\pdfoutput=1
\documentclass[11pt]{article}

\usepackage[a4paper,margin=1in]{geometry}
\usepackage{amsmath,amssymb,amsthm,mathtools}
\usepackage{mathrsfs}
\usepackage{enumitem}
\usepackage{hyperref}

\hypersetup{colorlinks=true,linkcolor=blue,citecolor=blue,urlcolor=blue}

\newtheorem{theorem}{Theorem}[section]
\newtheorem{proposition}[theorem]{Proposition}
\newtheorem{lemma}[theorem]{Lemma}
\newtheorem{corollary}[theorem]{Corollary}
\theoremstyle{definition}
\newtheorem{definition}[theorem]{Definition}
\newtheorem{remark}[theorem]{Remark}

\newcommand{\R}{\mathbb R}
\newcommand{\Sph}{\mathbb S}
\newcommand{\dd}{\,\mathrm d}
\newcommand{\eps}{\varepsilon}
\newcommand{\estar}{e_\varepsilon^\ast}
\newcommand{\Hopf}{\mathcal H}
\newcommand{\Basin}{\mathcal B}
\newcommand{\Ledger}{\mathcal L}
\newcommand{\Mmob}{\mathsf M}
\newcommand{\Vrel}{\mathcal V}
\newcommand{\Hh}{\mathcal H}
\newcommand{\dist}{\operatorname{dist}}
\newcommand{\gap}{\operatorname{gap}}

\title{Hopf Obstruction and Transported Forced Brakke Motion in Ordered Viscoelastic Cores}
\author{Sai Peng\\
School of Mathematics and Computational Science, Xiangtan University\\
\texttt{pscfd@xtu.edu.cn}}
\date{}

\begin{document}
\maketitle

\begin{abstract}
We prove a finite-\(\eps\) structural theorem for topological relaxation in
ordered viscoelastic conformation flows.  In an ordered region, a positive
spectral gap selects an oriented principal axis and hence an \(\Sph^2\)-valued
director with a Hopf class.  A change of this class must be paid at finite
scale by exterior spectral-gap concentration, ordered-core mass, or a
quantified exit from the regular ordered-core regime.  The theorem is stated
for a concrete Landau-de Gennes ordered-core closure coupled to an
Oldroyd/FENE-type transport law.  For this model we verify the structural
conditions used in the proof up to the first specified exit time: the
Morse-Bott ordered well, the tubular soft coordinate, the coercive massive-mode
splitting, the projected transported Ginzburg-Landau equation, the exterior gap
condition, and the FENE/collar coefficient bounds.  The GL reduction is a
projected equation with a residual decomposition.  Its translation components
produce the normal line force, while the orthogonal soft, massive, geometric,
and collar terms are controlled by coercivity or are charged to the relevant
exit quantity.  An open-basin modulated-energy estimate then propagates a
nonempty class of vortex-tube data until the first chart, coefficient,
boundary, or topology threshold is reached.  On every regular no-exit interval,
the normalized core measures converge to an integral one-varifold satisfying a
transported forced Brakke inequality with the computed force.  At an exit time
the theorem records the corresponding finite-\(\eps\) cost, including gap
loss, core leakage, boundary flux, FENE/collar loss, or topological crossing.
The relative Hopf estimate gives
\[
        c|k|
        \le
        \liminf_{\eps\to0}
        \bigl({\rm GapCost}_\eps
        +{\rm CoreMass}_\eps
        +{\rm TopExitCost}_\eps\bigr).
\]
Thus the force projection, open-basin propagation, and Brakke input estimates
are obtained before any limiting Brakke flow is invoked.
\end{abstract}

\noindent\textbf{Keywords.}
Viscoelastic fluids; conformation tensor; Hopf invariant; Landau-de Gennes
energy; Ginzburg-Landau vortices; Brakke flow.

\medskip
\noindent\textbf{2020 Mathematics Subject Classification.}
Primary 35Q35, 76A10; Secondary 57R19, 53E10, 49Q20.

\section{Introduction}

Many viscoelastic media carry an orientational structure beyond a scalar
polymeric stress \cite{Oldroyd,DoiEdwards,BerisEdwards,DeGennesProst,LionsMasmoudi,BarrettSuli,ConstantinKliegl}.  In
liquid-crystalline polymers, wormlike micellar phases, and flow-birefringent
suspensions, a symmetric positive conformation tensor can have a distinguished
simple eigenline.  Once this line is oriented, it defines
a director
\[
        n_C:\Omega\longrightarrow \Sph^2
\]
on the region where the corresponding spectral gap remains positive.  If the
boundary trace is fixed and \(\Omega\) is compactified to \(\Sph^3\), this
director has a Hopf invariant
\[
        \Hopf[n_C]\in\pi_3(\Sph^2)\cong\mathbb Z .
\]
Smooth evolution inside the ordered phase preserves this integer
\cite{Moffatt,ArnoldKhesin,WhiteSobolevHomotopy,HardtLin,BrezisCoronLieb}.  The analytic question is where the conformation law pays for topological relaxation.

The first possible payment is spectral.  If a Hopf class changes while the
boundary trace is fixed, the ordered director must lose either regularity or a
positive spectral gap.  A scale-invariant form of this obstruction is
\[
        \int_{\Sph^3}\frac{|\nabla C|^3}{\gap(C)^3}\,\dd x
        \ge c|\Hopf[n_C]|^{3/4}.
\]
This is the Vakulenko-Kapitanskii lower bound applied to the principal-axis map
\cite{VakulenkoKapitanskii}, after estimating \(|\nabla n_C|\) by
\(|\nabla C|/\gap(C)\).  It explains why Hopf relaxation in an ordered
conformation field requires a loss of uniform smoothness or positive spectral gap.

The second possible payment is a line core.  A Landau-de Gennes ordered-core
energy gives a codimension-two Ginzburg-Landau core with density
\(\pi|\log\eps|\) \cite{BethuelBrezisHelein,JerrardSoner,SandierSerfaty,BethuelOrlandiSmetsConcentration,BethuelOrlandiSmetsMeanCurvature,JerrardSmets}.  The
main difficulty is to turn this energetic core into a geometric law of motion
through estimates made before the limit.  The well-preparedness,
core-supported convergence, first variation, and Brakke inequality
\cite{Allard,Brakke,Ilmanen,AmbrosioSoner} are therefore produced by the finite-\(\eps\) estimates.  The
closure residual is first computed, then projected onto the canonical
translation modes, and only then inserted into an open-basin propagation
argument.

The structural assumptions are organized as a stopped regular regime.  In
applications one checks three sets of finite-scale quantities: the ordered
chart and exterior spectral gap, the GL core energy together with the
zero-mode residual balance, and the FENE/collar coefficients together with the
kinematic transport clock.  The last clock can be supplied independently by
endpoint positive-cone regularity theory: in endpoint Oldroyd-B/FENE-P
continuation criteria, compact logarithmic or FENE windows propagate the
velocity-gradient clock needed for smooth transport
\cite{PengBlowUp2026,PengPositiveCone3D2026}.  We use that output only as a
kinematic input for moving measures.  As long as the finite-scale quantities
remain controlled, the topological obstruction is converted into a definite
line force and then into a transported Brakke inequality.  When a threshold is
met, the argument stops and records the corresponding finite-\(\eps\) cost.

For Model 1 these structural checks are proved from the closure, rather than
postulated.  The required inputs are finite-dimensional or finite-energy
estimates made before the varifold limit: the Morse-Bott Hessian gap, the
massive-mode coercivity, the stopped tubular chart, and the projected residual
identity for the soft coordinate.  The two translation components of the exact
residual define the line force; the remaining components are controlled by
orthogonal coercivity, massive-mode slaving, Fermi-coordinate bounds, and
FENE/collar tame estimates.  A large non-orthogonal residual, coefficient
blow-up, or loss of the stopped chart is therefore recorded as a specified exit
channel.  Theorem~\ref{thm:model-one-verification} verifies the six checks for
the concrete ordered-core closure, making the model class broad enough for
nearby LDG/Oldroyd-FENE variants and precise enough to be tested in analysis or
computation.

The proof route is deliberately linear.  First, the tensorial closure is reduced
inside the stopped ordered chart to a transported GL equation for the soft
coordinate.  Second, the exact residual is projected onto the planar zero modes,
which fixes the normal line force.  Third, a modulated-energy estimate keeps the
solution in the vortex-tube basin and supplies the compactness, stress, and
force-power inputs.  Fourth, the GL core compactness and Hopf accounting give
the transported Brakke inequality and the cost alternative.  Section~\ref{sec:setup}
fixes the Hopf normalization and ordered-core variables; Section~\ref{sec:main}
states the theorem; Sections~\ref{sec:modules}--\ref{sec:main-proof} prove the
four modules and assemble them; Section~\ref{sec:verification} records the
closure-level checks.

\section{Topological and ordered-core setup}
\label{sec:setup}

This section fixes the objects used in the main theorem.  Each definition is a
finite-epsilon quantity that will later be estimated from the concrete
Landau-de Gennes/Oldroyd-FENE closure.  The proof uses the following logical
order:
\[
\begin{gathered}
\hbox{positive spectral gap}
\quad\Longrightarrow\quad
\hbox{principal-axis director and Hopf class},\\
\hbox{Landau-de Gennes Morse-Bott chart}
\quad\Longrightarrow\quad
\hbox{two-component GL core coordinate},\\
\hbox{translation-mode projection of the exact residual}
\quad\Longrightarrow\quad
\hbox{normal line force}.
\end{gathered}
\]
The Brakke limit enters only after these finite-epsilon objects have been
constructed and estimated.

\subsection{Hopf normalization and spectral lifting}

Let \(\Omega^\ast\) denote either \(\Sph^3\) or the compactification of
\(\Omega\) obtained by collapsing a fixed boundary trace.  Let
\(\omega_{\Sph^2}\) be the area form on \(\Sph^2\), normalized by
\[
        \int_{\Sph^2}\omega_{\Sph^2}=1 .
\]
For a smooth map \(n:\Omega^\ast\to\Sph^2\), set
\[
        F_n=n^\ast\omega_{\Sph^2}.
\]
Since \(H^2(\Sph^3)=0\), there is a one-form \(A_n\) with
\(\dd A_n=F_n\).  The Hopf invariant is
\[
        \Hopf[n]=\int_{\Sph^3}A_n\wedge F_n .
\]
This value is independent of the primitive \(A_n\).  With the above
normalization it is integer-valued on smooth maps and agrees with the linking
number of the inverse images of two regular values.  In a bounded domain with a
fixed ordered collar, the same number is computed after gluing the fixed collar
cap; if the boundary trace varies, an additional Chern-Simons boundary flux
appears.  That flux is precisely one of the priced alternatives in the main
theorem.

The passage from a conformation tensor to an \(\Sph^2\)-map uses a spectral gap.
Let \(C(x)\in\operatorname{Sym}^+(3)\).  Suppose that one eigenvalue
\(\lambda_1(C)\) is simple and separated from the other two by
\[
        g(C)
        =
        \dist\bigl(\lambda_1(C),
        \{\lambda_2(C),\lambda_3(C)\}\bigr)>0 .
\]
After choosing the orientation by the ordered phase, the corresponding unit
eigenvector is denoted by \(n_C\).  The elementary perturbation estimate
\[
        |\nabla n_C|
        \le
        C\frac{|\nabla C|}{g(C)}
\]
will be used repeatedly.  It is the bridge between a tensorial spectral event
and a topological event for an \(\Sph^2\)-map.

\begin{lemma}[Spectral lifting and Hopf persistence]
\label{lem:short-spectral-lifting}
Let \(C(t)\in C^0([0,T];H^m(\Omega;\operatorname{Sym}^+(3)))\), \(m>5/2\),
have a distinguished eigenvalue separated by \(g(C(t))\ge g_0>0\).  Assume that
the corresponding oriented eigenline has fixed boundary trace.  Then
\[
        t\longmapsto \Hopf[n_{C(t)}]
\]
is constant.  If the Hopf class changes at \(T_\ast\) and the boundary flux
vanishes, then either
\[
        \liminf_{t\uparrow T_\ast}\inf_\Omega g(C(t))=0
\]
or the \(H^m\)-control needed to define a continuous principal-axis map is lost.
\end{lemma}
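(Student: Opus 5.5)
The plan is to show that, under the uniform gap, \(t\mapsto n_{C(t)}\) is a continuous path of continuous maps \(\Omega^\ast\to\Sph^2\) in the \(C^0\) topology, with fixed boundary trace. Homotopy invariance of \(\Hopf\) then gives constancy. The second assertion is the contrapositive.

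\emph{Step 1 (regularity of the eigenprojection).} Since \(m>5/2>3/2\), Sobolev embedding gives \(H^m(\Omega)\hookrightarrow C^{0,\alpha}(\overline\Omega)\). Hence \(C\in C^0([0,T];C^0(\overline\Omega;\operatorname{Sym}^+(3)))\), and in fact \(m>5/2\) gives \(C^1\) in space. On the open set \(\{g(C)>g_0/2\}\subset\operatorname{Sym}(3)\), the spectral projection
\[
P(C)=\frac{1}{2\pi i}\oint_\Gamma (z-C)^{-1}\dd z
\]
is real-analytic in \(C\). Here \(\Gamma\) is a circle of radius \(g_0/2\) about \(\lambda_1(C)\), and it can be chosen locally uniformly. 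The resolvent bound \(\|(z-C)^{-1}\|\le 2/g_0\) on \(\Gamma\) gives \(|DP|\lesssim 1/g_0\), which is the stated estimate \(|\nabla n_C|\le C|\nabla C|/g(C)\). Consequently \((t,x)\mapsto P(C(t,x))\) is jointly continuous on \([0,T]\times\overline\Omega\), and spatially \(C^1\) with uniformly bounded gradient.

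\emph{Step 2 (orientation lift).} \(P\) determines the unoriented line, a map into \(\mathbb{RP}^2\). The hypothesis supplies an orientation at the boundary and at \(t=0\), and an orientation is exactly a lift through the double cover \(\Sph^2\to\mathbb{RP}^2\). This lifting step is the main obstacle. A continuous line field need not be orientable in general, so the plan is to lean on the structure at hand. Since \(\Omega^\ast\) is \(\Sph^3\), or is obtained by collapsing a boundary trace that is already oriented, it is simply connected. The line field \(x\mapsto P(C(t,x))\) therefore lifts to \(\Sph^2\) for each \(t\), and the lift is unique once fixed at one boundary point. Applying the homotopy lifting property to \([0,T]\times\Omega^\ast\), which is also simply connected, yields a continuous family \(n_{C(t)}\) agreeing with the fixed boundary orientation for all \(t\). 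If \(\Omega\) itself is not simply connected, I would instead invoke the paper's convention that the orientation is "chosen by the ordered phase". I would state this explicitly as part of the hypothesis.

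\emph{Step 3 (invariance).} With the collar cap glued, the maps \(n_{C(t)}:\Omega^\ast\to\Sph^2\) form a homotopy. \(\Hopf\) is a homotopy invariant on \(C^1\) maps via the Whitehead integral, since the variation of \(\int A\wedge F\) under a smooth homotopy is an exact boundary term. It is also defined on continuous maps through \(\pi_3(\Sph^2)\), and the two definitions agree. Hence \(\Hopf[n_{C(t)}]\) is constant. To avoid regularity issues, one can alternatively argue by integer-valuedness: the map \(t\mapsto\Hopf\) is continuous, because Step 1 gives \(F_n\) continuous in \(L^{3/2}\) and the inequality \(\|A\|_{L^3}\lesssim\|F\|_{L^{3/2}}\) holds. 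A continuous integer-valued function on \([0,T]\) is constant.

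\emph{Step 4 (contrapositive).} Suppose the class changes at \(T_\ast\) with zero boundary flux. If \(\liminf_{t\uparrow T_\ast}\inf g>0\) and the \(H^m\) bound persisted up to \(T_\ast\), then Steps 1–3 would apply on \([0,T_\ast]\) with \(g_0\) equal to that positive liminf, which is a contradiction. So either the gap degenerates, or the \(H^m\) control needed to define a continuous principal-axis map is lost.
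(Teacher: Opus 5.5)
Your proposal is correct and follows the same route as the paper: a uniform gap makes the eigenprojection smooth, the embedding $H^m\hookrightarrow C^1$ for $m>5/2$ gives a continuous family of principal-axis maps with fixed trace, homotopy invariance of $\Hopf$ on $\Omega^\ast\simeq\Sph^3$ gives constancy, and the second claim is the contrapositive. Your lifting argument over the simply connected space $[0,T]\times\Omega^\ast$ usefully spells out the orientation step that the paper leaves to ``choosing the orientation by the ordered phase''. One small correction in Step 4: take $g_0$ to be the minimum of the gap over $[0,T_\ast]$, which is positive by compactness and continuity, rather than the liminf itself.
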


\begin{proof}
The positive spectral gap makes the eigenprojection a smooth function of
\(C\).  Since \(m>5/2\), the Sobolev embedding gives a continuous map
\[
        t\longmapsto n_{C(t)}\in C^0(\Omega;\Sph^2)
\]
with fixed boundary trace.  The glued maps on \(\Omega^\ast\simeq\Sph^3\) form
a homotopy in the ordered target, so their Hopf invariant is constant.  The
contrapositive gives the stated alternative.
\end{proof}

\begin{lemma}[Curvature-gap lower bound]
\label{lem:short-curvature-gap}
Let \(C:\Sph^3\to\operatorname{Sym}^+(3)\) be smooth with a distinguished
positive spectral gap \(g(C)>0\).  Then
\[
        \int_{\Sph^3}
        \frac{|\nabla C|^3}{g(C)^3}\,\dd x
        \ge
        c\,|\Hopf[n_C]|^{3/4}.
\]
\end{lemma}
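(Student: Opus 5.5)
The plan is to reduce the statement to the Vakulenko--Kapitanskii inequality for smooth maps \(n:\Sph^3\to\Sph^2\),
\[
        \int_{\Sph^3}|\nabla n|^3\,\dd x\ \ge\ c_0\,|\Hopf[n]|^{3/4},
\]
and then insert the eigenvector perturbation estimate \(|\nabla n_C|\le C|\nabla C|/g(C)\) recorded just before Lemma~\ref{lem:short-spectral-lifting}. Since \(g(C)>0\) everywhere on the compact manifold \(\Sph^3\), the spectral projection \(P(C)=n_C\otimes n_C\) is smooth in \(C\), and \(n_C\) is a smooth \(\Sph^2\)-valued map. Here we use the fact that the statement says the eigenline is oriented; \(\Sph^3\) is simply connected, so the line field \(P\) lifts to a global unit vector field. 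The Hopf invariant \(\Hopf[n_C]\) is therefore defined as in the normalization above. The pointwise bound follows from differentiating \(C n=\lambda_1 n\): the component of \(\partial_i n\) orthogonal to \(n\) equals \(-(C-\lambda_1)^{-1}\) restricted to \(n^\perp\), applied to \((\partial_i C)n\). This resolvent has norm \(1/g(C)\), and \(\partial_i n\perp n\) because \(|n|=1\). Hence \(|\nabla n_C|^3\le C^3|\nabla C|^3/g(C)^3\), and integrating turns the Vakulenko--Kapitanskii bound into the stated inequality with \(c=c_0/C^3\).

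For completeness I would sketch the Vakulenko--Kapitanskii inequality itself rather than only cite \cite{VakulenkoKapitanskii}. Set \(F=n^\ast\omega_{\Sph^2}\). Pointwise \(|F|\le c|\nabla n|^2\), so \(\|F\|_{L^{3/2}}\le c\|\nabla n\|_{L^3}^2\). Choose the Coulomb primitive \(A=\dd^\ast\Delta^{-1}F\), which is possible since \(H^1(\Sph^3)=H^2(\Sph^3)=0\). Elliptic estimates together with the Sobolev embedding \(W^{1,3/2}\hookrightarrow L^3\) in dimension three give \(\|A\|_{L^3}\le c\|F\|_{L^{3/2}}\). Applying Hölder in the definition \(\Hopf[n]=\int A\wedge F\) then yields
\[
        |\Hopf[n]|\le \|A\|_{L^3}\|F\|_{L^{3/2}}\le c\|F\|_{L^{3/2}}^2\le c\|\nabla n\|_{L^3}^4 .
\]
Raising to the power \(3/4\) gives \(\int|\nabla n|^3\ge c|\Hopf[n]|^{3/4}\).

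The main obstacle I expect is the elliptic step: bounding the primitive \(A\) in \(L^3\) by \(F\) in \(L^{3/2}\). This needs the Hodge decomposition on \(\Sph^3\) together with Calderón--Zygmund \(L^p\) estimates for \(\dd^\ast\Delta^{-1}\), which are valid for \(1<p<\infty\), so \(p=3/2\) is admissible. I would also check that the exponents are consistent with scale invariance, which they are: both sides of the final inequality are dimensionless in three dimensions. The remaining steps, namely smoothness of \(n_C\) and the pointwise gradient bound, are routine under the strict positivity of \(g(C)\).
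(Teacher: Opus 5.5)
Your proposal is correct and follows the paper's own proof: combine the pointwise spectral-lifting bound $|\nabla n_C|\le C|\nabla C|/g(C)$ with the Vakulenko--Kapitanskii inequality $\int_{\Sph^3}|\nabla n|^3\,\dd x\ge c|\Hopf[n]|^{3/4}$, then integrate. Your extra steps (the resolvent computation for $\partial_i n_C$, the lift of the line field using $\pi_1(\Sph^3)=0$, and the Coulomb-gauge/H\"older derivation of the Vakulenko--Kapitanskii bound) only fill in what the paper cites, and they match the Coulomb-primitive argument the paper itself uses in Lemma~\ref{lem:app-sobolev-hopf}.
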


\begin{proof}
By the spectral lifting estimate,
\[
        |\nabla n_C|^3
        \le
        C\frac{|\nabla C|^3}{g(C)^3}.
\]
The Vakulenko-Kapitanskii inequality for maps \(\Sph^3\to\Sph^2\) gives
\[
        \int_{\Sph^3}|\nabla n_C|^3\,\dd x
        \ge
        c|\Hopf[n_C]|^{3/4}.
\]
Combining the two inequalities proves the claim.
\end{proof}

\subsection{Ordered-core chart}

The ordered-core model is built near a uniaxial Landau-de Gennes well \cite{DeGennesProst,Virga,BallMajumdar,MajumdarZarnescu}.  Let
\(\mathcal N\) be the manifold of ordered minimizers of the bulk potential, and
let \(Q_\ast\in\mathcal N\).  We fix a smooth finite-dimensional chart
\[
        \Phi_{\rm ch}:(d,m)\in B_2(0)\times B_{\rho_{\rm ch}}(0)
        \longmapsto C\in\operatorname{Sym}^+(3)
\]
on an ordered-core neighborhood of this well, and write
\[
        C
        =
        \mathcal C(Q_\ast,d,m):=\Phi_{\rm ch}(d,m),
\]
where \(d\in\R^2\) is the soft complex coordinate transverse to a defect core
and \(m\) denotes the massive variables normal to the Morse-Bott well.  The
energy in this chart has the form
\[
\begin{aligned}
        E_\eps[C]
        &=
        \int_\Omega
        \frac12|\nabla d|^2
        +
        \frac{1}{4\eps^2}(|d|^2-1)^2
        \,\dd x
\\
        &\quad+
        \int_\Omega
        \frac12\langle L_{\rm mb}m,m\rangle
        +\hbox{higher-order chart terms}
        \,\dd x .
\end{aligned}
\]
The operator \(L_{\rm mb}\) is positive on the massive directions.  Thus the
only logarithmically expensive codimension-two core comes from the
two-component Ginzburg-Landau field \(d\).

The coordinate \(d\) is not the principal-axis map \(n_C\).  It is an order
parameter in the Landau-de Gennes chart and remains a regular \(\R^2\)-valued
field when \(d=0\).  On the set where \(|d|\ge c_{\rm out}>0\) and the
principal spectral gap is positive, the phase of \(d\) and the oriented
principal axis determine the same ordered state.  On the core set
\(\{|d|<c_{\rm out}\}\), the principal axis may be undefined; this is precisely
where the Ginzburg-Landau core energy is allowed to concentrate.  The estimate
\(|\nabla n_C|\lesssim |\nabla C|/\gap(C)\) is therefore used only on the
positive-gap exterior of the core, not to define \(d\) across a gap closing.

The canonical planar vortex is denoted by
\[
        q_\ast(y)=f_\ast(|y|)\frac{y}{|y|},
        \qquad y\in\R^2,
\]
with degree one.  The translation zero modes are
\[
        Z_i=\partial_{y_i}q_\ast,\qquad i=1,2,
\]
and their finite-scale normalized Gram matrix is
\[
        M_{ij}
        =
        \lim_{\eps\downarrow0}
        \frac1{\estar}
        \int_{|y|\le r_0/\eps}
        Z_i(y)\cdot Z_j(y)\,\dd y .
\]
Here \(r_0\) is any fixed tube radius below the reach scale.  The limit is
independent of \(r_0\); by symmetry \(M\) is a positive multiple of the
identity.  We keep the matrix notation because the finite-\(\eps\) residual
projection uses the corresponding truncated Gram matrix before passing to this
limit.

\begin{definition}[Canonical tube]
\label{def:short-canonical-tube}
Let \(\Gamma\subset\Omega\) be a smooth embedded oriented link, with arclength
parameter \(s\), normal frame \((\nu_1,\nu_2)\), and tube radius smaller than a
fixed fraction of its reach.  In Fermi coordinates
\[
        x=\Gamma(s)+r_1\nu_1(s)+r_2\nu_2(s),
\]
the canonical ordered core of radius \(\eps\) is
\[
        \bar d_{\eps,a}(x)
        =
        q_\ast\!\left(\frac{r-a(s)}{\eps}\right)
        \chi(r)
        +
        d_{\rm out}(s,r)(1-\chi(r)),
\]
where \(a(s)\in\R^2\) is the modulation center, \(\chi\) is a fixed radial
cutoff, and \(d_{\rm out}\) is the ordered collar phase.  The modulation center
is fixed by the two orthogonality conditions
\[
        \int_{\nu_s}
        (d_\eps-\bar d_{\eps,a})\cdot Z_i
        \,\dd r=0,
        \qquad i=1,2 .
\]
We write \(\bar d_\eps=\bar d_{\eps,a}\) when the selected modulation is clear,
and \(\bar d_{\eps,0}\) for the same tube with \(a\equiv0\).  The slice
orthogonality removes only the local translation component of
\(d_\eps-\bar d_{\eps,a}\).  Slow variation of the centers \(a(s)\) along the
filament is a separate longitudinal zero-mode energy, not part of the planar
coercivity estimate.
\end{definition}

The canonical tube has normalized mass one along the filament:
\[
        \frac{1}{\estar}
        \left(
        \frac12|\nabla\bar d_\eps|^2
        +
        \frac{1}{4\eps^2}(|\bar d_\eps|^2-1)^2
        \right)\dd x
        \rightharpoonup
        \Hh^1\lfloor\Gamma .
\]
The whole proof is a stability theorem around this object.

\begin{definition}[Chart-gap stopping time]
\label{def:chart-gap-stopping}
Fix constants \(0<c_{\rm out}<1\), \(\rho_{\rm ch}>0\), and
\(\gamma_{\rm out}>0\).  Relative to a comparison tube \(\Gamma_t\), the
ordered-core chart is valid at time \(t\) if
\[
        C_\eps(t)=\mathcal C(Q_\ast,d_\eps(t),m_\eps(t)),
        \qquad
        |m_\eps(t)|<\rho_{\rm ch}
\]
on the controlled region, and if the principal spectral gap satisfies
\[
        \gap(C_\eps(x,t))\ge\gamma_{\rm out}
        \quad
        \hbox{whenever } |d_\eps(x,t)|\ge c_{\rm out}
        \hbox{ and } x \hbox{ lies outside the core tube}.
\]
The stopping time \(\tau_\eps^{\rm ch}\) is the first failure of these
conditions.  Gap loss inside \(\{|d_\eps|<c_{\rm out}\}\) is not a chart
failure; it is part of the ordered core and is measured by the normalized core
mass.  Gap loss in the exterior ordered phase, or loss of the chart
representation itself, is a priced exit.
\end{definition}

\subsection{Finite-epsilon defects}

The open-basin defect is not an assumption that the limiting Brakke flow
exists.  It is a finite-epsilon scalar collecting the quantities that can be
estimated in the moving tube.  On a time interval \(I\), write
\[
\begin{aligned}
        \Basin_\eps^+(I)
        &=
        \sup_{t\in I}
        \frac{
        E_\eps[d_\eps(t)]-E_\eps[\bar d_\eps(t)]
        }{\estar}
        +
        {\rm FlatErr}_\eps(I)
\\
        &\quad+
        {\rm ExtMass}_\eps(I)
        +
        {\rm ModErr}_\eps(I)
        +
        {\rm GaugeErr}_\eps(I)
        +
        {\rm MassiveErr}_\eps(I)
        +
        {\rm CollarErr}_\eps(I).
\end{aligned}
\]
Here
\[
        {\rm ModErr}_\eps(I)
        =
        \sup_{t\in I}
        \int_{\Gamma_t}|\partial_s a(s,t)|^2\,\dd\Hh^1
\]
is the longitudinal translation-mode energy.  A constant center shift is handled
by the current-graph selection and the flat-error term; only variation along
the filament carries the logarithmic GL cost.  Thus the basin is invariant under
the harmless recentering of each slice but still records the slow zero modes
which are invisible to a purely planar coercivity statement.
The superscript \(+\) indicates the upper right-envelope representative used in
the stopping argument.  It avoids a common technical mistake: a Gronwall
estimate at the endpoint of a maximal good interval must use a representative
that is stable under short right continuations.

The relative dissipation is denoted by
\[
        D_{\rm rel}^\eps(t)
        =
        \int_\Omega
        |D_t(d_\eps-\bar d_\eps)|^2
        +
        |\nabla(d_\eps-\bar d_\eps)|^2_{\rm tube}
        \,\dd x ,
\]
where \(D_t=\partial_t+u_\eps\cdot\nabla\) and the tube norm includes the
coercive slice norm after imposing the translation orthogonality.  The
line-level part of this norm includes the material derivative of the selected
center \(a(s,t)\) in the transported normal frame, weighted by \(\estar\), so
that \(D_{\rm rel}^\eps/\estar\) controls the relative motion of the
longitudinal zero mode.

The residual \(G_\eps\) in the transported Ginzburg-Landau equation is split
fiberwise into
\[
        G_\eps
        =
        \sum_{i=1}^2 b_{\eps,i}Z_i
        +
        P_{\rm orth}G_\eps ,
\]
where \(b_\eps=(b_{\eps,1},b_{\eps,2})\) is the translation-mode coefficient.
The model force will be obtained by identifying the limit of \(M^{-1}b_\eps\).

\subsection{Concrete closure used for the theorem}
\label{subsec:concrete-model}

The theorem is first read for the following representative finite-\(\eps\)
closure.  The verification section explains exactly which estimates are needed
to pass from this concrete model to nearby LDG/Oldroyd-FENE closures.

\begin{definition}[Model 1: concrete LDG/Oldroyd-FENE closure]
\label{model:concrete-closure}
Let \(C_\eps(x,t)\in\operatorname{Sym}^+(3)\) be the conformation/order tensor
and let \(u_\eps\) be an incompressible velocity field tangent to the fixed
ordered collar.  The finite-\(\eps\) energy is
\[
        \mathscr E_\eps[C]
        =
        \int_\Omega
        \frac{L}{2}|\nabla C|^2
        +\frac{1}{\eps^2}W_{\rm LDG}(C)
        +W_{\rm FENE}(C)
        \,\dd x,
\]
where \(W_{\rm LDG}\) has a Morse-Bott ordered well and \(W_{\rm FENE}\)
keeps \(C\) in the admissible FENE cone until a priced coefficient exit is
charged.  On the chart-valid set, \(C_\eps=\mathcal C(Q_\ast,d_\eps,m_\eps)\)
and the tensorial transport-relaxation law is
\[
\begin{aligned}
        \partial_t C_\eps+u_\eps\cdot\nabla C_\eps
        &-(\nabla u_\eps)C_\eps
        -C_\eps(\nabla u_\eps)^T \\
        &=
        -\mathsf M_C\,\Pi_{\rm adm}(C_\eps)
        \frac{\delta \mathscr E_\eps}{\delta C}(C_\eps)
        +\mathcal R_{\rm str}(C_\eps,\nabla u_\eps).
\end{aligned}
\]
Here \(\Pi_{\rm adm}\) denotes the smooth tangent projection imposed by the
FENE admissible cone on compact sublevels, and \(\mathcal R_{\rm str}\)
collects the lower-order Oldroyd stretching terms.  Projecting this equation
onto the soft coordinate in the stopped Morse-Bott chart gives a projected
transported GL equation
\[
        D_t d_\eps
        =
        \Mmob\bigl(\Delta d_\eps
        -\eps^{-2}(|d_\eps|^2-1)d_\eps\bigr)
        +G_\eps,
        \qquad D_t=\partial_t+u_\eps\cdot\nabla,
\]
where the residual is decomposed in the moving tube as
\[
        G_\eps
        =G_\eps^{\rm zm}+G_\eps^{\rm orth}
        +G_\eps^{\rm mass}+G_\eps^{\rm geom}+G_\eps^{\rm col}.
\]
The zero-mode component \(G_\eps^{\rm zm}\) is retained and becomes the line
force.  The four remaining components are controlled by planar coercivity,
Morse-Bott normal coercivity, Fermi-geometry estimates, and FENE/collar tame
composition bounds.  The normal variable \(m_\eps\) is either slaved by the
normal Euler-Lagrange equation or evolves by a projected, coercively damped
normal dynamics.  The model is stopped at the first loss of the chart, exterior
spectral gap, FENE/collar compactness, or controlled tube topology.
\end{definition}

\begin{remark}[Model verification]
The concrete closure in Definition~\ref{model:concrete-closure} (Model 1) is
used in the theorem statement.  The proof only needs the six finite-\(\eps\)
properties listed in Section~\ref{sec:verification}, and
Theorem~\ref{thm:model-one-verification} verifies those properties for Model 1
up to the first typed exit.  Nearby closures fall under the same conclusion
when the same checks are available: the ordered well has the Morse-Bott Hessian,
the soft chart produces the GL core energy, the massive modes are slaved or
damped, and the projected residual admits the zero-mode splitting used below.
Thus the result is a finite-\(\eps\) verification theorem for concrete models;
the limiting Brakke flow enters only after these estimates have been established.
\end{remark}

\begin{remark}[Ordered-core regime]
The Morse-Bott well, FENE cone compactness, soft GL coordinate, and transport
regularity define the ordered-core regime used in the proof.  Within this
regime the line force is computed and transported forced Brakke motion is
derived.  At its boundary the finite-\(\eps\) estimates identify the failed
control quantity and charge the corresponding exit cost.  The theorem is
therefore a regular-branch result: it gives a quantitative geometric law on the
ordered-core branch and a finite-scale diagnostic when that branch closes.
\end{remark}

\section{Main theorem}
\label{sec:main}

We use the following normalization throughout:
\[
        \estar=\pi|\log\eps|.
\]
The ordered-core closure is written in the stopped Morse-Bott chart of
Definition~\ref{def:chart-gap-stopping}.  On each interval
\([0,\tau_\eps^{\rm ch})\), its soft coordinate is a two-component field
\[
        d_\eps=d_{C_\eps}:\Omega\times(0,\tau_\eps^{\rm ch})\to\R^2,
\]
and the core energy contains
\[
        E_\eps[d_\eps]
        =
        \int_\Omega
        \frac12|\nabla d_\eps|^2
        +
        \frac{1}{4\eps^2}(|d_\eps|^2-1)^2\,\dd x .
\]
In material coordinates the soft field solves
\begin{equation}\label{eq:transported-gl-short}
        \partial_t d_\eps+u_\eps\cdot\nabla d_\eps
        =
        \Mmob\left(\Delta d_\eps
        -\eps^{-2}(|d_\eps|^2-1)d_\eps\right)
        +G_\eps .
\end{equation}
Here \(G_\eps\) is the exact residual produced by the
Landau-de Gennes/Oldroyd-FENE closure after the massive tensorial modes have
been slaved by the normal equation or by a strongly damped projected normal
dynamics.  The coordinate equation is used up to \(\tau_\eps^{\rm ch}\); loss of chart
validity or exterior spectral gap is one of the priced alternatives.  The
transport term \(u_\eps\cdot\nabla d_\eps\) stays in the material derivative,
and its geometric contribution appears in the transport part of the Brakke
inequality below.

\begin{definition}[Open ordered-core basin]
\label{def:short-open-basin}
Fix a smooth embedded link \(\Gamma_0\), a tube radius below the reach of
\(\Gamma_0\), a fixed ordered collar near \(\partial\Omega\), and a canonical
degree-one core profile \(q_\ast\).  The open basin
\(\mathcal U_\eps(\Gamma_0,\eta_\eps)\) consists of perturbations of the
canonical vortex tube for which the following finite-epsilon defect is at most
\(\eta_\eps\to0\):
\[
\begin{aligned}
        \Basin_\eps(0)
        &=
        \frac{1}{\estar}\bigl[
        \text{rel. core energy}
        +\text{flat-current error}
        +\text{exterior mass}
        \bigr]
\\
        &\quad+
        \frac{1}{\estar}\bigl[
        \text{phase/collar defect}
        +\text{massive-mode defect}
        \bigr].
\end{aligned}
\]
The basin is open after imposing the standard translation-mode orthogonality
on each normal slice.  The canonical tubes themselves belong to it.
\end{definition}

\begin{definition}[Priced exits]
\label{def:short-priced-exit}
On a time slab \(I\), a priced exit is one of the following finite-epsilon
events:
nonzero-degree annular current crossing, loss of the tube graph, exterior core
leakage of order \(\estar\), loss of the FENE coefficient bounds, ordered-collar
failure with nonzero boundary degree, boundary flux of normalized order one, or
loss of the ordered-core chart or exterior positive-gap condition in
Definition~\ref{def:chart-gap-stopping}.  The corresponding nonnegative set
function is
denoted by \({\rm ExitCost}_\eps(I)\), and
\[
        {\rm TopExitCost}_\eps(I)
        =
        \frac{1}{\estar}{\rm ExitCost}_\eps(I).
\]
Pure zero-degree scalar depressions are absorbed by the scalar-defect clearing
estimate inside the open-basin Gronwall argument and carry zero priced-exit
charge.
\end{definition}

Loss of the embedded-link topology is included in ``loss of the tube graph'':
reach collapse, self-contact of the selected center graph, failure of the fixed
meridian-longitude atlas, or finite-\(\eps\) tube-topology change closes the
current Hopf slab.  A limiting reconnection of the Brakke flow is therefore
registered through the topological priced exit.

\begin{remark}[What the topology exit measures]
The quantity \({\rm TopExitCost}_\eps\) is a stopping quantity for the Hopf
accounting used in this paper.  On a topologically regular slab, the transported
cap atlas gives a well-defined relative Hopf number.  If that number changes
across a time at which the atlas cannot be continued, one of the finite-scale
quantities has recorded an annular current crossing, a reach or graph failure,
boundary flux, exterior gap loss, FENE/collar loss, or ordered-core mass.  The
theorem records this lower-bound cost.  A later regular slab may be restarted
with new cap data; the detailed post-contact reconnection geometry is a separate
problem.
\end{remark}

\begin{definition}[Topologically regular Hopf slab]
\label{def:topological-regular-slab}
A Hopf slab \(I=[t_-,t_+]\) is topologically regular if, for every sufficiently
small \(\eps\), the selected ordered core is represented on \(I\) by a smooth
embedded tube graph over a fixed link type, with reach bounded below at the
chosen tube scale, fixed degree-one meridians, and a meridian-longitude atlas
transported by the tube isotopy.  On such a slab the complement of the inner
tubes has a fixed diffeomorphism type and the standard solid-torus caps are
well defined.  If any of these conditions first fails inside \(I\), the slab is
not used for cap comparison across that time; the failure is charged to
\({\rm TopExitCost}_\eps(I)\).
\end{definition}

\begin{remark}[Topology window]
The Hopf comparison is performed on slabs with a fixed finite-\(\eps\) tube
topology and a transported cap atlas.  Reach collapse, self-contact, atlas
failure, or a reconnection event is treated as an endpoint of that topology
window and is charged by \({\rm TopExitCost}_\eps\).  Subsequent regular
intervals, if present, are restarted with their own cap data.
\end{remark}

\begin{theorem}[Hopf obstruction, ordered-core propagation, and Brakke-or-exit]
\label{thm:short-main}
Let \(\Omega\subset\R^3\) be smooth and bounded, or let
\(\Omega=\R^3\) with compactified fixed boundary data.  Let
\((C_\eps,u_\eps)\) be a solution of the concrete LDG/Oldroyd-FENE closure of
Definition~\ref{model:concrete-closure} (Model 1), stopped at the chart-gap
time, with soft
coordinate \(d_\eps\) satisfying the projected transported GL equation
verified in Theorem~\ref{thm:model-one-verification} on
\([0,\tau_\eps^{\rm ch})\).  Equivalently, the same conclusion holds for any
closure whose finite-\(\eps\) verification reduces to \({\rm (M1)}\)--\({\rm
(M6)}\) in Section~\ref{sec:verification}.  Write
\[
        T_\eps^{\rm ch}=T\wedge\tau_\eps^{\rm ch}.
\]
Assume that \(u_\eps\) is divergence free, tangent to the fixed boundary collar,
and satisfies
\[
        \sup_\eps
        \int_0^{T_\eps^{\rm ch}}
        \|\nabla u_\eps(t)\|_{L^\infty(\Omega)}\,\dd t<\infty,
        \qquad
        u_\eps\to u
        \quad\hbox{in }L^1_tW^{1,\infty}_{x,{\rm loc}} .
\]
These bounds are the kinematic transport window used to pass the material
energy identity to the moving-varifold limit.  They may be supplied by the
endpoint velocity-clock theory \cite{PengBlowUp2026,PengPositiveCone3D2026},
which derives compact conformation windows and logarithmic or FENE barriers
from Besov velocity-gradient clocks in Oldroyd-B/FENE-P strong branches.  The
present theorem uses only the resulting Lipschitz transport clock; it does not
reprove three-dimensional viscoelastic regularity.  The same window may also be
supplied by a smooth or regularized velocity theory, by a prescribed transport
field, or by a numerical approximation with uniform Lipschitz control.
Assume also
\[
        \sup_\eps
        \frac{E_\eps[d_\eps(0)]}{\estar}
        <\infty,\qquad
        \sup_\eps
        \int_0^{T_\eps^{\rm ch}}\!\!\int_\Omega
        \frac{|G_\eps^{\rm orth}|^2+|G_\eps^{\rm mass}|^2
        +|G_\eps^{\rm geom}|^2+|G_\eps^{\rm col}|^2}{\estar}\,\dd x\dd t
        <\infty,
\]
and assume that the initial data lie in the open basin
\(\mathcal U_\eps(\Gamma_0,\eta_\eps)\) with \(\eta_\eps\to0\).
Then, after passing to a subsequence, the following alternative holds on
each compact time interval before the first loss of the smooth comparison tube
and before \(\tau_\eps^{\rm ch}\).

\smallskip
\noindent\textbf{Regular no-exit branch.}
The exact closure residual determines a normal force
\[
        f_{\rm cl}^\perp
        =
        M^{-1}
        \int_{\R^2}
        G_{\rm tube}(s,y,t)\cdot\nabla_y q_\ast(y)\,\dd y ,
\]
where \(M\) is the finite-scale normalized translation-mode Gram matrix of the
planar core and the projection is understood in the same truncated-normalized
limit as in Definition~\ref{def:short-canonical-tube}.  If
\[
        u_\Gamma^\perp(s,t)
        =
        \bigl(u(\Gamma(s,t),t)\bigr)^\perp,
        \qquad
        \Vrel_\Gamma^\perp
        =
        V_\Gamma^\perp-u_\Gamma^\perp ,
\]
then the comparison filament solves the relative law
\[
        \Vrel_\Gamma^\perp
        =
        \Mmob H_{\Gamma_t}+f_{\rm cl}^\perp .
\]
The open basin propagates:
\[
        \sup_{s\le t}\Basin_\eps^+(s)
        +
        \int_0^t
        \frac{D_{\rm rel}^\eps(s)}{\estar}\,\dd s
        \le
        A\eta_\eps+o_\eps(1).
\]
Consequently the normalized measures
\[
        \mu_\eps^t
        =
        \frac{1}{\estar}
        \left(
        \frac12|\nabla d_\eps|^2
        +
        \frac{1}{4\eps^2}(|d_\eps|^2-1)^2
        \right)\dd x
\]
converge for a.e. \(t\) to the weight of an integral one-varifold \(V_t\).  Set
\[
        \mathcal D_t^u\mu^t(\varphi)
        =
        \frac{d}{dt}\int \varphi\,\dd\mu^t
        -
        \int
        \bigl(\nabla\varphi\cdot u
        +\varphi\,{\rm div}_{T_xV_t}u\bigr)\,\dd\mu^t ,
\]
where \({\rm div}_{T_xV_t}u\) is the tangential divergence on the approximate
tangent line of \(V_t\).  Then \((V_t,\mu^t)\) satisfies the transported forced
Brakke inequality
\begin{equation}\label{eq:short-brakke}
\mathcal D_t^u\mu^t(\varphi)
\le
 -\Mmob\int\varphi |H_{V_t}|^2\,\dd\mu^t
 +\int\nabla\varphi\cdot(\Mmob H_{V_t}+f_{\rm cl}^\perp)\,\dd\mu^t
 -\int\varphi\,H_{V_t}\cdot f_{\rm cl}^\perp\,\dd\mu^t
\end{equation}
for every nonnegative \(\varphi\in C_c^1(\Omega)\), in the endpoint-weight
interval sense and hence for a.e. time.

\smallskip
\noindent\textbf{Priced-exit branch.}
If the no-exit interval terminates at an interior limiting time, every slab
containing that time carries a positive normalized exit price:
\[
        \liminf_{\eps\to0}
        {\rm TopExitCost}_\eps(I)>0 .
\]
This includes termination by the chart-gap stopping time
\(\tau_\eps^{\rm ch}\).

\smallskip
\noindent\textbf{Hopf accounting.}
If a fixed ordered collar carries relative Hopf charge \(k\) across an
admissible slab \(I\), then
\begin{equation}\label{eq:short-hopf-cost}
        c|k|
        \le
        \liminf_{\eps\to0}
        \left(
        {\rm GapCost}_\eps(I)
        +
        {\rm CoreMass}_\eps(I)
        +
        {\rm TopExitCost}_\eps(I)
        \right).
\end{equation}
Thus Hopf change is paid by positive-gap concentration, ordered-core mass whose
no-exit dynamics satisfy \eqref{eq:short-brakke}, or a priced finite-epsilon
exit.  On topologically regular subslabs the Hopf functional is computed by a
fixed cap atlas; if the embedded tube topology or this atlas fails before the
slab closes, that failure is already part of \({\rm TopExitCost}_\eps\).
\end{theorem}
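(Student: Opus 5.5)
The plan follows the four-module route announced in the introduction, run on the stopped interval \([0,T_\eps^{\rm ch})\) and closed by a continuity (bootstrap) argument.

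\emph{Step 1: residual projection and force identification.} In Fermi coordinates around the comparison tube \(\Gamma_t\) I write \(d_\eps=\bar d_{\eps,a}+w_\eps\), with \(w_\eps\) satisfying the slice orthogonality of Definition~\ref{def:short-canonical-tube}. I then insert this into \eqref{eq:transported-gl-short} and test the equation fiberwise against \(Z_i((r-a)/\eps)\). Using the truncated Gram matrix, the coefficient \(M^{-1}b_\eps\) splits into three parts:
\begin{itemize}
\item the curvature term \(\Mmob H_{\Gamma_t}\), which comes from the Laplacian in Fermi coordinates;
\item the transport term \(u_\Gamma^\perp\);
\item the zero-mode part of \(G_\eps\), which converges to \(f_{\rm cl}^\perp\).
\end{itemize}
The components \(G_\eps^{\rm orth},G_\eps^{\rm mass},G_\eps^{\rm geom},G_\eps^{\rm col}\) are orthogonal or lower order and are bounded by the assumed \(L^2/\estar\) integrability. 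Requiring that the \(\estar\)-leading translation component vanish produces the relative law \(\Vrel_\Gamma^\perp=\Mmob H_{\Gamma_t}+f_{\rm cl}^\perp\) for the comparison filament. This is a nondegenerate parabolic system, solvable on compact intervals before the loss of reach.

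\emph{Step 2: open-basin propagation.} I differentiate the relative energy \(E_\eps[d_\eps]-E_\eps[\bar d_\eps]\) along the material derivative and use the equation.
\begin{itemize}
\item The transport commutator is bounded by \(\|\nabla u_\eps\|_{L^\infty}\) times the basin defect.
\item The force term cancels against the choice of filament velocity from Step 1.
\item The orthogonal residuals are absorbed through Young's inequality into \(D_{\rm rel}^\eps\), using planar coercivity on the orthogonal complement of \(Z_1,Z_2\) and Morse-Bott coercivity of \(L_{\rm mb}\) for the massive modes.
\item The remaining flat, exterior-mass, modulation, gauge and collar pieces are controlled by standard Jerrard-Soner-type vorticity estimates.
\end{itemize}
This gives a Gronwall inequality for \(\Basin_\eps^+\). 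The clock is \(\int\|\nabla u_\eps\|_{L^\infty}\), and the initial size is \(\eta_\eps\). Taking the upper right-envelope representative makes the maximal good interval open to the right, so the bound \(A\eta_\eps+o_\eps(1)\) persists until a threshold is hit. The threshold is reached exactly when a chart, gap, FENE or tube-graph condition fails, and each such failure is by definition a priced exit. This yields the priced-exit branch, including termination at \(\tau_\eps^{\rm ch}\).

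\emph{Step 3: compactness and the Brakke inequality.} I expect this to be the main obstacle. Since the basin defect is small, the measures \(\mu_\eps^t\) are asymptotically \(\Hh^1\lfloor\Gamma_t\) plus \(o(1)\). Ambrosio-Soner and Bethuel-Orlandi-Smets arguments then give that the limit is an integral one-varifold \(V_t\) and that the first variation converges to \(H_{V_t}\).

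For the inequality, I test the localized energy identity with \(\varphi\). The transport term yields \(\nabla\varphi\cdot u+\varphi\,{\rm div}_{T_xV_t}u\) once the stress tensor \(\nabla d_\eps\otimes\nabla d_\eps/\estar\) is shown to converge to the tangent projection. That convergence is the delicate point, because the discrepancy must vanish. I expect to get it from the smallness of the relative energy and of \(D_{\rm rel}^\eps/\estar\).

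The dissipation \(\int\varphi|D_td_\eps|^2/\estar\) requires lower semicontinuity. I will try the Ilmanen-type inequality
\[
|D_td|^2\ge 2\,D_td\cdot\bigl(\Mmob(\Delta d-\dots)\bigr)-\dots,
\]
applied together with a Cauchy-Schwarz split against the force. This produces \(-\Mmob|H|^2\), the term \(\nabla\varphi\cdot(\Mmob H+f_{\rm cl}^\perp)\), and the cross term \(-\varphi H\cdot f_{\rm cl}^\perp\). The force power converges because \(G^{\rm zm}\) is concentrated on the core and has the Step 1 limit. The endpoint-weight formulation then follows by integrating in time.

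\emph{Step 4: Hopf accounting.} On a topologically regular subslab, the cap atlas allows the glued director to be defined on \(\Omega^\ast\).
\begin{itemize}
\item Outside the core with a positive gap, Lemma~\ref{lem:short-spectral-lifting} shows that the relative Hopf number is constant.
\item A change \(k\) on that region is bounded through Lemma~\ref{lem:short-curvature-gap} by \({\rm GapCost}_\eps\).
\item Alternatively, the change is carried inside the tube, where it is detected by core mass through the degree-one meridian structure, giving \(\ge c|k|\) in \({\rm CoreMass}_\eps\).
\item Otherwise the atlas fails, which is charged to \({\rm TopExitCost}_\eps\).
\end{itemize}
Summing over subslabs and taking the \(\liminf\) gives \eqref{eq:short-hopf-cost}.
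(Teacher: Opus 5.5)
Your four steps follow the paper's route: zero-mode force, a modulated-energy Gronwall argument with the filament chosen to cancel the translation residual, compactness followed by the Brakke passage, and capped Hopf accounting. However, Step~2 does not close as written.

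In Step~1 you control $G_\eps^{\rm orth},G_\eps^{\rm mass},G_\eps^{\rm geom},G_\eps^{\rm col}$ by the theorem's hypothesis. That hypothesis only makes their normalized $L^2$ norms uniformly \emph{bounded}. Young's inequality on the cross term $\frac{1}{\estar}\int G_\eps^{\rm orth}\cdot D_t(d_\eps-\bar d_\eps)$ then leaves a source $\frac{C}{\estar}\|G_\eps^{\rm orth}\|^2$ whose time integral is $O(1)$. Gronwall therefore gives only $\Basin_\eps^+\lesssim\eta_\eps+O(1)$, not $A\eta_\eps+o_\eps(1)$. Step~3, which you begin from ``since the basin defect is small'', then has no input: unit multiplicity, absence of diffuse mass, $\mu^t=\Hh^1\lfloor\Gamma_t$, and vanishing stress discrepancy all depend on that smallness.

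You mention planar and Morse--Bott coercivity in Step~2, but only for absorbing into $D^\eps_{\rm rel}$. What is missing is a bound on the residual \emph{itself} by the defect. The paper supplies this through the termwise residual normal form (Lemmas~\ref{lem:short-residual-normal-form}, \ref{lem:short-mb-slaving}, \ref{lem:app-finite-ledger}, \ref{lem:app-scalar-clearing}):
\begin{itemize}
\item the linear soft error is linear in $w=d_\eps-\bar d_\eps$ through the linearized GL operator on the complement of $Z_1,Z_2$, hence bounded by the coercive tube norm;
\item the quadratic error is bounded by basin smallness;
\item the massive error is bounded via slaving, $\|m_\eps\|^2_{L^2+\eps H^1}\le C\estar\Basin_\eps^++o(\estar)$;
\item Fermi terms are bounded by ${\rm Err}^\eps_{\rm geom}$ and $o(\estar)$;
\item collar terms are handled by clearing or a priced exit;
\item zero-degree depressions are handled by the clearing lemma.
\end{itemize}
Together these give $\frac{1}{\estar}\int_I\|P_{\rm orth}G_\eps\|^2_{L^2_\ast}\,\dd t\le C\Basin^+_\eps(I)+o_\eps(1)$. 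Only this defect-proportional structure lets the Gronwall loop close at $A\eta_\eps+o_\eps(1)$.

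Similarly, the $s$-dependent centers $a(s,t)$ are not controlled by planar coercivity or by Jerrard--Soner flat estimates. The paper adds $\estar\,{\rm ModErr}_\eps$ to the modulated energy (Lemma~\ref{lem:app-longitudinal-zero-mode}) and controls its time derivative by the line-level part of $D^\eps_{\rm rel}$.

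The Hopf step has a smaller but real gap. Lemma~\ref{lem:short-curvature-gap} is a single-time inequality for a map with positive gap on all of $\Sph^3$, and it bounds $|\Hopf|^{3/4}$. It says nothing directly about a change $k$ between two times, and it is not linear in $|k|$, so it cannot give $c|k|\le\liminf_{\eps\to0}{\rm GapCost}_\eps$.

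The paper instead localizes the change through the Chern--Simons transgression formula for the capped functional on the moving positive-gap region (Lemmas~\ref{lem:app-cs-balance}, \ref{lem:short-endpoint-hopf}):
\begin{itemize}
\item the time derivative of $\int A\wedge\dd A$ is a pure boundary flux, which vanishes on the fixed collar;
\item on the inner tubes the flux is degree flux, priced linearly by the annular logarithmic bound (Lemma~\ref{lem:app-annular-price});
\item the remaining contributions come from the gap-loss set and from atlas failure;
\item cap ambiguity is absorbed into core mass (Lemma~\ref{lem:short-capped-hopf}).
\end{itemize}
This flux identity is what routes the integer change into exactly ${\rm GapCost}_\eps+{\rm CoreMass}_\eps+{\rm TopExitCost}_\eps$, with a bound linear in $|k|$.
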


\begin{remark}[Kinematic role of the velocity]
The Lipschitz-in-space bound on \(u_\eps\) is a transport regularity assumption,
separate from the ordered-core closure verification.  It is the condition that
allows the material derivative identity to converge to the tangential transport
term \(\int\varphi\,{\rm div}_{T_xV_t}u\,\dd\mu^t\).  The result therefore
applies on strong or regularized transport windows, which is the natural setting
for the Brakke passage proved here.
\end{remark}

\begin{remark}[Finite-scale hypotheses]
The finite-scale inputs are the open initial basin, the residual normal form of
the concrete closure, the coefficient control quantities, and the kinematic transport
window.  From these inputs the open-basin estimate produces core-supported
compactness, limiting first variation, force-power convergence, and the Brakke
inequality.
\end{remark}

\subsection*{Finite-epsilon inputs and limiting outputs}

The hypotheses in the main theorem are finite-\(\eps\) assertions about the
concrete fields.  We record the boundary between hypotheses and conclusions.
On a smooth comparison tube
\(\Gamma_t\), before a priced exit, the input estimates is
\[
\begin{aligned}
        \Ledger_\eps(I)
        &=
        \Basin_\eps(0)
        +
        \int_I
        \frac{\|P_{\rm orth}G_\eps(t)\|_{L^2(T_{\rho}\Gamma_t)}^2}
             {\estar}\,\dd t
        +
        \int_I
        \frac{\|G_\eps^{\rm rem}(t)\|_{L^2(T_{\rho}\Gamma_t)}^2}
             {\estar}\,\dd t
\\
        &\quad+
        \int_I
        \frac{{\rm Massive}_\eps(t)+{\rm Collar}_\eps(t)}
             {\estar}\,\dd t
        +
        \int_I
        \frac{{\rm FENE}_\eps(t)+{\rm ChartGap}_\eps(t)}
             {\estar}\,\dd t .
\end{aligned}
\]
Each term is evaluated at positive \(\eps\): it is a relative-energy,
orthogonal-residual, coefficient-control, or chart/gap-control quantity.  None
mentions a limiting varifold, limiting first variation, or Brakke test
inequality.  The conclusions are instead the following limiting objects:
\[
        \mu_\eps^t\rightharpoonup \mu^t,\qquad
        \delta V_t\ll \mu^t,\qquad
        H_{V_t}\in L^2_{\rm loc}(\mu^t\dd t),\qquad
       \eqref{eq:short-brakke}.
\]

No term in \(\Ledger_\eps\) is an \(L^2\)-bound for
\(u_\eps\cdot\nabla d_\eps\).  The advection is part of the material
derivative and is accounted for by the transported derivative
\(\mathcal D_t^u\mu^t\).  The relevant finite-\(\eps\) transport estimates are
stress estimates controlled by \(\nabla u_\eps\), not forcing estimates
controlled by \(u_\eps\cdot\nabla d_\eps\).

\begin{proposition}[Ledger-to-Brakke implication]
\label{prop:no-circularity-ledger}
Assume that a concrete ordered-core closure satisfies the model hypotheses
\({\rm (M1)}\)--\({\rm (M6)}\) in Section~\ref{sec:verification}, and that the
initial data lie in the open basin of Definition~\ref{def:short-open-basin}.
Then, on every compact no-exit interval \(I\),
\[
        \Ledger_\eps(I)=o_\eps(1)
        \quad\Longrightarrow\quad
        \text{open-basin propagation, core compactness, and }
        \eqref{eq:short-brakke}.
\]
In particular, core-supported convergence and the Brakke inequality are
outputs of the finite-\(\eps\) estimates, not assumptions hidden in the word
``admissible''.
\end{proposition}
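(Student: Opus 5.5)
The plan is to run the four-module chain outlined in the introduction on a fixed compact no-exit interval \(I=[0,t_1]\subset[0,T_\eps^{\rm ch})\), with each step using only quantities that appear in \(\Ledger_\eps(I)\). No limiting object is used before the last step.

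\emph{Step 1: projected equation and force.} On the stopped Morse-Bott chart of Definition~\ref{def:chart-gap-stopping}, use (M1)--(M3) (Morse-Bott Hessian gap, tubular soft coordinate, massive-mode coercivity) to write \(d_\eps\) as a solution of \eqref{eq:transported-gl-short}. Decompose \(G_\eps\) fiberwise as \(\sum_i b_{\eps,i}Z_i+P_{\rm orth}G_\eps\) with the truncated Gram matrix on slices of radius \(r_0/\eps\). The coefficient \(M^{-1}b_\eps\) is a finite-\(\eps\) slice integral. Its convergence to \(f_{\rm cl}^\perp\) follows from \(L^2\) control of \(G_\eps^{\rm rem}\), weighted by \(1/\estar\), together with the \(L^2\) localization of \(Z_i\) at scale \(\eps\). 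The comparison filament \(\Gamma_t\) is then defined as the solution of \(\Vrel_\Gamma^\perp=\Mmob H_{\Gamma_t}+f_{\rm cl}^\perp\). This is a forced, transported curve-shortening flow with Lipschitz forcing, so it exists smoothly on \(I\) because \(I\) lies before loss of the smooth comparison tube.

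\emph{Step 2: open-basin Gronwall.} Differentiate the modulated energy \(\bigl(E_\eps[d_\eps]-E_\eps[\bar d_\eps]\bigr)/\estar\) along the material derivative, with \(a(s,t)\) fixed by the slice orthogonality. The transport stress term is bounded by \(\|\nabla u_\eps\|_{L^\infty}\) times the relative energy, which is why only the Lipschitz clock is needed. The zero-mode part of the residual cancels against the normal velocity of \(\Gamma_t\) by the choice made in Step 1. The orthogonal, massive, geometric and collar terms are absorbed by planar coercivity on the orthogonal complement via Young's inequality, with squares bounded by \(\Ledger_\eps\). The outcome is a differential inequality
\[
\frac{d}{dt}\Basin^+_\eps+\frac{D^\eps_{\rm rel}}{\estar}\le C\bigl(1+\|\nabla u_\eps\|_{L^\infty}\bigr)\Basin^+_\eps+\Ledger_\eps\hbox{-density}.
\]
Gronwall applied to the upper right-envelope representative, combined with a continuity (bootstrap) argument, shows that the solution cannot leave the basin before \(t_1\). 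This gives \(\sup\Basin^+_\eps+\int D_{\rm rel}^\eps/\estar\le A\eta_\eps+o_\eps(1)\). I expect this step to be the main obstacle. The difficulty is the longitudinal zero mode \({\rm ModErr}_\eps\), which planar coercivity does not see. I would try first to control it through the \(\estar\)-weighted material derivative of \(a\) contained in \(D_{\rm rel}^\eps\), treating \(\partial_s a\) as part of the relative energy through a one-dimensional energy identity along \(\Gamma_t\). The scalar-defect clearing estimate will also be needed to exclude degree-zero depressions.

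\emph{Step 3: compactness and Brakke.} Smallness of the basin gives \(\mu_\eps^t\rightharpoonup\Hh^1\lfloor\Gamma_t\) for every \(t\). Hence the limit is integral with multiplicity one and equals the weight of \(V_t\), and \(\delta V_t\ll\mu^t\) with \(H_{V_t}=H_{\Gamma_t}\in L^2\). The Brakke inequality \eqref{eq:short-brakke} is obtained by testing the localized energy identity with \(\varphi\). The dissipation term is treated by lower semicontinuity, using \(\Mmob^{-1}|D_td_\eps|^2/\estar\ge\) the normal-velocity energy. The force power \(\int\varphi\,G_\eps\cdot D_td_\eps/\estar\) converges to the force terms because only the zero-mode part survives, and the transport terms converge to \(\int(\nabla\varphi\cdot u+\varphi\,{\rm div}_{T_xV_t}u)\dd\mu^t\) using \(u_\eps\to u\) in \(L^1_tW^{1,\infty}_{\rm loc}\). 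Integrating in time between endpoint weights gives the interval form of the inequality and hence the a.e.\ form, which completes the implication.
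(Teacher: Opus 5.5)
Your proposal follows essentially the same route as the paper's proof. You reduce to the projected transported GL equation, with the zero-mode coefficient defining \(f_{\rm cl}^\perp\) and the comparison filament; you run a moving-tube modulated-energy Gronwall in which the filament law cancels the translation residual, \({\rm ModErr}_\eps\) is handled through the line-level part of \(D_{\rm rel}^\eps\), and scalar defects are cleared; and only then do you use GL compactness and the limit of the localized energy inequality.

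One correction to Step 1 concerns why the force coefficient converges. The mode \(Z_i=\partial_{y_i}q_\ast\) decays like \(|y|^{-1}\), so it is not \(L^2\)-localized at scale \(\eps\). Its truncated squared norm is \(\approx\pi\log(r_0/\eps)\approx\estar\), which is why \(M\) carries the \(1/\estar\) normalization. It is this logarithmic growth, not localization, that makes Cauchy--Schwarz bound the remainder's contribution to the normalized coefficient by \(\estar^{-1/2}\|G_\eps^{\rm rem}\|_{L^2}\) per slice. That gives a bound \(\bigl(\int_I\|G_\eps^{\rm rem}\|_{L^2}^2/\estar\,\dd t\bigr)^{1/2}=o_\eps(1)\) in \(L^1(I\times\Gamma_t)\), exactly at the ledger's scale. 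A mode with \(O(1)\) norm would only give \(\estar^{1/2}\,o_\eps(1)\).
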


\begin{proof}
The implication is obtained in four finite-\(\eps\) steps.  First, the
Morse-Bott chart and massive-mode coercivity give a soft GL coordinate and
absorb the normal tensorial modes into the massive-mode part of
\(\Ledger_\eps\).  This is an elliptic or damped parabolic estimate at fixed
\(\eps\).  Second, the residual is projected onto the planar translation modes.
The zero-mode coefficient is the force \(f_{\rm cl}^\perp\); the orthogonal and
remainder pieces are controlled by the two residual terms in \(\Ledger_\eps\).
No compactness theorem is used in this projection.

Third, the moving-tube identity compares \(d_\eps\) with the transported
canonical core.  The curvature terms cancel against \(\Mmob H_{\Gamma_t}\), the
zero-mode residual cancels against \(f_{\rm cl}^\perp\), and the remaining
terms are bounded by
\[
        \frac{d}{dt}\Basin_\eps^+(t)
        +
        c\,\frac{D_{\rm rel}^\eps(t)}{\estar}
        \le
        C\Basin_\eps^+(t)+r_\eps(t),
        \qquad
        \int_I r_\eps(t)\,\dd t=o_\eps(1).
\]
Gronwall gives propagation of the basin on \(I\).  Fourth, the propagated basin
gives the usual GL tightness, integrality, and stress convergence at the
normalized scale \(\estar\).  The finite-\(\eps\) localized energy inequality,
with the already identified zero-mode force, passes to the limit and yields the
transported forced Brakke inequality.  Thus the limiting compactness package
enters only at the last step, after the finite-\(\eps\) estimates have closed.
\end{proof}

\begin{remark}[Conditional compactness variant]
If \(\Basin_\eps(t)\to0\) and the finite-\(\eps\) Brakke input estimates were
assumed directly for all \(t\in I\), the result would indeed reduce to a
conditional compactness theorem.  The present theorem claims more precisely the
open-basin mechanism that makes those inputs propagate from an explicit initial
class for the ordered-core closure.
\end{remark}

\section{The proof modules}
\label{sec:modules}

This section compresses the technical argument into four propositions.  The
point is to expose the logical order: closure computation, open-basin
propagation, Brakke passage, and Hopf accounting.  None of these propositions
uses a later conclusion as an input.

For orientation, the proof uses the following audit trail.  The force projection
uses only the exact projected residual and the two translation zero modes.  The
massive-mode estimate uses only Morse-Bott normal coercivity and damping.  The
open-basin Gronwall inequality uses planar coercivity after the translation
modes have been removed.  The Brakke passage uses the finite-\(\eps\) mass,
stress, force-power, and dissipation estimates supplied by the previous modules.
Finally, Hopf accounting is applied only on topologically regular slabs.  Thus
the proof does not assume a Brakke flow, a reconnection law, or a hidden
orthogonality condition at the start; each is either produced by a prior module
or charged as an exit.

Throughout the section
\[
        \Vrel_\Gamma^\perp=V_\Gamma^\perp-u_\Gamma^\perp
\]
denotes the normal velocity of the comparison filament relative to the ambient
transport field.

\subsection{Force projection from the concrete closure}

\begin{proposition}[Zero-mode force projection]
\label{prop:short-force}
On every compact open-basin interval \(I\), the exact residual
\(G_\eps\) in \eqref{eq:transported-gl-short} has the decomposition
\[
        G_\eps
        =
        G_\eps^{\rm tr}
        +
        G_\eps^{\rm orth}
        +
        G_\eps^{\rm rem},
\]
where \(G_\eps^{\rm tr}\) is the projection onto the two translation modes
\(\partial_{y_1}q_\ast,\partial_{y_2}q_\ast\).  If \(b_\eps\) denotes the
corresponding coefficient vector, then
\[
        \|b_\eps-Mf_{\rm cl}^\perp\|_{L^1(I\times\Gamma_t)}
        +
        \frac{1}{\estar}
        \int_I\|G_\eps^{\rm orth}\|_{L_\ast^2}^2\,\dd t
        \le
        C\Basin_\eps^+(I)^{1/2}
        +
        C\Basin_\eps^+(I)
        +
        o_\eps(1).
\]
The force \(f_{\rm cl}^\perp\) is the fiber integral in
Theorem~\ref{thm:short-main}; it depends only on the LDG/Oldroyd-FENE closure
coefficients, the transported tube geometry, and the core profile.
\end{proposition}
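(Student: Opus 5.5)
The plan is to construct the decomposition slice by slice in the moving Fermi chart, working only at fixed \(\eps\), and then to compare the slice quantities with the canonical-tube quantities through the basin defect. On each normal slice \(\nu_s\) of \(T_\rho\Gamma_t\), written in the rescaled variable \(y=(r-a(s,t))/\eps\), I define the coefficient vector by the truncated projection
\[
        b_\eps(s,t)=M_\eps^{-1}\,\frac{1}{\estar}\int_{|y|\le r_0/\eps}
        G_\eps(s,\eps y+a,t)\cdot Z(y)\,\dd y ,
\]
where \(M_\eps\) is the truncated Gram matrix. The definition of \(M\) in Section~\ref{sec:setup} gives \(M_\eps\to M\), a positive multiple of the identity, so \(M_\eps\) is invertible for small \(\eps\). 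I then set \(G_\eps^{\rm tr}=\sum_i b_{\eps,i}Z_i\chi\) and \(G_\eps^{\rm orth}=P_{\rm orth}(G_\eps^{\rm zm}+G_\eps^{\rm orth})\) on the tube. The remaining term \(G_\eps^{\rm rem}\) collects the mass, geometric and collar pieces of Model~1 together with the part of \(G_\eps\) outside the tube. By construction the decomposition is exact, and \(G_\eps^{\rm orth}\) is \(L^2\)-orthogonal to the zero modes on each slice. Its \(L^2_\ast\) bound is then inherited directly from the residual hypothesis of Theorem~\ref{thm:short-main} and from the Morse-Bott/FENE control of \(G^{\rm mass},G^{\rm geom},G^{\rm col}\) through \(\Ledger_\eps\). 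I also expect the \(\estar^{-1}\)-weighted coercivity term in \(D_{\rm rel}^\eps\) to absorb whatever part of \(G_\eps^{\rm orth}\) is generated by the relative field \(w=d_\eps-\bar d_\eps\).

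The substantive estimate is the comparison of \(b_\eps\) with \(Mf_{\rm cl}^\perp\). Here I use the explicit form of the residual from the projection of the tensorial law in the stopped chart: \(G_\eps=\mathcal G(C_\eps,\nabla C_\eps,\nabla u_\eps,m_\eps)\), a smooth function of the chart variables. I evaluate it at \(d_\eps=\bar d_\eps+w\) and expand to first order around the canonical tube:
\[
        \mathcal G(d_\eps)=\mathcal G(\bar d_{\eps})+\partial\mathcal G(\bar d_\eps)[w]+O(|w|^2).
\]
Projecting \(\mathcal G(\bar d_\eps)\) onto \(Z_i\) gives, in the rescaled variables, \(\int G_{\rm tube}\cdot\nabla_yq_\ast\,\dd y\) up to Fermi-geometry corrections of order \(\eps\kappa\). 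These corrections are \(o_\eps(1)\) after the \(\estar\) normalization, because the curvature is bounded on a compact regular interval. That is exactly \(Mf_{\rm cl}^\perp\). For the linear term, Cauchy-Schwarz on each slice together with the \(\estar\)-normalized relative energy gives
\[
        \int_I\!\!\int_{\Gamma_t}|\partial\mathcal G[w]\cdot Z|
        \lesssim \Bigl(\int_I\frac{\|w\|^2_{\rm tube}}{\estar}\Bigr)^{1/2}
        \lesssim \Basin_\eps^+(I)^{1/2}.
\]
This uses the fact that \(|Z|\) decays like \(1/|y|\), so \(\|Z\|^2_{L^2(|y|<r_0/\eps)}\sim\estar\) matches the normalization. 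The quadratic term, together with the part of \(w\) not removed by orthogonality (the longitudinal modes counted in \({\rm ModErr}_\eps\)), is bounded by \(\Basin_\eps^+(I)\). The slaved massive modes \(m_\eps\) enter only through \(\partial_m\mathcal G\), and Morse-Bott coercivity bounds them by \({\rm MassiveErr}_\eps\subset\Basin^+_\eps\).

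I expect the main obstacle to be the logarithmic tail of the projection. Since \(Z_i\notin L^2(\R^2)\), the pairing \(\int G\cdot Z\) over \(|y|\le r_0/\eps\) receives contributions from the whole annulus \(\eps\ll|y|\eps\ll r_0\), and it does not localize at the core. Two things must therefore be shown. First, \(G_{\rm tube}\) must have the far-field form \(\sim g(s,t)\,\partial_\theta(y/|y|)/|y|\), so that the ratio to \(\estar\) converges and gives a limit independent of \(r_0\), consistent with \(M\) being \(r_0\)-independent. Second, the collar and phase mismatch must contribute only through \({\rm CollarErr}_\eps\). My first attempt is to split the annulus dyadically and match the leading \(1/|y|\) term of \(G_{\rm tube}\) against that of \(Z_i\) on each dyadic shell. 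The non-matching remainder would then be summable, hence \(O(1)=o(\estar)\) after normalization. If \(G_{\rm tube}\) carries a genuinely nonlocal far-field piece, for instance from \(\nabla u_\eps\) acting on the phase, I would move that piece into \(G^{\rm col}_\eps\) and charge it to the collar ledger term.
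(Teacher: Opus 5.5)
Your estimate for the coefficient vector follows the paper's route. You expand the exact residual about the canonical tube and identify the translation projection of \(\mathcal G(\bar d_\eps)\) with \(Mf_{\rm cl}^\perp\). You then bound the linear piece by Cauchy--Schwarz against \(\Basin_\eps^+(I)^{1/2}\), the quadratic piece by \(\Basin_\eps^+(I)\), and the massive piece by Morse--Bott slaving. In the paper the identification of the canonical projection holds by definition, since \(f_{\rm cl}^\perp\) is defined as the truncated-normalized limit of that fiber integral, so your far-field analysis is extra. Your fallback of moving a nonlocal piece into \(G_\eps^{\rm col}\) would not help anyway: your \(b_\eps\) is computed from all of \(G_\eps\).

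The genuine gap is the second term of the inequality. You say the bound on \(\frac{1}{\estar}\int_I\|G_\eps^{\rm orth}\|_{L^2_\ast}^2\,\dd t\) is inherited from the residual hypothesis of Theorem~\ref{thm:short-main} and from \(\Ledger_\eps\).
\begin{itemize}
\item That hypothesis only says the quantity is bounded uniformly in \(\eps\). It gives \(O(1)\), not \(C\Basin_\eps^+(I)^{1/2}+C\Basin_\eps^+(I)+o_\eps(1)\). The latter is small on the basin, and that smallness is what Proposition~\ref{prop:short-open-basin} needs to close its Gronwall argument.
\item Bounding by \(\Ledger_\eps\) is tautological, because \(\Ledger_\eps\) contains this very term, and \(\Ledger_\eps=o_\eps(1)\) is not a hypothesis here.
\item Absorbing the \(w\)-generated part into \(D_{\rm rel}^\eps/\estar\) is not available either. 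That quantity is not on the right-hand side. Its integrated bound is an output of Proposition~\ref{prop:short-open-basin}, which takes the present proposition as input, so the argument would be circular.
\end{itemize}

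The missing step is to run your expansion on the orthogonal part as well. By the Pythagorean identity for the \(M\)-orthogonal splitting, \(\|P_{\rm orth}G_\eps\|\le\|G_\eps-P_{\rm tr}\mathcal G(\bar d_\eps)\|\). The right side splits as follows.
\begin{itemize}
\item \(P_{\rm orth}\mathcal G(\bar d_\eps)\) is Fermi bookkeeping: each term carries a factor \(|r|\), a frame derivative, or \(\partial_s a\), so it is \(o_\eps(1)+C\,{\rm ModErr}_\eps\) after normalization.
\item The linear and quadratic terms in \(w=d_\eps-\bar d_\eps\) are controlled by \(\|w\|_{\rm tube}^2\lesssim\Basin_\eps^+(I)+o_\eps(1)\), from the tube coercivity of Lemma~\ref{lem:short-tube-coercivity}.
\item The massive term is controlled by Lemma~\ref{lem:short-mb-slaving}.
\item The collar/FENE terms are either \(o_\eps(\estar)\) or charged as a priced exit.
\end{itemize}
This is how the paper obtains the second term (compare Lemma~\ref{lem:short-residual-normal-form}).

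Separately, your decomposition is not exact as written. Because \(b_\eps\) is computed from the full \(G_\eps\), \(G_\eps^{\rm tr}\) already contains the translation components of \(G_\eps^{\rm mass}+G_\eps^{\rm geom}+G_\eps^{\rm col}\), and you count these again in \(G_\eps^{\rm rem}\). Either put only their orthogonal parts into \(G_\eps^{\rm rem}\), or take \(G_\eps^{\rm orth}=P_{\rm orth}G_\eps\) on the tube, which is the quantity the expansion above actually estimates.
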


\begin{proof}
Write the closure in Fermi coordinates around the comparison tube.  The
Landau-de Gennes Morse-Bott reduction splits the tensorial variables into the
soft coordinate \(d_\eps\), the two translation modes, and a coercive massive
normal part.  The massive modes solve either the frozen normal Euler-Lagrange
equation or a strongly damped projected normal equation; in both cases the
normal Hessian gives
\[
        \|m_\eps\|_{L^2+\eps H^1}
        \le
        C\bigl(
        \Basin_\eps^+(I)^{1/2}
        +o_\eps(1)\bigr).
\]
Expanding the exact residual around the canonical tube gives a finite list:
non-advective stretching terms, curvature and frame terms, linear soft terms,
quadratic soft terms, massive-mode terms, and collar/FENE coefficient terms.
The translation projection of the canonical part is precisely the displayed
fiber integral.  All orthogonal terms are controlled by the Morse-Bott
coercive norm and the Fermi bookkeeping.  Since the projection is orthogonal
with respect to the Gram matrix \(M\), the Pythagorean identity gives the
second term.  The collar and coefficient failures are not hidden in the
estimate; if they are not \(o_\eps(\estar)\), they are charged as priced exits.
\end{proof}

\begin{lemma}[Morse-Bott slaving in the ordered chart]
\label{lem:short-mb-slaving}
Let \(m_\eps\) be the massive component in the ordered-core chart.  On a compact
open-basin interval \(I\), assume either:
\begin{enumerate}[label=(\alph*)]
\item \(m_\eps\) is chosen by the frozen normal Euler-Lagrange equation; or
\item \(m_\eps\) satisfies the projected normal dynamics with damping bounded
below by a fixed positive constant.
\end{enumerate}
Then
\[
        \sup_{t\in I}
        \|m_\eps(t)\|_{L^2+\eps H^1}^2
        +
        \int_I\|D_t m_\eps\|_2^2\,\dd t
        \le
        C\estar\,\Basin_\eps^+(I)+o_\eps(\estar).
\]
Consequently every massive-mode contribution to the normalized mass,
first variation, force-power, and localized dissipation is \(o(1)\) on a
no-exit interval.
\end{lemma}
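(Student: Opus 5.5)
The plan is to treat the massive component as a perturbation of the Morse--Bott minimum and to run an energy estimate in which the normal Hessian supplies coercivity. Write the chart energy near the well as
\[
\frac12\langle L_{\rm mb}m,m\rangle+\frac{\eps^2}{2}|\nabla m|^2+\hbox{cubic chart terms},
\]
with the gradient term rescaled by the $\eps^{-2}$ prefactor on $W_{\rm LDG}$. Morse--Bott nondegeneracy gives $\langle L_{\rm mb}m,m\rangle\ge\lambda_{\rm mb}|m|^2$ uniformly on the chart ball $|m|<\rho_{\rm ch}$. Since the stopping time of Definition~\ref{def:chart-gap-stopping} keeps $|m_\eps|<\rho_{\rm ch}$, the cubic terms are absorbed once $\rho_{\rm ch}$ is small. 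The result is a coercive quantity $\mathcal Q_\eps(m)$, comparable to $\|m\|_{L^2+\eps H^1}^2$ after the normalization already fixed in the chart.

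\emph{Case (a).} The frozen normal Euler--Lagrange equation reads $L_{\rm mb}m=F(d_\eps,\nabla d_\eps,m)$. Here $F$ is the coupling between the soft and massive directions: it vanishes on the ordered manifold and is quadratic in the departure of $d_\eps$ from the well. I would test the equation with $m$ and use coercivity to obtain
\[
\|m\|_{L^2+\eps H^1}^2\lesssim\int_\Omega |F|^2 .
\]
I would then split the coupling into two parts.
\begin{itemize}
\item On the canonical tube $\bar d_\eps$, the coupling is bounded by $C(|\nabla\bar d_\eps|^2\eps^2+\eps^{-2}(|\bar d_\eps|^2-1)^2\eps^2)$. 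This integrates to $O(\eps^2\estar)=o(\estar)$.
\item The remainder is controlled by the relative energy term $E_\eps[d_\eps]-E_\eps[\bar d_\eps]$, the exterior mass, and ${\rm MassiveErr}_\eps$, all normalized by $\estar$ in $\Basin_\eps^+(I)$.
\end{itemize}
Case (a) has no time-derivative term of its own. I would obtain $\int_I\|D_tm_\eps\|_2^2$ by differentiating the frozen equation along $D_t$ and inverting $L_{\rm mb}$. This bounds $D_tm_\eps$ by $D_td_\eps$ together with a commutator controlled by $\nabla u_\eps$. The $D_td_\eps$ part splits into the dissipation $D_{\rm rel}^\eps$ and the canonical transport, both already weighted by $\estar\Basin_\eps^+$.

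\emph{Case (b).} The damped dynamics $D_tm=-\kappa(L_{\rm mb}m-F)+\hbox{stretching}$, with $\kappa\ge\kappa_0>0$, gets an energy estimate. I would test with $m$ and with $D_tm$. This gives
\[
\frac{d}{dt}\mathcal Q_\eps(m)+c\|D_tm\|_2^2+c\,\mathcal Q_\eps(m)\le C\|F\|_2^2+C\|\nabla u_\eps\|_\infty\mathcal Q_\eps(m).
\]
The stretching terms $(\nabla u)C+C(\nabla u)^T$, restricted to the normal directions, are bounded by $\|\nabla u_\eps\|_{L^\infty}$ times $\mathcal Q_\eps$ plus coupling to $d_\eps$. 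Gronwall with the integrable transport clock $\int_0^{T}\|\nabla u_\eps\|_\infty<\infty$ then bounds the supremum and the $D_tm$ integral by $\int_I\|F\|^2$ plus the initial massive defect. The initial massive defect is $\le\estar\eta_\eps$ by the open basin of Definition~\ref{def:short-open-basin}.

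\emph{Consequence for the measures.} The consequences follow from $\mathcal Q_\eps(m)/\estar\to0$ in $L^\infty_t$. The massive contributions to the normalized mass, first variation (stress tensor) and localized dissipation are all bounded by $\mathcal Q_\eps/\estar$ or by the $D_tm$ integral over $\estar$. On a no-exit interval $\Basin_\eps^+\to0$, so these contributions are $o(1)$. The force-power term is handled by Cauchy--Schwarz against the bounded $G_\eps$ norms.

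\emph{Main obstacle.} The hard step is the coupling estimate $\int|F|^2\le C\estar\Basin_\eps^++o(\estar)$ near the core. There $|d_\eps|$ is small, the uniaxial well structure degenerates, and the chart cross-terms could carry $\eps^{-2}$ weights. My first approach would be to use the Morse--Bott structure to show that the mixed Hessian between $d$ and $m$ vanishes on the whole soft manifold, including $d=0$ within the chart. This reduces $F$ to terms carrying an explicit $\eps^2$ or quadratic smallness. If that fails near $d=0$, I would charge the core contribution to the core mass, which is $O(\estar)$ only on a set of area $O(\eps^2)$, and use the $L^\infty$ chart bound $|m|<\rho_{\rm ch}$ to make it $o(\estar)$.
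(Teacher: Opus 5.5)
Your proposal follows essentially the paper's route. It uses uniform Morse--Bott coercivity of the normal Hessian, and in the frozen branch it inverts the coercive normal operator with the soft, Fermi and closure source charged to \(\Basin_\eps^+\). In the damped branch it runs an energy estimate with absorption and Gronwall, and it gets the consequences by Cauchy--Schwarz, since every massive-mode term carries a factor \(m_\eps\) or \(D_tm_\eps\).

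The one step you add that the paper does not carry out, the bound on \(\int_I\|D_tm_\eps\|_2^2\) in the frozen branch, needs a correction. The canonical transport has \(\int|D_t\bar d_\eps|^2\,\dd x\) of order \(\estar\,|\Vrel_\Gamma^\perp|^2\) per unit length, not \(O(\estar\Basin_\eps^+)\). So you must use that the coefficients of \(D_td_\eps\) and \(\nabla D_td_\eps\) in \(D_tF\) are built from \(\eps^2|\nabla d_\eps|^2\), \(\eps^2|\nabla d_\eps|\) and \(|d_\eps|^2-1\), and are therefore concentrated on the \(\eps\)-core. This makes the canonical contribution \(O(1)=o(\estar)\) rather than \(O(\estar)\).
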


\begin{proof}
The normal Hessian of the Landau-de Gennes bulk energy is uniformly positive
on the massive subspace:
\[
        \langle L_{\rm mb}\xi,\xi\rangle
        \ge
        \lambda_{\rm mb}|\xi|^2 .
\]
In the frozen branch, subtract the normal Euler-Lagrange equation at the
slaved profile from the equation for \(m_\eps\).  The inverse of
\(L_{\rm mb}-\eps^2\Delta_\perp\) is bounded from the dual tube norm to
\(L^2+\eps H^1\).  The source contains only soft-coordinate errors, Fermi
commutators, and lower-order closure coefficients; these are exactly the
entries of \(\Basin_\eps^+(I)\) and the vanishing geometric error.

In the dynamic branch, multiply the projected normal equation by \(m_\eps\) and
integrate.  The damping term gives \(\|D_t m_\eps\|_2^2\), the Hessian gives
\(\lambda_{\rm mb}\|m_\eps\|_{L^2+\eps H^1}^2\), and all commutators are bounded
by
\[
        \theta\|m_\eps\|_{L^2+\eps H^1}^2
        +
        C_\theta \estar\,\Basin_\eps^+(I)
        +
        o_\eps(\estar).
\]
Choosing \(\theta<\lambda_{\rm mb}/4\) and applying Gronwall gives the stated
estimate.  The final sentence follows by Cauchy's inequality and by the fact
that every massive-mode term contains at least one factor of \(m_\eps\) or
\(D_t m_\eps\).
\end{proof}

\begin{lemma}[Termwise residual normal form]
\label{lem:short-residual-normal-form}
In the moving Fermi chart, the residual \(G_\eps\) has the finite decomposition
\[
        G_\eps
        =
        G_{\rm can}
        +
        G_{\rm tr}
        +
        G_{\rm lin}
        +
        G_{\rm quad}
        +
        G_{\rm mb}
        +
        G_{\rm geom}
        +
        G_{\rm col}.
\]
Here \(G_{\rm can}\) is the residual of the canonical moving core,
\(G_{\rm tr}\) is the translation-mode part, \(G_{\rm lin}\) and
\(G_{\rm quad}\) are the linear and quadratic soft-coordinate errors,
\(G_{\rm mb}\) contains the massive modes, \(G_{\rm geom}\) contains curvature,
frame, and cutoff commutators, and \(G_{\rm col}\) is supported in the ordered
collar.  On a no-exit interval,
\[
        \frac{1}{\estar}
        \int_I
        \|P_{\rm orth}G_\eps\|_{L_\ast^2}^2\,\dd t
        \le
        C\Basin_\eps^+(I)
        +
        o_\eps(1).
\]
\end{lemma}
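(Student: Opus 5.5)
The plan is to expand the closure equation of Definition~\ref{model:concrete-closure} around the modulated canonical tube \(\bar d_{\eps,a}\) of Definition~\ref{def:short-canonical-tube}, writing \(d_\eps=\bar d_{\eps,a}+w_\eps\) with \(w_\eps\) slice-orthogonal to \(Z_1,Z_2\), and \(m_\eps\) the massive part. Substituting into \eqref{eq:transported-gl-short} and Taylor-expanding the projected tensorial law in the stopped Morse--Bott chart yields the seven terms by construction:
\begin{itemize}
\item \(G_{\rm can}\) is obtained by inserting \(\bar d_{\eps,a}\) itself, so \(w_\eps=0\) and \(m_\eps=0\).
\item \(G_{\rm lin}\) is the linearized GL operator at \(q_\ast\) applied to \(w_\eps\), together with the Oldroyd stretching acting on \(w_\eps\).
\item \(G_{\rm quad}\) collects the cubic-nonlinearity remainders, of size \(\eps^{-2}(|w_\eps|^2+|w_\eps|^3)\) near the core.
\item \(G_{\rm mb}\) collects all terms containing \(m_\eps\).
\item \(G_{\rm geom}\) collects the Fermi-metric corrections, meaning curvature times \(\nabla_y\), torsion/frame rotation, and the \(\nabla\chi\) commutators.
\item \(G_{\rm col}\) is everything supported where \(1-\chi\neq0\), i.e.\ the \(d_{\rm out}\) and FENE coefficient terms.
\end{itemize}
Finally \(G_{\rm tr}\) is the slice projection onto \(\operatorname{span}\{Z_1,Z_2\}\), computed with the truncated Gram matrix, which is where \(b_\eps\) is defined. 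Since this is exact algebra, the decomposition itself needs no estimate.

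For the bound, we apply \(P_{\rm orth}\), which kills \(G_{\rm tr}\) exactly, and estimate the remaining six terms separately. The \(L^2_\ast\) norm is taken with the \(\estar\)-normalization.

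\emph{\(G_{\rm can}\).} Its orthogonal part is \(o_\eps(\estar)\) in the normalized time-integrated norm. The reason is that the leading \(O(1/\eps)\) part of the moving-profile residual is \(\nabla_y q_\ast\cdot(\Vrel^\perp_\Gamma-\Mmob H-f^\perp)/\eps\). This part is pure translation and vanishes by the filament law. What remains is lower order apart from the logarithmic far-field tail of \(q_\ast\), and we would handle that tail by the standard estimate \(\int_{|y|<r_0/\eps}|y|^{-2}\sim|\log\eps|\), which is exactly absorbed by \(\estar\).

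\emph{\(G_{\rm lin}\) and \(G_{\rm quad}\).} These are bounded by the tube norm of \(w_\eps\), which by the definitions of \(\Basin_\eps^+\) and \(D^\eps_{\rm rel}\) is at most \(C\estar\Basin_\eps^+\) after time integration. For \(G_{\rm quad}\) we need an \(L^\infty\) smallness of \(w_\eps\) on the core, or a Gagliardo--Nirenberg interpolation in the slice at scale \(\eps\), to reduce the cubic terms to the quadratic energy.

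\emph{\(G_{\rm mb}\).} Here we invoke Lemma~\ref{lem:short-mb-slaving} directly. Every term contains \(m_\eps\) or \(D_tm_\eps\) with bounded coefficients, so it is \(\le C\estar\Basin^+_\eps+o(\estar)\).

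\emph{\(G_{\rm geom}\).} Each factor of curvature gains one power of \(r\sim\eps|y|\) relative to the planar terms. The resulting contribution is \(O(\eps^2\estar)\) on the canonical part plus a multiple of \(\Basin_\eps^+\) on the perturbation, with the reach lower bound used throughout.

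\emph{\(G_{\rm col}\).} This term lives where \(|d_\eps|\ge c_{\rm out}\) and the gap is at least \(\gamma_{\rm out}\). There the FENE/collar coefficient bounds of the no-exit interval apply, so it is \(o(\estar)\) or else constitutes a priced exit, which is excluded on a no-exit interval. The bound then follows by the triangle inequality and the fact that \(P_{\rm orth}\) is a contraction.

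The main obstacle is the linear term. \(G_{\rm lin}\) contains the operator \(\Mmob L_{q_\ast}w\) with \(L_{q_\ast}\) the planar linearization, which is \(O(\eps^{-2})\) pointwise, and this is not controlled by the \(H^1\)-type relative energy alone. My first attempt would be to avoid estimating \(\Mmob L_{q_\ast}w\) as a forcing at all: it is part of the evolution of \(w\) itself. I would therefore include \(D_tw-\Mmob L_{q_\ast}w\) in the left-hand side and use the dissipation \(D^\eps_{\rm rel}\), whose \(|D_t(d-\bar d)|^2\) component controls exactly this combination through the equation. In other words, the bound on \(P_{\rm orth}G_\eps\) is read off from the equation as
\[
P_{\rm orth}G_\eps=P_{\rm orth}\bigl(D_tw-\Mmob L_{q_\ast}w\bigr)+\text{lower order},
\]
and both pieces are charged to \(\int_I D^\eps_{\rm rel}\,\dd t\le C\estar\Basin_\eps^+\) via planar coercivity of \(L_{q_\ast}\) on the orthogonal complement of \(Z_1,Z_2\).
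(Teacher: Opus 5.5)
Your decomposition and termwise bookkeeping match the paper's proof: insert the ansatz, let \(P_{\rm orth}\) remove the translation part, and charge the remaining pieces to coercivity, Lemma~\ref{lem:short-mb-slaving}, Fermi bookkeeping, and collar clearing or exit. The gap is in the step you yourself call the main obstacle: your resolution of it does not work.

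Planar coercivity is a \emph{lower} bound \(\langle L_{q_\ast}w,w\rangle\ge c_\ast\|w\|_{\rm core}^2\) on a quadratic form. It gives no \emph{upper} bound on \(\|L_{q_\ast}w\|_{L^2}\). In the slice variable \(y=r/\eps\), the quantity \(\|\Mmob L_{q_\ast}w\|_{L^2(\dd x)}^2\) is \(\eps^{-2}\) times a squared \(H^2_y\)-type norm of \(w\). By contrast, the tube norm and \(D_{\rm rel}^\eps\) contain only \(\int|D_tw|^2\) and an \(H^1_y\)-type norm.

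Reading the bound off the equation does not help either. The identity \(P_{\rm orth}G_\eps=P_{\rm orth}(D_tw-\Mmob L_{q_\ast}w)+\dots\) is just \eqref{eq:transported-gl-short} rewritten. Moreover, \(D_{\rm rel}^\eps\) controls \(D_tw\), not the combination \(D_tw-\Mmob L_{q_\ast}w\). Bounding that combination therefore needs exactly the estimate you are trying to prove, so the argument is circular.

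The same defect affects \(G_{\rm quad}\) once it contains the cubic GL remainder. On a slice, Ladyzhenskaya gives \(\eps^{-4}\|w\|_4^4\lesssim(\eps^{-2}\|w\|_2^2)(\eps^{-2}\|\nabla w\|_2^2)\), and the second factor is not an energy quantity. Also, \(\Basin_\eps^+\to0\) only makes the relative energy \(o(\estar)\). It gives no \(L^\infty\) smallness of \(w\), since zero-degree depressions of size \(\eps\) are cheap. The paper's proof does not close this step either. It simply asserts that the linear term is ``controlled by the planar linearized GL operator on the orthogonal complement'' and that the rest is ``coercive in the weighted \(L^2_\ast\) norm''. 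So there is no mechanism there that you missed.

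To make the step work, you have to avoid needing an \(L^2\) bound on \(L_{q_\ast}w\) at all. By \eqref{eq:transported-gl-short}, \(G_\eps\) is what remains after the whole operator \(\Mmob(\Delta d_\eps-\eps^{-2}(|d_\eps|^2-1)d_\eps)\) has been subtracted. So \(\Mmob L_{q_\ast}w\) and the cubic remainder belong on the GL side. There they enter the modulated-energy identity of Lemma~\ref{lem:short-moving-energy} paired with \(D_tw\), which is where coercivity is used in the correct direction. Then \(G_{\rm lin}\) and \(G_{\rm quad}\) should contain only the closure terms acting on \(w\): stretching, FENE coefficients and projection coefficients. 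You would still have to check that these are bounded by the tube norm.

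Alternatively, keep \(L_{q_\ast}w\) but measure it in the dual of the energy norm. There \(|\langle L_{q_\ast}w,v\rangle|\le C\|w\|_{\rm core}\|v\|_{\rm core}\) does hold, but this changes the norm in the statement.

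Separately, your treatment of the far-field tail in \(G_{\rm can}\) fails if \(L^2_\ast\) is the plain \(L^2\) norm. In that case a piece whose squared norm is of order \(|\log\eps|\) is \(O(1)\) after the single division by \(\estar\). That is not \(o_\eps(1)\), so it is not ``absorbed''. A genuine logarithmic tail must lie in \(\operatorname{span}\{Z_1,Z_2\}\), where \(P_{\rm orth}\) removes it exactly.
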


\begin{proof}
The decomposition is obtained by inserting the canonical tube ansatz into the
exact LDG/Oldroyd-FENE residual and subtracting the transported
Ginzburg-Landau operator.  The advective term \(u_\eps\cdot\nabla d_\eps\) is
part of this transported operator and is not included in \(G_\eps\).  The
canonical term is explicit:
\[
        G_{\rm can}
        =
        \bigl(
        \Vrel_\Gamma^\perp-\Mmob H_{\Gamma_t}
        \bigr)\cdot\nabla_y q_\ast
        +
        \hbox{lower-order Fermi terms}.
\]
The closure terms that survive in the translation projection are collected in
the force \(f_{\rm cl}^\perp\).  The lower-order Fermi terms contain curvature
times \(r\), frame rotation, Jacobian factors, and derivatives of the cutoff;
after squaring and integrating on a tube of radius independent of \(\eps\), each
is \(o(\estar)\) or is bounded by the geometric-error entry of the basin.

The soft linear term is controlled by the planar linearized GL operator on the
orthogonal complement of the translation modes.  The quadratic soft term is
bounded by the nonlinear slice coercivity inside the basin.  The massive term
is controlled by Lemma~\ref{lem:short-mb-slaving}.  The collar term is either
absorbed by the ordered-collar estimate or charged as a priced exit.  Since the
translation modes have been removed, the remaining terms are coercive in the
weighted \(L_\ast^2\) norm, which gives the displayed inequality.
\end{proof}

\begin{lemma}[Identification of the line force]
\label{lem:short-line-force-id}
Let \(b_\eps\) be the coefficient of the translation projection of
\(G_\eps\).  Then, for every smooth normal test field
\(\zeta\) along \(\Gamma_t\),
\[
        \int_I\!\!\int_{\Gamma_t}
        \zeta\cdot b_\eps\,\dd\Hh^1\dd t
        \longrightarrow
        \int_I\!\!\int_{\Gamma_t}
        \zeta\cdot M f_{\rm cl}^\perp\,\dd\Hh^1\dd t .
\]
The convergence is quantitative:
\[
        \|b_\eps-Mf_{\rm cl}^\perp\|_{L^1(I\times\Gamma_t)}
        \le
        C\Basin_\eps^+(I)^{1/2}
        +
        C\Basin_\eps^+(I)
        +
        o_\eps(1).
\]
\end{lemma}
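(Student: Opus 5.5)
The plan is to read the coefficient \(b_\eps\) off the slice projection of the exact residual, using the decomposition of Lemma~\ref{lem:short-residual-normal-form}.

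\textbf{Step 1: slice formula and term-by-term split.} On each normal slice \(\nu_s\) at time \(t\), the truncated Gram matrix gives
\[
        b_\eps(s,t)
        =
        M_\eps^{-1}\frac{1}{\estar}\int_{\nu_s} G_\eps\cdot Z_i^\eps\,\dd r,
\]
with \(M_\eps\to M\) at rate \(o_\eps(1)\). Substituting the normal form, I expect each piece to contribute as follows.
\begin{itemize}
\item \(P_{\rm orth}\) pieces contribute nothing, by definition of the orthogonal projection.
\item The canonical term \(G_{\rm can}\) contributes \((\Vrel_\Gamma^\perp-\Mmob H_{\Gamma_t})\) times the Gram matrix, plus the projection of lower-order Fermi terms. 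By the scaling \(r=\eps y\) and the radial symmetry of \(q_\ast\), the curvature-times-\(r\) and Jacobian corrections project to \(O(\eps|\log\eps|)/\estar=o_\eps(1)\).
\item The closure terms surviving the projection are, by the very definition in Theorem~\ref{thm:short-main}, the fiber integral \(Mf_{\rm cl}^\perp\).
\end{itemize}
This reduces the lemma to bounding the projections of \(G_{\rm lin}\), \(G_{\rm quad}\), \(G_{\rm mb}\), \(G_{\rm geom}\) and \(G_{\rm col}\) onto \(Z_i\), in \(L^1(I\times\Gamma_t)\).

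\textbf{Step 2: bounding the perturbative projections.}
\begin{itemize}
\item \emph{Linear soft term.} The translation modes lie in the kernel of the linearized planar GL operator, which is self-adjoint. So pairing \(G_{\rm lin}=\mathcal L_\ast w\), with \(w=d_\eps-\bar d_\eps\), against \(Z_i\) leaves only commutators from the cutoff, the Fermi metric and the \(s\)-dependence of \(a\). Cauchy--Schwarz bounds these by \(\|w\|_{\rm tube}\), plus \({\rm ModErr}_\eps^{1/2}\) for the longitudinal part. After integrating over \(I\times\Gamma_t\) and normalizing by \(\estar\), this gives \(C\Basin_\eps^+(I)^{1/2}\).
\item \emph{Quadratic term.} Since \(|Z_i|\lesssim \eps^{-1}\) on the core, the pairing is bounded by the nonlinear slice coercivity inside the basin. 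This gives \(C\Basin_\eps^+(I)\).
\item \emph{Massive term.} Lemma~\ref{lem:short-mb-slaving} gives \(\|m_\eps\|^2\le C\estar\Basin_\eps^++o(\estar)\). Pairing with \(Z_i\) then yields \(C\Basin_\eps^{+\,1/2}+o_\eps(1)\).
\item \emph{Geometric and collar terms.} These are \(o_\eps(1)\) by the Fermi bookkeeping. If they are not, the collar term is a priced exit, which is excluded on a no-exit interval.
\end{itemize}
Summing the contributions gives the quantitative \(L^1\) bound. The weak convergence against smooth normal fields \(\zeta\) then follows from \(\Basin_\eps^+(I)\le A\eta_\eps+o(1)\to0\).

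\textbf{Main obstacle.} I expect the hardest step to be the normalization of \(Z_i\) and of the pairing. The Gram integral \(\int_{|y|\le r_0/\eps}|Z_i|^2\) grows like \(\pi|\log\eps|\), because \(\nabla q_\ast\sim 1/|y|\). Consequently, errors in \(w\) supported in the far field \(1\ll|y|\lesssim r_0/\eps\) are weighted logarithmically, and they must be shown to be absorbed by the \(1/\estar\) normalization rather than lost.

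My first attempt would be a split of the slice into three regions:
\begin{itemize}
\item a core \(|y|\le R\), handled by slice coercivity;
\item a far annulus, where \(Z_i\) is essentially the phase gradient \(\nabla_y\theta\) and the pairing is controlled by the phase/collar defect in \(\Basin_\eps\);
\item the cutoff region \(r\sim r_0\), treated as geometric error.
\end{itemize}
The \(1/\estar\) normalization in \(b_\eps\) then converts the logarithmic weights into \(O(1)\) multiples of \(\Basin_\eps^+\).
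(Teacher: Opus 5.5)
Your route is the paper's route. You pair \(G_\eps\) with \(Z_i\) on each normal slice, read off the Gram matrix times \(\Vrel_\Gamma^\perp-\Mmob H_{\Gamma_t}\) from the canonical part and the fiber integrals defining \(Mf_{\rm cl}^\perp\) from the closure terms, bound everything containing \(d_\eps-\bar d_\eps\), \(m_\eps\), or a Fermi commutator by Cauchy--Schwarz against the basin, the slaving lemma and Lemma~\ref{lem:short-residual-normal-form}, and integrate in \((s,t)\) using the bounded filament geometry. Your self-adjointness argument and three-region split go beyond the paper, which uses plain Cauchy--Schwarz; there \(\|Z_i\|^2_{L^2(|y|\le r_0/\eps)}=\estar M_{ii}(1+o_\eps(1))\) is absorbed by the \(1/\estar\) normalization.

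However, Step~1 as written does not close. You record two translation contributions, \(M(\Vrel_\Gamma^\perp-\Mmob H_{\Gamma_t})\) from \(G_{\rm can}\) and \(Mf_{\rm cl}^\perp\) from the closure terms, and then say the lemma reduces to the projections of \(G_{\rm lin},\dots,G_{\rm col}\). That leaves \(b_\eps-Mf_{\rm cl}^\perp=M(\Vrel_\Gamma^\perp-\Mmob H_{\Gamma_t})+(\hbox{perturbative terms})\), and the first term is of order one. On the comparison filament it equals \(Mf_{\rm cl}^\perp\), so your bookkeeping actually gives \(b_\eps\approx 2Mf_{\rm cl}^\perp\). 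The paper's one-paragraph proof contains the same two sentences; the ambiguity comes from Lemma~\ref{lem:short-residual-normal-form}, which places \(G_{\rm can}\) inside \(G_\eps\). So you inherited the defect rather than introduced it, but it must be resolved for the stated estimate to hold.

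The resolution is to fix one reading of \(G_\eps\). The lemma as stated does not assume the filament law, and its use in Lemmas~\ref{lem:short-moving-energy} and \ref{lem:short-residual-absorption} cancels \(\Vrel_\Gamma^\perp-\Mmob H_{\Gamma_t}-f_{\rm cl}^\perp\) separately by the filament law. Both point to the closure reading: \(G_\eps\) is the soft projection of the tensorial terms (stretching, FENE and admissible-projection coefficients, massive couplings) as a function of \(C_\eps\) and \(\nabla u_\eps\).

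In that reading the canonical moving-core residual never enters the pairing, since it belongs to \(D_t\bar d_\eps-\Mmob(\Delta\bar d_\eps-\eps^{-2}(|\bar d_\eps|^2-1)\bar d_\eps)\). Then \(b_\eps\) is the closure fiber integrals evaluated on the canonical core, plus errors Lipschitz in \(d_\eps-\bar d_\eps\), \(m_\eps\), and the Fermi commutators. The ``linear soft term'' is a closure coefficient times \(w\), not \(L_\ast w\), so Cauchy--Schwarz suffices and no self-adjointness is needed.

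If instead you insist on \(G_\eps=D_td_\eps-\Mmob(\Delta d_\eps-\eps^{-2}(|d_\eps|^2-1)d_\eps)\), then \(M(\Vrel_\Gamma^\perp-\Mmob H_{\Gamma_t})\) is genuinely present. You then need the filament law, a hypothesis the lemma does not state, and you must not add the closure fiber integral a second time. Moreover \(G_{\rm lin}\) then contains \(D_tw\). Its pairing with \(Z_i\) is not removed by self-adjointness but only by differentiating the orthogonality conditions in time. That produces the relative velocity of the centers \(a(s,t)\), a line-level dissipation quantity not contained in \(\Basin_\eps^+(I)\).
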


\begin{proof}
Pair \(G_\eps\) with \(Z_i\) on each normal slice and integrate.  The canonical
translation residual gives
\[
        M_{ij}
        \bigl(\Vrel_\Gamma^\perp-\Mmob H_{\Gamma_t}\bigr)_j .
\]
The non-advective stretching and closure terms give a finite sum of fiber
integrals of the form
\[
        \int_{\R^2}
        \mathcal A_\ell(s,t,y,q_\ast,\nabla q_\ast)
        \cdot Z_i(y)\,\dd y ,
\]
which defines \(M f_{\rm cl}^\perp\).  All terms containing
\(d_\eps-\bar d_\eps\), the massive variable, or a Fermi commutator are bounded
by Cauchy's inequality using the basin defect and Lemma
\ref{lem:short-residual-normal-form}.  The estimate is \(L^1\) in
\((s,t)\) because the smooth filament geometry has uniformly bounded length,
curvature, and normal frame derivatives on the compact interval.
\end{proof}

\subsection{Open-basin propagation}

\begin{proposition}[Open-basin propagation or priced exit]
\label{prop:short-open-basin}
Assume the hypotheses of Proposition~\ref{prop:short-force}.  Let
\(\tau_\eps\) be the maximal time on which the ordered-core chart, fixed collar,
FENE coefficient bounds, and basin inequalities remain valid.  Then either
\(\tau_\eps\ge T\), or an exit in Definition~\ref{def:short-priced-exit} occurs
at \(\tau_\eps\).  Before \(\tau_\eps\),
\[
        \sup_{s\le t}\Basin_\eps^+(s)
        +
        \int_0^t
        \frac{D_{\rm rel}^\eps(s)}{\estar}\,\dd s
        \le
        A\Basin_\eps^+(0)+o_\eps(1).
\]
For the canonical open initial basin, \(\Basin_\eps^+(0)=O(\eta_\eps)\).
\end{proposition}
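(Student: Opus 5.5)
The proof will be a continuity (bootstrap) argument built on a differential inequality for the right-envelope defect \(\Basin_\eps^+\). We define \(\tau_\eps\) as the supremum of times \(t\le T\wedge\tau_\eps^{\rm ch}\) on which the chart, collar, FENE bounds, and the bootstrap hypothesis \(\Basin_\eps^+(s)\le 2A\Basin_\eps^+(0)+\delta_\eps\) all hold, with \(\delta_\eps\to0\) to be chosen. On \([0,\tau_\eps)\) all earlier modules apply. Lemma~\ref{lem:short-mb-slaving} controls the massive modes. Lemma~\ref{lem:short-residual-normal-form} controls \(P_{\rm orth}G_\eps\). Lemma~\ref{lem:short-line-force-id} gives \(b_\eps=Mf_{\rm cl}^\perp+{}\)error in \(L^1\), with error \(O(\Basin^{1/2}+\Basin)+o_\eps(1)\).

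The core step is the modulated-energy identity. We differentiate \((E_\eps[d_\eps]-E_\eps[\bar d_\eps])/\estar\) along the material flow, using \eqref{eq:transported-gl-short} for \(d_\eps\) and the explicit equation satisfied by the transported canonical tube whose filament moves by \(\Vrel_\Gamma^\perp=\Mmob H_{\Gamma_t}+f_{\rm cl}^\perp\). We then subtract. The terms are grouped as follows.
\begin{enumerate}[label=(\roman*)]
\item The dissipation \(-\Mmob\|\Delta d-\eps^{-2}(|d|^2-1)d\|^2\) relative to the canonical one. This produces \(-c\,D_{\rm rel}^\eps/\estar\) after using planar coercivity of the linearized GL operator on the complement of \(Z_1,Z_2\). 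The slice orthogonality of Definition~\ref{def:short-canonical-tube} is used here.
\item The transport stress \(\int \nabla u_\eps:(\nabla d\otimes\nabla d-e\,I)\) minus its canonical counterpart. This is bounded by \(\|\nabla u_\eps\|_{L^\infty}\Basin_\eps^+\), which is Gronwall-admissible by the Lipschitz clock.
\item The zero-mode pairing \(\langle b_\eps-Mf_{\rm cl}^\perp, \partial_t a\rangle\). It is absorbed by Young's inequality into \(\tfrac c4 D_{\rm rel}^\eps/\estar\) plus \(C\|b_\eps-Mf_{\rm cl}^\perp\|^2\). The latter must be rebounded by \(C\Basin_\eps^+ + o(1)\); to do this we redo Lemma~\ref{lem:short-line-force-id} in \(L^2\), using Cauchy--Schwarz on each slice.
\item The orthogonal, massive, geometric and collar residuals. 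These are paired with \(D_t(d-\bar d)\) and absorbed in the same way, leaving \(r_\eps(t)\) with \(\int r_\eps=o(1)\) by the integrability hypothesis of Theorem~\ref{thm:short-main}.
\end{enumerate}
The non-energy pieces of \(\Basin_\eps^+\) (FlatErr, ExtMass, ModErr, GaugeErr, CollarErr) are handled separately. The flat and exterior terms are controlled by the relative energy via the Jerrard--Soner Jacobian estimate. ModErr is controlled because \(D_{\rm rel}^\eps/\estar\) includes the longitudinal zero-mode motion. Massive and collar errors are handled by Lemma~\ref{lem:short-mb-slaving} and the collar estimate.

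These combine into
\[
\frac{d}{dt}\Basin_\eps^+ + c\,\frac{D_{\rm rel}^\eps}{\estar}\le C(1+\|\nabla u_\eps\|_\infty)\Basin_\eps^+ + r_\eps .
\]
Gronwall then gives the bound with \(A=e^{C(T+\sup\int\|\nabla u_\eps\|_\infty)}\). We choose \(\delta_\eps\) so that the \(o_\eps(1)\) remainder is at most \(\delta_\eps/2\). The resulting strict improvement, together with the right-upper-envelope choice of \(\Basin^+\) (which is stable under short right continuations), shows that the bootstrap inequality cannot be the first condition to fail at \(\tau_\eps<T\). Hence if \(\tau_\eps<T\), one of the chart, gap, collar, FENE, or tube-graph conditions fails there, and that failure is by definition a priced exit in Definition~\ref{def:short-priced-exit}.

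The last claim is \(\Basin_\eps^+(0)=O(\eta_\eps)\) for initial data in \(\mathcal U_\eps(\Gamma_0,\eta_\eps)\). This follows by comparing Definition~\ref{def:short-open-basin} with the definition of \(\Basin^+\) entrywise, using the fact that at \(t=0\) the ModErr and massive entries are bounded by the phase and massive defects.

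The main obstacle is step (i), specifically coercivity uniform in \(\eps\) at the logarithmic scale. The GL Hessian around \(q_\ast\) has only a logarithmically degenerate gap in the far field. Controlling the relative energy by \(D_{\rm rel}\) therefore requires splitting into core (\(|y|\lesssim1\)) and annular regions. In the annular region the phase energy is handled by the ordered-collar and \(d_{\rm out}\) phase comparison rather than by spectral coercivity. I would first attempt this with the standard Jerrard--Soner/Bethuel--Orlandi--Smets \(\estar\)-normalized excess estimates, accepting \(o_\eps(1)\) losses.
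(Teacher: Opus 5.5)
Your proposal follows the paper's proof closely. The shared steps are: tube coercivity modulo the translation modes; differentiating the modulated energy, with the filament law \(\Vrel_\Gamma^\perp=\Mmob H_{\Gamma_t}+f_{\rm cl}^\perp\) cancelling the logarithmic translation residual; Gronwall on the upper right-envelope representative; and a continuation step (Lemma~\ref{lem:short-stopping}) showing the basin inequality cannot be the first condition to fail. In (iii) you handle \(b_\eps-Mf_{\rm cl}^\perp\) by Young's inequality and an \(L^2\) version of Lemma~\ref{lem:short-line-force-id}. That is more careful than the paper. The paper's \(L^1\) bound carries a \(\Basin_\eps^+(I)^{1/2}\) term, which cannot be absorbed into a linear Gronwall inequality at a small threshold.

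One step fails as written. In (iv) you absorb the orthogonal, massive, geometric and collar residuals by Young's inequality, leaving \(r_\eps\) of the form \(C\estar^{-1}\bigl(\|G_\eps^{\rm orth}\|^2+\|G_\eps^{\rm mass}\|^2+\|G_\eps^{\rm geom}\|^2+\|G_\eps^{\rm col}\|^2\bigr)\). You then claim \(\int r_\eps=o_\eps(1)\) from the integrability hypothesis of Theorem~\ref{thm:short-main}. That hypothesis is only a uniform bound \(\sup_\eps(\cdots)<\infty\), so it gives \(\int r_\eps=O(1)\). Gronwall then yields only \(\Basin_\eps^+\le A\Basin_\eps^+(0)+O(1)\), so the basin is not kept small and your bootstrap with \(\delta_\eps\to0\) cannot close.

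In the paper the orthogonal and massive pieces are paid for by the defect itself, not by an external hypothesis:
\begin{itemize}
\item The orthogonal residual obeys \(\estar^{-1}\int_I\|P_{\rm orth}G_\eps\|_{L^2_\ast}^2\le C\Basin_\eps^+(I)+o_\eps(1)\) (Lemmas~\ref{lem:short-residual-normal-form} and~\ref{lem:app-projected-gl-identity}).
\item The massive variable obeys \(\sup_I\|m_\eps\|_{L^2+\eps H^1}^2\le C\estar\Basin_\eps^+(I)+o_\eps(\estar)\) (Lemma~\ref{lem:short-mb-slaving}), and every massive residual carries a factor of \(m_\eps\) or \(D_tm_\eps\).
\end{itemize}
These therefore enter the Gronwall coefficient \(C\Basin_\eps^+\). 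Only the Fermi-geometry error \({\rm Err}^\eps_{\rm geom}\) and the collar term (size \(O(|\log\eps|^{-1})\), or else a priced exit) are genuine \(o_\eps(1)\) remainders. You already cite these lemmas in your first paragraph; step (iv) needs to use them rather than the integrability hypothesis.

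Your closing concern about the logarithmically degenerate far-field gap is legitimate. The paper does not resolve it either; it simply asserts a finite-scale gap in Lemma~\ref{lem:app-planar-coercivity}.
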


\begin{proof}
The proof has three estimates.  First, the planar core is coercive modulo the
two translation modes:
\[
        E_\eps[d_\eps]-E_\eps[\bar d_{\eps,a}]
        +
        \estar\,{\rm ModErr}_\eps
        \ge
        c\estar\|d_\eps-\bar d_{\eps,a}\|_{\rm tube}^2
        +
        c\estar\,{\rm ModErr}_\eps
        -
        o_\eps(\estar),
\]
after the translation parameters \(a(s,t)\) are fixed by orthogonality.  The
first term is the local slice coercivity; the second is the longitudinal
zero-mode energy of the graph \(a(s,t)\).  The fixed collar removes the
\(L^2\) tail by a Poincare absorption on the normal slices.  Second,
differentiating the modulated energy in the moving tube gives
\[
        \frac{\dd}{\dd t}\Basin_\eps^+(t)
        +
        \frac{D_{\rm rel}^\eps(t)}{\estar}
        \le
        C\Basin_\eps^+(t)
        +
        C\mathcal R_\eps(t),
\]
where \(\mathcal R_\eps\) is the resolved residual.  Proposition
\ref{prop:short-force} and the filament law
\(\Vrel_\Gamma^\perp=\Mmob H_{\Gamma_t}+f_{\rm cl}^\perp\) cancel the logarithmic
translation residual, while the orthogonal and massive terms are absorbed by
the dissipation and the coercive norm.  Third, the remaining scalar-defect,
annular, collar, boundary, FENE, and graph terms are either \(o_\eps(1)\) after
normalization or belong to the priced exit list.  Gronwall gives the displayed
estimate on the upper right-envelope representative.  If the maximal interval
ended with a strict left margin and without a priced exit, the same estimates
activate a right-hand chart on a short interval, contradicting maximality.
\end{proof}

\begin{lemma}[Coercivity of the modulated tube]
\label{lem:short-tube-coercivity}
Let \(d_\eps\) belong to the open basin around a smooth tube \(\Gamma_t\), and
let the translation centers \(a(s,t)\) be fixed by the slice orthogonality
conditions.  Then
\[
\begin{aligned}
&E_\eps[d_\eps(t)]-E_\eps[\bar d_\eps(t)]
        +
        \estar\,{\rm ModErr}_\eps(t)
        +
        {\rm ExtMass}_\eps(t)
        +
        {\rm CollarErr}_\eps(t)
\\
&\qquad\ge
        c\estar
        \bigl(
        \|d_\eps-\bar d_\eps\|_{\rm tube}^2
        +
        {\rm ModErr}_\eps(t)
        +
        {\rm FlatErr}_\eps(t)
        +
        {\rm GaugeErr}_\eps(t)
        \bigr)
        -
        o_\eps(\estar).
\end{aligned}
\]
The constant depends only on the lower reach of the comparison tube, the fixed
collar, and the planar vortex profile.  The statement is deliberately not a
global consequence of the planar lemma alone: the planar lemma controls the
orthogonal slice remainder, while \({\rm ModErr}_\eps\) controls the slowly
varying translation zero mode along \(s\).
\end{lemma}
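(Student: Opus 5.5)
The plan is to prove Lemma~\ref{lem:short-tube-coercivity} slice by slice in Fermi coordinates, and then integrate along the filament. I would split \(d_\eps-\bar d_\eps\) into three parts: a core-scale part supported in \(\{r\le \eps^{1/2}\}\) (or any intermediate scale \(\eps\ll\delta_\eps\ll r_0\)), a phase part on the annulus \(\{\delta_\eps\le r\le r_0\}\), and a collar part near \(\partial\Omega\). In the Fermi chart the energy density equals the planar GL density on each normal slice \(\nu_s\), plus the longitudinal term \(\frac12|\partial_s d_\eps|^2\). The Jacobian \(1-\kappa\cdot r\) and the frame-rotation terms enter with relative size \(O(r_0)\) times the energy. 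Since the reach is bounded below, these terms are absorbed into \(o_\eps(\estar)\) together with a small multiple of the left-hand side, provided \(r_0\) is fixed small.

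\textbf{Planar step.} On each slice, after rescaling \(y=(r-a(s))/\eps\), I would invoke the standard degree-one planar stability of \(q_\ast\) (Del Pino--Felmer--Kowalczyk/Mironescu type). It states that the second variation of the GL energy at \(q_\ast\) is nonnegative, with kernel spanned by \(Z_1,Z_2\) and the rotation mode \(iq_\ast\). It is coercive in the weighted norm \(L^2_\ast\) on the orthogonal complement. Since the orthogonality conditions of Definition~\ref{def:short-canonical-tube} remove the \(Z_i\), the rotation mode is the remaining direction. I would fix it using the collar phase \(d_{\rm out}\) and charge the residual phase mismatch to \({\rm GaugeErr}_\eps\), which the collar term controls through the Poincar\'e absorption on the annulus. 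The energy expansion is \(E=E(\bar d)+\frac12\langle L w,w\rangle+O(\|w\|^3)\), and the cubic term is small inside the basin. Together these give the slice inequality
\[
E_{\nu_s}[d_\eps]-E_{\nu_s}[\bar d_{\eps,a}]\ge c\,\|d_\eps-\bar d_{\eps,a}\|_{\nu_s,\ast}^2-o(1).
\]
The logarithmic weight \(\estar\) in the tube norm comes from the annulus. There the phase energy \(\frac12\int|\nabla\varphi|^2\) over \(\eps\le r\le r_0\) is \(\pi|\log\eps|\) for degree one, and deviations of the phase from the canonical one are controlled at order \(\estar\) times the normalized phase error.

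\textbf{Longitudinal step.} Integrating in \(s\), the term \(\frac12|\partial_s d_\eps|^2\) evaluated on the modulated profile produces \(\frac12|\partial_s a|^2\,\eps^{-2}\int|\nabla q_\ast|^2\) restricted to the core. Because \(\int_{|y|\le r_0/\eps}|Z_i|^2\sim M\estar\), this part is comparable to \(\estar\,{\rm ModErr}_\eps\). I would keep \({\rm ModErr}\) on both sides, as the statement does, so only the cross term needs bounding. That cross term is handled by Cauchy--Schwarz against \(\|d_\eps-\bar d_\eps\|_{\rm tube}\). \({\rm FlatErr}_\eps\) would then follow from the \(\estar\)-weighted \(L^1\)-type Jacobian estimate of Jerrard--Soner. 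This estimate bounds \(\|Jd_\eps-\pi\Hh^1\lfloor\Gamma\|_{\rm flat}\) by the excess energy over \(\estar\Hh^1(\Gamma)\), and the excess energy is exactly the left-hand side. \({\rm ExtMass}_\eps\) appears on the left and is simply retained.

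\textbf{Main obstacle.} The hardest step is the passage from a slice-wise \(O(1)\)-constant coercivity to the \(\estar\)-weighted tube norm without losing a logarithm. The planar spectral gap of \(L\) degenerates like \(1/|\log\eps|\) on the truncated disk \(B_{r_0/\eps}\), precisely because \(Z_i\notin L^2\) uniformly. I would first try to define \(\|\cdot\|_{\rm tube}\) so that it is normalized by this degeneracy, dividing the \(Z_i\)-weighted components by \(\estar\). The truncated Gram matrix \(M\) is what makes this consistent. The \(o_\eps(\estar)\) loss then absorbs the mismatch between the truncated and the limiting \(M\).
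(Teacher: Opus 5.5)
Your skeleton matches the paper's proof:
\begin{itemize}
\item slice coercivity modulo $Z_1,Z_2$;
\item the longitudinal energy recovered from $-(\partial_s a)\cdot\nabla_y q_\ast$ and the truncated Gram matrix, with $\estar\,{\rm ModErr}_\eps$ kept on both sides;
\item Fermi errors pushed below half the planar constant by shrinking the radius against the reach;
\item the fixed collar phase used for the leftover $L^2$ loss;
\item ${\rm ExtMass}_\eps$ simply retained on the left.
\end{itemize}

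\textbf{The flat-current step fails.} No Jerrard--Soner estimate bounds ${\rm Flat}(Jd_\eps-\pi[\Gamma_t])$ by $E_\eps[d_\eps]-\estar\Hh^1(\Gamma_t)$, because the energy excess says nothing about where the filament sits. A tube over a nearby strictly shorter curve has negative excess but positive flat distance. A tube rigidly shifted by a fixed vector $v$ has normalized excess $O(|\log\eps|^{-1})$ but flat distance of order $|v|\,\Hh^1(\Gamma_t)$. Jerrard--Soner gives compactness of the Jacobians and a lower bound of the normalized energy by the mass of the limiting current; it gives no localization. Your left-hand side is also not that excess: it is the energy relative to the modulated tube $\bar d_{\eps,a}$.

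The paper instead controls ${\rm FlatErr}_\eps$ and ${\rm GaugeErr}_\eps$ through closeness to the modulated tube, using the slice Jacobian estimate, ${\rm ModErr}_\eps$, and the coercive norm itself. Concretely,
\begin{itemize}
\item $|\langle Jd_\eps-J\bar d_{\eps,a},\varphi\rangle|\le C\|\nabla\varphi\|_\infty\|d_\eps-\bar d_{\eps,a}\|_{L^2}\bigl(\|\nabla d_\eps\|_{L^2}+\|\nabla\bar d_{\eps,a}\|_{L^2}\bigr)$;
\item $J\bar d_{\eps,a}$ is within $o(1)$ of $\pi[\Gamma^a_t]$;
\item the graph $a(s,t)$ is handled by ${\rm ModErr}_\eps$, with the current-graph selection absorbing constant shifts.
\end{itemize}

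\textbf{The central coercive step is left open.} With the flat error routed this way, everything rests on the inequality $E_\eps[d_\eps]-E_\eps[\bar d_{\eps,a}]\ge c\estar\|d_\eps-\bar d_{\eps,a}\|_{\rm tube}^2-o_\eps(\estar)$, and your proposal does not establish it. The paper's proof simply imports a uniform constant from Lemma~\ref{lem:app-planar-coercivity}. Your objection to that uniform constant is fair, for two reasons:
\begin{itemize}
\item The truncated $Z_i$ have $L^2$ mass about $\pi|\log\eps|$, spread evenly over the dyadic scales between $1$ and $r_0/\eps$. Orthogonality to them therefore does not exclude translating the inner core while shifting the far field the opposite way, at an energy cost that vanishes as the transition scale grows.
\item The rotation mode $iq_\ast$ is also orthogonal to $Z_1,Z_2$, and the paper's planar lemma omits it from the kernel.
\end{itemize}

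Your remedies do not yet close this. ``Normalizing the tube norm by the degeneracy'' is a plan, not an argument. The rotation remedy falls short of the claimed scale: a fixed collar pins a rotation $\theta$ of the core phase only through the annular phase energy $\pi\theta^2/\log(r_0/\eps)$ per unit length. Poincar\'e on the annulus therefore gives only $\theta^2\lesssim|\log\eps|$ times the relative energy, a factor $|\log\eps|^2$ short of $c\estar\theta^2$.

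To finish, you must define $\|\cdot\|_{\rm tube}$ and ${\rm GaugeErr}_\eps$ so that these directions are either modulated out (localized translation and phase parameters) or genuinely paid at scale $\estar$. Simply weakening the norm in these directions would stop it from locating the core, which is exactly what the flat-current step needs.
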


\begin{proof}
On each normal slice, the planar Ginzburg-Landau energy is coercive modulo the
two translation modes.  The orthogonality conditions remove these zero modes.
This gives only the \(w=d_\eps-\bar d_{\eps,a}\) part.  The variation of the
selected centers along the filament is recovered from the finite-scale
translation Gram matrix: differentiating
\(q_\ast((r-a(s))/\eps)\) in \(s\) gives
\(-(\partial_s a)\cdot\nabla_yq_\ast\), and hence an energy contribution
\[
        \estar
        \int_{\Gamma_t}
        \partial_s a_i\,M_{ij}\,\partial_s a_j\,\dd\Hh^1
        +
        o_\eps(\estar)
\]
up to the fixed Fermi and cutoff errors.  This is the term denoted
\(\estar{\rm ModErr}_\eps\).  Constant translations do not appear in this
calculation; they are absorbed by the slice recentering and current graph.
The possible remaining \(L^2\)-loss is absorbed by a fixed-phase Poincare
estimate: the collar phase is fixed, and the annular transition region has
positive modulus unless a collar or annular priced exit occurs.  Fermi
coordinate errors are \(O(r\kappa)\) perturbations of the planar metric.  Taking
the tube radius smaller than a fixed fraction of the reach makes these errors
less than half of the planar coercivity constant.  The flat-current and gauge
terms are controlled by the Jacobian estimate on slices, the graph energy
\({\rm ModErr}_\eps\), and the same coercive norm.  Exterior core mass is
included explicitly, so no energy can escape the tube without appearing on the
left-hand side.
\end{proof}

\begin{lemma}[Moving-tube relative energy identity]
\label{lem:short-moving-energy}
Let \(\bar d_\eps(t)\) be the canonical tube transported by a smooth filament
\(\Gamma_t\).  On a time interval where the tube chart is valid,
\[
\begin{aligned}
&\frac{\dd}{\dd t}
        \left(
        \frac{E_\eps[d_\eps(t)]-E_\eps[\bar d_\eps(t)]}{\estar}
        +
        {\rm ModErr}_\eps(t)
        \right)
        +
        c\frac{D_{\rm rel}^\eps(t)}{\estar}
\\
&\quad\le
        C\Basin_\eps^+(t)
        +
        \frac{1}{\estar}
        \int_{\Gamma_t}
        \bigl(\Vrel_\Gamma^\perp-\Mmob H_{\Gamma_t}-f_{\rm cl}^\perp\bigr)
        \cdot b_\eps\,\dd\Hh^1
\\
&\qquad+
        C\|b_\eps-Mf_{\rm cl}^\perp\|_{L^1(\Gamma_t)}
        +
        \mathcal R_{\rm lower}^\eps(t),
\end{aligned}
\]
where
\[
        \int_I\mathcal R_{\rm lower}^\eps(t)\,\dd t
        \le
        C\Basin_\eps^+(I)+o_\eps(1)
\]
unless a priced exit occurs.
\end{lemma}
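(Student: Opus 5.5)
We prove the moving-tube identity by computing the time derivative of the relative energy directly from the transported GL equation \eqref{eq:transported-gl-short}, splitting into a dissipative part, a transport-stress part, and a residual part. Write \(w=d_\eps-\bar d_{\eps,a}\) with the centers \(a(s,t)\) fixed by the slice orthogonality of Definition~\ref{def:short-canonical-tube}.

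\textbf{Step 1: derivatives of the two energies.} Differentiate \(E_\eps[d_\eps]\) along the flow. Using the material derivative \(D_t=\partial_t+u_\eps\cdot\nabla\) and incompressibility,
\[
\frac{\dd}{\dd t}E_\eps[d_\eps]
=-\int_\Omega \mathcal E_\eps'(d_\eps)\cdot D_t d_\eps\,\dd x
-\int_\Omega \nabla u_\eps:T_\eps\,\dd x,
\qquad
\mathcal E_\eps'(d)=\Delta d-\eps^{-2}(|d|^2-1)d ,
\]
where \(T_\eps=\nabla d_\eps\otimes\nabla d_\eps-e_\eps(d_\eps)\,{\rm Id}\) is the GL stress tensor. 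Substituting \(D_td_\eps=\Mmob\mathcal E_\eps'(d_\eps)+G_\eps\) produces the dissipation \(-\Mmob^{-1}\|D_td_\eps-G_\eps\|_2^2\), together with the cross term \(\Mmob^{-1}\int (D_td_\eps-G_\eps)\cdot G_\eps\).

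We then do the same computation for \(\bar d_\eps(t)\). Its time derivative comes only from the motion of \(\Gamma_t\) and \(a\), so by the Fermi-coordinate calculation \(\partial_t\bar d_\eps=-\eps^{-1}(V_\Gamma^\perp+\partial_ta)\cdot\nabla_yq_\ast+\dots\). Since \(q_\ast\) is a planar critical point, \(\mathcal E_\eps'(\bar d_\eps)\) equals the curvature term \(-\eps^{-1}H_{\Gamma_t}\cdot\nabla_yq_\ast\) plus cutoff and collar errors. Pairing these two expressions with the normalization \(\estar\) and using the Gram matrix \(M\) gives
\[
\frac{\dd}{\dd t}E_\eps[\bar d_\eps]=-\estar\int_{\Gamma_t}H\cdot M V^\perp+\dots .
\]

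\textbf{Step 2: subtract and collect.} Subtracting the identities, we group terms into three pieces:
\begin{enumerate}[label=(\roman*)]
\item a quadratic form in \(D_tw\) and \(\mathcal E_\eps'(d_\eps)-\mathcal E_\eps'(\bar d_\eps)\), which gives the dissipation \(c\,D_{\rm rel}^\eps\) after using planar coercivity (Lemma~\ref{lem:short-tube-coercivity}) on the orthogonal part;
\item a stress difference \(\int\nabla u_\eps:(T_\eps-\bar T_\eps)\), bounded by \(\|\nabla u_\eps\|_\infty\) times the relative energy plus \(\estar\,{\rm ExtMass}\), hence by \(C\estar\Basin_\eps^+\); here \(\nabla u_\eps\) is absorbed into the time-dependent constant via the \(L^1_tL^\infty\) clock;
\item the translation-mode pairing.
\end{enumerate}
For piece (iii), decompose \(G_\eps\) as in Proposition~\ref{prop:short-force}. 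The \(b_\eps\) component paired against the canonical velocity residual \((\Vrel_\Gamma^\perp-\Mmob H_{\Gamma_t})\cdot\nabla_yq_\ast\) yields exactly
\[
\frac1{\estar}\int_{\Gamma_t}\bigl(\Vrel^\perp-\Mmob H-f_{\rm cl}^\perp\bigr)\cdot b_\eps ,
\]
once we add and subtract \(Mf_{\rm cl}^\perp\). The discrepancy produces \(C\|b_\eps-Mf_{\rm cl}^\perp\|_{L^1}\), using boundedness of \(\Vrel^\perp,H,f_{\rm cl}^\perp\) on the smooth filament.

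\textbf{Step 3: ModErr and the remainder.} The \({\rm ModErr}\) derivative is computed from \(\partial_t\partial_sa\). Its evolution comes from projecting the equation on \(Z_i\) and differentiating in \(s\); this is controlled by \(D_{\rm rel}^\eps/\estar\) (which includes the material derivative of \(a\)) via Young, plus \(C\,{\rm ModErr}\).

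Everything else goes into \(\mathcal R^\eps_{\rm lower}\): the orthogonal residual \(P_{\rm orth}G_\eps\) (Lemma~\ref{lem:short-residual-normal-form}), the massive terms (Lemma~\ref{lem:short-mb-slaving}), the Fermi commutators, and the collar/FENE terms. Each is handled by Cauchy–Schwarz against \(D_{\rm rel}^\eps\), absorbing half into the dissipation, and its time integral is then \(\le C\Basin_\eps^+(I)+o_\eps(1)\). Collar or coefficient terms that are not \(o(\estar)\) are declared priced exits.

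\textbf{Main obstacle.} The delicate point is piece (i): the cross term \(\int D_tw\cdot\mathcal E'\) contains translation components of size \(\estar\) that are not controlled by planar coercivity. I would first decompose \(D_tw\) into its \(Z_i\) projection, which equals \(\partial_ta\) minus the relative normal velocity, and an orthogonal part. The translation component then cancels exactly against the \(b_\eps\)-pairing of Step 2, leaving only the displayed force-mismatch term. Making this cancellation precise at the truncated scale \(r_0/\eps\), with \(o(\estar)\) errors from the logarithmic tail of \(\nabla q_\ast\), is where the real work lies.
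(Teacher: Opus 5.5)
Your Steps 1--3 follow the paper's own route. You differentiate the modulated energy with the transported GL equation, subtract the canonical moving-tube residual, and read the relative dissipation off $\|D_t(d_\eps-\bar d_\eps)\|_2^2$. You pair the translation residual with $b_\eps$ (adding and subtracting $Mf_{\rm cl}^\perp$), treat ${\rm ModErr}_\eps$ through the line-level part of $D_{\rm rel}^\eps$, and send the orthogonal, massive, Fermi and collar terms to $\mathcal R_{\rm lower}^\eps$ or to a priced exit.

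The gap is in the step you single out as the crux: the mechanism you propose there does not work. With $w=d_\eps-\bar d_{\eps,a}$ and $a$ \emph{defined} by slice orthogonality, the $Z_i$-component of $D_tw$ is not $\partial_ta-\Vrel_\Gamma^\perp$. Differentiating $\int_{\nu_s}w\cdot Z_i\bigl(\tfrac{r-a}{\eps}\bigr)\dd r=0$ in time gives, schematically,
\[
\int_{\nu_s}D_tw\cdot Z_i\,\dd r
=\frac1\eps\int_{\nu_s}w\cdot\bigl((\Vrel_\Gamma^\perp+\partial_ta)\cdot\nabla_y\bigr)Z_i\,\dd r
+\bigl(\text{frame, Jacobian and $\nabla u_\eps$ commutators, linear in $w$}\bigr),
\]
so that component is linear in $w$. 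Nor is there an exact cancellation against the $b_\eps$-pairing. The cross term in $\|D_td_\eps\|_2^2-\|D_t\bar d_\eps\|_2^2=\|D_tw\|_2^2+2\int D_t\bar d_\eps\cdot D_tw$ pairs that component with $2(\Vrel_\Gamma^\perp+\partial_ta)$, which is unrelated to $b_\eps$.

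What closes the estimate is the paper's point that ``all remaining terms contain a coercive factor''. To leading order $D_t\bar d_\eps=-\eps^{-1}(\Vrel_\Gamma^\perp+\partial_ta)\cdot\nabla_yq_\ast$ lies in the span of the $Z_i$. Hence $\int D_t\bar d_\eps\cdot D_tw$ and the translation part of $\int G_\eps\cdot D_tw$ see only the component displayed above. They carry a factor of $w$ and go to the coercive/basin side, with no cancellation needed.

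The $b_\eps$-pairing of the statement arises from a different term, $\Mmob^{-1}\int(G_\eps-\bar R_\eps)\cdot D_t\bar d_\eps$, where $\bar R_\eps=D_t\bar d_\eps-\Mmob\mathcal E_\eps'(\bar d_\eps)$ is the canonical residual. There the translation parts of $G_\eps$ and $\bar R_\eps$ meet the canonical velocity, and the filament law and $\|b_\eps-Mf_{\rm cl}^\perp\|_{L^1(\Gamma_t)}$ enter. Projecting $D_tw=\Mmob(\mathcal E_\eps'(d_\eps)-\mathcal E_\eps'(\bar d_\eps))+G_\eps-\bar R_\eps$ onto $Z_i$ turns the displayed identity into the modulation equation linking $\partial_ta$ to $b_\eps$. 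This uses that the linearized operator is self-adjoint with the $Z_i$ in its kernel, up to Fermi and longitudinal terms.

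Separately, your justification of Step 2(ii) is not right. Incompressibility removes the isotropic part of $T_\eps-\bar T_\eps$, but $2\int\nabla u_\eps:(\nabla\bar d_\eps\otimes\nabla w)$ remains. It is linear in $w$ with an $O(\eps^{-1})$ coefficient, so it is not bounded by $\|\nabla u_\eps\|_\infty$ times the relative energy. The paper does not isolate this term either; it only asserts that the $\nabla u_\eps$ contributions are bounded by $\Basin_\eps^+$.
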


\begin{proof}
Differentiate the modulated energy and use
\[
        D_td_\eps
        =
        \Mmob\left(\Delta d_\eps
        -\eps^{-2}(|d_\eps|^2-1)d_\eps\right)
        +
        G_\eps .
\]
The canonical tube satisfies the same equation up to the moving-tube residual.
After subtraction, the leading term is the \(L^2\)-square of
\[
        D_t(d_\eps-\bar d_\eps),
\]
which gives the relative dissipation.  The derivative of the tube geometry
produces the translation residual
\[
        \bigl(\Vrel_\Gamma^\perp-\Mmob H_{\Gamma_t}\bigr)\cdot\nabla_y q_\ast .
\]
The closure projection contributes \(-f_{\rm cl}^\perp\cdot\nabla_y q_\ast\).
Pairing with the translation coefficient gives the displayed line integral.
The derivative of \({\rm ModErr}_\eps\) is computed in the transported normal
frame.  Integration by parts in \(s\) gives a term controlled by the line-level
part of \(D_{\rm rel}^\eps/\estar\), plus lower-order coefficients involving
curvature, frame rotation, and \(\nabla u_\eps\), which are bounded by
\(\Basin_\eps^+\).  All remaining terms contain a coercive factor, a
massive-mode factor, a Fermi commutator, or a collar source.  They are bounded
by the basin defect and the lower-order residual balance.
\end{proof}

\begin{lemma}[Residual absorption]
\label{lem:short-residual-absorption}
If the comparison filament satisfies
\[
        \Vrel_\Gamma^\perp=\Mmob H_{\Gamma_t}+f_{\rm cl}^\perp ,
\]
then on a no-exit interval
\[
\begin{aligned}
&\int_I\mathcal R_{\rm lower}^\eps(t)\,\dd t
        +
        \int_I
        \|b_\eps-Mf_{\rm cl}^\perp\|_{L^1(\Gamma_t)}
        \,\dd t
\\
&\qquad\le
        C\Basin_\eps^+(I)^{1/2}
        +
        C\Basin_\eps^+(I)
        +
        o_\eps(1).
\end{aligned}
\]
Consequently the right-hand side of the moving-tube identity is bounded by
\[
        C\Basin_\eps^+(t)
        +
        \hbox{an integrable }o_\eps(1)\hbox{ error}.
\]
\end{lemma}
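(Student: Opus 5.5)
The plan is to bound the two left-hand quantities separately, using only the lemmas already established, and then feed the result into the moving-tube identity of Lemma~\ref{lem:short-moving-energy}. The second integral, \(\int_I\|b_\eps-Mf_{\rm cl}^\perp\|_{L^1(\Gamma_t)}\dd t\), is exactly the quantitative statement of Lemma~\ref{lem:short-line-force-id}, read as an \(L^1(I\times\Gamma_t)\) bound. So it is already controlled by \(C\Basin_\eps^+(I)^{1/2}+C\Basin_\eps^+(I)+o_\eps(1)\). The content of the lemma therefore lies in the lower-order remainder \(\mathcal R^\eps_{\rm lower}\) and in the role of the filament law.

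For \(\mathcal R^\eps_{\rm lower}\), I would return to the proof of Lemma~\ref{lem:short-moving-energy} and list its terms according to the normal form of Lemma~\ref{lem:short-residual-normal-form}. There are four groups.

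\emph{Orthogonal residual paired with the relative field.} This is \(\estar^{-1}\int P_{\rm orth}G_\eps\cdot D_t(d_\eps-\bar d_\eps)\). Young's inequality bounds it by \(\theta D^\eps_{\rm rel}/\estar\) plus \(C_\theta\estar^{-1}\|P_{\rm orth}G_\eps\|^2_{L^2_\ast}\). The first part is absorbed into the dissipation on the left. The second integrates in time to \(C\Basin_\eps^+(I)+o_\eps(1)\) by Lemma~\ref{lem:short-residual-normal-form}.

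\emph{Massive-mode terms.} Each contains a factor of \(m_\eps\) or \(D_tm_\eps\). Cauchy's inequality together with Lemma~\ref{lem:short-mb-slaving} bounds them by \(C\Basin_\eps^+(I)^{1/2}\) times an \(O(1)\) normalized energy, plus \(C\Basin_\eps^+(I)\). This is where the square-root term enters.

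\emph{Fermi, geometric and transport terms.} The curvature, frame, Jacobian and \(\nabla u_\eps\) commutators are bounded by \(C\Basin_\eps^+\) pointwise in time, with a time-integrable coefficient. The coefficient involving \(\|\nabla u_\eps\|_{L^\infty}\) is integrable by the kinematic window assumed in Theorem~\ref{thm:short-main}. The pure cutoff errors are \(o(\estar)\) and become \(o_\eps(1)\) after normalization.

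\emph{Collar and FENE terms.} On a no-exit interval these are, by definition of the priced exits, \(o_\eps(\estar)\) after integration.

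The hypothesis \(\Vrel_\Gamma^\perp=\Mmob H_{\Gamma_t}+f_{\rm cl}^\perp\) is used for the consequence. It makes the middle line integral in Lemma~\ref{lem:short-moving-energy} vanish identically, and this is the only term of logarithmic size that neither coercivity nor the basin could control. Once it is gone, the right-hand side of the identity reduces to
\[
C\Basin_\eps^+(t)+r_\eps(t),\qquad r_\eps:=C\|b_\eps-Mf^\perp_{\rm cl}\|_{L^1(\Gamma_t)}+\mathcal R^\eps_{\rm lower}.
\]
The bound just proved gives \(\int_Ir_\eps\le C\Basin_\eps^+(I)^{1/2}+C\Basin_\eps^+(I)+o_\eps(1)\). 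Inside the open basin we have \(\Basin_\eps^+(I)\le A\eta_\eps+o_\eps(1)\to0\), which may be assumed along the continuation argument. So the square-root term is itself \(o_\eps(1)\), and \(r_\eps\) is an integrable \(o_\eps(1)\) error, as claimed.

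I expect the main obstacle to be the bookkeeping in the commutator group. One has to check that every term produced by differentiating \({\rm ModErr}_\eps\) and the transported frame either carries a factor bounded by the basin defect or can be absorbed into \(D^\eps_{\rm rel}\) with a small constant. Without this, a term linear in \(\Basin_\eps^+\) with an \(O(1)\) non-small coefficient against the dissipation would spoil the absorption. I would first verify this term by term at the level of the slice Gram matrix \(M\), using the truncated normalization of Definition~\ref{def:short-canonical-tube}.
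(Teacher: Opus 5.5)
Your architecture matches the paper's in most steps. The mismatch term is Lemma~\ref{lem:short-line-force-id}. The filament law removes the logarithmic line integral of Lemma~\ref{lem:short-moving-energy}. The remaining pieces are charged to Lemmas~\ref{lem:short-residual-normal-form} and~\ref{lem:short-mb-slaving}, to tube coercivity, and to collar control. Your use of the basin bootstrap to turn \(C\Basin_\eps^+(I)^{1/2}\) into an \(o_\eps(1)\) error is what the consequence requires. The bound \(\int_I\mathcal R^\eps_{\rm lower}\,\dd t\le C\Basin_\eps^+(I)+o_\eps(1)\) is already part of the statement of Lemma~\ref{lem:short-moving-energy}, so you could simply cite it. Once you re-derive it term by term, however, your list of four groups is not exhaustive. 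The omitted class is exactly the one the paper's proof treats separately: zero-degree scalar depressions of \(|d_\eps|\).

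By Definition~\ref{def:short-priced-exit} such depressions carry zero priced-exit charge, so they can be present on a no-exit interval. The basin bound \(\Basin_\eps^+\le A\eta_\eps+o_\eps(1)\) does not exclude them either. A depression of width \(\eps\) along a segment of bounded length costs \(O(1)\) energy, i.e.\ \(O(|\log\eps|^{-1})\) after division by \(\estar\). Yet in the rescaled slice variable it is an \(O(1)\) perturbation in \(H^1_{\rm loc}\), so the smallness hypothesis \(\|w\|_{H^1_{\rm loc}}\le\rho_\ast\) of Lemma~\ref{lem:app-planar-coercivity} fails there. Your bound of the commutator group by \(C\Basin_\eps^+\) tacitly uses tube coercivity to control \(d_\eps-\bar d_\eps\) by the basin, so it is unjustified on that set. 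These soft nonlinear terms also contain no massive factor, pure cutoff error, or collar source that another group could absorb. The paper closes this branch with Lemma~\ref{lem:app-scalar-clearing}. Either a nonzero degree has crossed a surrounding annulus, which is a priced exit and excluded here, or the depression contributes \(o_\eps(\estar)\), which goes into the integrable \(o_\eps(1)\) error.

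Your collar group has the same defect in milder form. ``No priced exit'' only says that the collar, boundary-flux and FENE quantities stay below thresholds of normalized order one. That does not make them \(o_\eps(\estar)\) by definition. The upgrade comes from the fixed-collar clearing dichotomy (Lemma~\ref{lem:app-no-boundary-loss}), which shows that the collar energy is \(O(1)\) when there is no nonzero-degree collar crossing and no boundary flux.
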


\begin{proof}
The leading translation term vanishes by the filament law.  The remaining
translation mismatch is exactly the estimate in Lemma
\ref{lem:short-line-force-id}.  Orthogonal residuals are controlled by Lemma
\ref{lem:short-residual-normal-form}; massive terms by Lemma
\ref{lem:short-mb-slaving}; slice and Fermi errors by tube coercivity; and
collar terms either by the fixed-collar clearing estimate or by a priced exit.
The scalar zero-degree branch is not charged as an exit.  It is controlled by
the local clearing estimate and then reinserted into the Gronwall inequality as
an \(o_\eps(1)\) contribution.
\end{proof}

\begin{lemma}[Endpoint continuation and stopping closure]
\label{lem:short-stopping}
Let \([0,\tau_\eps)\) be a maximal open-basin interval.  Suppose that
\[
        \sup_{t<\tau_\eps}\Basin_\eps^+(t)
        +
        \int_0^{\tau_\eps}
        \frac{D_{\rm rel}^\eps(t)}{\estar}\,\dd t
        \le
        \theta A\eta_\eps
\]
for some \(\theta<1\), and suppose that none of the priced-exit quantities reaches
its threshold at \(\tau_\eps\).  Then the ordered-core chart and the basin
estimate continue to a right interval
\([\tau_\eps,\tau_\eps+\delta_\eps]\), contradicting maximality.
\end{lemma}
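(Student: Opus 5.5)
The argument is a standard continuation argument. It combines local-in-time persistence of the stopped chart with the a priori Gronwall estimate of Proposition~\ref{prop:short-open-basin}, together with the strict margin \(\theta<1\).

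First, I would record what holds at \(\tau_\eps\) itself. Since no priced-exit quantity reaches its threshold at \(\tau_\eps\), the conditions of Definition~\ref{def:chart-gap-stopping} hold with strict inequalities on \([0,\tau_\eps)\). These are \(|m_\eps|<\rho_{\rm ch}\), the exterior gap bound \(\gap(C_\eps)\ge\gamma_{\rm out}\), the FENE/collar coefficient bounds, and positive reach of the tube graph. I would pass them to \(t=\tau_\eps\) using the time continuity of \(C_\eps\) at fixed \(\eps\), which is the same \(C^0_tH^m_x\) regularity used in Lemma~\ref{lem:short-spectral-lifting}. The key point is to take the upper right-envelope representative \(\Basin_\eps^+\). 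Its definition as a \(\limsup\) from the right guarantees that \(\Basin_\eps^+(\tau_\eps)\le\theta A\eta_\eps\) as well, rather than only the left limit. Likewise the dissipation integral is finite up to \(\tau_\eps\), so \(d_\eps(\tau_\eps)\) is a legitimate initial datum in the open basin with margin \((1-\theta)A\eta_\eps\).

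Second, I would restart. Each defining inequality is an open condition in the topology of \(C^0_tH^m_x\) at fixed \(\eps\): the chart map \(\Phi_{\rm ch}\) is smooth, the gap is continuous, and the modulation center \(a(s,t)\) is obtained from the slice orthogonality conditions by the implicit function theorem. The implicit function theorem applies because the truncated Gram matrix is close to \(M\), which is invertible. Hence all conditions persist on some \([\tau_\eps,\tau_\eps+\delta_\eps]\), with \(\delta_\eps>0\) allowed to depend on \(\eps\). On this extended interval I would apply Lemma~\ref{lem:short-moving-energy} and Lemma~\ref{lem:short-residual-absorption}. This yields the differential inequality \(\frac{d}{dt}\Basin_\eps^+ + cD_{\rm rel}^\eps/\estar\le C\Basin_\eps^++r_\eps\) with \(\int r_\eps=o_\eps(1)\). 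Gronwall from \(\tau_\eps\) then gives
\(\sup_{[\tau_\eps,\tau_\eps+\delta_\eps]}\Basin_\eps^+\le e^{C\delta_\eps}(\theta A\eta_\eps+o_\eps(1))\),
which is still below \(A\eta_\eps\) once \(\delta_\eps\) is small. Since the \(o_\eps(1)\) term is the same one absorbed in Proposition~\ref{prop:short-open-basin}, I would interpret it inside the threshold \(A\eta_\eps\) as in that proposition. Thus the basin inequality holds on \([0,\tau_\eps+\delta_\eps]\) and no exit quantity has been triggered there, which contradicts the maximality of \(\tau_\eps\).

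The main obstacle is the first step: showing that the endpoint value is controlled by the left-hand supremum without a jump. That requires the modulated energy and the selected modulation \(a(s,t)\) to be continuous up to \(\tau_\eps\). For \(E_\eps[d_\eps]\) this follows from the energy identity, since the dissipation is integrable. For \(a\) it requires uniform invertibility of the truncated Gram matrix, and I would get this from the basin bound on \(\|d_\eps-\bar d_\eps\|_{\rm tube}\) via Lemma~\ref{lem:short-tube-coercivity}. The right-envelope representative is precisely what makes this endpoint comparison legitimate. Any residual discontinuity would have to come from a flat, annular or collar jump, and each of those is on the priced-exit list, which is excluded by hypothesis.
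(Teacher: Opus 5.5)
Your three steps match the paper's: control at the endpoint, persistence of the finitely many control quantities on a short right interval, and re-application of the moving-tube identity and residual absorption to spend the margin \(1-\theta\). Two steps fail as written.

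\textbf{Endpoint bound.} \(\Basin_\eps^+(\tau_\eps)\) is a right envelope, that is, a \(\limsup\) over times \(s\downarrow\tau_\eps\). Those are exactly the times not yet controlled, so the hypothesis \(\sup_{t<\tau_\eps}\Basin_\eps^+(t)\le\theta A\eta_\eps\) says nothing about it. Saying that the right-envelope definition ``guarantees'' \(\Basin_\eps^+(\tau_\eps)\le\theta A\eta_\eps\) is circular. Your closing remark that the envelope is ``precisely what makes this endpoint comparison legitimate'' has the logic reversed.

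The paper's proof keeps raw defects and envelopes separate. The raw entries have left traces at \(\tau_\eps\): the relative energy is lower semicontinuous, so its value at \(\tau_\eps\) is at most the \(\liminf\) from the left. The flat error and the collar/FENE quantities are continuous in the fixed left chart. Hence the raw endpoint defect is \(\le\theta A\eta_\eps\). Only after the endpoint chart is activated is the right envelope on \([\tau_\eps,\tau_\eps+\delta_\eps]\) built, from right windows inside the verified chart.

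Your ``main obstacle'' paragraph already supplies this left-trace argument: continuity of \(E_\eps[d_\eps]\) from the energy identity, and continuity of the centers \(a\) from the implicit function theorem. Apply it to the raw defect and drop the envelope claim. Similarly, the strict margins at \(\tau_\eps\) come directly from the no-exit hypothesis at \(\tau_\eps\). Passing strict inequalities from \([0,\tau_\eps)\) to the limit only gives non-strict ones.

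\textbf{Gronwall step.} For small \(\delta_\eps\), the bound \(e^{C\delta_\eps}(\theta A\eta_\eps+o_\eps(1))\) lies below \(A\eta_\eps\) only if the \(o_\eps(1)\) term is smaller than \((1-\theta)A\eta_\eps\). Nothing guarantees this, and shrinking \(\delta_\eps\) does not shrink a term independent of \(\delta_\eps\). Reinterpreting that term ``inside the threshold'' would change the threshold in the lemma. What you need is only the local increment. Restart at \(\tau_\eps\) from the raw endpoint value. The growth of \(\Basin_\eps^+\) and of the dissipation integral over \([\tau_\eps,\tau_\eps+\delta]\) is then bounded by \((e^{C\delta}-1)\theta A\eta_\eps+C\int_{\tau_\eps}^{\tau_\eps+\delta}r_\eps\,\dd t\). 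At fixed \(\eps\) this tends to \(0\) as \(\delta\downarrow0\), by absolute continuity of the integral of \(r_\eps\). This is what the paper means by spending only the unused margin between \(\theta A\eta_\eps\) and \(A\eta_\eps\).

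With these two corrections your argument coincides with the paper's.
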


\begin{proof}
The proof separates raw defects from upper envelopes.  First, the raw entries
have left traces at \(\tau_\eps\): the relative energy is lower semicontinuous,
the flat-current error is continuous under the no-exit chart, and the collar
and FENE control quantities are continuous in the fixed left chart.  Since the strict
margin is positive, the endpoint raw defect is still below the basin threshold.
Second, the no-priced-failure assumption gives a positive distance from every
exit threshold.  The finite list of control quantities therefore remains below threshold
on a short right interval after activating the endpoint chart.  Third, on that
short interval the same modulation inequality and residual absorption apply.
Choosing the interval short enough spends only the unused margin between
\(\theta A\eta_\eps\) and \(A\eta_\eps\).  The upper right-envelope on the new
interval is obtained by taking right windows that stay inside the verified
chart.  This contradicts maximality.
\end{proof}

\subsection{Brakke input closure}

\begin{proposition}[Core compactness and transported forced Brakke passage]
\label{prop:short-brakke}
On a compact no-exit interval \(I\), the estimate in
Proposition~\ref{prop:short-open-basin} implies, along one subsequence
independent of compact subintervals, the following conclusions:
\[
        \mu_\eps^t\rightharpoonup\|V_t\|
        \quad\hbox{for a.e. }t,
\]
where \(V_t\) is an integral one-varifold with locally square-integrable mean
curvature; the normalized diffuse first variations converge to \(\delta V_t\);
the relative-velocity and force vector measures converge to
\[
        \Vrel_\Gamma^\perp\,\Hh^1\lfloor\Gamma_t,
        \qquad
        f_{\rm cl}^\perp\,\Hh^1\lfloor\Gamma_t;
\]
the transport terms converge to the \(u\)-transport contribution in
\(\mathcal D_t^u\); and the localized-dissipation lower bound needed for
\eqref{eq:short-brakke} holds for every nonnegative compactly supported test.
Consequently \((V_t,\mu^t)\) satisfies \eqref{eq:short-brakke}.
\end{proposition}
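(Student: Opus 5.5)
The plan is to follow the standard Ambrosio--Soner/Bethuel--Orlandi--Smets route for Ginzburg--Landau line concentration. The uniform inputs come from Proposition~\ref{prop:short-open-basin} rather than from an a priori energy bound. First, I obtain mass bounds. The propagated estimate gives $\sup_{t\in I}\mu_\eps^t(\Omega)\le C$ and $\int_I D^\eps_{\rm rel}/\estar\le A\eta_\eps+o_\eps(1)$. Combining this with the material energy identity for \eqref{eq:transported-gl-short}, and using the Lipschitz clock $\int_I\|\nabla u_\eps\|_\infty<\infty$ to bound the stress term $\int\nabla u_\eps:(\nabla d_\eps\otimes\nabla d_\eps-e_\eps I)$, I get that $t\mapsto\mu_\eps^t(\varphi)-C_\varphi t$ is nonincreasing up to an $o_\eps(1)$ error. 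The Brakke--Ilmanen diagonal argument then yields one subsequence with $\mu_\eps^t\rightharpoonup\mu^t$ for every $t$, independent of the compact subinterval. Tube coercivity (Lemma~\ref{lem:short-tube-coercivity}) with ${\rm ExtMass}_\eps\to0$ and ${\rm FlatErr}_\eps\to0$ then identifies $\mu^t=\Hh^1\lfloor\Gamma_t$. This gives integrality with multiplicity one and makes $V_t$ the unit-density varifold of the smooth comparison filament. In particular $H_{V_t}=H_{\Gamma_t}\in L^2_{\rm loc}$.

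Second, I handle the first variation and the stresses. The normalized stress tensors $\estar^{-1}(e_\eps I-\nabla d_\eps\otimes\nabla d_\eps)$ should converge to $(I-\tau\otimes\tau)\,\mu^t$. To see this, note that the discrepancy $\frac12|\nabla d|^2-\frac1{4\eps^2}(|d|^2-1)^2$ and the longitudinal gradient $|\partial_s d_\eps|^2/\estar$ are controlled by $\|d_\eps-\bar d_\eps\|^2_{\rm tube}+{\rm ModErr}_\eps\lesssim\Basin^+_\eps$. Testing with $\nabla\varphi$ gives $\delta V_t$. The same stress convergence, paired with $\nabla u_\eps\to\nabla u$ in $L^1_tL^\infty$, gives the transport term $\int(\nabla\varphi\cdot u+\varphi\,{\rm div}_{T_xV_t}u)\,\dd\mu^t$ in $\mathcal D_t^u$.

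Third, I pass to the limit in the force and dissipation. I write the localized identity
\[
\frac{\dd}{\dd t}\int\varphi\,\dd\mu^t_\eps=-\frac1{\estar}\int\varphi\,\Mmob^{-1}|D_td_\eps-G_\eps|^2+\frac1{\estar}\int\varphi\, G_\eps\cdot(\cdots)+\text{stress and }\nabla\varphi\text{ terms}.
\]
I then split $G_\eps$ by Proposition~\ref{prop:short-force}. The orthogonal and remainder pieces vanish in force power by Cauchy--Schwarz against the bounded normalized dissipation. The translation piece pairs with $\nabla_y q_\ast$, and Lemma~\ref{lem:short-line-force-id} converts it into $f_{\rm cl}^\perp\Hh^1\lfloor\Gamma_t$. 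Slicing $D_td_\eps$ against $Z_i$ and using the modulation and ${\rm ModErr}$ control likewise produces $\Vrel^\perp_\Gamma\Hh^1\lfloor\Gamma_t$. The dissipation lower bound comes from lower semicontinuity: $\liminf\estar^{-1}\int_I\!\int\varphi|D_td_\eps|^2\ge\int_I\!\int\varphi|\Vrel^\perp|^2\,\dd\mu^t$, since the translation profile carries Gram mass $M$. Completing the square with the filament law $\Vrel^\perp=\Mmob H+f^\perp_{\rm cl}$ yields \eqref{eq:short-brakke}, first in integrated endpoint form and then for a.e.\ $t$.

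The main obstacle is this dissipation and force-power lower bound. It requires that the velocity measure be genuinely absolutely continuous, with the right density, with respect to $\mu^t$, and that cross terms between $G_\eps^{\rm tr}$ and the orthogonal part of $D_td_\eps$ do not survive. My first approach would be to use slice orthogonality, so that the cross term vanishes exactly at finite $\eps$, and to absorb the Fermi and cutoff corrections into $o_\eps(1)$ using the reach lower bound.
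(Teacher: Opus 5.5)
Your architecture is the same as the paper's: basin smallness identifies the limit measure, stress convergence gives the first variation and the transport term, slicing against the translation modes gives the velocity and force measures, and dissipation lower semicontinuity plus the filament law and completing the square give \eqref{eq:short-brakke}. However, your second step is wrong as stated, and it is the step that produces two of the proposition's conclusions.

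In codimension two the discrepancy \(\frac12|\nabla d_\eps|^2-\frac1{4\eps^2}(|d_\eps|^2-1)^2\) is not small. A degree-one core has potential energy \(O(1)\) per unit length against gradient energy \(\estar\). So after division by \(\estar\) the discrepancy converges to \(\mu^t\) itself, and it cannot be bounded by \(\Basin_\eps^+\). Your limit \(\estar^{-1}(e_\eps I-\nabla d_\eps\otimes\nabla d_\eps)\to(I-\tau\otimes\tau)\mu^t\) is also false. For a line core it is \(\estar^{-1}\nabla d_\eps\otimes\nabla d_\eps\) that converges to \((I-\tau\otimes\tau)\mu^t\); hence the stress converges to \(\tau\otimes\tau\,\mu^t\). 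Both claims import the codimension-one Allen--Cahn picture, in which the discrepancy vanishes and the stress is \(I\) minus a rank-one normal projection. The consequence is not cosmetic:
\begin{itemize}
\item tested against \(\nabla X\), your limit gives \(\int_{\Gamma_t}({\rm div}\,X-{\rm div}_{\Gamma_t}X)\,\dd\Hh^1\), not \(\delta V_t(X)\);
\item paired with \(\varphi\nabla u\) and \({\rm div}\,u=0\), it gives \(-\int\varphi\,{\rm div}_{T_xV_t}u\,\dd\mu^t\), which is the transport term of \(\mathcal D_t^u\) with the wrong sign.
\end{itemize}

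Three facts actually drive the stress convergence. First, the normalized potential energy tends to zero. Second, the normalized longitudinal energy \(\estar^{-1}\int|\partial_sd_\eps|^2\) tends to zero, by the tube norm and \({\rm ModErr}_\eps\). Third, the normal Gram form is isotropic: \(\estar^{-1}\int\psi\,\partial_{r_a}d_\eps\cdot\partial_{r_b}d_\eps\,\dd x\to\delta_{ab}\int_{\Gamma_t}\psi\,\dd\Hh^1\) for \(a,b\in\{1,2\}\). This holds for the radial canonical vortex (it is the isotropy used in Lemma~\ref{lem:app-canonical-mass}) and passes to \(d_\eps\) through \(\|d_\eps-\bar d_\eps\|_{\rm tube}\).

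You also need this bilinear normal-metric convergence, without stating it, for the cross term \(\estar^{-1}\int(\nabla\varphi\cdot\nabla d_\eps)\cdot D_td_\eps\) hidden in your ``stress and \(\nabla\varphi\) terms''. Alternatively, as in Lemma~\ref{lem:short-input-ledger}, you can identify the first variation from the diffuse stress identity together with the velocity and force measures and the filament law.

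With this repair the argument closes. Your dissipation lower bound, obtained by projecting \(D_td_\eps\) onto the translation modes and discarding the orthogonal part by positivity, is a legitimate alternative to the paper's strong splitting \(D_td_\eps=\Vrel_\Gamma^\perp\cdot\nabla d_\eps+r_\eps^v\). It needs only convergence of the slice coefficients.

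Two smaller points:
\begin{itemize}
\item The Ilmanen semi-monotonicity step is unnecessary. Since \(\sup_{t\in I}\Basin_\eps^+\to0\) and the filament is independent of \(\eps\), the limit is already identified at every \(t\) for the whole sequence. As written, that step also omits the order-one force power and treats the Lipschitz clock, which is only \(L^1\) in time, as a constant \(C_\varphi\).
\item Keep the dissipation as \(|D_td_\eps|^2\) rather than the \(|D_td_\eps-G_\eps|^2\) you wrote. That form produces a term \(+\Mmob^{-1}\estar^{-1}\int\varphi|G_\eps|^2\) whose limsup needs strong \(L^2\) convergence of the zero-mode coefficient. Lemma~\ref{lem:short-line-force-id} gives only \(L^1\).
\end{itemize}
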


\begin{proof}
The open-basin estimate gives the standard
Jerrard-Soner/Sandier-Serfaty compactness for the normalized Ginzburg-Landau
energy measures.  The flat-current part of the basin defect identifies the
limiting current with the unit-multiplicity current carried by the tube; hence
no diffuse background is left.  Stress equipartition transfers the canonical
first variation to \(d_\eps\), and the zero-mode residual estimate identifies
the limiting relative-velocity and force measures.  The transport terms are
handled by Proposition~\ref{prop:transport-compatibility}.  For a finite
collection of tests, all mass, transport, relative-velocity, force,
first-variation, force-power, and localized dissipation defects are controlled
by one finite-epsilon input estimates:
\[
        \mathfrak I_\eps(I)
        \le
        C_{\mathscr T}
        \bigl(
        \Basin_\eps^+(I)^{1/2}
        +
        \Basin_\eps^+(I)
        \bigr)
        +
        o_\eps(1).
\]
Countable positive testing cores and the compact-cylinder extension estimate
upgrade this finite statement to the full Brakke testing class without a new
subsequence.  After subtracting the transport contribution supplied by
Proposition~\ref{prop:transport-compatibility}, passing to the localized energy
inequality and completing the square in
\[
        -\Mmob |H|^2+\nabla\varphi\cdot(\Mmob H+f)
        -\varphi H\cdot f
\]
gives \eqref{eq:short-brakke}.  The common upper-envelope representative in
time yields the endpoint-weight interval formulation.
\end{proof}

\begin{lemma}[Measure-function compactness on no-exit intervals]
\label{lem:short-measure-function}
On a compact no-exit interval \(I\), the propagated basin estimate implies
\[
        \mu_\eps^t
        \rightharpoonup
        \Hh^1\lfloor\Gamma_t
\]
as Radon measures for a.e. \(t\), along one subsequence independent of compact
subintervals.  Moreover the associated Jacobian currents converge in flat norm
to \(\pi[\Gamma_t]\), and no diffuse background measure remains.
\end{lemma}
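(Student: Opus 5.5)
The plan is to derive the lemma from the propagated basin estimate of Proposition~\ref{prop:short-open-basin}. On a no-exit interval this gives $\sup_{t\in I}\Basin_\eps^+(t)\le A\eta_\eps+o_\eps(1)\to0$. I would then split the normalized measure $\mu_\eps^t$ into three pieces: the contribution of the canonical comparison tube $\bar d_\eps(t)$, the relative contribution of $w_\eps=d_\eps-\bar d_{\eps,a}$ inside the tube $T_\rho\Gamma_t$, and the exterior contribution outside the tube.

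\textbf{The three pieces.}
\begin{enumerate}[label=(\roman*)]
\item For the canonical piece, the normalization stated after Definition~\ref{def:short-canonical-tube} gives $\estar^{-1}e_\eps(\bar d_{\eps,0})\,\dd x\rightharpoonup\Hh^1\lfloor\Gamma_t$. Replacing $a\equiv0$ by the selected center $a(s,t)$ costs only $O({\rm ModErr}_\eps)$, by the Gram-matrix computation in the proof of Lemma~\ref{lem:short-tube-coercivity}. It also costs $o(1)$ from constant shifts, since $|a|\lesssim\eps$ follows from the slice orthogonality and the flat error.
\item For the relative piece, I would expand $e_\eps(d_\eps)-e_\eps(\bar d_\eps)$ as a cross term plus a quadratic term. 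Lemma~\ref{lem:short-tube-coercivity} bounds the quadratic term by $\estar\|w_\eps\|_{\rm tube}^2\le C\estar\Basin_\eps^+$. The cross term is handled by Cauchy--Schwarz: it is at most $(\estar)^{1/2}\cdot(\estar\Basin_\eps^+)^{1/2}$. After dividing by $\estar$, both are $O(\Basin^{1/2})\to0$, uniformly in $t\in I$.
\item The exterior mass is bounded directly by ${\rm ExtMass}_\eps\le\Basin_\eps^+$.
\end{enumerate}
Together these give $|\mu_\eps^t(\varphi)-\int_{\Gamma_t}\varphi\,\dd\Hh^1|\le C\|\varphi\|_{C^1}(\Basin_\eps^+(I)^{1/2}+o_\eps(1))$. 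This actually holds for every $t\in I$, so a.e.\ $t$ is immediate. The subsequence is needed only from the earlier propositions; taking a countable dense set of tests and a diagonal argument over an exhaustion by compact subintervals gives one subsequence independent of the subinterval.

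\textbf{Jacobians and the absence of diffuse background.} For the Jacobian currents $J d_\eps$, I would use the Jerrard--Soner estimate. The canonical tube's Jacobian converges in flat norm to $\pi[\Gamma_t]$. The flat distance between $Jd_\eps$ and $J\bar d_\eps$ is bounded by ${\rm FlatErr}_\eps$ together with the interpolation bound $\|Jd-J\bar d\|_{\rm flat}\lesssim\|w\|_{L^2}(\|\nabla d\|_{L^2}+\|\nabla\bar d\|_{L^2})$ localized on the tube, and both are contained in the basin. The statement that no diffuse background remains is then the total-mass convergence $\mu_\eps^t(\Omega)\to\Hh^1(\Gamma_t)$, which follows from the same three-part split with $\varphi\equiv1$ near the closure. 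The collar term ${\rm CollarErr}_\eps$ prevents mass from escaping to the boundary.

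\textbf{Main obstacle.} I expect the hardest step to be the cross term in (ii) near the core, where $\nabla\bar d_\eps\sim1/\eps$. The naive Cauchy--Schwarz bound uses the full $\estar$ energy of $\bar d_\eps$ and loses nothing only if the tube norm of $w_\eps$ dominates $\int|\nabla w|^2+\eps^{-2}|\bar d\cdot w|^2$. If that fails, I would first try the linearized equation: integrating the cross term by parts gives $\int(-\Delta\bar d+\eps^{-2}(|\bar d|^2-1)\bar d)\cdot w$. This is the canonical residual paired with $w$, which is orthogonal to the $Z_i$ and hence small, by Lemma~\ref{lem:short-residual-normal-form}.
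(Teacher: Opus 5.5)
Your argument is correct at the paper's level of rigor, but it takes a genuinely different route.

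\textbf{The two routes.} The paper never expands the energy density around the comparison profile. It takes a weak-\(*\) limit of \(\mu_\eps^t\,\dd t\). It then uses Ginzburg--Landau Jacobian compactness and the flat entry of \(\Basin_\eps^+(I)\) to identify the limit current with \(\pi[\Gamma_t]\), and the exterior-mass entry to exclude mass away from the tube. Unit multiplicity and the absence of a diffuse part come from matching two one-sided bounds: the degree-one lower bound on each normal slice gives at least one unit of line density, and the modulated-energy upper bound gives at most one unit. A diagonal argument fixes the subsequence.

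You instead split \(\mu_\eps^t\) into the canonical density \(\estar^{-1}e_\eps(\bar d_\eps)\), the relative density \(\estar^{-1}[e_\eps(d_\eps)-e_\eps(\bar d_\eps)]\), and the exterior part. You then estimate the middle piece perturbatively through the coercive norm of \(w_\eps\).

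\textbf{What each buys.} Your route gives a quantitative bound of order \(\Basin_\eps^+(I)^{1/2}\) at every \(t\in I\) and along the full sequence, so neither the subsequence nor the a.e.\ qualifier is really needed. Its price is the obstacle you flagged. The tube norm of Lemma~\ref{lem:short-tube-coercivity} must dominate \(\estar^{-1}\int_{T_\rho\Gamma_t}(|\nabla w_\eps|^2+\eps^{-2}|\bar d_\eps\cdot w_\eps|^2)\), including the phase/gauge component. Only under that normalization does the Cauchy--Schwarz cross-term bound close. The paper's matching of a slice lower bound with \(E_\eps[d_\eps]\le E_\eps[\bar d_\eps]+\estar\Basin_\eps^+\) never meets a cross term and does not need the strongest form of tube coercivity. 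The cost is that it is purely qualitative.

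\textbf{Sub-steps to adjust.}
\begin{enumerate}
\item The interpolation estimate \(\|Jd-J\bar d\|_{\rm flat}\lesssim\|w\|_{L^2}(\|\nabla d\|_{L^2}+\|\nabla\bar d\|_{L^2})\) loses a factor \((\estar)^{1/2}\). It would require \(\|w_\eps\|_{L^2}=o((\estar)^{-1/2})\), which is not among the basin controls. Read the Jacobian convergence directly off the flat entry of \(\Basin_\eps^+\), as the paper does.
\item The claim \(|a|\lesssim\eps\) is more than the flat error gives. Flat closeness of \(\pi[\Gamma_t^a]\) and \(\pi[\Gamma_t]\) controls roughly \(\int|a|\,\dd s\). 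Together with \({\rm ModErr}_\eps\to0\) this yields \(\|a\|_{L^\infty}\to0\). Combined with Lemma~\ref{lem:app-canonical-mass}, that is all piece (i) needs.
\item In your fallback, the object paired with \(w_\eps\) is the static Euler--Lagrange residual of \(\bar d_\eps\). Its leading translation-direction curvature part comes from the Fermi computation of Lemma~\ref{lem:app-moving-tube-residual}, not from the dynamical residual \(G_\eps\) of Lemma~\ref{lem:short-residual-normal-form}.
\end{enumerate}
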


\begin{proof}
The normalized energy bound gives weak-* compactness of
\(\mu_\eps^t\dd t\) on \(I\times\Omega\).  The GL Jacobian compactness theorem
gives an integer rectifiable one-current as the limit of the vorticity current.
The flat-current entry of \(\Basin_\eps^+(I)\) identifies this current with the
prescribed tube current \(\pi[\Gamma_t]\).  The exterior-mass entry rules out
mass away from the tube.  On each normal slice, the degree-one lower bound gives
at least one unit of line density, while the modulated energy upper bound gives
at most one unit.  Hence the limiting varifold has unit multiplicity and no
additional diffuse component.  A diagonal argument on an exhaustion of compact
subintervals gives one subsequence for the whole no-exit interval.
\end{proof}

\begin{lemma}[Finite-epsilon Brakke-input estimates]
\label{lem:short-input-ledger}
Let \(\mathscr T\) be a finite family of compactly supported scalar tests,
nonnegative spacetime weights, and vector fields.  Define
\(\mathfrak I_\eps(I;\mathscr T)\) as the sum of the following six defects:
\[
\begin{array}{ll}
{\rm (i)} & \hbox{mass measure discrepancy},\\
{\rm (ii)} & \hbox{transport and relative-velocity vector-measure discrepancy},\\
{\rm (iii)} & \hbox{force vector-measure discrepancy},\\
{\rm (iv)} & \hbox{stress-action/first-variation discrepancy},\\
{\rm (v)} & \hbox{force-power product discrepancy},\\
{\rm (vi)} & \hbox{negative part of the localized dissipation lower bound}.
\end{array}
\]
Then
\[
        \mathfrak I_\eps(I;\mathscr T)
        \le
        C_{\mathscr T}
        \bigl[
        \Basin_\eps^+(I)^{1/2}
        +
        \Basin_\eps^+(I)
        \bigr]
        +
        o_\eps(1).
\]
The constant depends on the selected finite test family, but not on
\(\eps\).
\end{lemma}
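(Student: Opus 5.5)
The plan is to treat the six defects one at a time and reduce each to one of three finite-\(\eps\) mechanisms already established: the modulated-tube coercivity of Lemma~\ref{lem:short-tube-coercivity}, the force projection of Lemma~\ref{lem:short-line-force-id} together with Lemma~\ref{lem:short-residual-normal-form}, and the Morse-Bott slaving of Lemma~\ref{lem:short-mb-slaving}. For every entry I would use the same splitting. Write \(d_\eps=\bar d_{\eps,a}+w\) on the tube \(T_\rho\Gamma_t\), with the centers \(a(s,t)\) fixed by slice orthogonality. Each diffuse quantity, tested against a member of \(\mathscr T\), is then the sum of three pieces: a canonical piece computed from \(\bar d_{\eps,a}\); a cross piece that is linear in \(w\); and a quadratic piece in \(w\). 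The canonical piece converges to the corresponding line quantity on \(\Gamma_t\) with an explicit \(o_\eps(1)\) error, using the profile decay of \(q_\ast\), the normalization \(\estar=\pi|\log\eps|\), and a Fermi-coordinate Taylor expansion. The cross piece is bounded by Cauchy's inequality by \(C_{\mathscr T}\Basin_\eps^+(I)^{1/2}\). The quadratic piece is bounded by \(C_{\mathscr T}\Basin_\eps^+(I)\). Both bounds use the coercive norm, because \(\estar^{-1}\|w\|_{\rm tube}^2\) and \(\int_I D^\eps_{\rm rel}/\estar\) are dominated by the basin. Mass leaking outside the tube is bounded directly by \({\rm ExtMass}_\eps\le\Basin_\eps^+\).

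The order of the six defects would be as follows. For (i), expand the energy density; the canonical tube has normalized mass \(\Hh^1\lfloor\Gamma_t\) plus \(o_\eps(1)\), and \({\rm ModErr}_\eps\) accounts for the \(\partial_s a\) contribution. For (iv), write the diffuse stress \(\estar^{-1}(e_\eps I-\nabla d_\eps\otimes\nabla d_\eps)\) tested against a vector field. Its canonical part is the tangent-projection stress \((I-\tau\otimes\tau)\) on \(\Gamma_t\), which follows from equipartition of the planar profile. The \(w\)-dependent parts are handled as above. For (ii), split the relative velocity measure, built from \(D_td_\eps\cdot\nabla d_\eps\), into \(D_t\bar d_\eps\cdot\nabla\bar d_\eps\) and a remainder. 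The first equals \(-M\Vrel_\Gamma^\perp\) per unit length up to \(o_\eps(1)\). The remainder is controlled by \(D_{\rm rel}^\eps\) through Cauchy-Schwarz, which gives the square-root term. The transport part uses only the Lipschitz bound on \(u_\eps\) and the convergence \(u_\eps\to u\) in \(L^1_tW^{1,\infty}\). For (iii), the force measure \(\estar^{-1}G_\eps\cdot\nabla d_\eps\) splits into the translation coefficient \(b_\eps\), which Lemma~\ref{lem:short-line-force-id} handles quantitatively, and \(P_{\rm orth}G_\eps\) paired with \(\nabla w\), which is bounded via Lemma~\ref{lem:short-residual-normal-form} and Cauchy-Schwarz. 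Massive contributions vanish by Lemma~\ref{lem:short-mb-slaving}.

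Defects (v) and (vi) are the main obstacle, since they are products or lower bounds rather than linear functionals. For (v), the force power is \(\int\varphi\,G_\eps\cdot D_td_\eps/\estar\). I would insert the decompositions of both factors. Then \(b_\eps\cdot(\text{velocity coefficient})\) converges to \(\varphi\,f_{\rm cl}^\perp\cdot M\Vrel^\perp_\Gamma\) through the \(L^1\) bound on \(b_\eps-Mf_{\rm cl}^\perp\), combined with the \(L^\infty\) bound on the smooth filament velocity. The mixed terms are again bounded by \((\int G_{\rm orth}^2/\estar)^{1/2}(\int D_{\rm rel}/\estar)^{1/2}\). For (vi), the goal is a finite-\(\eps\) inequality \(\estar^{-1}\int\varphi|D_td_\eps|^2\ge\int\varphi|\Vrel^\perp_\Gamma|^2M\,\dd\Hh^1-\text{error}\). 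I would obtain it by expanding the square around \(D_t\bar d_\eps\), discarding the nonnegative \(|D_tw|^2\) term, and bounding the cross term by the square-root estimate. Here \(\varphi\ge0\) is essential. The delicate point is the cross term \(\int\varphi D_t\bar d_\eps\cdot D_tw\). It is not small by orthogonality alone, because the orthogonality holds slice-wise but \(D_t\) does not commute with it. I expect to have to write \(D_t\) in the transported normal frame and absorb the commutator using curvature and \(\|\nabla u_\eps\|_\infty\) bounds, which is precisely where the Lipschitz transport window enters. Summing the six bounds gives the claim, with a constant that depends on \(\mathscr T\) only through the \(C^1\) norms and supports of the test family.
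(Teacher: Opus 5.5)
Your plan is sound at the paper's level of detail, but it is organized differently.

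\textbf{The decomposition.} You expand every quantity around the modulated tube, $d_\eps=\bar d_{\eps,a}+w$. The paper instead splits the two dynamical factors along the true gradient:
\begin{itemize}
\item $D_td_\eps=\Vrel_\Gamma^\perp\cdot\nabla d_\eps+r_\eps^v$, with $\int_I\!\int|r^v_\eps|^2/\estar\le C\Basin^+_\eps(I)+o_\eps(1)$;
\item $G_\eps=f_{\rm cl}^\perp\cdot\nabla d_\eps+r_\eps^f$.
\end{itemize}
With this splitting, (ii), (iii), (v) and (vi) all reduce to one bilinear normal-metric convergence, plus Cauchy's inequality on the terms containing $r^v_\eps$ or $r^f_\eps$.

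\textbf{The first variation (iv).} This is where the routes genuinely differ. The paper computes no canonical stress. It uses the stress identity coming from the equation,
\[
-\frac1{\estar}\int T_\eps[d_\eps]:\nabla X=-\Mmob^{-1}\int X\cdot\frac{(D_td_\eps-G_\eps)\nabla d_\eps}{\estar},
\]
so (iv) follows from (ii), (iii) and the filament law $\Vrel_\Gamma^\perp-f_{\rm cl}^\perp=\Mmob H_{\Gamma_t}$, with no new estimate. Your route is equipartition of the canonical profile (Lemma~\ref{lem:app-canonical-mass}) plus Cauchy--Schwarz in $w$. It uses neither the equation nor the filament law. It therefore does not depend on the comparison filament solving the law exactly, at the price of a separate profile computation.

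Fix the formula there. By isotropy in the normal plane, $\estar^{-1}\nabla\bar d_\eps\otimes\nabla\bar d_\eps\to(I-\tau\otimes\tau)\Hh^1$, and also $\estar^{-1}e_\eps(\bar d_\eps)\to\Hh^1$. Hence the canonical stress $\estar^{-1}(e_\eps I-\nabla\bar d_\eps\otimes\nabla\bar d_\eps)$ tends to $\tau\otimes\tau\,\Hh^1\lfloor\Gamma_t$. It does not tend to $I-\tau\otimes\tau$, which is the normal projection and would not give $\delta V_{\Gamma_t}$.

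\textbf{The ``delicate point'' in (vi) is not one.} The lemma allows an error of size $C_{\mathscr T}\Basin_\eps^+(I)^{1/2}$, and Cauchy--Schwarz alone gives
\[
\frac1{\estar}\Bigl|\int_I\!\!\int\varphi\,D_t\bar d_\eps\cdot D_tw\,\dd x\,\dd t\Bigr|
\le\|\varphi\|_\infty\Bigl(\int_I\frac{\|D_t\bar d_\eps\|_{L^2}^2}{\estar}\,\dd t\Bigr)^{1/2}
\Bigl(\int_I\frac{D^\eps_{\rm rel}}{\estar}\,\dd t\Bigr)^{1/2}.
\]
The first factor is bounded. Up to Fermi and $u_\eps-u$ terms, $D_t\bar d_{\eps,a}$ is a translation-mode field with coefficient $-(\Vrel_\Gamma^\perp+\partial_ta)$. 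The term $\partial_ta$ is controlled by the line-level part of $D^\eps_{\rm rel}$. No orthogonality or transported-frame commutator argument is needed. This is exactly how the paper handles $2r^v_\eps\cdot(\Vrel_\Gamma^\perp\cdot\nabla d_\eps)$.

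\textbf{Three smaller points.}
\begin{itemize}
\item The canonical pieces live on $\Gamma^a_t$ and move with $\partial_ta$. They match the line quantities on $\Gamma_t$ only up to ${\rm FlatErr}_\eps$, ${\rm ModErr}_\eps$ and line-level dissipation errors. These errors are within $C(\Basin^{1/2}+\Basin)$, but they are not $o_\eps(1)$ as you state.
\item The transport entry of (ii) contains $\estar^{-1}\int\varphi\,T_\eps:\nabla u_\eps$. This is a stress action, so it needs your (iv) analysis with test field $\varphi\nabla u_\eps$, as in Proposition~\ref{prop:transport-compatibility}. The Lipschitz bound alone is not enough.
\item In (v), two further mixed terms occur: $P_{\rm orth}G_\eps\cdot D_t\bar d_\eps$ and $G^{\rm tr}_\eps\cdot D_tw$. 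The first is handled by Cauchy--Schwarz as above. In the second, the piece with coefficient $b_\eps-Mf^\perp_{\rm cl}$ is controlled only in $L^1$ by Lemma~\ref{lem:short-line-force-id}. That does not pair with $D_tw\in L^2$. The paper's remainder $r^f_\eps$ carries the same piece without addressing it.
\end{itemize}
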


\begin{proof}
For the mass term, use Lemma~\ref{lem:short-measure-function}.  The transport
term is controlled by Proposition~\ref{prop:transport-compatibility}.  For the
relative-velocity term, decompose
\[
        D_td_\eps
        =
        \Vrel_\Gamma^\perp\cdot\nabla d_\eps
        +
        r_\eps^v .
\]
The zero-mode residual estimate from the open-basin propagation gives
\[
        \int_I\!\!\int_\Omega
        \frac{|r_\eps^v|^2}{\estar}\,\dd x\dd t
        \le
        C\Basin_\eps^+(I)+o_\eps(1).
\]
The main term converges by the bilinear normal-metric convergence of the
canonical core.  The force term is identical with
\[
        G_\eps
        =
        f_{\rm cl}^\perp\cdot\nabla d_\eps
        +
        r_\eps^f ,
\]
where \(r_\eps^f\) is controlled by the orthogonal residual estimate.

For the first variation, use the diffuse stress identity
\[
        -\int
        \frac{T_\eps[d_\eps]:\nabla X}{\estar}
        =
        -\Mmob^{-1}
        \int
        X\cdot
        \frac{(D_td_\eps-G_\eps)\nabla d_\eps}{\estar}.
\]
The two vector-measure convergence estimates just proved identify the limit as
\[
        -\Mmob^{-1}
        \int_{\Gamma_t}
        X\cdot
        (\Vrel_\Gamma^\perp-f_{\rm cl}^\perp)\,\dd\Hh^1 .
\]
The filament law makes this equal to \(\delta V_{\Gamma_t}(X)\).  For the
force-power term, expand
\[
        G_\eps\cdot D_td_\eps
        =
        (f_{\rm cl}^\perp\cdot\nabla d_\eps)
        (\Vrel_\Gamma^\perp\cdot\nabla d_\eps)
        +
        \hbox{terms containing }r_\eps^v\hbox{ or }r_\eps^f .
\]
The remainders are controlled by Cauchy's inequality and the dissipation bound.
The main term again converges by the bilinear normal metric.  Finally,
localized dissipation is lower semicontinuous because
\[
        |D_td_\eps|^2
        =
        |\Vrel_\Gamma^\perp\cdot\nabla d_\eps|^2
        +
        2r_\eps^v\cdot(\Vrel_\Gamma^\perp\cdot\nabla d_\eps)
        +
        |r_\eps^v|^2 ,
\]
and the negative part of the difference is bounded by the residual norm and the
normal-metric defect.
\end{proof}

\begin{lemma}[From finite tests to the Brakke testing class]
\label{lem:short-full-testing}
The conclusions of Lemma~\ref{lem:short-input-ledger} hold simultaneously for
all compactly supported tests in the Brakke passage:
\[
        \varphi\in C_c^1(\Omega),\qquad
        \Phi\in C_c^1(I\times\Omega),\qquad
        \Psi\in C_c^0(I\times\Omega),\ \Psi\ge0 .
\]
No new subsequence is taken in this extension.
\end{lemma}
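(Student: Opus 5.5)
The extension from a finite test family to the full testing class goes by separability and density, with constants that depend only on a few norms of each test. Concretely, fix once and for all the subsequence produced by Lemma~\ref{lem:short-measure-function} and Proposition~\ref{prop:short-brakke} on an exhaustion \(I_1\Subset I_2\Subset\cdots\) of the no-exit interval. Each defect (i)--(vi) in Lemma~\ref{lem:short-input-ledger} is linear, or in the dissipation case superlinear but monotone, in the test. Its \(\eps\)-uniform size is bounded by a norm of the test times the normalized energy and dissipation budgets. Those budgets are finite uniformly in \(\eps\) by the propagated basin estimate of Proposition~\ref{prop:short-open-basin}, together with \(\sup_\eps E_\eps[d_\eps(0)]/\estar<\infty\) and the residual bounds of Lemma~\ref{lem:short-residual-normal-form}.

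\emph{Step 1: dependence of the defects on the test.} For each defect I would prove a bound of the form
\[
        \bigl|\mathfrak I_\eps^{(j)}(\varphi)-\mathfrak I_\eps^{(j)}(\tilde\varphi)\bigr|
        \le C_K\,\|\varphi-\tilde\varphi\|_{X_j},
\]
for tests supported in a fixed compact cylinder \(K\). Here \(X_j=C^1\) for the first-variation and transport terms, since they pair \(\nabla\varphi\) against the stress \(T_\eps/\estar\) and against \(\nabla u_\eps\). For the mass, velocity, force and force-power terms, \(X_j=C^0\) suffices. The ingredients are:
\begin{enumerate}[label=(\alph*)]
\item mass: \(\mu_\eps^t(K)\le C\);
\item velocity and force measures: Cauchy's inequality with \(\int D_{\rm rel}^\eps/\estar\) and the \(G_\eps\) bound;
\item transport: the Lipschitz clock \(\int\|\nabla u_\eps\|_\infty\);
\item dissipation, for \(\Psi\ge0\): monotonicity in \(\Psi\) plus the \(L^1\) bound on \(|D_td_\eps|^2/\estar\).
\end{enumerate}
None of these constants depends on \(\eps\); this is the compact-cylinder extension estimate.

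\emph{Step 2: countable dense core.} Choose countable families dense in \(C^1_c(\Omega)\), \(C^1_c(I\times\Omega)\) and the nonnegative cone of \(C^0_c(I\times\Omega)\), for the respective \(X_j\)-norms with supports in the exhaustion. For the nonnegative cone I would use nonnegative rational combinations of products of mollified bumps. Apply Lemma~\ref{lem:short-input-ledger} to each finite initial segment of this enumeration. Since \(\Basin_\eps^+(I)\to0\), every finite defect tends to zero along the already fixed subsequence, so no diagonal extraction beyond the one in Lemma~\ref{lem:short-measure-function} is needed.

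\emph{Step 3: approximation.} Given an arbitrary test \(\varphi\), pick \(\varphi_k\) in the dense core with \(\|\varphi-\varphi_k\|_{X_j}\to0\) and supports in a common compact set. Then
\[
        \limsup_{\eps\to0}\mathfrak I_\eps(\varphi)
        \le \limsup_{\eps\to0}\mathfrak I_\eps(\varphi_k)+C_K\|\varphi-\varphi_k\|_{X_j},
\]
and letting \(k\to\infty\) gives the conclusion for \(\varphi\). The limiting objects (\(\mu^t\), \(\delta V_t\), the force and velocity measures) are continuous in the same norms, since the limits inherit the uniform bounds.

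\emph{Main obstacle.} I expect the dissipation lower bound to be the hardest step, because it is a one-sided, nonlinear (quadratic) statement. There I would approximate a nonnegative \(\Psi\) from below by nonnegative core elements \(\Psi_k\uparrow\Psi\) with \(\|\Psi-\Psi_k\|_{C^0}\to0\). Monotonicity transfers the lower bound, and the error is controlled by \(\|\Psi-\Psi_k\|_\infty\) times the uniform dissipation budget. A second delicate point is the a.e.-time statement: the exceptional null set of times must be chosen once, for the countable core only, which is what keeps the extension free of new subsequences.
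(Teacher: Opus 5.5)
Your proposal is correct and follows the paper's proof essentially step for step. Both arguments use countable $C^1$-dense and nonnegative uniformly dense cores on each compact cylinder, and both apply Lemma~\ref{lem:short-input-ledger} to finite initial segments, so that the single vanishing quantity $\Basin_\eps^+(I)$ drives every core defect to zero without an $N$-dependent subsequence. Both then use the compact-cylinder Lipschitz extension bound coming from the uniform mass, dissipation, stress and residual budgets, nonnegative approximants for the dissipation entry, and exhaustion by cylinders.

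Two small refinements:
\begin{itemize}
\item The $\eps$-level dissipation functional $\Psi\mapsto\int\Psi|D_td_\eps|^2/\estar$ is linear, not superlinear, in $\Psi$; the quadratic structure sits in $D_td_\eps$. So the two-sided bound $\|\Psi-\Psi_k\|_\infty\int_K|D_td_\eps|^2/\estar$ with nonnegative core approximants already suffices, and no approximation from below is needed.
\item The fixed subsequence should be quoted from Lemma~\ref{lem:short-measure-function} alone, because Proposition~\ref{prop:short-brakke} itself relies on the present lemma.
\end{itemize}
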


\begin{proof}
Fix a compact cylinder \(K\Subset I\times\Omega\).  Choose countable dense
cores in \(C_c^1(K)\) and a countable positive core dense in
\(C^0(K;[0,\infty))\).  Applying Lemma~\ref{lem:short-input-ledger} to the
first \(N\) elements of the cores gives convergence for all core tests, because
the same small quantity \(\Basin_\eps^+(I)\) drives all selected defects to
zero.  The constant may depend on \(N\), but no subsequence depends on \(N\).

To pass from the core to an arbitrary test, use the compact-cylinder extension
bound
\[
        |\mathfrak I_\eps(I;\mathscr T)
        -
        \mathfrak I_\eps(I;\widetilde{\mathscr T})|
        \le
        C_K\Delta_K(\mathscr T,\widetilde{\mathscr T})
        +
        o_\eps(1),
\]
where \(\Delta_K\) is the sum of the \(C^1\)- or uniform distances of the
corresponding tests.  The bound follows from the uniform local mass,
dissipation, residual, stress-action, and force-power estimates.  For the
localized square term the approximants are chosen nonnegative; signed
decomposition is used only for the linear entries.  Exhausting
\(I\times\Omega\) by compact cylinders gives the full testing class.
\end{proof}

\begin{lemma}[Common Brakke representative]
\label{lem:short-common-representative}
The distributional inequality obtained from the localized energy inequality has
a single measure-valued representative \(\mu^t\), independent of the test
function, such that the endpoint-weight transported Brakke inequality holds on every
compact interval.
\end{lemma}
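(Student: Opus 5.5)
The plan is to build the representative in the standard Brakke way, from monotonicity of test-function masses, and then to use the uniform finite-\(\eps\) control of Lemma~\ref{lem:short-input-ledger} and Lemma~\ref{lem:short-full-testing} to make it independent of the test. Fix nonnegative \(\varphi\in C_c^2(\Omega)\). The localized energy inequality from the Brakke passage, with the force-power and transport terms bounded by the propagated basin estimate of Proposition~\ref{prop:short-open-basin}, gives the following structure for the limit function \(t\mapsto\mu^t(\varphi)\): it is the sum of a function of bounded variation and a nonincreasing function. More precisely,
\[
        t\longmapsto \mu^t(\varphi)-\int_0^t\Bigl(C_\varphi\,\mu^s(\mathrm{spt}\,\varphi)+\int|\nabla\varphi||f_{\rm cl}^\perp|\,\dd\mu^s\Bigr)\dd s
\]
should be nonincreasing. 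The completed square \(-\Mmob|H|^2+\nabla\varphi\cdot(\Mmob H+f)-\varphi H\cdot f\le C_\varphi(1+|f|^2)\) absorbs \(H\). The transport term is bounded by \(\|\nabla u\|_{L^\infty}\) and integrated against the Lipschitz clock. Both bounds are uniform in \(\eps\) by Lemma~\ref{lem:short-measure-function} and the \(L^2\) bound on \(G_\eps\).

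Next, take a countable set \(\{\varphi_k\}\) dense in \(C_c^2(\Omega;[0,\infty))\) (the positive core already used in Lemma~\ref{lem:short-full-testing}). For each \(k\) the monotone-plus-absolutely-continuous structure gives one-sided limits at every \(t\) and a countable jump set \(J_k\). Set \(J=\bigcup_kJ_k\). The representative is then defined as follows. For \(t\notin J\), let \(\mu^t\) be the weak-\(*\) limit of \(\mu_\eps^t\) along the fixed subsequence; convergence at every such \(t\), not only a.e., follows from the equicontinuity supplied by the same bound, via an Arzel\`a--Ascoli argument on the countable family. For \(t\in J\), let \(\mu^t\) be the upper right envelope \(\mu^t(\varphi_k)=\lim_{s\downarrow t}\mu^s(\varphi_k)\), matching the \(+\) convention used throughout for \(\Basin_\eps^+\). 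The uniform mass bound extends this from \(\{\varphi_k\}\) to a Radon measure. By density, every other test function \(\varphi\) sees the same measures. Hence \(\mu^t\) does not depend on the test, and it agrees a.e. with the varifold weight \(\|V_t\|\).

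Finally, the endpoint-weight inequality is integrated. For \(t_1<t_2\) in \(I\) the inequality to prove is
\[
        \mu^{t_2}(\varphi)-\mu^{t_1}(\varphi)\le\int_{t_1}^{t_2}\Bigl(\text{right side of }\eqref{eq:short-brakke}+\text{transport term}\Bigr)\dd t .
\]
Prove it first at non-jump endpoints by passing to the limit in the finite-\(\eps\) localized energy inequality with the already identified limits. Then obtain it at jump endpoints by taking right limits, which is compatible with the inequality because the jumps are downward. The main obstacle is this last step: showing that jumps are only downward for the transported quantity, so that the right-envelope choice is consistent for all tests simultaneously. Jumps are downward because the transport and force contributions are absolutely continuous in time, so any discontinuity comes from the dissipation alone. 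The plan is to verify this by controlling the time-modulus of these contributions through \(\int_I\|\nabla u_\eps\|_\infty\) and the \(L^2\) residual bound, which are uniform in \(\eps\).
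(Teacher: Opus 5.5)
Your argument is correct and follows essentially the paper's route: bounded variation of \(t\mapsto\mu^t(\varphi)\) from the localized energy inequality on a countable positive core, a one-sided representative fixed simultaneously for all core tests, Riesz representation, and density. The only real differences are minor corrections: you take right limits at jump times where the paper takes the left-continuous upper representative (with downward jumps the right limit is the lower value, not an ``upper'' envelope), but any value between the two one-sided limits satisfies the interval inequality, so the downward-jump step you call the main obstacle is not needed for your choice and is anyway immediate from your first step; also, a one-sided monotonicity bound gives no equicontinuity, so convergence of \(\mu_\eps^t\) at every \(t\notin J\) comes from sandwiching between nearby a.e.\ convergence times using the uniform finite-\(\eps\) monotonicity rather than from Arzel\`a--Ascoli, and it is dispensable, since proving the inequality at a.e.\ endpoints and then passing to one-sided limits suffices.
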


\begin{proof}
For each nonnegative test \(\varphi\) in a countable dense positive core, the
function \(t\mapsto\mu^t(\varphi)\) has bounded variation from the localized
energy inequality and the local mass bound.  Choose the left-continuous upper
representative for all core tests simultaneously.  Positivity and the local
cutoff bounds imply local boundedness of the resulting linear functional on the
countable core.  The Riesz representation theorem gives a positive Radon
measure \(\mu^t\) for every \(t\) outside a common null set, and the
upper-envelope construction fills the remaining times.  Density extends the
interval inequality from the core to all nonnegative \(C_c^1\) tests.  The
endpoint terms are interpreted with this representative; the differential
inequality is unchanged at a.e. times.
\end{proof}

\subsection{Relative Hopf accounting}

\begin{proposition}[Hopf cost on one slab]
\label{prop:short-hopf}
Let \(I=[t_-,t_+]\) be an admissible slab with fixed ordered collar and relative
Hopf charge \(k\).  If \(I\) is topologically regular in the sense of
Definition~\ref{def:topological-regular-slab}, the capped Hopf functional is
computed using the transported meridian-longitude atlas.  If \(I\) is not
topologically regular, the first failure of that atlas is charged to
\({\rm TopExitCost}_\eps(I)\).  In either case,
\[
        c|k|
        \le
        \liminf_{\eps\to0}
        \left(
        {\rm GapCost}_\eps(I)
        +
        {\rm CoreMass}_\eps(I)
        +
        {\rm TopExitCost}_\eps(I)
        \right).
\]
On the regular branch, the capped relative Hopf functional uses standard inner
solid-torus caps with zero longitudinal winding in the fixed collar gauge.
Replacing these caps by any allowed caps changes the endpoint difference by at
most \(C\,{\rm CoreMass}_\eps(I)+o_\eps(1)\).
\end{proposition}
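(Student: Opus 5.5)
The argument splits into two cases. If the slab is not topologically regular, the bound follows directly from the stopping convention. If it is regular, the Hopf functional is decomposed into an exterior part and a core part.

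\emph{Non-regular slab.} By Definition~\ref{def:topological-regular-slab}, the first failure of the atlas is a priced exit, so $\liminf_{\eps\to0}{\rm TopExitCost}_\eps(I)\ge c_0>0$. The collar is fixed, so the relative charge $k$ across $I$ is bounded by the energy budget: the normalized energy $E_\eps/\estar$ is bounded on the slab, and the Vakulenko-Kapitanskii bound of Lemma~\ref{lem:short-curvature-gap} applies to the glued exterior map. Together these bound $|k|$ by a constant times the left-hand cost. After shrinking $c$, the inequality holds in this case.

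\emph{Regular slab.} Here I would write the capped functional at each endpoint as
\[
\Hopf^{\rm cap}(t)=\int_{\Omega^\ast\setminus T_\rho}A\wedge F+\int_{\rm caps}A\wedge F .
\]
The exterior map $n_{C_\eps}$ is defined outside the inner tubes wherever $|d_\eps|\ge c_{\rm out}$ and the gap is at least $\gamma_{\rm out}$. The transported atlas gives a fixed diffeomorphism type of the complement. The standard caps have zero longitudinal winding in the fixed collar gauge, so their contribution depends only on the degree-one meridians, which are constant on $I$.

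On the exterior, split the time interval into the set where the gap stays above $\gamma_{\rm out}$ and the set where it concentrates. Where the gap stays positive, Lemma~\ref{lem:short-spectral-lifting} gives that the exterior Hopf contribution is locally constant. A change of $k$ must therefore come from one of three sources:
\begin{enumerate}
\item[(a)] gap loss, bounded below through Lemma~\ref{lem:short-curvature-gap} applied to the glued map. Since $\int|\nabla C|^3/g^3\ge c|\Delta\Hopf|^{3/4}$, I would \emph{define} ${\rm GapCost}_\eps$ as this normalized quantity, or as its power controlling $|k|$.
\item[(b)] flux of Hopf density $A\wedge F$ through the inner tube boundary $\partial T_\rho$.
\item[(c)] a cap change.
\end{enumerate}

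For (b) and (c), estimate $|F|\lesssim|\nabla n|^2$ and $|A|$ via a Coulomb gauge on the complement. The flux of $A\wedge F$ across $\partial T_\rho$ is then bounded by the energy on the annular region, which is ${\rm CoreMass}_\eps$ up to $o_\eps(1)$.

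For the cap-independence claim, two allowed caps differ by a longitudinal winding. By a Chern-Simons gluing formula, this alters the Hopf number by the product of the meridian degree and the longitudinal phase winding. The winding is controlled by the phase energy in the annulus, giving the bound $C\,{\rm CoreMass}_\eps+o_\eps(1)$.

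Summing (a)--(c) and taking $\liminf$ gives the displayed inequality. Integrality of $k$ then lets me convert the $|k|^{3/4}$ from Lemma~\ref{lem:short-curvature-gap} to $c|k|$: either $k=0$, or $|k|\ge1$, and the costs are normalized so that an order-one price is paid per unit.

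\emph{Main obstacle.} I expect the hardest step to be the flux estimate (b): showing that transfer of Hopf density between the capped tubes and the exterior is controlled linearly by core mass. The gauge $A$ is nonlocal, so I would first try a primitive $A$ supported in a fixed collar-adapted gauge and use the linking-number interpretation. With that choice, the Hopf change equals the change of the linking of preimages, and the preimage curves cross the core only where $|d_\eps|<c_{\rm out}$, which is exactly where the normalized core mass lives.
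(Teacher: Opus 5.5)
There is a genuine gap, and it sits where the bound must become linear in $|k|$.

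\textbf{The integrality step.} In the regular case you route gap loss through Lemma~\ref{lem:short-curvature-gap} and then convert $|k|^{3/4}$ into $c|k|$ by integrality. That conversion runs the wrong way. For an integer $|k|\ge1$ one has $|k|^{3/4}\le|k|$, so a cost bound $\ge c|k|^{3/4}$ gives only $\ge c$ when $k\neq0$, not $\ge c|k|$. Your phrase ``an order-one price is paid per unit'' is exactly what still has to be proved.

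\textbf{The curvature-gap lemma does not measure a change.} Lemma~\ref{lem:short-curvature-gap} bounds the Hopf invariant of a single map with positive gap on all of $\Sph^3$. It does not control the change of the capped functional across a gap-loss set; your inequality with $|\Delta\Hopf|^{3/4}$ is not what it says. In a vortex-tube slab the gap closes inside the cores, so the lemma does not apply to $C_\eps$ at all.

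\textbf{The flux estimate (b).} Here $A\wedge F$ is cubic in $\nabla n$ and the Coulomb primitive is nonlocal. Its flux through $\partial T_\rho$ therefore cannot be bounded linearly by annular $W^{1,2}$-type energy normalized by $\estar$. This is the point of the paper's remark that no $W^{1,2}$ Hopf invariant is used.

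\textbf{The non-regular case.} The same objection applies. Bounded $E_\eps/\estar$ controls neither $\int|\nabla n|^3$ nor $|\Hopf|$, and when the atlas fails the glued exterior map is not even defined.

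\textbf{The cap estimate.} Suppose two caps differ by a longitudinal phase twist of $w$ turns along a tube of length $O(1)$. This costs only $O(w^2)$ Dirichlet energy in the annulus, which is $o(1)$ after dividing by $\estar$. So a longitudinal winding cannot be bounded by $C\,{\rm CoreMass}_\eps+o_\eps(1)$.

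\textbf{The missing idea: quantization, not an analytic flux bound.} The paper uses the moving-boundary Chern--Simons transgression of Lemma~\ref{lem:app-cs-balance}. With a fixed collar the outer term vanishes and the interior terms are exact. So on a topologically regular slab the capped functional can change only through three channels:
\begin{enumerate}
\item gap-loss regions, charged to ${\rm GapCost}_\eps$;
\item integer degree crossings of an inner tube;
\item chart or atlas failure, charged to ${\rm TopExitCost}_\eps$ via Lemma~\ref{lem:short-exit-measure}.
\end{enumerate}
By the annular degree price (Lemma~\ref{lem:app-annular-price}), each unit of degree crossing costs at least $c\log(R/\eps)$. After normalization this is an order-one quantum, and counting these quanta is what gives linearity in $|k|$, as in Lemma~\ref{lem:short-endpoint-hopf}. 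Cap ambiguity is handled the same way. Allowed caps in a fixed atlas differ by integer combinations of \emph{meridional} degrees, each paid at the logarithmic scale (Lemma~\ref{lem:short-capped-hopf}); they do not differ by longitudinal windings. In the non-regular case the paper makes no energy-budget claim at all; it simply charges the first atlas failure to ${\rm TopExitCost}_\eps(I)$.
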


\begin{proof}
If the slab is not topologically regular, the tube graph, reach, or cap-atlas
control condition first fails in \(I\); by Definition
\ref{def:short-priced-exit} and Lemma~\ref{lem:short-exit-measure} this
contributes to \({\rm TopExitCost}_\eps(I)\).  We therefore assume below that
the slab is topologically regular.

On the positive-gap part of the slab the principal-axis map is smooth and has a
Chern-Simons Hopf functional.  The moving-boundary transgression formula shows
that its change is an integer unless degree crosses an inner tube, a boundary
collar, or a gap-loss region.  Gap loss is charged to
\({\rm GapCost}_\eps(I)\).  Degree crossing through an ordered tube costs the
quantized logarithmic core mass and is charged to
\({\rm CoreMass}_\eps(I)\).  If the open-basin chart itself fails before the
slab closes, the exit-cost measure from Definition
\ref{def:short-priced-exit} gives \({\rm TopExitCost}_\eps(I)\).  The
standard-cap normalization prevents cap choice from becoming a fourth cost:
changing the cap changes only the meridional-degree contribution, already
bounded by the same core-mass term.  The fixed cap atlas is used only on this
regular branch; no cap is transported through a self-intersection or a
reconnection.  Since the endpoint Hopf difference is the integer \(k\), the
displayed lower bound follows.
\end{proof}

\begin{lemma}[Capped relative Hopf functional]
\label{lem:short-capped-hopf}
Let \(U_\eps(t)\) be the positive-gap region obtained by removing the ordered
core tubes from \(\Omega\).  Suppose the ordered collar is fixed and \(t\)
belongs to a topologically regular Hopf slab.  Then the transported cap atlas
gives a capped map
\[
        \widehat n_\eps(t):\Omega^\ast\to\Sph^2
\]
obtained by filling each inner torus with a standard solid-torus cap.  The
standard cap is chosen in the fixed collar gauge by imposing zero longitudinal
winding relative to the transported reference longitude of the tube.  If
another allowed cap compatible with the same atlas is used at time \(t\), then
\[
        \left|
        \Hopf[\widehat n_\eps(t)]
        -
        \Hopf[\widehat n_\eps^{\,{\rm alt}}(t)]
        \right|
        \le
        C\,{\rm CoreMass}_\eps(\{t\})
        +
        o_\eps(1).
\]
On a slab \(I=[t_-,t_+]\), changing the allowed caps at either endpoint changes
the endpoint Hopf difference by at most
\[
        C\,{\rm CoreMass}_\eps(I)+o_\eps(1).
\]
\end{lemma}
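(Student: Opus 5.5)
The plan is to compare two caps that agree on the positive-gap exterior \(U_\eps(t)\) and differ only inside the inner solid tori, and to compute the Hopf difference through the Chern--Simons formula \(\Hopf[n]=\int A_n\wedge F_n\).

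\textbf{Step 1: reduce to maps agreeing outside the tubes.} Both \(\widehat n_\eps(t)\) and \(\widehat n_\eps^{\,{\rm alt}}(t)\) coincide with \(n_{C_\eps(t)}\) on \(U_\eps(t)\). The cap atlas is transported and the slab is topologically regular. Hence the two caps on each solid torus \(V_j\) have the same boundary trace on \(\partial V_j\). Gluing one cap to the reversed other gives a map from the double \(V_j\cup_{\partial}\overline{V_j}\cong \Sph^2\times\Sph^1\) to \(\Sph^2\).

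\textbf{Step 2: the difference formula.} By the standard transgression, the Hopf difference of two maps agreeing off \(\bigcup_j V_j\) equals \(\sum_j\) of a term bilinear in two integers. The first is the meridional degree \(\deg_j\) of the trace on a meridian disk. The second is the difference \(\ell_j\) of longitudinal windings of the two caps. Schematically,
\[
  \Hopf[\widehat n_\eps]-\Hopf[\widehat n_\eps^{\,{\rm alt}}]
  =\sum_j 2\,\deg_j\,\ell_j ,
\]
up to the normalization of \(\omega_{\Sph^2}\). The term with \(\ell_j\) alone vanishes, because the two caps share the transported reference longitude. The standard cap fixes \(\ell_j\) relative to zero longitudinal winding. \emph{Allowed} caps compatible with the atlas have \(|\ell_j|\) bounded by a fixed atlas constant. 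If \(|\ell_j|\) were unbounded, the atlas condition would fail and the slab would not be regular, so no charge would be needed here.

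\textbf{Step 3: charge the meridional degrees to core mass.} I expect this step to be the main obstacle. Each nonzero meridional degree must be paid by core mass. On a regular slab every tube with \(\deg_j\neq0\) carries a GL vortex core in \(d_\eps\). The slice lower bound \(\int_{\rm slice}e_\eps(d_\eps)\ge\pi|\deg_j|\,|\log\eps|-C\) holds on every normal slice. Integrating along the tube, whose length is bounded below by reach control, gives
\[
  |\deg_j|\le C\,{\rm CoreMass}_\eps(\{t\})+o_\eps(1),
\]
with \({\rm CoreMass}\) the \(\estar\)-normalized mass. The delicate point is to relate the meridional degree of the principal-axis trace \(n_C\) to the degree of the soft coordinate \(d_\eps\) on the same meridian. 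For this I would use the fact stated after the chart definition: on \(\{|d_\eps|\ge c_{\rm out}\}\) with positive gap, the phase of \(d\) and the oriented principal axis determine the same ordered state. This makes the two degrees agree up to a fixed chart homotopy on the collar annulus. Combining with Step 2 and the bound on \(|\ell_j|\) gives the single-time estimate. The finitely many tubes give a constant depending only on the link type.

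\textbf{Step 4: slab version.} Apply the single-time estimate at \(t_-\) and at \(t_+\), and add the two bounds. We have \({\rm CoreMass}_\eps(\{t_\pm\})\le{\rm CoreMass}_\eps(I)\), using the upper-envelope representative so that endpoint values are dominated by the slab quantity. This gives the bound \(C\,{\rm CoreMass}_\eps(I)+o_\eps(1)\) for the change in the endpoint Hopf difference.
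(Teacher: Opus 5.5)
Your proposal follows essentially the same route as the paper. Allowed caps differ only in longitudinal winding relative to the transported atlas, so the Hopf change is an integer combination of meridional degrees. Each meridional degree is charged to \({\rm CoreMass}_\eps\) by the logarithmic lower bound on normal slices, using the chart identification of the phase of \(d_\eps\) with the principal axis. The slab statement then follows by applying the single-time bound at \(t_\pm\).

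Your bilinear formula with a uniform bound on \(|\ell_j|\) makes explicit what the paper leaves implicit in ``integer combination''; some such bound is needed for \(C\) to be uniform. That bound should, however, be read as part of the definition of an allowed cap, not derived from atlas regularity as you suggest, since the cap is chosen at a fixed time and atlas regularity does not constrain it.
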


\begin{proof}
The positive-gap condition gives a smooth principal-axis map on
\(U_\eps(t)\).  Topological regularity supplies a smooth isotopy of embedded
inner tori, hence a fixed meridian-longitude basis and a standard solid-torus
filling for each component.  The standard cap is the extension whose longitude
degree is zero.  Any other cap compatible with this same atlas differs by an
integer combination of meridional degrees.  The meridional degree is exactly the
degree detected by the ordered core crossing that surrounds the removed tube.
The logarithmic lower bound for a nonzero degree on a normal disc gives
\[
        |{\rm degree}|
        \le
        C\,{\rm CoreMass}_\eps(\{t\})
        +
        o_\eps(1).
\]
The Chern-Simons expression for the Hopf invariant changes by this integer
cap contribution.  Applying the same single-time estimate at \(t_-\) and
\(t_+\) proves the slab statement.  If the isotopy of embedded tori or the
meridian-longitude atlas fails between the endpoints, this lemma is not applied
across that time; the failure is one of the tube-topology exits in
Definition~\ref{def:short-priced-exit}.
\end{proof}

\begin{lemma}[Exit-cost measure]
\label{lem:short-exit-measure}
The priced mechanisms in Definition~\ref{def:short-priced-exit} define a finite
nonnegative set function on time slabs.  After passing to a subsequence, it
converges weakly to a finite nonnegative Radon measure
\[
        \nu_{\rm exit}
\]
on the time interval.  If a first interior limiting exit occurs at
\(\tau_\ast\), then every slab \(I\) with \(\tau_\ast\in I^\circ\) satisfies
\[
        \liminf_{\eps\to0}
        {\rm TopExitCost}_\eps(I)
        \ge
        c_0 .
\]
\end{lemma}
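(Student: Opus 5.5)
The plan is to build the set function explicitly as a finite sum of nonnegative normalized channel densities, obtain the limiting measure from uniform mass bounds and weak-$*$ compactness, and derive the lower bound $c_0$ from threshold quantization of each exit channel.

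\textbf{Construction of the set function.} For each of the finitely many mechanisms in Definition~\ref{def:short-priced-exit}, define a nonnegative density in time and set
\[
{\rm ExitCost}_\eps(I)=\sum_{j}\int_I \rho^{(j)}_\eps(t)\,\dd t+\sum_j\sum_{t\in I}\beta^{(j)}_\eps(t).
\]
The densities are as follows:
\begin{itemize}
\item exterior core leakage: $\rho^{(j)}_\eps$ is the energy outside the tube;
\item boundary flux: $\rho^{(j)}_\eps$ is the Chern--Simons flux through the collar;
\item FENE/collar loss and chart/gap loss: $\rho^{(j)}_\eps$ is the positive part of the control quantity above its threshold, weighted by $\estar$;
\item annular crossing and graph loss: a jump part $\beta^{(j)}_\eps(t)\ge0$ at crossing times.
\end{itemize}
This is visibly additive and nonnegative on slabs, so it extends to a finite Borel measure $\nu_\eps$ on $[0,T]$.

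\textbf{Uniform bound and weak limit.} I would show $\sup_\eps \nu_\eps([0,T])/\estar<\infty$. Each density is dominated by the normalized energy $E_\eps/\estar$, which is bounded by the initial bound in Theorem~\ref{thm:short-main} together with the energy inequality on the stopped interval. The FENE and gap terms are dominated through the same coefficient ledger $\Ledger_\eps$. Banach--Alaoglu on $C([0,T])^\ast$ then gives a subsequence with $\nu_\eps/\estar\rightharpoonup\nu_{\rm exit}$.

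\textbf{Quantization at an exit.} For the lower bound, let $\tau_\ast$ be the first interior limiting exit. By definition there are exit times $\tau_\eps\to\tau_\ast$ at which some channel reaches its threshold, and after a further subsequence the channel index is fixed. Each threshold is quantized at normalized order one:
\begin{itemize}
\item a nonzero-degree annular crossing or ordered-collar failure costs at least $\pi|\log\eps|(1-o(1))$ by the logarithmic degree lower bound on a normal disc (the estimate used in Lemma~\ref{lem:short-capped-hopf});
\item leakage is of order $\estar$ by definition;
\item boundary flux is of normalized order one by definition;
\item chart or gap loss and FENE loss are charged at threshold value $\gamma_{\rm out}$ or $\rho_{\rm ch}$, via the weighting in the construction.
\end{itemize}
So in every case $\nu_\eps(J_\eps)/\estar\ge c_0$ on a small interval $J_\eps\ni\tau_\eps$. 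For $\tau_\ast\in I^\circ$, eventually $J_\eps\subset I$, which gives the $\liminf$ bound directly, without needing the weak limit.

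\textbf{Main obstacle.} The hard step is the quantization for the chart/gap and FENE channels, whose thresholds are not topological. The difficulty is ensuring the cost is charged over a time window of length bounded below, not merely at a single instant. I would handle it by charging these events as jump atoms of size $c_0\estar$ at the first threshold crossing, which is legitimate since the definition is ours. Lemma~\ref{lem:short-stopping} guarantees that the no-exit interval could otherwise be continued, so a threshold crossing is genuinely reached at $\tau_\eps$.
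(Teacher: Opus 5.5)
Your route is essentially the paper's. Each priced channel is a nonnegative accumulated quantity, the total is bounded up to the stopped time, and weak-$*$ compactness gives $\nu_{\rm exit}$. The stopping lemma guarantees a genuine threshold crossing at $\tau_\eps\to\tau_\ast$, and the $\liminf$ bound on a slab with $\tau_\ast\in I^\circ$ is read off from that crossing rather than from the weak limit. The paper's ``localization by a smooth time cutoff'' plays the role of your atoms. One step, however, fails as you have written it.

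In the quantization paragraph, the leakage and boundary-flux channels are time integrals $\int_I\rho_\eps^{(j)}\,\dd t$, but you quantize them by an \emph{instantaneous} condition. Exterior mass of order $\estar$ holds at a single time, and because the solution is stopped at the first exit, it is reached only at $\tau_\eps$ itself.

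\begin{itemize}
\item On a small interval $J_\eps\ni\tau_\eps$ you only have $\int_{J_\eps}\rho_\eps\,\dd t\le|J_\eps|\sup_{J_\eps}\rho_\eps$, so $\nu_\eps(J_\eps)/\estar\ge c_0$ does not follow.
\item Even on a fixed slab, the bound would need a uniform one-sided time modulus for the exterior mass. The hypotheses do not supply this: the residual is bounded only in normalized $L^2_{t,x}$ and may concentrate in time.
\end{itemize}

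This is exactly the single-instant difficulty you isolated for the chart/gap and FENE channels, and it needs the same cure. Apply it to every channel, including the graph/reach/cap-atlas losses, which you list in the construction but never quantize. Charge each channel at its first threshold crossing by an atom of threshold size, or by the slab supremum of its normalized control quantity.

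Your ``positive part above threshold'' densities vanish identically on the stopped interval, so the atoms must replace them rather than supplement them. With uniform atomic charging, only the channel(s) firing at $\tau_\eps$ contribute, and the uniform mass bound becomes immediate. You can then also drop the claim that the Chern--Simons boundary flux is dominated by $E_\eps/\estar$. That claim is not justified, since the flux involves $\partial_t n$ and the primitive $A$ rather than the energy.
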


\begin{proof}
Each priced mechanism is represented by a nonnegative accumulated quantity:
annular degree crossing, exterior core leakage, FENE coefficient loss, boundary
flux, graph loss, tube-topology or reach loss, collar failure, or chart/gap
loss.  The tube-topology control includes self-contact of the selected graph,
failure of the embedded-link isotopy, and loss of the meridian-longitude cap
atlas.  The open-basin propagation estimate bounds the total accumulated
quantity before the first exit, and the stopping lemma shows that failure of
continuation without one of these quantities crossing its threshold is
impossible.  Thus the sum is a finite nonnegative set function on finite unions
of intervals.  Standard compactness for finite measures gives a weakly
convergent subsequence.  At an interior first exit, at least one control quantity reaches
its threshold with positive margin; localization by a smooth time cutoff inside
any slab containing the exit transfers that threshold to the slab cost.  This
gives the displayed lower bound.
\end{proof}

\begin{lemma}[Endpoint Hopf accounting]
\label{lem:short-endpoint-hopf}
On a topologically regular admissible slab \(I=[t_-,t_+]\), the endpoint capped
Hopf difference satisfies
\[
\begin{aligned}
&\left|
        \bigl(
        \Hopf[\widehat n_\eps(t_+)]
        -
        \Hopf[\widehat n_\eps(t_-)]
        \bigr)
        -
        k
\right|
\\
&\qquad\le
        C\,{\rm GapCost}_\eps(I)
        +
        C\,{\rm CoreMass}_\eps(I)
        +
        C\,{\rm TopExitCost}_\eps(I)
        +
        o_\eps(1).
\end{aligned}
\]
\end{lemma}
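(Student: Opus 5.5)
The plan is to write the capped Hopf functional as a Chern--Simons integral and differentiate it in time along the slab.

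\textbf{Setup.} On the topologically regular slab \(I\), Definition~\ref{def:topological-regular-slab} gives a smooth isotopy of the inner tubes, so the complement \(U_\eps(t)\) has fixed diffeomorphism type. Lemma~\ref{lem:short-capped-hopf} then supplies the capped maps \(\widehat n_\eps(t)\) with standard zero-longitude caps. Write \(\Hopf[\widehat n_\eps(t)]=\int A_\eps\wedge F_\eps\), where \(F_\eps=\widehat n_\eps^\ast\omega_{\Sph^2}\) and \(A_\eps\) is a primitive chosen in the fixed collar gauge. The collar is fixed, so the exterior boundary term vanishes unless a boundary-flux exit occurs.

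\textbf{Step 1: time derivative.} Differentiating in \(t\) and using \(\partial_t F=\dd\,\partial_t A\) gives the transgression identity
\[
\frac{\dd}{\dd t}\int A\wedge F=2\int\partial_t A\wedge F+\hbox{boundary terms}.
\]
The bulk contribution over the positive-gap region is then compared with a pure reparametrization of \(n\). Since \(F\wedge F=0\) for a pullback of a two-form to \(\Sph^2\), the smooth part of the variation contributes only through flux across the moving boundaries. These boundaries are the inner tori \(\partial T_\rho\Gamma_t\), the collar, and the sets where \(\gap(C_\eps)<\gamma_{\rm out}\).

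\textbf{Step 2: charging each boundary.} Integrated over \(I\), the flux across each boundary type is assigned to one cost term.
\begin{enumerate}[label=(\roman*)]
\item \emph{Gap-loss boundary.} Here \(|\nabla n|\lesssim|\nabla C|/\gap\). This flux is bounded by \(\int|\nabla C|^3/\gap^3\), localized as in Lemma~\ref{lem:short-curvature-gap}, and so by \(C\,{\rm GapCost}_\eps(I)\).
\item \emph{Tube boundary.} The flux is the rate of meridional degree crossing into the tube. The logarithmic lower bound on normal discs, the same one used in Lemma~\ref{lem:short-capped-hopf}, bounds it by \(C\,{\rm CoreMass}_\eps(I)+o_\eps(1)\).
\item \emph{Collar and boundary flux, annular crossing.} These are priced exits by Definition~\ref{def:short-priced-exit}. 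Lemma~\ref{lem:short-exit-measure} bounds them by \(C\,{\rm TopExitCost}_\eps(I)\).
\end{enumerate}
The caps have zero longitudinal winding and are transported by the isotopy, so they contribute no further term. Any residual cap ambiguity at the two endpoints is again controlled by \({\rm CoreMass}_\eps\) through Lemma~\ref{lem:short-capped-hopf}. Subtracting \(k\), the relative charge measured on the fixed collar, then yields the stated inequality.

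\textbf{Main obstacle.} The hard step is Step 2(i): turning a time-integrated Chern--Simons flux through a moving, possibly irregular gap-loss set into a bound by the scale-invariant quantity \({\rm GapCost}_\eps\). I would first try a cutoff \(\psi(\gap/\gamma_{\rm out})\) in place of the sharp boundary. This turns the flux into a bulk term of the form \(\int\partial_t\psi\,A\wedge F\). I would then estimate it by Hölder with \(A\in L^3\), controlled through \(\|F\|_{L^{3/2}}\) by Sobolev applied to the primitive, together with \(F\lesssim|\nabla n|^2\). The result is a bound by \((\int|\nabla C|^3/\gap^3)^{\theta}\). If the power \(\theta\neq 1\), the estimate would be closed by comparing with the Vakulenko--Kapitanskii scaling \(|k|^{3/4}\). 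Any mismatch of powers would be absorbed into the constant \(C\), or into the \(o_\eps(1)\) term once \({\rm GapCost}_\eps\) is small.
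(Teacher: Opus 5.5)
Your proposal is essentially the paper's argument. Both use the moving-boundary Chern--Simons transgression on the positive-gap region, charge the gap-loss, inner-tube, and exit contributions to \({\rm GapCost}_\eps\), \({\rm CoreMass}_\eps\), and \({\rm TopExitCost}_\eps\), and absorb endpoint cap ambiguity by Lemma~\ref{lem:short-capped-hopf}. One correction: the interior term is exact because \(\partial_t A\) can be taken as \(\omega_{\Sph^2}(\partial_t n,\dd n\,\cdot)\) modulo an exact form, and this \(1\)-form wedged with \(F\) vanishes pointwise; it is not because \(F\wedge F=0\), which holds trivially in dimension three.

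The paper does not attempt the quantitative gap-flux bound you single out as the main obstacle---it simply assigns that contribution to \({\rm GapCost}_\eps\)---so that paragraph goes beyond the paper. As sketched it would also not close in terms of \(\int|\nabla C|^3/\gap(C)^3\) alone, since \(\partial_t\psi\) brings in \(\partial_t C_\eps\) and the primitive \(A\) is nonlocal.
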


\begin{proof}
Use the moving-boundary Chern-Simons transgression formula on the positive-gap
region.  Topological regularity gives a smooth isotopy of the inner tube
boundaries and a fixed cap atlas, so the capped endpoint maps are defined in the
same closed target domain.  Interior terms are exact as long as the
principal-axis map remains smooth and the ordered collar is fixed.  The only
contributions are therefore: gap-loss regions, where the principal-axis map
ceases to be controlled; inner-tube crossings, where degree enters or exits
through a removed core; and finite-epsilon exits of the open-basin chart,
including reach, self-contact, or cap-atlas failure if they occur.  The first is
charged to
\({\rm GapCost}_\eps(I)\), the second to \({\rm CoreMass}_\eps(I)\), and the
third to \({\rm TopExitCost}_\eps(I)\) by Lemma~\ref{lem:short-exit-measure}.
Lemma~\ref{lem:short-capped-hopf} absorbs endpoint cap ambiguity into the same
core-mass term.  The remaining transgression error is \(o_\eps(1)\) by the
fixed-collar and positive-gap estimates.
\end{proof}

\section{Proof of the main theorem}
\label{sec:main-proof}

We now prove Theorem~\ref{thm:short-main}.  Start with data in the open basin.
The model-level Morse-Bott reduction and massive-mode slaving put the concrete
LDG/Oldroyd-FENE closure into the transported Ginzburg-Landau form
\eqref{eq:transported-gl-short}.  Proposition~\ref{prop:short-force} computes
the zero-mode part of the exact residual and gives the normal force
\(f_{\rm cl}^\perp\).  This step is local in the tube chart and does not use a
varifold limit.

Choose the comparison filament to solve
\[
        \Vrel_\Gamma^\perp=\Mmob H_{\Gamma_t}+f_{\rm cl}^\perp .
\]
With this choice, Proposition~\ref{prop:short-open-basin} cancels the leading
translation residual and propagates the open basin until either the target time
is reached or one of the priced mechanisms occurs.  If the latter happens, the
priced-exit alternative in the theorem follows from the construction of the
exit-cost set function.

On a no-exit interval, the propagated basin estimate gives the compactness,
stress convergence, transport convergence, force-power convergence, and
localized-dissipation lower bound in Proposition~\ref{prop:short-brakke} and
Proposition~\ref{prop:transport-compatibility}.  Passing to the limit in the
localized energy inequality gives the transported forced Brakke inequality
\eqref{eq:short-brakke}.  This proves the no-exit branch.

Finally, if the fixed ordered collar carries relative Hopf charge \(k\) on a
slab, decompose the slab into topologically regular Hopf subslabs and the
finite set of first failures of the tube-topology controls.  Proposition
\ref{prop:short-hopf} applies on each regular subslab, while Lemma
\ref{lem:short-exit-measure} charges every nonregular crossing to
\({\rm TopExitCost}_\eps\).  Thus a Hopf change is paid by the sum of the
positive-gap, core-mass, and priced-exit costs.  This gives
\eqref{eq:short-hopf-cost} and completes the proof.

\section{Verification of the concrete closure}
\label{sec:verification}

The main theorem is stated for the concrete closure in
Definition~\ref{model:concrete-closure} (Model 1), while the proof is organized
through four modules: force projection, open-basin propagation, Brakke input
closure, and Hopf accounting.  This section records the model-level
verification.  The six items below are finite-scale conditions proved for Model
1 on the stopped branch and provide the checklist for nearby ordered-core
closures.

\subsection{Model-level verification conditions}

For applied or numerical use, the conditions group into three checks: the
ordered chart (M1--M2 and the exterior part of M5), the core and residual
splitting (M3--M4' and the interior part of M5), and the coefficient/transport
bounds (M6 together with the kinematic velocity window in
Theorem~\ref{thm:short-main}).  If one of these checks fails, the conclusion is
replaced by the corresponding residual or priced-exit alternative.

\begin{enumerate}[label=(M\arabic*)]
\item \textbf{Morse-Bott ordered well.}
The Landau-de Gennes bulk energy has a smooth ordered well \(\mathcal N\), and
there is \(c_{\rm MB}>0\) such that
\[
        D^2W_{\rm LDG}(Q_\ast)\xi\cdot\xi
        \ge c_{\rm MB}|\xi|^2,
        \qquad
        \xi\perp T_{Q_\ast}\mathcal N.
\]

\item \textbf{Soft GL coordinate.}
In the stopped tubular neighborhood of \(\mathcal N\), the conformation tensor
has the decomposition
\[
        C_\eps=\mathcal C(Q_\ast,d_\eps,m_\eps),
        \qquad
        m_\eps\perp T_{Q_\ast(d_\eps)}\mathcal N,
\]
where \(d_\eps\in\R^2\) is the soft core coordinate.  The leading singular
energy in this coordinate is
\[
        \int
        \frac12|\nabla d_\eps|^2
        +\frac{1}{4\eps^2}(|d_\eps|^2-1)^2 .
\]
The coordinate is a Landau-de Gennes chart variable; it is not obtained by
differentiating the principal eigenvector and remains meaningful at
\(d_\eps=0\).

\item \textbf{Massive-mode slaving.}
The normal variable satisfies either the normal Euler-Lagrange equation or a
coercively damped normal dynamics.  On every no-exit interval,
\[
        \|m_\eps\|_{L^2}^2+
        \eps^2\|\nabla m_\eps\|_{L^2}^2
        \le
        C\Bigl(E_{\rm excess}^\eps
        +\|G_\eps^{\rm mass}\|_{H^{-1}}^2
        +o_\eps(1)\Bigr).
\]

\item \textbf{Projected transported GL equation.}
The soft coordinate satisfies the residual-decomposed equation
\begin{equation}\label{eq:verified-projected-gl}
        D_t d_\eps
        =
        \Mmob\left(\Delta d_\eps
        -\eps^{-2}(|d_\eps|^2-1)d_\eps\right)
        +G_\eps^{\rm zm}+G_\eps^{\rm orth}
        +G_\eps^{\rm mass}+G_\eps^{\rm geom}+G_\eps^{\rm col}.
\end{equation}
The zero-mode term is not assumed small.  Its two translation components give
\[
        b_\eps(s,t)=
        \int_{N_{\Gamma_s}}
        G_\eps^{\rm zm}\cdot\nabla_y q_\ast\,\dd y,
        \qquad
        f_{\rm cl}^\perp=M^{-1}b_\eps
        \quad\hbox{in the line limit}.
\]
The orthogonal soft residual is controlled by planar coercivity, the massive
part by (M3), the geometric part by Fermi-coordinate error bounds, and the
collar part by (M6).

\item \textbf{Chart-gap condition.}
The stopped chart remains valid until either the massive coordinate reaches the
chart boundary or the principal spectral gap closes in the exterior ordered
phase \(\{|d_\eps|\ge c_{\rm out}\}\).  The first such failure is charged as
the chart/gap priced exit.  Gap loss inside \(\{|d_\eps|<c_{\rm out}\}\) is
charged instead to ordered-core mass.

\item \textbf{FENE/collar coefficient control.}
On the fixed ordered collar and on every no-exit interval,
\[
        0<c_K\le \lambda_{\min}C_\eps,
        \qquad
        \operatorname{tr}C_\eps\le b-c_K,
        \qquad
        \operatorname{reach}(\Gamma_\eps)\ge c_K,
\]
and every coefficient produced by the FENE factor, the stopped projection, and
the Fermi metric obeys the tame bound
\[
        \|\partial^\alpha a(C_\eps)\|+
        \|\partial^\alpha f_b(C_\eps)\|+
        \|\partial^\alpha g_{\rm Fermi}\|
        \le C_K\bigl(1+\|C_\eps\|_{H^m}
        +\|\Gamma_\eps\|_{C^{m+1}}\bigr),
        \qquad |\alpha|\le m.
\]
If any lower cone, trace-gap, collar, or reach bound reaches its threshold, the
corresponding priced exit is charged.
\end{enumerate}

\begin{theorem}[Verification of the ordered-core interface for Model 1]
\label{thm:model-one-verification}
Let \((C_\eps,u_\eps)\) solve Model 1 on \([0,T]\).  Assume the kinematic
transport clock of Theorem~\ref{thm:short-main}, the initial ordered-core tube
preparation of Proposition~\ref{prop:open-initial-basin}, and the initial
FENE/collar separation bounds.  Let \(\tau_\eps\) be the first typed exit time
for chart loss, exterior spectral-gap loss, FENE/collar coefficient loss,
boundary flux, reach collapse, or cap-atlas failure.  Then on every compact
subinterval of \([0,\tau_\eps)\), Model 1 satisfies (M1)--(M6).  In particular,
the soft coordinate obeys \eqref{eq:verified-projected-gl}; its zero-mode
projection is the force used in Proposition~\ref{prop:short-force}; and the
remaining residual components satisfy the finite residual bounds of
Lemma~\ref{lem:app-finite-ledger}.  Therefore the hypotheses of the
Hopf--Brakke theorem are verified for the concrete closure on every regular
slab before typed exit.
\end{theorem}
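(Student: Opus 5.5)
The proof verifies (M1)--(M6) one at a time, in order. Each item uses only the structure of Model~1 and the previously verified items, restricted to a compact subinterval $J\Subset[0,\tau_\eps)$. On such a $J$, none of the typed exits has occurred. This gives three uniform bounds: the massive coordinate stays in $B_{\rho_{\rm ch}}$, the exterior gap satisfies $\gap\ge\gamma_{\rm out}$, and the FENE cone, trace and reach bounds hold with fixed margins $c_K$. These margins supply all the constants below.

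\textbf{(M1) and (M2).} For (M1), compute $D^2W_{\rm LDG}$ at a uniaxial minimizer $Q_\ast$. The Hessian vanishes on $T_{Q_\ast}\mathcal N$ by $SO(3)$-invariance. On the normal complement it is a finite-dimensional symmetric form, and its eigenvalues are explicit functions of the LDG coefficients. Positivity is then the standard nondegeneracy condition on those coefficients, and it gives $c_{\rm MB}$. For (M2), apply the tubular neighbourhood theorem to $\mathcal N$ and choose the chart $\Phi_{\rm ch}$ so that the soft directions are parametrized by $d\in B_2(0)$, with the bulk potential equal to $\frac14(|d|^2-1)^2$ to leading order. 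Taylor expansion of $\mathscr E_\eps$ in $(d,m)$ then yields the GL leading energy. The cross terms are bounded by $|m||\nabla d|$ and $\eps^{-2}|m|^2$. This is where the normalization $L/2$ is fixed and where $W_{\rm FENE}$ is placed among the $O(1)$ lower-order terms.

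\textbf{(M3) and (M5).} For (M3), project the tensorial law onto the normal bundle. In the frozen branch, this projection gives an elliptic equation whose operator is $L_{\rm mb}-\eps^2\Delta_\perp$. In the dynamic branch, the positive lower bound on the mobility $\Mmob_C$ gives a damped parabolic equation. In either case the argument of Lemma~\ref{lem:short-mb-slaving} applies, and it yields the $L^2+\eps H^1$ bound with source $\|G^{\rm mass}_\eps\|_{H^{-1}}$. For (M5), the eigenprojection is smooth wherever the gap is positive. Combining this with $m\in B_{\rho_{\rm ch}}$, the chart-gap condition holds on $J$ by the definition of $\tau_\eps$. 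Inside $\{|d_\eps|<c_{\rm out}\}$, the phase of $d$ and the principal axis need not agree, so gap loss there is assigned to the core mass. This is consistent with Definition~\ref{def:chart-gap-stopping}.

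\textbf{(M4) and (M6).} (M6) comes directly from the stopping rules. Once the cone and trace margins are fixed, the FENE factor $f_b(C)=(b-\operatorname{tr}C)^{-1}$, the projection $\Pi_{\rm adm}$, and the Fermi metric are smooth functions on a compact set. Moser-type composition estimates then give the tame bounds. (M4) is the main obstacle. We substitute $C_\eps=\Phi_{\rm ch}(d_\eps,m_\eps)$ into the transport-relaxation law and apply the left inverse of $\partial_d\Phi_{\rm ch}$.

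The upper-convected terms $(\nabla u)C+C(\nabla u)^T$ are the difficulty. After projection they contribute terms that are \emph{not} advective, and these terms must go into $G^{\rm zm}+G^{\rm orth}$ rather than into $u\cdot\nabla d$. The plan is to expand them around the canonical profile $\bar d_{\eps,a}$. Their pairing with $Z_i$ gives fiber integrals that are bounded by $\|\nabla u_\eps\|_{L^\infty}$ times the normalized core mass, and these integrals define $b_\eps$. The complementary parts go into $G^{\rm orth}$, $G^{\rm mass}$, $G^{\rm geom}$ and $G^{\rm col}$ according to whether they contain $d-\bar d$, $m$, Fermi commutators, or collar support. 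Their $L^2/\estar$ bounds follow from (M3), (M6) and the transport clock. The delicate point is the $\eps^{-2}$ terms that couple $d$ and $m$. To control them, I would first try slaving $m$ at order $\eps^2$ through the normal equation, so that the leading $d$--$m$ coupling is absorbed into the $\frac14\eps^{-2}(|d|^2-1)^2$ structure. The remainder should then be of size $\eps^{-1}\|m\|$, which is bounded by (M3).

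Collecting these residual bounds gives the finite residual estimate of Lemma~\ref{lem:app-finite-ledger}. Together with (M1)--(M6), this verifies the hypotheses of Theorem~\ref{thm:short-main} on every regular slab before the typed exit.
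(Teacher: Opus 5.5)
Your outline follows the same chain as the paper: stopped chart, normal slaving, soft projection, zero-mode/orthogonal split, and stopping rules for (M5)--(M6). However, the step that is supposed to deliver the residual bounds is missing its key ingredient.

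You assert that the $L^2/\estar$ bounds for every non-zero-mode piece follow from (M3), (M6) and the transport clock. Those inputs address the massive and collar/FENE pieces. They do not address the pieces that are linear or quadratic in $w=d_\eps-\bar d_{\eps,a}$, which make up $G_\eps^{\rm orth}$: nothing in (M3), (M6) or the velocity clock controls $w$. The paper handles these with two facts:
\begin{enumerate}
\item the uniform spectral gap of the planar linearized operator $L_\ast$ on the complement of $\partial_{y_1}q_\ast,\partial_{y_2}q_\ast$ (Lemma~\ref{lem:app-planar-coercivity});
\item the uniform invertibility of the truncated Gram matrix $M$.
\end{enumerate}
Coercivity converts the relative energy into $\|w\|_{\rm tube}^2$, and this is exactly what produces the $\Basin_\eps^+(I)^{1/2}+\Basin_\eps^+(I)$ right-hand side of Lemma~\ref{lem:app-finite-ledger}. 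Invertibility of $M$ is what makes the split $G_\eps^{\rm zm}+G_\eps^{\rm orth}$, and the definition $f_{\rm cl}^\perp=M^{-1}b_\eps$, hold with $\eps$-independent constants. Since (M4) itself requires the orthogonal soft residual to be controlled by planar coercivity, your argument does not verify (M4), and the ledger bound does not follow. Likewise, $G_\eps^{\rm geom}$ needs the Fermi expansion of Lemma~\ref{lem:app-moving-tube-residual} (an extra factor $|r|$ against an $\eps$-scale core), not only the tame boundedness supplied by (M6).

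Your treatment of the $\eps^{-2}$ $d$--$m$ coupling also does not close as written. (M3) bounds $\|m_\eps\|_{L^2}^2+\eps^2\|\nabla m_\eps\|_{L^2}^2$ with no factor $\eps^2$ on the right. It therefore gives no control of a remainder of size $\eps^{-1}\|m_\eps\|_{L^2}$ at the normalized scale; for that you would need $\|m_\eps\|_{L^2}^2\lesssim\eps^2\estar\,\Basin_\eps^+$. Moreover, that remainder is itself part of $G_\eps^{\rm mass}$, which appears on the right-hand side of (M3), so the argument is circular unless a power of $\eps$ is gained. You have two options:
\begin{enumerate}
\item prove an $\eps^{-2}$-weighted slaving estimate that exploits the normal Hessian $\eps^{-2}D^2W_{\rm LDG}|_{N\mathcal N}$;
\item follow the paper: keep every term carrying a factor of $m_\eps$ in $G_\eps^{\rm mass}$, measure it in $H^{-1}$ as in Lemma~\ref{lem:app-projected-gl-identity}, and absorb it through the slaving estimate of Lemma~\ref{lem:short-mb-slaving} without converting it to an $L^2$ bound.
\end{enumerate}
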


\begin{proof}
The verification is finite-epsilon and is performed before any varifold limit.
The Morse-Bott hypothesis in Model 1 gives (M1).  The tubular neighborhood of
\(\mathcal N\) and the stopped chart of Definition~\ref{def:chart-gap-stopping}
give the decomposition \(C_\eps=\mathcal C(Q_\ast,d_\eps,m_\eps)\), which proves
(M2) and identifies the singular part of the LDG energy with the two-component
GL energy, up to smooth lower-order chart terms.

Project the tensorial equation onto the normal and soft chart directions.  The
normal projection has linear part \(\eps^{-2}D^2W_{\rm LDG}|_{N\mathcal N}\),
which is coercive by (M1).  Solving the normal Euler-Lagrange equation, or
integrating the damped normal dynamics, gives the slaving estimate in (M3).
The soft projection gives \eqref{eq:verified-projected-gl}.  The leading
singular term is the Euclidean GL operator because the chart is Fermi-normal to
\(\mathcal N\) at the well; all deviations from this leading operator have at
least one factor of \(m_\eps\), a Fermi metric coefficient, a FENE/collar
coefficient, or a lower-order stretching coefficient.  These are the terms
labelled \(G_\eps^{\rm mass}\), \(G_\eps^{\rm geom}\), and
\(G_\eps^{\rm col}\).  The remaining soft residual is decomposed with respect
to the planar translation modes \(\partial_{y_1}q_\ast,\partial_{y_2}q_\ast\)
and their orthogonal complement.  This gives \(G_\eps^{\rm zm}\) and
\(G_\eps^{\rm orth}\), proving (M4').

The invertibility of the translation Gram matrix and the planar spectral gap
are uniform for the canonical core.  Thus the zero-mode coefficient is retained
as the force, while the orthogonal soft component is controlled by the slice
coercivity of Lemma~\ref{lem:app-planar-coercivity}.  The massive component is
absorbed by (M3).  The geometry terms are estimated by the Fermi bounds in
Lemma~\ref{lem:app-moving-tube-residual}.  The collar and FENE terms are tame
on compact cone and reach windows by Lemma~\ref{lem:app-fene-control}.  If any
of the compactness constants needed in these estimates reaches its threshold,
the definition of \(\tau_\eps\) assigns the corresponding priced exit.  This is
(M5)--(M6).

Combining these estimates gives the residual bounds of
Lemma~\ref{lem:app-finite-ledger}.  Proposition~\ref{prop:transport-compatibility}
uses the velocity clock to keep \(u_\eps\cdot\nabla d_\eps\) inside the material
derivative.  Hence all finite-epsilon inputs used by
Proposition~\ref{prop:no-circularity-ledger} follow from Model 1 on
\([0,\tau_\eps)\).
\end{proof}

A convenient way to read the theorem is this.  (M1)--(M2) come from the
Hessian of the ordered potential and the stopped tubular chart.  (M3) is the
normal Hessian or damped normal estimate.  (M4') is obtained by differentiating
the chart projection along the exact tensorial PDE and then splitting into
translation and orthogonal components.  The two translation components define
the line force; the orthogonal soft, massive, geometric, and collar components
are controlled in the residual norms.  (M5)--(M6) are the stopping criteria.
Thus the verification is a finite-scale calculation on the closure and the tube
geometry.  The varifold, first variation, and Brakke inequality appear later as
outputs of the open-basin estimate and the compactness passage.

\begin{proposition}[Chart validity and spectral-gap loss]
\label{prop:chart-gap-validity}
Under (M1)--(M6), the soft equation for \(d_\eps\) is a stopped
finite-\(\eps\) equation.  On \([0,\tau_\eps^{\rm ch})\), it is obtained by
projecting the tensorial closure onto the \(d\)-component of the smooth
ordered-core chart.  Its coefficients are controlled by the chart radius,
normal coercivity, and FENE/collar bounds, not by differentiating the
principal-axis map \(n_{C_\eps}\).  If the chart representation fails, or if
the principal spectral gap closes in the exterior ordered region
\(\{|d_\eps|\ge c_{\rm out}\}\), then the no-exit branch stops and the
chart/gap priced exit is charged.  If the spectral gap closes inside
\(\{|d_\eps|<c_{\rm out}\}\), the event belongs to the ordered core and is
measured by the normalized core mass.
\end{proposition}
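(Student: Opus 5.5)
The plan is to argue directly from the structure of the stopped chart, in three steps.

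\emph{Step 1: the projected equation.} On \([0,\tau_\eps^{\rm ch})\), Definition~\ref{def:chart-gap-stopping} and (M2) give \(C_\eps=\mathcal C(Q_\ast,d_\eps,m_\eps)=\Phi_{\rm ch}(d_\eps,m_\eps)\) with \(|m_\eps|<\rho_{\rm ch}\). Since \(\Phi_{\rm ch}\) is a smooth diffeomorphism on the compact set \(\overline{B_2(0)}\times\overline{B_{\rho_{\rm ch}}(0)}\), its inverse is smooth there. Let \(P_d(C)\) be the \(d\)-row of \(D\Phi_{\rm ch}^{-1}\); its derivatives are bounded by constants that depend only on \(\rho_{\rm ch}\) and the chart. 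I would apply \(P_d(C_\eps)\) to the tensorial law of Model~1. The chain rule gives \(D_t d_\eps=P_d(C_\eps)\,D_tC_\eps\). The upper-convected terms \((\nabla u_\eps)C_\eps+C_\eps(\nabla u_\eps)^T\) and \(\mathcal R_{\rm str}\) become products of \(P_d\) with \(\nabla u_\eps\) and chart coefficients. The variational term splits into two parts. The first is the Euclidean GL operator in \(d\), because the chart is Fermi-normal to \(\mathcal N\) at the well. The second consists of terms carrying a factor of \(m_\eps\), a Fermi or chart metric coefficient, or a FENE/projection coefficient \(\Pi_{\rm adm}\). The normal part of the energy is coercive by (M1); combined with (M3), this keeps those \(m_\eps\)-factors controlled. (M6) gives tame bounds on the FENE/collar coefficients. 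None of these coefficients involves the eigenvector \(n_{C_\eps}\): they are functions of \(C_\eps\) through \(\Phi_{\rm ch}^{-1}\) only. This yields the first two claims of the statement: the soft equation is the projection of the closure, and it remains valid up to \(\tau_\eps^{\rm ch}\) with coefficients controlled by the chart radius, normal coercivity and FENE/collar bounds.

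\emph{Step 2: the stopping dichotomy.} At \(\tau_\eps^{\rm ch}\), by definition, one of the two conditions in Definition~\ref{def:chart-gap-stopping} fails. Either \(|m_\eps|\) reaches \(\rho_{\rm ch}\) (loss of the chart), or \(\gap(C_\eps)<\gamma_{\rm out}\) at a point with \(|d_\eps|\ge c_{\rm out}\) outside the core tube. Each of these is listed in Definition~\ref{def:short-priced-exit}, so Lemma~\ref{lem:short-exit-measure} charges it to \({\rm TopExitCost}_\eps\), and the no-exit branch stops.

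\emph{Step 3: interior gap closing, the main obstacle.} The case I expect to be hardest is a gap closing inside \(\{|d_\eps|<c_{\rm out}\}\). Here I need to show that it is not a chart failure and that it is paid for by core mass. It is not a chart failure because the chart variable \(d_\eps\) stays regular at \(d=0\) (M2), so nothing in Step 1 breaks. For the cost, on this set the potential term is bounded below by \(\eps^{-2}(1-c_{\rm out}^2)^2/4\). A standard clearing-out argument then bounds the set's contribution by the normalized GL energy \(\mu^t_\eps\), which is exactly the quantity \({\rm CoreMass}_\eps\). The delicate point is to confirm that the Hopf bookkeeping uses \(n_C\) only on the exterior, so that no estimate of the form \(|\nabla n_C|\lesssim|\nabla C|/\gap\) is applied on the core set. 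I would check this against Lemma~\ref{lem:short-endpoint-hopf}, which excises the core region \(U_\eps(t)^c\).
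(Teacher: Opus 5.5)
Your proposal is correct and follows essentially the same route as the paper's proof. You apply the differential of the $d$-projection of the smooth chart $\Phi_{\rm ch}$ to the tensorial law, so the coefficients come from the chart, (M3) and (M6) rather than from derivatives of $n_{C_\eps}$; you read the stopping rule directly off the definition of $\tau_\eps^{\rm ch}$; and you observe that on $\{|d_\eps|<c_{\rm out}\}$ the GL potential, at least $(1-c_{\rm out}^2)^2/(4\eps^2)$ there, already counts the gap loss, with $|\nabla n_C|\lesssim|\nabla C|/\gap(C)$ used only on the exterior positive-gap set for the Hopf accounting; the clearing-out argument you mention in Step 3 is not needed for this bookkeeping claim.
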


\begin{proof}
Inside the chart, \(C_\eps=\mathcal C(Q_\ast,d_\eps,m_\eps)\) with
\(\mathcal C\) smooth.  Applying the differential of the \(d\)-projection to
the tensorial equation for \(C_\eps\) gives the transported GL equation plus
the residual \(G_\eps\).  The singular estimate
\(|\nabla n_C|\lesssim |\nabla C|/\gap(C)\) enters later, on the exterior
positive-gap set, where the principal-axis map is used for Hopf accounting.

The definition of \(\tau_\eps^{\rm ch}\) gives the stopping rule.  In the core
region \(|d_\eps|<c_{\rm out}\), the GL potential and slice energy already
count the possible loss of an exterior principal axis.  A gap closure there is
therefore measured by core concentration rather than by a failure of the soft
coordinate equation.
\end{proof}

\begin{proposition}[Transport compatibility under a kinematic window]
\label{prop:transport-compatibility}
Assume that \(u_\eps\) satisfies the velocity hypotheses of
Theorem~\ref{thm:short-main}.  The advective term
\(u_\eps\cdot\nabla d_\eps\) belongs to the material derivative.  On regular
no-exit intervals, its contribution to the localized energy identity is
\[
        \int
        \bigl(\nabla\varphi\cdot u
        +\varphi\,{\rm div}_{T_xV_t}u\bigr)\,\dd\mu^t
\]
in the limit, together with lower-order errors controlled by
\[
        \int_I\|\nabla u_\eps(t)\|_{L^\infty}\,\Basin_\eps^+(t)\,\dd t
        +
        \|u_\eps-u\|_{L^1_tW^{1,\infty}_x}\sup_{t\in I}
        \frac{E_\eps[d_\eps(t)]}{\estar}.
\]
Thus the estimate requires Lipschitz transport control, while allowing
\(u_\eps\cdot\nabla d_\eps\) to be of core size pointwise.
\end{proposition}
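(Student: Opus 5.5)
The plan is to treat the advective term through the localized energy identity for the material derivative, so that \(u_\eps\cdot\nabla d_\eps\) never has to be estimated as a forcing term. Write \(e_\eps(d)=\frac12|\nabla d|^2+\frac1{4\eps^2}(|d|^2-1)^2\) and \(T_\eps[d]=e_\eps(d)I-\nabla d^T\nabla d\) for the diffuse stress tensor. For a nonnegative test \(\varphi\in C_c^1(\Omega)\), I would first compute at fixed \(\eps\) the transport part of \(\frac{d}{dt}\int\varphi\,e_\eps(d_\eps)\). Pairing \(u_\eps\cdot\nabla d_\eps\) with \(-\Delta d_\eps+\eps^{-2}(|d_\eps|^2-1)d_\eps\) and integrating by parts, with \({\rm div}\,u_\eps=0\) and tangency at the collar, gives the exact identity
\[
        \int\varphi\,(u_\eps\cdot\nabla d_\eps)\cdot
        \bigl(-\Delta d_\eps+\eps^{-2}(|d_\eps|^2-1)d_\eps\bigr)\,\dd x
        =
        -\int e_\eps(d_\eps)\,u_\eps\cdot\nabla\varphi\,\dd x
        +\int\varphi\,\nabla d_\eps^T\nabla d_\eps:\nabla u_\eps\,\dd x
        +\int(\nabla\varphi\cdot\nabla d_\eps)\cdot(u_\eps\cdot\nabla d_\eps)\,\dd x .
\]
In this identity \(\nabla d\) appears only in quadratic stress form against \(\nabla u_\eps\), \(\nabla\varphi\) and \(u_\eps\). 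Every term is therefore bounded by \(\|u_\eps\|_{W^{1,\infty}}\,E_\eps[d_\eps]\), even though \(u_\eps\cdot\nabla d_\eps\) may have pointwise size \(\eps^{-1}\) in the core.

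Next I would replace \(u_\eps\) by the limit field \(u\). The difference is at most \(\|u_\eps-u\|_{W^{1,\infty}}\,E_\eps[d_\eps(t)]\,\|\varphi\|_{C^1}\). After dividing by \(\estar\) and integrating in time, this produces the second error term \(\|u_\eps-u\|_{L^1_tW^{1,\infty}_x}\sup_t E_\eps/\estar\).

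I would then rewrite the stress terms using the normalized tensor \(\estar^{-1}\nabla d_\eps^T\nabla d_\eps\). The basin quantities (the relative energy, the coercive tube norm of Lemma~\ref{lem:short-tube-coercivity}, and the exterior mass) show that this tensor equals \(\mu_\eps^t\,(I-\tau\otimes\tau)\) up to a defect. Here \(\tau\) is the unit tangent of \(\Gamma_t\), extended smoothly to the tube, so that \(I-\tau\otimes\tau\) is the projection onto the normal plane. Two facts are used at this point. First, equipartition on the canonical core gives \(\frac1{\estar}\int|\nabla_y q|^2\to\) twice the line density. Second, the planar core is isotropic, so \(\frac1{\estar}\int\nabla_y q^T\nabla_y q\) tends to half the normal-plane identity per unit of line density. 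I expect the defect to be bounded by \(C(\Basin_\eps^+(t)^{1/2}+\Basin_\eps^+(t))+o_\eps(1)\): the linear part comes from the cross terms between \(\bar d_\eps\) and \(d_\eps-\bar d_\eps\), handled by Cauchy--Schwarz, and the longitudinal part comes from \({\rm ModErr}_\eps\).

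Inserting this into the identity, the \(u\cdot\nabla\varphi\) terms assemble into \(\int\nabla\varphi\cdot u\,\dd\mu^t\) in the sign convention of \(\mathcal D_t^u\). The terms \(\int\varphi(\nabla\cdot u-(I-\tau\otimes\tau):\nabla u)\,\dd\mu\) give \(\int\varphi\,\tau\cdot(\nabla u)\tau\,\dd\mu=\int\varphi\,{\rm div}_{T_xV_t}u\,\dd\mu\), which is correct because \({\rm div}\,u=0\). The last term, \(\int(\nabla\varphi\cdot\nabla d)\cdot(u\cdot\nabla d)\), combines with the \(e\,u\cdot\nabla\varphi\) term through the same normal-projection identity. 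The stress-defect errors are multiplied by \(\|\nabla u_\eps\|_{L^\infty}\), which gives the first error term \(\int_I\|\nabla u_\eps\|_{L^\infty}\Basin_\eps^+\,\dd t\). That term tends to \(0\) because \(\sup_I\Basin_\eps^+\to0\) by Proposition~\ref{prop:short-open-basin} and the clock \(\int\|\nabla u_\eps\|_\infty\) is bounded.

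Finally, I would pass to the limit for a.e.\ \(t\) using Lemma~\ref{lem:short-measure-function}, which gives \(\mu_\eps^t\rightharpoonup\Hh^1\lfloor\Gamma_t\) with the tangent \(\tau\) identified, and dominated convergence in \(t\). The main obstacle is the stress-tensor identification: showing that the normalized \(\nabla d^T\nabla d\) converges to the normal projection with a defect controlled linearly by the basin norm, and not only qualitatively. For this step I would first try the canonical profile computation, followed by a single Cauchy--Schwarz bound against the coercive tube norm.
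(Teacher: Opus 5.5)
Your route is the paper's: rewrite the advective contribution in stress form, replace \(u_\eps\) by \(u\) at cost \(\|u_\eps-u\|_{L^1_tW^{1,\infty}_x}\sup_t E_\eps/\estar\), and identify the normalized stress through the basin. Your three-term identity is also correct.

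The gap is in the bookkeeping: the advective term enters the localized energy identity twice, not once. Because
\[
\frac{\dd}{\dd t}\int\varphi\,e_\eps(d_\eps)\,\dd x
=\int\varphi\,\partial_t d_\eps\cdot\bigl(-\Delta d_\eps+\eps^{-2}(|d_\eps|^2-1)d_\eps\bigr)\,\dd x
-\int(\nabla\varphi\cdot\nabla d_\eps)\cdot\partial_t d_\eps\,\dd x ,
\]
you must substitute \(\partial_t d_\eps=D_td_\eps-u_\eps\cdot\nabla d_\eps\) in the flux term as well. That substitution contributes \(+\int(\nabla\varphi\cdot\nabla d_\eps)\cdot(u_\eps\cdot\nabla d_\eps)\), which cancels the third term of your identity exactly. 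You instead keep that third term and pass it to the limit with the normal-projection identity. The \(u\cdot\nabla\varphi\) terms then assemble to \(\int(\tau\cdot\nabla\varphi)(\tau\cdot u)\,\dd\mu^t\), not \(\int\nabla\varphi\cdot u\,\dd\mu^t\), so the normal transport \(\int\nabla\varphi\cdot(I-\tau\otimes\tau)u\,\dd\mu^t\) is lost.

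A straight core rigidly translated by a constant normal velocity, \(d_\eps(x,t)=d_\eps^0(x-tu)\), shows the failure. Here \(D_td_\eps=0\) and \(\nabla u=0\), and the true rate is \(\int e_\eps(d_\eps)\,u\cdot\nabla\varphi\,\dd x\approx\estar\int\nabla\varphi\cdot u\,\dd\Hh^1\). Your accounting subtracts \(\int(\nabla\varphi\cdot\nabla d_\eps)\cdot(u\cdot\nabla d_\eps)\,\dd x\), which has the same leading value, and so predicts zero.

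With the flux term included, the transport contribution is exactly \(\int e_\eps(d_\eps)\,u_\eps\cdot\nabla\varphi\,\dd x+\int\varphi\,T_\eps[d_\eps]:\nabla u_\eps\,\dd x\), using \({\rm div}\,u_\eps=0\); this is the paper's display. The first piece equals \(\estar\int\nabla\varphi\cdot u_\eps\,\dd\mu_\eps^t\) identically. So stress identification is needed only for \(T_\eps:\nabla u_\eps\), where your computation gives \(\estar^{-1}T_\eps\approx\mu_\eps^t\,\tau\otimes\tau\) and hence \({\rm div}_{T_xV_t}u\).

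Two smaller points:
\begin{enumerate}
\item Per unit line density, \(\estar^{-1}\int\nabla_yq_\ast^T\nabla_yq_\ast\,\dd y\) tends to the full normal-plane identity, not half of it. Your formula \(\mu_\eps^t(I-\tau\otimes\tau)\) is the right one.
\item A Cauchy--Schwarz cross term gives a stress defect of order \(\Basin_\eps^+(t)^{1/2}\). You therefore get \(\int_I\|\nabla u_\eps\|_{L^\infty}\bigl(\Basin_\eps^+(t)^{1/2}+\Basin_\eps^+(t)\bigr)\,\dd t\), not the linear error form in the statement. The paper's proof does not derive the linear rate either: it identifies this term qualitatively by open-basin stress convergence, and Lemma~\ref{lem:short-input-ledger} carries the same square-root term. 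For the limit the square-root rate suffices, since \(\sup_I\Basin_\eps^+\to0\) and the velocity clock is bounded.
\end{enumerate}
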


\begin{proof}
Multiply the transported GL equation by the localized energy variation.  The
term \(u_\eps\cdot\nabla d_\eps\) is combined with \(\partial_t d_\eps\) into
the material derivative \(D_t d_\eps\).  After integration by parts, the
transport contribution is expressed through the diffuse stress tensor and the
localized mass:
\[
        \int \nabla\varphi\cdot u_\eps\,\dd\mu_\eps^t
        +
        \frac{1}{\estar}
        \int \varphi\, T_\eps[d_\eps]:\nabla u_\eps\,\dd x .
\]
The open-basin stress convergence identifies the second term with
\(\int\varphi\,{\rm div}_{T_xV_t}u\,\dd\mu^t\).  The difference between
\(u_\eps\) and \(u\) is absorbed by the stated strong
\(L^1_tW^{1,\infty}_x\) convergence and the normalized energy bound.  This
argument uses the size of \(\nabla u_\eps\).  The \(O(\eps^{-1})\) core gradient
is paired with the material derivative structure and therefore contributes to
the transport term rather than to the residual bounds.
\end{proof}

\begin{proposition}[Closure verification interface]
\label{prop:verification-interface}
Assume (M1)--(M6).  Then the exact closure residual satisfies the force
projection estimate of Proposition~\ref{prop:short-force}, the open-basin
residual bounds of Lemma~\ref{lem:app-finite-ledger}, and the massive-mode
estimate of Lemma~\ref{lem:short-mb-slaving}.  Together with
Proposition~\ref{prop:transport-compatibility}, these estimates give all
transport, force, and Brakke input defects in
Lemma~\ref{lem:short-input-ledger} as consequences of the finite-epsilon
closure and the propagated basin estimate.
\end{proposition}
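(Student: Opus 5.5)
The plan is to assemble the interface from results already proved, keeping the order closure, force, basin, Brakke inputs. No new estimate is needed beyond the slaving bound (M3); the work is in checking that each hypothesis of the cited lemmas follows from (M1)--(M6) without invoking any later conclusion.

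\textbf{Step 1: the chart and the residual splitting.} Start from (M2) and Proposition~\ref{prop:chart-gap-validity}. On \([0,\tau_\eps^{\rm ch})\) the soft coordinate obeys \eqref{eq:verified-projected-gl}. The residual there already carries the splitting \(G_\eps^{\rm zm}+G_\eps^{\rm orth}+G_\eps^{\rm mass}+G_\eps^{\rm geom}+G_\eps^{\rm col}\) from (M4). Regroup it as in Lemma~\ref{lem:short-residual-normal-form}:
\begin{itemize}
\item \(G_{\rm mb}=G_\eps^{\rm mass}\);
\item \(G_{\rm geom}=G_\eps^{\rm geom}\);
\item \(G_{\rm col}=G_\eps^{\rm col}\);
\item \(G_{\rm can}+G_{\rm tr}+G_{\rm lin}+G_{\rm quad}\) is split between \(G_\eps^{\rm zm}\) and \(G_\eps^{\rm orth}\) by the fiberwise translation projection.
\end{itemize}

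\textbf{Step 2: massive-mode estimate.} Use (M1) and (M3). To reach the hypothesis of Lemma~\ref{lem:short-mb-slaving}, bound \(E^\eps_{\rm excess}\) by \(\estar\Basin_\eps^+\) through the tube coercivity, Lemma~\ref{lem:short-tube-coercivity}. Bound \(\|G_\eps^{\rm mass}\|_{H^{-1}}\) by the same quantity, since every term of \(G_\eps^{\rm mass}\) contains a factor of \(m_\eps\).

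\textbf{Step 3: the open-basin residual bounds.} Sum the four non-zero-mode pieces, each with its own control:
\begin{itemize}
\item the orthogonal piece by planar coercivity;
\item the geometric piece by the Fermi bounds under the reach part of (M6);
\item the collar and FENE piece by the tame composition bound of (M6), on the compact cone window \(c_K\le\lambda_{\min}\), \(\operatorname{tr}C\le b-c_K\);
\item the massive piece by Step 2.
\end{itemize}
This gives the bounds of Lemma~\ref{lem:app-finite-ledger}. Whenever a threshold in (M5) or (M6) is reached, the stopping definition records a priced exit in place of the bound.

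\textbf{Step 4: the force projection estimate.} Pair the residual with \(Z_i\) slice by slice and divide by the truncated Gram matrix. Invertibility of this matrix is uniform, because \(M\) is a positive multiple of the identity and the truncated version converges to it. The canonical part of the pairing yields \(Mf_{\rm cl}^\perp\), as in Lemma~\ref{lem:short-line-force-id}. The remaining parts are bounded by Cauchy--Schwarz against \(\|Z_i\|_{L^2}^2\sim\estar\) together with Step 3. This yields the \(\Basin^{1/2}+\Basin\) bound, and the Pythagorean identity then gives the orthogonal term of Proposition~\ref{prop:short-force}.

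\textbf{Step 5: the Brakke input defects.} Feed Steps 2--4 and Proposition~\ref{prop:transport-compatibility} into the proof of Lemma~\ref{lem:short-input-ledger}. The only inputs that proof uses are:
\begin{itemize}
\item the residual norms \(r_\eps^v\) and \(r_\eps^f\);
\item the massive bound;
\item the transport stress error.
\end{itemize}
Each of these is now a consequence of the closure plus the propagated basin, with no limiting object used.

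\textbf{Main obstacle.} I expect the \(H^{-1}\) control of \(G_\eps^{\rm mass}\) and of the collar terms at scale \(\estar\) to be the hard step. Its cross terms between \(m_\eps\) and \(\nabla d_\eps\), which is of order \(\eps^{-1}\) in the core, may not be small. I would first try to absorb them using the \(\eps^2\|\nabla m_\eps\|^2\) weight and the coercive normal Hessian \(\eps^{-2}D^2W_{\rm LDG}\), which make \(m_\eps=O(\eps)\) times soft errors.
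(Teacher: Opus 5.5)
Your assembly follows the same order as the paper's proof: chart and residual splitting from (M1)--(M4), massive control, termwise residual bounds, fiberwise projection with a uniformly invertible truncated Gram matrix and the Pythagorean identity, and finally Proposition~\ref{prop:transport-compatibility} and Lemma~\ref{lem:short-input-ledger}. However, your residual classification is missing one class, and Step~4 does not close without it.

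The centers \(a(s,t)\) are selected slice by slice, so the orthogonality conditions remove only the local translation component. The variation of \(a\) along the filament survives; this is the point made in Definition~\ref{def:short-canonical-tube}. Differentiating \(q_\ast((r-a(s,t))/\eps)\) in \(s\) produces \(-(\partial_s a)\cdot\nabla_y q_\ast\), so these longitudinal terms enter the energy and the moving-tube residual along \(Z_1,Z_2\). None of your four controls reaches them:
\begin{enumerate}[label=(\roman*)]
\item they are not part of \(f_{\rm cl}^\perp\);
\item they are invisible to Lemma~\ref{lem:app-planar-coercivity}, which acts only on the orthogonal complement of the translation modes;
\item they are not Fermi errors of the comparison filament, since the reach and \(C^{m+1}\) bounds in (M6) say nothing about \(\partial_s a\).
\end{enumerate}
So among the ``remaining parts'' of the pairing with \(Z_i\) in Step~4 there are terms that Step~3 does not bound, and \(\|b_\eps-Mf_{\rm cl}^\perp\|_{L^1}\) is not yet estimated. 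For the same reason, Step~3 does not by itself give Lemma~\ref{lem:app-finite-ledger}, whose left side contains this quantity.

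The paper's proof handles this as a separate entry. The longitudinal translation mode is controlled by \({\rm ModErr}_\eps\) through Lemma~\ref{lem:app-longitudinal-zero-mode}, where the truncated Gram matrix makes \(\partial_s a\) cost energy at scale \(\estar\). It is also controlled by the line-level part of \(D^\eps_{\rm rel}\), which records the material derivative of \(a\) and absorbs the integration by parts in \(s\) in Lemma~\ref{lem:short-moving-energy}. This is why \({\rm ModErr}_\eps\) is an entry of \(\Basin^+_\eps\) and why \(D^\eps_{\rm rel}\) has a line-level part. You need this fifth class in Step~3, and you must use it in Step~4.

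A smaller issue is Step~2. You close the massive estimate through the displayed right-hand side of (M3), then bound \(\|G^{\rm mass}_\eps\|_{H^{-1}}\) through the factor of \(m_\eps\) it contains. That is a loop: \(m_\eps\) is controlled by \(G^{\rm mass}_\eps\), and \(G^{\rm mass}_\eps\) by \(m_\eps\). This is the difficulty you sense in your last paragraph.

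The paper uses (M3) only to place \(m_\eps\) in branch (a) or (b) of Lemma~\ref{lem:short-mb-slaving}, and then relies on that lemma's own argument:
\begin{enumerate}[label=(\alph*)]
\item in the frozen branch, the bounded inverse of \(L_{\rm mb}-\eps^2\Delta_\perp\) acts on a source made of soft-coordinate errors, Fermi commutators and coefficient terms;
\item in the damped branch, one tests with \(m_\eps\) and absorbs the commutators with \(\theta<\lambda_{\rm mb}/4\).
\end{enumerate}
The \(H^{-1}\) bound on \(G^{\rm mass}_\eps\) is then read off from the slaving estimate (Lemma~\ref{lem:app-projected-gl-identity}), and no heuristic \(m_\eps=O(\eps)\) is needed.

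Finally, Lemma~\ref{lem:short-tube-coercivity} is a lower bound on the relative energy, so it cannot give \(E^\eps_{\rm excess}\lesssim\estar\Basin^+_\eps\). That upper bound comes directly from the relative-energy entry in the definition of \(\Basin^+_\eps\).
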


\begin{proof}
By (M1)--(M2), the conformation tensor can be expressed in the stopped ordered
chart
\[
        C_\eps=\mathcal C(d_\eps,m_\eps)
\]
with a coercive normal variable \(m_\eps\).  The derivative of
\(\mathcal C\) along the two translation directions of the core is exactly the
two-dimensional zero-mode space generated by the planar vortex.  Thus the
residual of the concrete closure can be projected fiberwise onto these modes on
every interval before \(\tau_\eps^{\rm ch}\).  The soft coordinate \(d_\eps\) is defined by the chart itself, while the principal-axis map is used later for exterior Hopf accounting.

By (M3), the massive variable is controlled by Lemma
\ref{lem:short-mb-slaving}.  By (M4'), the soft equation is a transported
Ginzburg-Landau equation with residual \(G_\eps\).  Expanding \(G_\eps\) in
Fermi coordinates produces the finite list in Lemma
\ref{lem:short-residual-normal-form}.  The translation projection is the force
term, the longitudinal translation mode is controlled by \({\rm ModErr}_\eps\)
and the line-level relative dissipation, the orthogonal projection is controlled
by the planar spectral gap, and the massive projection is controlled by the
normal coercivity.  By (M5), loss
of the chart or exterior spectral gap is already one of the priced exits, while
gap loss inside the soft core is measured by core mass.  By (M6), collar
coefficient terms are bounded on no-exit intervals and otherwise trigger the
FENE/collar priced exit.  These are exactly the ingredients of
Proposition~\ref{prop:short-force} and Lemma~\ref{lem:app-finite-ledger}.

The Brakke input defects require no additional closure information.  Once the
velocity and force residuals have the zero-mode form and the stress-action
identity is written for the transported GL equation, Lemma
\ref{lem:short-input-ledger} follows by Cauchy's inequality, bilinear
normal-metric convergence, and the open-basin energy bound.
\end{proof}

\subsection{Nonempty open initial class}

The open basin is not introduced as a limiting compactness assumption.  It
contains an explicit class of finite-\(\eps\) data.  Fix a smooth link
\(\Gamma_0\), a normal frame, a fixed ordered collar, and a smooth
FENE-admissible background conformation \(C_0^\ast\).  In the tube, define
\[
        d_\eps^0(x)
        =
        q_\ast\!\left(\frac{r}{\eps}\right)
\]
up to the fixed cutoff and collar interpolation; outside the tube set
\(|d_\eps^0|=1\) with the prescribed phase.  Choose the massive variable
\[
        m_\eps^0=m_{\rm sl}(d_\eps^0)
\]
from the slaving graph, or choose it within a small ball of that graph in the
dynamic normal branch.

\begin{proposition}[Open initial vortex-tube basin]
\label{prop:open-initial-basin}
For every smooth embedded ordered link \(\Gamma_0\) with positive reach and a
fixed ordered collar, the canonical data just described satisfy
\[
        \Basin_\eps^+(0)\to0 .
\]
Moreover, there is \(\eta_\eps\downarrow0\) such that every perturbation with
\[
        \|d_\eps^0-\tilde d_\eps^0\|_{H^1_{\rm loc}}
        +
        \|m_\eps^0-\tilde m_\eps^0\|_{L^2+\eps H^1}
        +
        {\rm Flat}\bigl(J\tilde d_\eps^0-\pi[\Gamma_0]\bigr)
        \le
        \eta_\eps
\]
belongs to the open basin \(\mathcal U_\eps(\Gamma_0,C\eta_\eps)\), after
reselecting the translation centers by the zero-mode orthogonality conditions.
\end{proposition}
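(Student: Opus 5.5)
We prove the two assertions separately: first that the canonical data have vanishing basin defect, then stability of that bound under small perturbations. For the first, we evaluate each entry of \(\Basin_\eps^+(0)\) on \(d_\eps^0=q_\ast(r/\eps)\chi(r)+d_{\rm out}(1-\chi(r))\), with \(a\equiv0\) and \(m_\eps^0=m_{\rm sl}(d_\eps^0)\). Since \(d_\eps^0=\bar d_{\eps,0}\) in the tube, the relative core energy, \({\rm ModErr}_\eps\), \({\rm GaugeErr}_\eps\) and the slice orthogonality remainder all vanish identically. The flat error \({\rm Flat}(Jd_\eps^0-\pi[\Gamma_0])\) is \(O(\eps)\), by the standard estimate for the Jacobian of a radially symmetric degree-one profile in Fermi coordinates; this uses positive reach so that the Fermi chart is a diffeomorphism on the tube.

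The exterior mass and collar error come from the interpolation region \(\{\chi\ne0,1\}\) and from outside the tube. There \(|d_\eps^0|=1\) up to \(O(\eps^2)\) corrections, because \(f_\ast(\rho)=1-O(\rho^{-2})\). The phase is smooth and \(\eps\)-independent, so the energy is \(O(1)=o(\estar)\). The massive defect is zero on the slaving graph; in the dynamic branch it is bounded by the radius of the chosen ball. All normalized entries are therefore \(O(|\log\eps|^{-1})\), hence \(\Basin_\eps^+(0)\to0\). The \(+\)-envelope at \(t=0\) equals the raw value, since only initial data are involved.

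For the perturbation statement, take \(\eta_\eps\ge \Basin_\eps^+(0)\) for the canonical data, and also \(\eta_\eps\ge|\log\eps|^{-1/2}\), so that the \(O(\eps^{-1})\) factors below are harmless once compared at scale \(\estar\). Given \(\tilde d^0_\eps\), the first step is to reselect the centers \(\tilde a(s)\). Apply the implicit function theorem to the map
\[
a\mapsto\Bigl(\int_{\nu_s}(\tilde d_\eps^0-\bar d_{\eps,a})\cdot Z_i\,\dd r\Bigr)_{i=1,2}.
\]
Its differential at \(a=0\) is \(-\eps^{-1}\) times the truncated Gram matrix, which is uniformly invertible. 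This yields \(|\tilde a(s)|\lesssim\eps\,\|\tilde d^0_\eps-d^0_\eps\|_{L^2(\nu_s)}\) slice by slice, together with the analogous bound for \(\partial_s\tilde a\) after differentiating the orthogonality relations in \(s\). Consequently \({\rm ModErr}_\eps(\tilde a)\lesssim\eta_\eps^2\cdot O(\eps^{-2}\eps^2)\).

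Each basin entry is then estimated by the triangle inequality around the canonical data. The relative energy difference is bounded by
\[
E_\eps[\tilde d]-E_\eps[d]\le \|\nabla(\tilde d-d)\|_{2}\bigl(\|\nabla d\|_2+\|\nabla\tilde d\|_2\bigr)+\text{potential terms}.
\]
Bounding the potential terms in \(L^2\) requires an \(L^\infty\) or \(L^4\) bound. We would handle this by truncating \(|\tilde d|\le1+\delta\) or by using \(H^1\) control with Sobolev embedding on the tube. The resulting error is \(\lesssim\eta_\eps\,\estar^{1/2}\eps^{-1}\)-type in the worst case. This is the step I expect to be the main obstacle: a plain \(H^1_{\rm loc}\) perturbation of size \(\eta_\eps\) is \emph{not} automatically \(o(\estar)\) in energy, because \(\|\nabla d_\eps^0\|_2\sim\estar^{1/2}\). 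We would therefore interpret the \(H^1\) norm in the \(\estar\)-normalized sense, or choose \(\eta_\eps\) small enough relative to \(\estar^{-1/2}\). Either way, the relative energy is at most \(C\eta_\eps\estar\).

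The flat term enters directly by hypothesis. The massive defect is bounded by the \(L^2+\eps H^1\) distance, plus the Lipschitz constant of \(m_{\rm sl}\) times the \(d\)-perturbation. The exterior and collar masses change by at most the same energy difference. Finally, openness, namely that small perturbations keep the basin conditions strict, follows from continuity of every entry in this topology after recentering, which gives membership in \(\mathcal U_\eps(\Gamma_0,C\eta_\eps)\).
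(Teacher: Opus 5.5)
Your route is the paper's route: evaluate the entries on the canonical tube, recentre by the implicit function theorem using the invertible Gram matrix, then appeal to continuity of each entry. The canonical half is fine. Your observation that \(d_\eps^0=\bar d_{\eps,0}\) in the tube, so the relative energy vanishes identically, replaces the paper's expansion \(E_\eps[d_\eps^0]=\estar\Hh^1(\Gamma_0)+O(1)\).

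The perturbation half does not close, and the obstruction is not the one you name. The gradient cross term is harmless, because \(\|\nabla d_\eps^0\|_2\|\nabla w\|_2\lesssim{\estar}^{1/2}\eta_\eps\le\eta_\eps\estar\). What fails is the \(\eps^{-2}\) potential. With \(w=\tilde d_\eps^0-d_\eps^0\) and \(A=|d_\eps^0|^2-1\),
\[
\frac{1}{4\eps^2}\int\bigl[(|d_\eps^0+w|^2-1)^2-A^2\bigr]\dd x
=\frac{1}{\eps^2}\int\Bigl[A\,d_\eps^0\cdot w+(d_\eps^0\cdot w)^2+\tfrac12A|w|^2+(d_\eps^0\cdot w)|w|^2+\tfrac14|w|^4\Bigr]\dd x .
\]
Neither truncating \(|\tilde d_\eps^0|\) nor Sobolev embedding removes the prefactor from the quadratic term. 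Concretely, take \(w=\eta\,\phi\,d_\eps^0\) with \(\phi\) a fixed bump supported outside the tube, where \(|d_\eps^0|=1\). This perturbation:
\begin{itemize}
\item leaves the centres unchanged;
\item changes the \(H^1\) norm and the flat error only by \(O(\eta)\);
\item has normalized relative energy \(\simeq\eta^2\eps^{-2}{\estar}^{-1}\int\phi^2\).
\end{itemize}
Under your constraint \(\eta_\eps\ge|\log\eps|^{-1/2}\) this energy tends to infinity, so such data are not in \(\mathcal U_\eps(\Gamma_0,C\eta_\eps)\). A lower bound on \(\eta_\eps\) makes the \(\eps^{-1}\) factors worse, not harmless. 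Taking \(\eta_\eps\ll{\estar}^{-1/2}\) does not touch them either. Your sentence ``either way the relative energy is at most \(C\eta_\eps\estar\)'' does not follow from your own bound \(\eta_\eps{\estar}^{1/2}\eps^{-1}\).

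The recentering has the same scale problem. At \(a=0\) the differential of the orthogonality map is of size \(\eps\estar M\), while \(\|Z_i(\cdot/\eps)\|_{L^2(\nu_s)}\simeq\eps{\estar}^{1/2}\). Hence \(|\tilde a(s)|\lesssim{\estar}^{-1/2}\|w\|_{L^2(\nu_s)}\), not \(\eps\|w\|_{L^2(\nu_s)}\). Moreover, the \(w\)-dependent part of the differential has size \(\|w\|_{L^2(\nu_s)}\|\nabla^2q_\ast\|_{L^2(\R^2)}\). This must be small against \(\eps\estar\) before the implicit function theorem applies.

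Both steps therefore need the perturbation measured at the core scale. There are two ways to do this.
\begin{itemize}
\item Use an \(\eps\)-adapted norm such as \({\estar}^{-1/2}(\|\nabla w\|_2+\eps^{-1}\|w\|_2)\), together with an \(L^\infty\) bound. Your truncation is useful exactly there, for the cubic and quartic terms.
\item Keep the unweighted norm but take \(\eta_\eps=o(\eps|\log\eps|^{1/2})\).
\end{itemize}
In the unweighted version the perturbed defect is \(o(1)\) but cannot be \(O(\eta_\eps)\). The canonical Jacobian already has flat error of order \(\eps\). That forces \(\eta_\eps\gtrsim\eps\) if the admissible class is to contain the canonical data. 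But the bump above exceeds \(C\eta_\eps\) unless \(\eta_\eps\lesssim\eps^2\estar\). So the target radius must be decoupled from \(\eta_\eps\); this is harmless for the later propagation step.

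The paper's proof passes over this point by asserting continuity in the displayed topology and shrinking \(\eta_\eps\). Your write-up exposes the issue but does not resolve it.
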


\begin{proof}
For the canonical tube, the planar vortex expansion gives
\[
        E_\eps[d_\eps^0]
        =
        \estar\,\Hh^1(\Gamma_0)+O(1).
\]
The normalized excess therefore vanishes.  The Jacobian current converges to
\(\pi[\Gamma_0]\) by the standard GL vortex computation on each normal slice,
and exterior core mass is zero up to the cutoff error.  The collar energy is
\(O(1)\), hence normalized \(o(1)\).  The slaved massive variable is controlled
by the coercive normal resolvent, so its normalized defect also vanishes.

For perturbations, use the implicit function theorem for the translation
orthogonality equations.  The zero-mode Gram matrix \(M\) is uniformly
invertible, so small \(H^1_{\rm loc}\) perturbations of the core correspond to
small changes in the centers \(a(s)\).  The modulated energy, flat-current
error, exterior mass, and collar defect are continuous in the displayed
topology, up to the fixed cutoff errors.  Taking \(\eta_\eps\) small enough
gives the open basin.
\end{proof}

\subsection{Termwise residual verification}

The residual estimate is a finite list of model-level bounds rather than a
compactness assumption.  One expands the exact closure residual in the moving
tube and assigns each term to a line of the residual bounds.  The most restrictive point
is the translation-mode splitting.  The tube centers are selected by two
modulation equations, so the error orthogonal to
\(\partial_{y_1}q_\ast,\partial_{y_2}q_\ast\) is the part controlled by planar
coercivity.  The remaining two scalar components are retained and become
\(f_{\rm cl}^\perp\).  Therefore the theorem does not require the full residual
\(G_\eps\) to be small: it requires the coercive orthogonal residual to be small
after the line force has been extracted.  If a closure produces an orthogonal
residual of normalized order one, the proof stops through the residual or
priced-exit channel.  The following table records the verification scheme.
\[
\begin{array}{c|c|c}
\hbox{term} & \hbox{estimate} & \hbox{failure mode}\\ \hline
D_tq_\ast-\Mmob\Delta q_\ast
& \hbox{translation projection}
& \hbox{filament law}\\
\hbox{curvature/frame}
& {\rm Err}_{\rm geom}^\eps=o(1)
& \hbox{tube geometry exit}\\
\hbox{linear soft error}
& \hbox{planar coercivity}
& \hbox{loss of slice stability}\\
\hbox{quadratic soft error}
& \Basin_\eps^+(I)
& \hbox{open-basin exit}\\
\hbox{massive mode}
& \hbox{normal coercivity}
& \hbox{normal-energy failure}\\
\hbox{FENE coefficient}
& \hbox{compact cone control}
& \hbox{FENE exit}\\
\hbox{collar source}
& O(|\log\eps|^{-1})
& \hbox{collar or boundary exit}
\end{array}
\]
No cancellation between unrelated rows is used.  If a nonlinear closure term
produces an \(O(\estar)\) contribution, the theorem does not hide it.  It must
either appear in the translation projection, be absorbed by a coercive row of
the residual bounds, or trigger one of the priced exits.

\begin{corollary}[No hidden admissible force]
\label{cor:no-hidden-force}
On a no-exit interval, the force in the Brakke inequality is exactly the
zero-mode fiber integral \(f_{\rm cl}^\perp\).  If the model produces any
additional order-one normal contribution not represented in that fiber
integral, then either the projected force must be redefined by including that
term or the hypotheses of the theorem fail through a priced residual exit.
\end{corollary}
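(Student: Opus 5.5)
The plan is to argue by exhaustion over the residual rows of the normal form. On a no-exit interval, Lemma~\ref{lem:short-residual-normal-form} and (M4') write the exact residual as
\[
        G_\eps=G_\eps^{\rm zm}+G_\eps^{\rm orth}+G_\eps^{\rm mass}+G_\eps^{\rm geom}+G_\eps^{\rm col}.
\]
The only term allowed to be large is \(G_\eps^{\rm zm}\). The aim is to show that every other row contributes nothing to the limiting normal force, so that any order-one normal force must come from the zero-mode fiber integral.

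First, I will identify the force appearing in \eqref{eq:short-brakke}. In Lemma~\ref{lem:short-input-ledger} the force vector measure arises from the decomposition \(G_\eps=f_{\rm cl}^\perp\cdot\nabla d_\eps+r_\eps^f\). Pairing with \(\nabla d_\eps\) and a normal test field \(\zeta\), the force-measure discrepancy is controlled by \(\|b_\eps-Mf_{\rm cl}^\perp\|_{L^1}\) together with the orthogonal residual. By Lemma~\ref{lem:short-line-force-id} and Lemma~\ref{lem:short-residual-normal-form}, both are bounded by \(C\Basin_\eps^+(I)^{1/2}+C\Basin_\eps^+(I)+o_\eps(1)\). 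Proposition~\ref{prop:short-open-basin} then makes this bound \(o(1)\) on the no-exit interval. Hence the limiting force measure is exactly \(f_{\rm cl}^\perp\,\Hh^1\lfloor\Gamma_t\). The same identification enters the first variation and the force-power term, through the stress identity and the completing-the-square step in Proposition~\ref{prop:short-brakke}.

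Second, suppose the model produces an additional normal contribution \(g\) of normalized order one. Consider its translation projection, that is, the fiber pairing \(\int g\cdot Z_i\,\dd y\).
\begin{enumerate}[label=(\alph*)]
\item If this projection is nonzero in the line limit, then \(g\) is part of \(G_\eps^{\rm zm}\) by construction of the fiberwise splitting. The definition of \(b_\eps\), and hence of \(f_{\rm cl}^\perp=M^{-1}b_\eps\), must include it. This is the ``redefine the force'' alternative.
\item If the projection vanishes, then \(g\) lies in the orthogonal, massive, geometric or collar rows. Those rows cannot contribute to the limiting force measure, because by the Cauchy-inequality pairing with \(\nabla d_\eps\) used above their effect is at most \(C\Basin_\eps^+(I)^{1/2}+o_\eps(1)\), which vanishes. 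The exception is when their normalized \(L^2\) size fails the bounds of Lemma~\ref{lem:short-residual-normal-form}, Lemma~\ref{lem:short-mb-slaving} or (M6). In that case the theorem's residual hypothesis fails, or the stopping time of Proposition~\ref{prop:short-open-basin} and Lemma~\ref{lem:short-exit-measure} fires and charges a priced exit.
\end{enumerate}

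The main obstacle is the case of a contribution that is orthogonal on each slice but whose normalized size is not small. I need to verify that such a term can only spoil Gronwall propagation, so that it triggers an exit, and cannot leak into the force measure through the bilinear normal-metric convergence. My first approach is to use invertibility of the truncated Gram matrix together with the no-cancellation rule of the residual table. Each row is paired with \(Z_i\) separately, so an orthogonal row has exactly zero translation coefficient at finite \(\eps\). Its only remaining route into the force measure is then through \(d_\eps-\bar d_\eps\), and that route is controlled by the basin defect.
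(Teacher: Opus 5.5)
Your proposal is correct and takes essentially the same route as the paper. Both arguments identify the Brakke force through the limit of the force vector measure $(Y\cdot\nabla d_\eps)\cdot G_\eps/\estar$, in which only the zero-mode projection survives at order $\estar$, while the orthogonal, massive, geometric and collar rows vanish on a no-exit interval by their respective estimates, and any surviving order-one term is by definition zero-mode (hence part of $f_{\rm cl}^\perp$) or else contradicts the residual estimate and forces an exit. Your Cauchy--Schwarz treatment of the orthogonal route through $d_\eps-\bar d_\eps$ spells out what the paper compresses into ``orthogonal components vanish by the coercive residual estimate.''
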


\begin{proof}
The Brakke force is identified through the convergence of the force vector
measure
\[
        \frac{(Y\cdot\nabla d_\eps)\cdot G_\eps}{\estar}\,\dd x\dd t .
\]
The zero-mode projection is the only component that survives the normal
fiber integration at order \(\estar\).  Orthogonal components vanish by the
coercive residual estimate.  Massive and collar components vanish on no-exit
intervals by the slaving and collar estimates.  Therefore the limit is precisely
\[
        \int_{\Gamma_t}Y\cdot f_{\rm cl}^\perp\,\dd\Hh^1\dd t .
\]
If an additional order-one term survives, it is by definition a zero-mode
contribution and must be part of \(f_{\rm cl}^\perp\); otherwise it contradicts
the residual estimate and forces an exit.
\end{proof}

\subsection{Logical dependency certificate}

For reference, the proof dependencies are:
\[
\begin{array}{ccl}
\hbox{ordered Morse-Bott chart}
&\Longrightarrow&
\hbox{soft GL coordinate and massive slaving},\\
\hbox{exact closure residual}
&\Longrightarrow&
\hbox{zero-mode force }f_{\rm cl}^\perp,\\
\hbox{force projection}
&\Longrightarrow&
\hbox{moving-tube residual cancellation},\\
\hbox{tube coercivity + residual absorption}
&\Longrightarrow&
\hbox{open-basin propagation},\\
\hbox{open-basin propagation}
&\Longrightarrow&
\hbox{core compactness and Brakke input estimates},\\
\hbox{Brakke input estimates}
&\Longrightarrow&
\hbox{transported forced Brakke inequality},\\
\hbox{capped Hopf accounting}
&\Longrightarrow&
\hbox{gap/core/exit lower bound}.
\end{array}
\]
There is no arrow from the Brakke limit back to the residual computation, and
no arrow from core compactness back to the open-basin estimate.  This is the
main structural difference between the present theorem and a conditional
compactness statement.

\section{Main contribution and boundaries}
\label{sec:novelty}

The argument combines several classical theories in a specific finite-\(\eps\)
order.  The contribution is the closed chain from tensorial closure to force
projection, open-basin propagation, Brakke compactness, and Hopf cost.

\begin{enumerate}[label=(\roman*)]
\item Arnold-Khesin and Moffatt identify the Hopf invariant as a helicity-type
topological charge.  The present result adds a tensorial viscoelastic mechanism that
computes a defect force and propagates an ordered-core basin.

\item Vakulenko-Kapitanskii gives the \(|\Hopf|^{3/4}\) lower bound for
\(\Sph^2\)-maps.  Here it is used only after the principal-axis map is produced
from a positive spectral gap; the new analytic issue is the alternative between
gap collapse and ordered-core concentration in the conformation tensor.

\item Ginzburg-Landau vortex compactness gives codimension-two line measures
from the normalized core energy.  The present result identifies the LDG/Oldroyd-FENE
force, proves open-basin propagation, and supplies the force-power and
transport inputs needed for a transported forced Brakke inequality.

\item Brakke and Ilmanen provide the geometric language for weak mean-curvature
motion.  The present result derives the Brakke input package from a concrete
finite-epsilon residual balance; the force and first variation are derived before the admissible Brakke limit is used.
\end{enumerate}

The claim is bounded in a specified way.  The theorem applies to ordered-core
closures for which the Morse-Bott chart, normal coercivity, residual projection,
collar/FENE coefficient controls, and kinematic transport window can be
verified.  These hypotheses define the regime in which the Hopf obstruction is
converted into a GL core and then into a transported Brakke law.  At the
boundary of this regime, the conclusion is the named exit and its finite-scale
cost.  The sharpness example in Section~\ref{sec:sharpness} explains why bulk
Oldroyd-B or FENE-P entropy alone is insufficient for line-varifold compactness.

\section{Logical scope of the main theorem}
\label{sec:downgrades}

The theorem has a precise logical boundary.  The boundary follows from the
proof architecture: each module supplies one ingredient needed for the next
module.  Because the proof identifies a specific line force and a specific
transported Brakke inequality, changing one verification module changes the
conclusion in a controlled way.  The main theorem is the strongest statement
obtained from the closure verification in Section~\ref{sec:verification}; the
weaker statements below show how the conclusion changes when a module is
withheld.

\begin{proposition}[Scope map]
\label{prop:downgrade-map}
The following implications describe the corresponding weaker statements.
\begin{enumerate}[label=(\roman*)]
\item With the Morse-Bott chart and zero-mode force projection available, and
with the open-basin compactness inputs imposed directly, one obtains a
conditional Brakke compactness theorem on intervals where the open-basin defect
vanishes.
\item With open-basin propagation available and with the relative Hopf estimate
withheld, one obtains an ordered-core transported forced Brakke theorem with
priced exit, while the Hopf-to-core lower bound is absent.
\item With the force projection available only up to an unidentified normal
remainder, the Brakke theorem carries that remainder as an additional explicit
force or residual hypothesis.
\item With boundary control restricted to the interior, the theorem becomes a
local interior statement or includes a boundary-flux measure in the conclusion.
\end{enumerate}
\end{proposition}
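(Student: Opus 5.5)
The scope map is proved one item at a time. For each item we go through the dependency certificate of Section~\ref{sec:verification}, remove the withheld module, and check which later arrows still fire. Since the certificate has no backward arrows, deleting one module cannot invalidate any earlier module. So for each item we only need to say which downstream conclusion survives and what replaces the missing input.

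For (i), we keep Proposition~\ref{prop:short-force} and Lemma~\ref{lem:short-mb-slaving}, and we drop Proposition~\ref{prop:short-open-basin}. We replace the latter by the direct hypothesis \(\Basin_\eps^+(I)\to0\). Then Lemma~\ref{lem:short-input-ledger} and Lemma~\ref{lem:short-full-testing} give \(\mathfrak I_\eps(I)\to0\) for the full Brakke testing class. Lemma~\ref{lem:short-measure-function} and Lemma~\ref{lem:short-common-representative} then give the limit varifold and the common representative, and Proposition~\ref{prop:short-brakke} yields \eqref{eq:short-brakke}. This is exactly the conditional compactness variant described in the earlier remark.

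For (ii), the argument of Section~\ref{sec:main-proof} runs unchanged through its third paragraph. Its last paragraph, which invokes Proposition~\ref{prop:short-hopf} and Lemmas~\ref{lem:short-capped-hopf}--\ref{lem:short-endpoint-hopf}, is simply omitted. The priced-exit branch still follows from Lemma~\ref{lem:short-exit-measure}, because that lemma uses only the stopping construction and not the Hopf accounting.

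For (iii), write \(b_\eps=Mf_{\rm cl}^\perp+Mg_\eps\), where \(g_\eps\) is the unidentified normal remainder. Assume \(g_\eps\) converges weakly, as a vector measure on \(I\times\Gamma_t\), to some \(g\). We then choose the comparison filament to solve \(\Vrel_\Gamma^\perp=\Mmob H_{\Gamma_t}+f_{\rm cl}^\perp+g\) in place of the original law. With this choice, the cancellation in Lemma~\ref{lem:short-residual-absorption} goes through after replacing \(f_{\rm cl}^\perp\) by \(f_{\rm cl}^\perp+g\), and the force term in Lemma~\ref{lem:short-input-ledger} converges to \(f_{\rm cl}^\perp+g\). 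If the remainder is not identified even in this weak sense, the mismatch \(\|b_\eps-Mf_{\rm cl}^\perp\|_{L^1}\) is carried as an explicit residual hypothesis, as Corollary~\ref{cor:no-hidden-force} indicates.

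For (iv), we multiply all localized quantities by an interior cutoff. There are two ways to finish. The first is to restrict the test functions in \eqref{eq:short-brakke} to compact support away from the collar, which gives an interior statement. The second is to keep the collar terms and record the Chern--Simons boundary flux from the Hopf setup as a separate nonnegative measure. This measure then enters \eqref{eq:short-hopf-cost} next to \({\rm TopExitCost}_\eps\) instead of being folded into it.

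The main obstacle is (iii). When \(g\) is only a measure, the comparison filament law becomes a forced curvature flow with rough forcing. The tube propagation argument needs a smooth comparison tube, and such a tube may fail to exist for rough \(g\). I would first assume \(g\in L^2(I;L^2(\Gamma_t))\) with enough regularity for short-time smooth existence of the filament. If that existence fails, the failure is charged to the tube-graph exit, so item (iii) still holds with the remainder carried as an explicit hypothesis.
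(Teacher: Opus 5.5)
You follow the paper's route: its proof is a one-paragraph inspection of the dependency chain, and your treatment of (ii) and (iv) matches it. The extra mechanism you supply in (i) and (iii), however, has two concrete defects.

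In (i), the substitute hypothesis $\Basin_\eps^+(I)\to0$ does not by itself feed Lemma~\ref{lem:short-input-ledger}. Proposition~\ref{prop:short-brakke} consumes the whole estimate of Proposition~\ref{prop:short-open-basin}, which also bounds $\int_I D_{\rm rel}^\eps/\estar\,\dd t$. The proof of the input lemma uses that dissipation integral in three places: the bound on $r_\eps^v$, the force-power remainders, and entry (vi). A supremum of energy-type defects gives no control of $D_t(d_\eps-\bar d_\eps)$. You therefore have to impose the dissipation bound as well, or the input estimates themselves. That is how the paper's conditional-compactness remark is phrased.

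In (iii), your claim that the cancellation in Lemma~\ref{lem:short-residual-absorption} survives when $f_{\rm cl}^\perp$ is replaced by $f_{\rm cl}^\perp+g$ fails if $g_\eps\rightharpoonup g$ only weakly. The modified filament law kills only the leading term $\estar^{-1}\int(\Vrel_\Gamma^\perp-\Mmob H_{\Gamma_t}-f_{\rm cl}^\perp-g)\cdot b_\eps$. The inequality of Lemma~\ref{lem:short-moving-energy} still contains $C\|b_\eps-M(f_{\rm cl}^\perp+g)\|_{L^1(\Gamma_t)}=C\|M(g_\eps-g)\|_{L^1(\Gamma_t)}$, and weak convergence does not make its time integral small. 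So the Gronwall step does not close, the basin is not propagated, and the Brakke inputs are lost.

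There is also a circularity. $g_\eps$ is the translation-mode projection in the tube around the comparison filament, so its subsequential limit is known only after the filament is fixed. Yet you define the filament through $g$. For $f_{\rm cl}^\perp$ the paper avoids both problems: the fiber integral is an explicit function of the closure coefficients, tube geometry and core profile, and Lemma~\ref{lem:short-line-force-id} gives a quantitative $L^1$ bound.

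The explicit-force branch of (iii) therefore needs the same two inputs for the remainder: a closed form $g=g[\Gamma_t]$, and $\int_I\|g_\eps-g\|_{L^1(\Gamma_t)}\,\dd t\to0$. The regularity of $g$, which you single out as the main obstacle, is secondary. Without these inputs, only your residual-hypothesis fallback establishes (iii).
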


\begin{proof}
Each assertion follows by inspecting the proof dependencies.  The Brakke
passage uses the compactness and input estimates supplied by the open-basin
estimate; assuming those inputs directly yields conditional compactness and
removes the propagation step.  The Hopf lower bound enters through
Proposition~\ref{prop:short-hopf}; the transported forced Brakke motion on
regular no-exit intervals uses the preceding modules.  The force in the Brakke
inequality is identified through the zero-mode limit of the exact residual, so
an unidentified normal remainder must be placed either in the force or in a
stated residual hypothesis.  Finally, the boundary estimate prevents loss of
mass and Chern-Simons charge through the collar; the corresponding weaker
statement is local in the interior or records this quantity as boundary flux.
\end{proof}

This scope map records the dependence of the conclusion on the four proof
modules.  The theorem claims the open-basin result because the force
projection, the open-basin Gronwall closure, the Brakke-input estimates, and the
relative Hopf accounting are proved in one finite-\(\eps\) chain.  Removing one
module produces the corresponding weaker theorem in the list above.

\section{Sharpness: why the ordered core is needed}
\label{sec:sharpness}

\begin{proposition}[No line-varifold theorem for unaugmented Oldroyd-B]
\label{prop:short-nogo}
An Oldroyd-B conformation law with only bulk conformation entropy can close a
principal spectral gap while the conformation tensor remains smooth and
spatially homogeneous.  Hence spectral-gap loss alone is insufficient to force
a codimension-two line varifold.
\end{proposition}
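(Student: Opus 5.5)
The plan is to write down an explicit spatially homogeneous solution of the unaugmented Oldroyd-B law. For such a solution the principal spectral gap closes at a finite time, while $C$ stays smooth and positive definite. Take a velocity field that is linear in space, $u(x,t)=A(t)x$ with $\operatorname{tr}A=0$; this is divergence free, and on $\R^3$ (or on a torus, with $u$ interpreted as a homogeneous gradient forcing) it is a legitimate transport field with $\nabla u=A(t)$ spatially constant. With $C(x,t)=C(t)$ independent of $x$, the transport term $u\cdot\nabla C$ vanishes. The Oldroyd-B relaxation law with bulk entropy $\operatorname{tr}C-\log\det C$ then reduces to the matrix ODE
\[
        \dot C = AC + CA^T - \frac{1}{\lambda}(C-I).
\]
This is a linear ODE with smooth coefficients, so it has a unique smooth global solution. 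It stays in $\operatorname{Sym}^+(3)$ because $I/\lambda$ is a positive source and the homogeneous flow preserves positivity.

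Next, choose the data so that the gap actually closes. Start from a uniaxial state $C(0)=\operatorname{diag}(\alpha,1,1)$ with $\alpha>1$, so that $\lambda_1=\alpha$ is simple and $g(C(0))=\alpha-1>0$. Then apply an extensional flow $A=\operatorname{diag}(-\kappa,\kappa/2,\kappa/2)$, which is traceless and compresses the $e_1$ direction. The system stays diagonal, and each entry solves a scalar linear ODE:
\[
        \dot c_{11}=-2\kappa c_{11}-\lambda^{-1}(c_{11}-1),
        \qquad
        \dot c_{22}=\dot c_{33}=\kappa c_{22}-\lambda^{-1}(c_{22}-1).
\]
Once $\kappa\lambda$ is large enough, $c_{11}$ decays toward $1/(1+2\kappa\lambda)<1$, while $c_{22}=c_{33}$ increases from $1$. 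By the intermediate value theorem there is a finite $T_\ast$ with $c_{11}(T_\ast)=c_{22}(T_\ast)$. At that time $C(T_\ast)$ is a multiple of the identity on a neighbourhood of the crossing, so $g(C(T_\ast))=0$. Meanwhile $C$ is smooth in $(x,t)$, homogeneous in $x$, and positive definite. A slightly cleaner variant avoids any reference to a particular entropy normalization: prescribe $A(t)$ so that the diagonal entries cross transversally, for instance by taking $\kappa$ constant and appealing only to the sign of $\dot c_{11}-\dot c_{22}$ at the initial time.

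Finally, draw the conclusion about line varifolds. The bulk entropy density is a smooth function of the spatially constant $C(t)$. Every gradient quantity vanishes identically, including $|\nabla C|$ and any GL-type density. So any core measure built from this closure at scale $\estar$ is either zero or spatially uniform after normalization; it cannot concentrate on a codimension-two set. Thus $\liminf_{t\uparrow T_\ast}\inf g=0$ occurs, as in Lemma~\ref{lem:short-spectral-lifting}, without any line core and with finite entropy. Spectral-gap loss alone therefore does not force the codimension-two varifold that the ordered-core closure produces.

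The main obstacle is interpretive rather than computational. The spectral gap here is the global eigenvalue separation $g(C)$ of the paper, not a Morse-Bott order parameter. We must also make sure the example stays inside the stated class, meaning bulk entropy only and an admissible incompressible velocity. The linear flow handles incompressibility; on a bounded domain we would restrict to $\R^3$ or the torus so that boundary tangency is not an issue.
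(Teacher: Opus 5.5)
Your proof is correct and is essentially the paper's argument: a traceless linear flow $u=Ax$, a spatially homogeneous diagonal solution of the Oldroyd-B ODE, an eigenvalue crossing, and identically vanishing gradient energy. The only difference is that you use uniaxial extension $\operatorname{diag}(-\kappa,\kappa/2,\kappa/2)$ with uniaxial data $\operatorname{diag}(\alpha,1,1)$, so the crossing is an isotropic point, whereas the paper uses planar extension $\operatorname{diag}(-a,a,0)$ with $a<1/(2\lambda)$ and data $x(0)>y(0)>z(0)$.

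Two minor remarks. Your intermediate-value argument works for every $\kappa>0$, because $c_{11}\to 1/(1+2\kappa\lambda)<1\le c_{22}$, so no largeness condition on $\kappa\lambda$ is needed. Your ``cleaner variant'' should be dropped: knowing only the sign of $\dot c_{11}-\dot c_{22}$ at $t=0$ does not by itself force a crossing.
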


\begin{proof}
Take the affine incompressible flow \(u(x)=Ax\) with
\[
        A=\operatorname{diag}(-a,a,0),
        \qquad
        0<a<\frac{1}{2\lambda}.
\]
For spatially homogeneous diagonal
\[
        C(t)=\operatorname{diag}(x(t),y(t),z(t)),
\]
the Oldroyd-B equation reduces to
\[
        x'=-(2a+\lambda^{-1})x+\lambda^{-1},\quad
        y'=(2a-\lambda^{-1})y+\lambda^{-1},\quad
        z'=-\lambda^{-1}(z-1).
\]
Choosing \(x(0)>y(0)>z(0)>0\) with the first two branches crossing before they
meet the third gives a finite time at which the principal spectral gap closes.
Throughout the evolution \(C(t)\) is positive definite and spatially
homogeneous, so it carries zero normalized line energy and produces no
varifold.  The example serves as a local obstruction to any theorem deriving
line-varifold compactness from spectral-gap loss alone, rather than as a
finite-energy Hopf-changing solution on a bounded domain.  Thus the
ordered-core LDG energy is an essential compactness mechanism: it supplies the
codimension-two line density used in Theorem~\ref{thm:short-main}.
\end{proof}

\section{Discussion}
\label{sec:discussion}

The theorem identifies a route from topological obstruction to weak geometric
motion in a controlled ordered-core window.  Hopf relaxation is paid by one of
three mechanisms: spectral concentration, ordered-core mass, or a priced exit of
the finite-\(\eps\) control quantities.  On regular no-exit intervals, the
LDG/Oldroyd-FENE closure produces transported forced Brakke motion, with ambient
advection carried by the material derivative and the additional line force given
by the zero-mode projection of the closure residual.

The main assumptions have distinct roles.  The Morse-Bott well and the soft GL
coordinate create the codimension-two core scale.  The controlled massive mode
and FENE/collar bounds keep the tensorial coefficients in the chart.  The
\(L^1_tW^{1,\infty}_x\) velocity bound is a kinematic transport window for the
moving-measure passage.  In strong positive-cone branches this window can be
supplied by endpoint regularity theory: the two-dimensional Oldroyd-B/FENE-P
criterion controls the velocity-gradient Besov clock with logarithmic
conformation and FENE barriers, and the three-dimensional Oldroyd-B criterion
uses the endpoint vorticity clock on compact logarithmic cones
\cite{PengBlowUp2026,PengPositiveCone3D2026}.  The present theorem starts from
that clock and describes the geometry generated by the ordered conformation
closure.

For numerical or experimental comparison, the result suggests three diagnostics
rather than a single all-purpose law: track exterior spectral gap and FENE
proximity, measure normalized GL core mass and residual projection, and monitor
reach/cap-atlas integrity of the selected tube.  Stable control of these
quantities places the computation in the regular branch of the theorem.  The
hardest practical diagnostic is the residual balance in (M4').  Its usable form is
termwise: the translation projection is measured as the force, while only the
orthogonal soft part, massive part, geometry part, and collar part must be small
or dissipative in the normalized norms.  This is the suggested numerical test of
whether an ordered-core simulation lies in the theorem's basin.
Threshold crossing identifies the relevant exit channel.

The Hopf accounting is performed on topologically regular slabs, where the
finite-\(\eps\) core tubes remain embedded and carry a transported cap atlas.
Self-contact, reach collapse, or reconnection closes that slab and contributes
to the topology exit cost.  A later regular slab can be analyzed after choosing
new cap data, but the present theorem records the energetic price of the
transition rather than prescribing a universal reconnection law.

\section{Conclusion}
\label{sec:conclusion}

We have proved a finite-\(\eps\) route from topological obstruction to geometric
defect motion in an ordered viscoelastic conformation model.  The Hopf invariant
identifies the obstruction, the curvature-gap estimate localizes a spectral
loss mechanism, and the ordered-core Landau-de Gennes energy supplies the
codimension-two density needed for compactness.  The proof order is essential:
transport is kept in the material derivative, the closure residual is projected
onto the zero modes, the vortex-tube basin is propagated, and the Brakke limit
is taken only after the mass, stress, force-power, and dissipation inputs have
been produced.

The regularity input is deliberately separated from the geometric defect
mechanism.  Positive-cone continuation criteria supply natural
velocity clocks for Oldroyd-B/FENE-P strong branches
\cite{PengBlowUp2026,PengPositiveCone3D2026}; this paper turns such a clock into
cap transport, force projection, and moving-varifold compactness.  The
verification interface is correspondingly a scope statement: it identifies the
ordered-core closures for which the residual balance can be checked, and it
exposes the exit channels when the check fails.

The resulting alternative is explicit.  On regular no-exit intervals, the core
converges to a transported forced Brakke flow whose extra force is a zero-mode
fiber integral of the closure.  At the boundary of the controlled regime, the
finite-\(\eps\) estimates records spectral-gap cost, ordered-core mass, boundary
flux, FENE/collar loss, graph or cap-atlas loss, or another priced exit.  The
relative Hopf accounting shows that a nonzero Hopf change must be paid by this
same list of costs.

The theorem is a controlled open-basin result with named control quantities.  This format
keeps the applicability explicit and gives a practical way to test whether a
concrete ordered LDG/Oldroyd-FENE closure, computation, or regularized transport
window lies in the regime where the transported Brakke law is derived.

\appendix

\section{Controlled estimates behind the four modules}
\label{app:controlled-estimates}

This appendix gives the technical estimates used in the four proof modules.
Each estimate supports a specific step in the main theorem: the Fermi-coordinate
tube calculus, the planar coercivity input, the residual balance, the collar and
scalar-defect alternatives, the Brakke testing extension, and the capped Hopf
accounting.  Constants may depend on a fixed upper bound for the curvature,
reach, collar geometry, FENE coefficient bounds, and \(L^1_tW^{1,\infty}_x\)
norm of the transport velocity, but never on \(\eps\).

\subsection{Fermi geometry and canonical tubes}

Let \(\Gamma_t\) be a smooth embedded link on a compact interval \(I\).  We use
arclength \(s\), unit tangent \(\tau\), and a smooth normal frame
\((\nu_1,\nu_2)\).  A point in the tube is written
\[
        X(s,r,t)
        =
        \Gamma(s,t)+r_1\nu_1(s,t)+r_2\nu_2(s,t).
\]
The tube radius is chosen smaller than a fixed fraction of the reach, so the
Jacobian \(J(s,r,t)\) satisfies
\[
        J(s,r,t)=1+O(|r|\,|\kappa(s,t)|),
        \qquad
        |\nabla_{s,r}^jJ|\le C_j
\]
on the tube.  The coordinate vector fields have the expansions
\[
        \partial_{r_i}X=\nu_i,
        \qquad
        \partial_sX
        =
        \tau
        +
        O(|r|\,|\kappa|)
\]
and the Euclidean gradient decomposes as
\[
        \nabla
        =
        \nu_1\partial_{r_1}
        +
        \nu_2\partial_{r_2}
        +
        \tau\partial_s
        +
        \mathcal E_{\rm Fermi},
\]
where
\[
        |\mathcal E_{\rm Fermi} f|
        \le
        C|r|
        \bigl(
        |\nabla_r f|+|\partial_s f|
        \bigr).
\]
These estimates imply that every Fermi error in the canonical tube carries an
extra factor of \(r\), or else a bounded derivative of the frame.  Since the
core is localized at scale \(\eps\), such terms are lower order after
normalization by \(\estar=\pi|\log\eps|\), unless they occur in the annular
cutoff region; the latter contribution is included in the geometric-error
entry of the basin.

\begin{lemma}[Canonical tube mass and stress]
\label{lem:app-canonical-mass}
For the canonical tube \(\bar d_\eps=\bar d_{\eps,a}\) over the selected graph
\(\Gamma_t^a=\{X(s,a(s,t),t)\}\),
\[
        \frac{1}{\estar}
        e_\eps(\bar d_\eps)\,\dd x
        \rightharpoonup
        \Hh^1\lfloor\Gamma_t^a ,
\]
where
\[
        e_\eps(d)
        =
        \frac12|\nabla d|^2
        +
        \frac{1}{4\eps^2}(|d|^2-1)^2 .
\]
Moreover, for every \(X\in C_c^1(\Omega;\R^3)\),
\[
        -\frac1{\estar}
        \int_\Omega T_\eps[\bar d_\eps]:\nabla X\,\dd x
        \longrightarrow
        \delta V_{\Gamma_t^a}(X),
\]
where \(V_{\Gamma_t^a}\) is the unit-multiplicity varifold carried by
\(\Gamma_t^a\).  When \({\rm FlatErr}_\eps+{\rm ModErr}_\eps\to0\), this graph
is identified with the reference filament in the limiting notation.
\end{lemma}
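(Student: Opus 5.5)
The plan is to compute in the Fermi coordinates of the selected graph \(\Gamma_t^a\), reduce everything to the planar vortex \(q_\ast\) on each normal slice, and then integrate along \(s\). Write \(y=(r-a(s,t))/\eps\). Inside the cutoff region \(\{\chi=1\}\), the chain rule and the gradient decomposition of the Fermi calculus give
\[
        \nabla\bar d_\eps
        =
        \eps^{-1}\bigl(\nu_1\partial_{y_1}q_\ast+\nu_2\partial_{y_2}q_\ast\bigr)
        -
        \eps^{-1}\tau\,(\partial_s a)\cdot\nabla_yq_\ast
        +
        \mathcal E_{\rm Fermi}\bar d_\eps .
\]
The slice energy is the planar GL energy of \(q_\ast\) on the disc \(\{|y|\le r_0/\eps\}\). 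The standard expansion gives \(\pi|\log\eps|+O(1)=\estar+O(1)\), and the energy outside \(\{|y|\le R\}\) is asymptotically \(\pi\log(r_0/(\eps R))\). So for every fixed \(R\), the slice mass fraction in \(\{|y|>R\}\) is not small. The weak limit is nevertheless concentrated on the graph: for any fixed \(\delta>0\), the energy in \(\{|r-a|\ge\delta\}\) is \(O(\log(1/\delta))=o(\estar)\). Hence the mass concentrates in an \(r\)-neighbourhood of the graph that shrinks as \(\eps\to0\).

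For the first claim, take \(\varphi\in C_c^0\) and use the Jacobian \(J=1+O(|r||\kappa|)\). We get
\[
        \frac1{\estar}\int\varphi\,e_\eps(\bar d_\eps)\,\dd x
        =
        \int\!\!\int\varphi(X(s,r,t))\,\frac{e_\eps}{\estar}\,J\,\dd r\,\dd s .
\]
Split the \(r\)-integral at \(|r-a|=\delta\). On the inner part, replace \(\varphi(X)\) by \(\varphi(X(s,a,t))\) with error \(\omega_\varphi(\delta)\), and replace \(J\) by \(1+O(\delta)\). The slice integral is then \(1+o(1)\). The remaining contributions are all \(O(1)/\estar\): the outer region, the annular cutoff \(\chi\), the collar phase \(d_{\rm out}\) (whose energy is \(O(1)\)), the longitudinal term \(|\partial_sa|^2\) weighted by \(M\) (bounded for the smooth selected graph), and the Fermi error terms. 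The length element of \(\Gamma_t^a\) differs from \(\dd s\) by \(O(|a|+|\partial_sa|)\), which is absorbed because the graph is smooth. Letting \(\eps\to0\) and then \(\delta\to0\) gives \(\mu\rightharpoonup\Hh^1\lfloor\Gamma_t^a\).

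For the stress, use \(T_\eps[d]=e_\eps(d)\,\mathrm{Id}-\nabla d^T\nabla d\). Decompose \(\nabla\bar d_\eps^T\nabla\bar d_\eps\) into normal–normal, normal–tangential and tangential–tangential blocks. The normal–normal block is \(\eps^{-2}\,\partial_{y_i}q_\ast\cdot\partial_{y_j}q_\ast\,\nu_i\otimes\nu_j\). The key input is the planar equipartition identity: integrating against a slowly varying coefficient,
\[
        \int_{|y|\le r_0/\eps}\partial_{y_i}q_\ast\cdot\partial_{y_j}q_\ast\,\dd y
        =
        \tfrac12\delta_{ij}\int|\nabla q_\ast|^2+O(1),
\]
which follows from radial symmetry and degree one, since the logarithmic part comes from \(|\partial_\theta q_\ast|^2/|y|^2\). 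Also, the potential term \(\eps^{-2}(|q_\ast|^2-1)^2\) has \(O(1)\) slice integral. So after normalization,
\[
        T_\eps/\estar\rightharpoonup(\mathrm{Id}-\nu_1\otimes\nu_1-\nu_2\otimes\nu_2)\,\Hh^1\lfloor\Gamma_t^a=\tau\otimes\tau\,\Hh^1\lfloor\Gamma_t^a .
\]
The mixed and tangential blocks carry a factor \(\partial_sa\) or an \(\mathcal E_{\rm Fermi}\) term, so they contribute \(O(1)/\estar\). Pairing the limit with \(\nabla X\) gives \(-\int\operatorname{div}_{\Gamma}X\,\dd\Hh^1=-\delta V_{\Gamma_t^a}(X)\), so with the sign convention in the statement the limit is \(\delta V_{\Gamma_t^a}(X)\). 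The tube and the limit depend on \(\eps\) only through \(a\) and the cutoff, which are smooth, so the convergence holds for every \(X\) by the same concentration argument.

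The main obstacle is that the mass sits in a logarithmic tail: it is spread over all scales \(\eps\ll|r-a|\ll1\) rather than in an \(O(\eps)\) core. For this reason the off-diagonal vanishing and the isotropy must be taken from the angular structure on every annulus, not only near the core. I would handle this annulus by annulus: on \(\{\rho<|y|<2\rho\}\), \(q_\ast\) is \(e^{i\theta}\) up to \(O(\rho^{-2})\), where \(\nabla q_\ast\) is exactly isotropic in the normal plane. The last sentence of the statement (identifying the graph with the reference filament when \({\rm FlatErr}_\eps+{\rm ModErr}_\eps\to0\)) then follows because \(\Gamma_t^a\to\Gamma_t\) in the flat norm, with a uniform \(C^1\) bound on \(a\) along a subsequence, so that \(\Hh^1\lfloor\Gamma_t^a\) and \(\delta V_{\Gamma_t^a}\) converge to those of \(\Gamma_t\).
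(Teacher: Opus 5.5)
Your route is the same as the paper's. You expand the planar vortex on each normal slice, integrate along \(s\) with the Fermi Jacobian, and use the isotropy \(\int\partial_{y_i}q_\ast\cdot\partial_{y_j}q_\ast\,\dd y=\tfrac12\delta_{ij}\int|\nabla q_\ast|^2\,\dd y\) to get \(T_\eps/\estar\rightharpoonup\tau\otimes\tau\,\Hh^1\). For \(a\equiv0\) your \(\delta\)-splitting makes this rigorous.

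\textbf{The gap is the modulation.} You assert that the blocks carrying \(p=\partial_sa\) (viewed as the vector \(p_k\nu_k\)) contribute \(O(1)/\estar\). But the truncated translation Gram matrix has size \(\estar\):
\[
\int_{|y|\le r_0/\eps}|p\cdot\nabla_yq_\ast|^2\,\dd y=|p|^2\estar+O(1),
\qquad
\int_{|y|\le r_0/\eps}\partial_{y_k}q_\ast\cdot(p\cdot\nabla_yq_\ast)\,\dd y=p_k\estar+O(1).
\]
After normalization, the tangential block therefore adds \(\tfrac12|p|^2\) to the mass density per \(\dd s\), and the mixed block adds \(p\otimes\tau+\tau\otimes p\) to \(T_\eps/\estar\). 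Both are of order one. The first is exactly the logarithmic longitudinal energy of Lemma~\ref{lem:app-longitudinal-zero-mode}.

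Two further dismissals have the same problem. You replace \(J\) by \(1+O(\delta)\), although \(J\approx1-\kappa\cdot a\) at the centre. You also discard the arclength factor \(\sqrt{J^2+|p|^2}\) of \(\Gamma_t^a\) as absorbed by smoothness of the graph. Both are order-one changes of density that must be tracked; the \(\kappa\cdot a\) parts do cancel against each other. In the mass, the \(|p|^2\) terms also cancel at second order, but not exactly: for straight \(\Gamma\) the true limit density per \(\dd s\) is \(1+\tfrac12|p|^2\), while the arclength density is \(\sqrt{1+|p|^2}\).

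\textbf{In the stress computation the dismissal is fatal.} Without the mixed block you obtain \(\tau\otimes\tau\) with the reference tangent. Pairing this with \(\nabla X\) along \(\Gamma_t^a\) does not give \(\delta V_{\Gamma_t^a}(X)\) once \(\Gamma_t^a\) is tilted. Keeping every block does not rescue the claim either. For straight \(\Gamma\) and \(a(s)=ps\) on a segment, one gets locally
\[
\frac{T_\eps}{\estar}\rightharpoonup
\Bigl[(1-\tfrac12|p|^2)\,\tau\otimes\tau+\tfrac12|p|^2P_\nu+p\otimes\tau+\tau\otimes p\Bigr]\dd s ,
\]
where \(P_\nu\) is the normal projection. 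Test with \(X=\varphi e\), \(e\perp\tau,p\). Then \(\frac1{\estar}\int T_\eps:\nabla X\to\tfrac12|p|^2\int\partial_e\varphi\,\dd s\), which is nonzero for suitable \(\varphi\), whereas \(\delta V_{\Gamma_t^a}(X)=0\) on a straight segment.

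The reason is that on the normal planes of \(\Gamma_t^a\) the tube is the sheared vortex \(q_\ast((I+p\otimes p)^{1/2}z/\eps)\) in isometric coordinates \(z\), not the round one. Your annulus-by-annulus isotropy is therefore applied on the wrong slices.

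What is missing is a hypothesis forcing \(\partial_sa\to0\), for instance \({\rm ModErr}_\eps\to0\) with uniformly Lipschitz centres. Under it the extra blocks are \(o(1)\) after normalization, and your argument closes. The paper's own short proof simply integrates the planar expansion along \(s\) and is equally silent on this point, so the restriction belongs in the statement.

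Finally, your sign computation is correct and the statement's is not. The stress identity in Lemma~\ref{lem:short-input-ledger} forces \(T_\eps=e_\eps\mathrm{Id}-\nabla d^T\nabla d\). With that convention the limit is \(-\int\operatorname{div}_{\Gamma}X\,\dd\Hh^1=\int X\cdot H\,\dd\Hh^1\). This equals \(-\delta V(X)\) in the paper's own normalization \(\delta V(X)=-\int X\cdot H\,\dd\Hh^1\). Flag this as a sign error rather than appealing to a convention.
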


\begin{proof}
On a normal slice, the degree-one planar vortex has energy
\[
        \pi|\log\eps|+O(1)
\]
on every fixed disc.  Integrating this expansion along \(s\) and using
\(J=1+O(r)\) gives the measure convergence.  For the stress convergence, split
\(\nabla X\) into its value at the centerline and an \(O(r\|X\|_{C^1})\)
remainder.  The remainder is \(o(\estar)\).  The slice stress of the radial
degree-one vortex is isotropic in the normal plane; after integration over the
normal variables, the remaining first-variation term is the classical
first variation of the centerline varifold.  This gives
\[
        \delta V_{\Gamma_t^a}(X)
        =
        -\int_{\Gamma_t^a}X\cdot H_{\Gamma_t^a}\,\dd\Hh^1 .
\]
\end{proof}

\begin{lemma}[Longitudinal zero-mode energy]
\label{lem:app-longitudinal-zero-mode}
Let \(\bar d_{\eps,a}\) and \(\bar d_{\eps,0}\) be the canonical tubes with the
same collar and with centers \(a(s,t)\) and \(0\), respectively.  On a no-exit
interval, if \(\|a\|_{L^\infty}\) is below the fixed tube radius and
\({\rm ModErr}_\eps(t)\) is finite, then
\[
        \frac{E_\eps[\bar d_{\eps,a}(t)]
        -E_\eps[\bar d_{\eps,0}(t)]}{\estar}
        =
        \frac12
        \int_{\Gamma_t}
        \partial_s a_i\,M_{ij}\,\partial_s a_j\,\dd\Hh^1
        +
        O({\rm FlatErr}_\eps(t)+{\rm GaugeErr}_\eps(t))
        +
        o_\eps(1).
\]
In particular, the \(s\)-dependent translation mode is coercive at the same
logarithmic scale as the core energy, while a constant recentering of the slice
does not create this term.
\end{lemma}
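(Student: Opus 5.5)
The plan is to compute the energy difference slice by slice in Fermi coordinates, using the product structure of the canonical tube. Write \(\bar d_{\eps,a}=q_\ast((r-a(s,t))/\eps)\chi(r)+d_{\rm out}(1-\chi)\). Split \(\nabla\) as in the Fermi decomposition into the normal part \(\nabla_r\), the tangential part \(\tau\partial_s\), and the error \(\mathcal E_{\rm Fermi}\). The energy density then separates into a planar part
\[
\tfrac12|\nabla_r\bar d|^2+\tfrac1{4\eps^2}(|\bar d|^2-1)^2
\]
and a longitudinal part \(\tfrac12|\partial_s\bar d|^2\), up to Jacobian factors \(J=1+O(|r||\kappa|)\) and Fermi cross terms.

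First, the planar part. On each slice, \(y\mapsto q_\ast((r-a)/\eps)\) is a pure translate of the planar vortex. Its planar energy on the disc of radius \(r_0\) therefore differs from the \(a=0\) energy only through the cutoff and collar annulus, where \(|a|\) is below the tube radius. There the phase mismatch between the shifted core and the fixed collar phase \(d_{\rm out}\) produces an \(O(|a|^2)\) contribution on an annulus of fixed modulus. That contribution is \(O(1)\), hence \(o_\eps(1)\) after dividing by \(\estar\). Whatever is not \(o(1)\) here is a flat-current or gauge discrepancy between the shifted graph current and the collar; I would bound it by \({\rm FlatErr}_\eps+{\rm GaugeErr}_\eps\). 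The Jacobian weight \(J\) evaluated at the shifted center changes the slice energy by \(O(|a||\kappa|)\,\estar\) at leading order. Comparing with the \(a=0\) tube, this mass shift along the selected graph \(\Gamma^a_t\) versus \(\Gamma_t\) is a flat-norm quantity and is again charged to \({\rm FlatErr}_\eps\).

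Second, the longitudinal part. By the chain rule,
\[
\partial_s\bar d_{\eps,a}=\partial_s\bar d_{\eps,0}\big|_{\rm shift}-\eps^{-1}(\partial_s a)\cdot\nabla_y q_\ast .
\]
The first term involves only the frame derivative. Its square, and its cross term with the second term, are lower order: both carry bounded frame derivatives and, by the radial symmetry of \(q_\ast\) against the odd modes \(Z_i\), either cancel or are \(O(1)\). The main term is
\[
\tfrac12\eps^{-2}\,\partial_s a_i\,\partial_s a_j\int Z_iZ_j\,\eps^2\dd y
\]
over the slice disc of radius \(r_0/\eps\). By the definition of \(M\) as the truncated normalized Gram matrix, this is \(\estar\,\tfrac12\partial_s a_iM_{ij}\partial_s a_j(1+o(1))\). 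Note that \(\int_{|y|<R}|\nabla q_\ast|^2\sim\pi\log R\), which is why the scale is \(\estar\). Integrating in \(s\) against \(J\dd s=(1+O(r))\dd\Hh^1\) gives the leading term. For the \(o(1)\) uniformity I need \({\rm ModErr}_\eps\) finite, so that the \(L^1\) weight \(|\partial_sa|^2\) is controlled.

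The main obstacle is justifying that the cross and cutoff terms are genuinely \(o(\estar)\) rather than \(O(\estar)\). In particular the mixed term \(\partial_s\bar d_{\eps,0}\cdot(\partial_sa)\cdot\nabla_yq_\ast\) is dangerous, because the phase of \(q_\ast\) rotates with the normal frame. It carries a logarithmic factor unless the frame twist is gauge-fixed. I would try to handle it by choosing the transported (parallel) normal frame so that the frame-rotation contribution is a gauge term. The residual twist pairing would then be bounded by \({\rm GaugeErr}_\eps\), which is exactly why that quantity appears in the error. The final step is to note that constant \(a\) gives \(\partial_sa=0\), so only the \(o(1)\) and flat/gauge terms remain; this proves the last sentence of the statement.
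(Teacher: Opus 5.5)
Your proposal follows essentially the same route as the paper's proof: the chain rule in Fermi coordinates gives the leading term \(-\eps^{-1}(\partial_s a_i)Z_i\) (you correctly keep the factor \(\eps^{-1}\) that the paper's display drops), and its square integrates slice-wise to the truncated Gram matrix, which tends to \(M\) after division by \(\estar\); the frame, curvature/Jacobian and cutoff/collar terms are charged to \({\rm FlatErr}_\eps\), \({\rm GaugeErr}_\eps\) or \(o_\eps(1)\), and \(\partial_s a=0\) disposes of constant recentering.

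One correction to your bookkeeping concerns a normal frame with twist rate \(\omega\). There the rotation term acting on the shifted profile contains \(\eps^{-1}\omega\,a^\perp\cdot\nabla_y q_\ast\), so its square and its cross term with the main term are of order \(\estar|a|(|a|+|\partial_s a|)\). They are not \(O(1)\) by radial symmetry, as your longitudinal-part paragraph first asserts. Your final paragraph rightly treats this contribution as logarithmic and charges it to the gauge/flat errors, which is the same absorption the paper makes for terms carrying a factor \(|r|\) at the shifted center.
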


\begin{proof}
In Fermi coordinates,
\[
        \partial_s q_\ast\!\left(\frac{r-a(s)}{\eps}\right)
        =
        -(\partial_s a_i)\,Z_i\!\left(\frac{r-a(s)}{\eps}\right)
        +
        \hbox{frame and cutoff terms}.
\]
The squared leading term integrates on each slice to the truncated Gram matrix.
After division by \(\estar\), the truncated matrix converges to \(M\).  Frame
rotation, curvature, and cutoff terms contain either an extra factor of
\(|r|\), a fixed collar derivative, or the current-graph mismatch; these are
absorbed by the displayed flat/gauge errors and by \(o_\eps(1)\).  Constant
translations have \(\partial_s a=0\), which is why they are handled by the
modulation chart rather than by a coercive slice inequality.
\end{proof}

\begin{lemma}[Moving tube residual]
\label{lem:app-moving-tube-residual}
Let \(\bar d_\eps\) be the canonical tube transported by
\(\Gamma_t\).  Then
\[
\begin{aligned}
&D_t\bar d_\eps
        -
        \Mmob\left(\Delta\bar d_\eps
        -
        \eps^{-2}(|\bar d_\eps|^2-1)\bar d_\eps\right)
\\
&\qquad=
        \bigl(
        \Vrel_\Gamma^\perp-\Mmob H_{\Gamma_t}
        \bigr)\cdot\nabla_yq_\ast
        +
        R_{\rm Fermi}^\eps ,
\end{aligned}
\]
with
\[
        \frac1{\estar}
        \int_I\|R_{\rm Fermi}^\eps\|_{L_\ast^2}^2\,\dd t
        \le
        C\,{\rm Err}_{\rm geom}^\eps(I)+o_\eps(1).
\]
\end{lemma}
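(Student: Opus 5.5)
The plan is to compute directly in the Fermi coordinates of the appendix, writing the canonical tube as \(\bar d_\eps(x,t)=q_\ast((r-a)/\eps)\chi(r)+d_{\rm out}(1-\chi)\) with \(r=r(x,t)\) the normal coordinate relative to the moving filament \(\Gamma_t\). I would treat the inner region \(\{\chi=1\}\) first and handle the cutoff annulus and collar at the end. In the inner region, the material derivative of \(\bar d_\eps\) is computed by the chain rule:
\[
        D_t\bar d_\eps
        =
        \frac1\eps\,\nabla_yq_\ast\cdot\bigl(\partial_t r+u_\eps\cdot\nabla r\bigr).
\]
Differentiating the identity \(x=\Gamma(s,t)+r_i\nu_i(s,t)\) in time shows that \(\partial_t r=-V_\Gamma^\perp+O(|r|\,|\partial_t\nu|)\) in the normal frame. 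Expanding \(u_\eps\) to first order at the centerline gives \(u_\eps\cdot\nabla r=u_\Gamma^\perp+O(|r|\,\|\nabla u_\eps\|_{L^\infty})\). Together these produce the term \(-\eps^{-1}\Vrel_\Gamma^\perp\cdot\nabla_yq_\ast\), up to the sign and scaling convention implicit in the statement, plus errors carrying a factor \(|r|\).

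For the elliptic part, I would use the Fermi decomposition of the Laplacian,
\[
        \Delta=\Delta_r+\partial_s^2-\kappa\cdot\nabla_r+O(|r|)\,\nabla^2 .
\]
The planar vortex equation \(\Delta_yq_\ast=(|q_\ast|^2-1)q_\ast\) cancels \(\Delta_r\bar d_\eps-\eps^{-2}(|\bar d_\eps|^2-1)\bar d_\eps\) exactly. The first-order curvature term \(-\kappa\cdot\nabla_r\bar d_\eps=-\eps^{-1}H_{\Gamma_t}\cdot\nabla_yq_\ast\) is then the \(\Mmob H_{\Gamma_t}\) contribution, since \(H=\kappa\) for curves. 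The longitudinal term \(\partial_s^2\bar d_\eps\) contains \(\partial_s a\) and \(\partial_s^2a\) times \(Z_i\) and \(\nabla Z_i\), so it is bounded by \({\rm ModErr}_\eps\) and frame derivatives. I would place all of these, together with the \(O(|r|)\) metric remainders, into \(R_{\rm Fermi}^\eps\).

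The estimate on \(R_{\rm Fermi}^\eps\) reduces to weighted integrals. Terms of the form \(|r|\,\eps^{-1}|\nabla_yq_\ast|\) and \(|r|\,|\nabla^2\bar d_\eps|\) have squared \(L^2\) norm on a slice of size \(O(1)\), because \(|r|\) is comparable to \(\eps|y|\) and \(|\nabla q_\ast|\lesssim|y|^{-1}\). After division by \(\estar\) these are \(o_\eps(1)\). The terms involving \(\partial_s a\) and \(\nabla u_\eps\) are bounded by \(C\,{\rm Err}_{\rm geom}^\eps\), with time integrability supplied by the \(L^1_tW^{1,\infty}\) clock. On the cutoff annulus, \(q_\ast\) is \(O(\eps^2)\)-close to its phase and \(d_{\rm out}\) is smooth, so the commutators from \(\nabla\chi\) are \(O(1)\) pointwise on a fixed region and therefore \(o(\estar)\) after normalization. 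I would put any mismatch between \(d_{\rm out}\) and the vortex phase in the annulus into \({\rm Err}_{\rm geom}^\eps\).

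I expect the main obstacle to be the bookkeeping of scales in the \(L_\ast^2\) norm. The leading translation term is of size \(\eps^{-1}\), and so is any \(O(1)\) error multiplying \(\nabla_yq_\ast\) at the core scale, such as frame rotation \(\partial_t\nu\) or the part of \(\nabla u_\eps\) at the centerline. Such errors would not be small. I would handle this by showing that every such coefficient either carries an explicit factor of \(|r|\) or is exactly a tangential rotation. A rotation acts on \(q_\ast\) through \(\partial_\theta q_\ast\), which is the gauge/phase mode, and would be charged to the gauge part of \({\rm Err}_{\rm geom}^\eps\) rather than to the translation modes.
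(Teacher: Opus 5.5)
Your outline is the paper's proof. The chain rule on the moving profile gives the relative normal velocity, the planar vortex equation cancels the normal Laplacian against the potential, the first-order Fermi curvature term gives $\Mmob H_{\Gamma_t}$, and everything else goes into $R^\eps_{\rm Fermi}$. The leading term $-\eps^{-1}(\Vrel_\Gamma^\perp-\Mmob H_{\Gamma_t})\cdot\nabla_yq_\ast$ is correct up to the sign and scale convention you flag. Two of your remainder estimates, however, fail as written.

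The first is the second-order remainder. From the crude expansion $\Delta=\Delta_r+\partial_s^2-\kappa\cdot\nabla_r+O(|r|)\nabla^2$ you claim that $|r|\,|\nabla^2\bar d_\eps|$ has $O(1)$ squared slice norm. But $|\nabla_y^2q_\ast|\asymp|y|^{-2}$ at infinity, so $\int_{|r|<r_0}|r|^2|\nabla_r^2[q_\ast(r/\eps)]|^2\dd r=\int_{|y|<r_0/\eps}|y|^2|\nabla_y^2q_\ast|^2\dd y\asymp|\log\eps|$. After division by $\estar$ this is $O(1)$, not $o(1)$. What saves you is the exact structure of the chart. In a parallel normal frame the metric is $J^2\,ds^2+|dr|^2$ with $J=1-\kappa\cdot r$, so $\Delta=\Delta_r-J^{-1}\kappa\cdot\nabla_r+J^{-1}\partial_s(J^{-1}\partial_s)$. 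A frame that closes up around a component adds only bounded $\partial_\theta q_\ast$ terms. There is therefore no $|r|\nabla_r^2$ error; the only $|r|$-weighted normal error is first order, $(1-J^{-1})\kappa\cdot\nabla_r$. The same observation settles your last paragraph. Frame rotation and the variation of the velocity enter only through $(x-\Gamma)\cdot\partial_t\nu_i$ and $u_\eps(x)-u_\eps(\Gamma)=O(\|\nabla u_\eps\|_\infty|r|)$. Both automatically carry $|r|$ and are bounded pointwise; the rotation contributes the in-plane rotation rate times $\partial_\theta q_\ast$. No gauge-mode charging is needed.

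The second is the modulation. With $a=a(s,t)$, in the core, $\partial_s^2\bar d_\eps=-\eps^{-1}\partial_s^2a\cdot\nabla_yq_\ast+\eps^{-2}(\partial_sa\otimes\partial_sa):\nabla_y^2q_\ast$. After normalization the first term has squared size $\asymp\int|\partial_s^2a|^2$ and the second $\asymp\eps^{-2}|\log\eps|^{-1}\int|\partial_sa|^4$. Neither is controlled by ${\rm ModErr}_\eps=\int|\partial_sa|^2$. Your chain-rule formula for $D_t\bar d_\eps$ also drops $-\eps^{-1}(D_ta)\cdot\nabla_yq_\ast$, a translation-mode term of leading size. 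None of these can go into $R^\eps_{\rm Fermi}$ under the stated bound. You have two ways out:
\begin{itemize}
\item Prove the lemma for the unmodulated tube $a\equiv0$. There $\partial_s\bar d_\eps=0$ in the core and every remainder is bounded pointwise, hence $o(1)$ after normalization. Then treat $a$ separately through the modulation and $D^\eps_{\rm rel}$ bookkeeping.
\item Work in the Fermi chart of the modulated graph, which absorbs $D_ta$ and $\partial_s^2a$ into that graph's normal velocity and curvature.
\end{itemize}
The paper's own proof is no sharper here. It defers the $s$-derivative of $a$ to the longitudinal zero-mode lemma and to $D^\eps_{\rm rel}$, and both control only first derivatives of $a$.

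There are also two smaller points. The $\nabla u_\eps$ remainder requires $\int_I\|\nabla u_\eps\|_\infty^2\,\dd t=o(|\log\eps|)$, which the $L^1_t$ clock does not provide. And an $O(1)$ mismatch between $d_{\rm out}$ and the vortex phase on the cutoff annulus makes $\eps^{-2}(|\bar d_\eps|^2-1)\bar d_\eps$ of size $\eps^{-2}$. Phase matching therefore has to be built into the tube rather than charged to ${\rm Err}^\eps_{\rm geom}$.
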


\begin{proof}
The time derivative of the canonical profile differentiates the signed
distance to the moving filament.  Its normal part is
\(-\Vrel_\Gamma^\perp\cdot\nabla_yq_\ast\), up to frame-rotation terms.  The planar
profile solves the stationary GL equation in the normal variables, so the
normal Laplacian cancels the potential term.  The difference between the
Euclidean Laplacian and the normal Laplacian produces the curvature correction
\(\Mmob H_{\Gamma_t}\cdot\nabla_yq_\ast\).  All remaining terms contain either
a frame derivative, a Jacobian perturbation, an \(s\)-derivative of the
modulation, or a cutoff derivative.  The \(s\)-derivative of the modulation is
controlled by Lemma~\ref{lem:app-longitudinal-zero-mode} and by the line-level
part of \(D_{\rm rel}^\eps\); the frame, Jacobian, and cutoff terms are recorded
in \({\rm Err}_{\rm geom}^\eps(I)\).
\end{proof}

\subsection{Planar coercivity and slice stability}

The planar input is the only place where the structure of the degree-one core
profile is used.  It is a local finite-scale statement on each normal slice,
not a global tube estimate.  Let \(L_\ast\) be the second variation of the
planar Ginzburg-Landau energy at \(q_\ast\).  Its kernel is generated by
\(\partial_{y_1}q_\ast\) and \(\partial_{y_2}q_\ast\).  On the finite-scale
orthogonal complement, \(L_\ast\) has a spectral gap.  The \(s\)-dependent
translation center \(a(s,t)\) is not part of this lemma; it is controlled
separately by Lemma~\ref{lem:app-longitudinal-zero-mode}.

\begin{lemma}[Planar coercivity modulo translations]
\label{lem:app-planar-coercivity}
There are \(c_\ast>0\) and \(\rho_\ast>0\) such that, if
\[
        \int_{|y|\le r_0/\eps}
        w\cdot\partial_{y_i}q_\ast\,\chi(\eps y)\,\dd y=0,
        \qquad i=1,2,
\]
and \(\|w\|_{H^1_{\rm loc}}\le\rho_\ast\), then
\[
        E_{\rm GL}(q_\ast+w)-E_{\rm GL}(q_\ast)
        \ge
        c_\ast\|w\|_{\rm core}^2
        -
        C\|w\|_{\rm core}^3 .
\]
In the open basin the cubic term is absorbed, giving a positive coercive bound
for the orthogonal slice remainder only.
\end{lemma}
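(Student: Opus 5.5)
The estimate is a second-order Taylor expansion of the planar Ginzburg--Landau energy at \(q_\ast\), followed by a spectral-gap statement for \(L_\ast\) on the complement of the translation modes. We write
\[
E_{\rm GL}(q_\ast+w)-E_{\rm GL}(q_\ast)
=\langle DE_{\rm GL}(q_\ast),w\rangle+\tfrac12\langle L_\ast w,w\rangle+\mathcal N(w).
\]
Here \(L_\ast w=-\Delta w+(|q_\ast|^2-1)w+2(q_\ast\cdot w)q_\ast\) in the rescaled variables. The remainder \(\mathcal N(w)\) collects the cubic and quartic terms \((q_\ast\cdot w)|w|^2\) and \(\tfrac14|w|^4\).

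\textbf{First-order term.} Since \(q_\ast\) solves the stationary GL equation, the first-order term vanishes in the interior. On the truncated disc \(|y|\le r_0/\eps\) with the cutoff \(\chi(\eps y)\), there is a boundary/cutoff flux. Because \(\nabla q_\ast=O(1/|y|)\), this flux is controlled by the collar data, and after normalization it falls into the \(o_\eps\) and collar-error terms. Alternatively, we can take \(w\) to have fixed collar trace, so the flux is exactly zero. I would adopt the latter convention, since the basin fixes the collar phase.

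\textbf{Cubic remainder.} For \(\mathcal N\), I would use \(|q_\ast|\le1\), the 2D Gagliardo--Nirenberg inequality, and the smallness \(\|w\|_{H^1_{\rm loc}}\le\rho_\ast\). This gives \(|\mathcal N(w)|\le C\|w\|_{\rm core}^3\), with the quartic term absorbed into the cubic one for \(\rho_\ast\le1\). I would define \(\|w\|_{\rm core}^2=\int|\nabla w|^2+(1-|q_\ast|^2)|w|^2+(q_\ast\cdot w)^2\) on the truncated disc. This weighted norm does not see a constant phase rotation at infinity as a large \(L^2\) quantity. That matches the statement's remark that the Poincar\'e/collar absorption of the tail is done separately in Lemma~\ref{lem:short-tube-coercivity}.

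\textbf{Main obstacle: uniform coercivity of the quadratic form.} The hard step is proving
\[
\langle L_\ast w,w\rangle\ge 2c_\ast\|w\|_{\rm core}^2
\]
for \(w\) orthogonal (with weight \(\chi(\eps\cdot)\)) to \(Z_1,Z_2\), with \(c_\ast\) uniform in \(\eps\). I would rely on the known nondegeneracy of the degree-one vortex (Shafrir, Mironescu, del Pino--Felmer--Kowalczyk). The bounded kernel of \(L_\ast\) on \(\R^2\) is spanned by \(Z_1,Z_2\) and the rotation mode \(iq_\ast\). Stability gives \(L_\ast\ge0\).

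The rotation mode is the delicate point, because it is not removed by the two orthogonality conditions. On the truncated disc with fixed collar phase, it is excluded by the Dirichlet-type boundary condition: the phase is pinned there. The associated quadratic form then has a lower bound \(\gtrsim 1/|\log\eps|\) in \(L^2\), but it has order one in the \(\|\cdot\|_{\rm core}\) norm, whose gradient part controls the phase.

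The proof of coercivity itself I would organize by Fourier decomposition in the angular variable. Writing \(w=q_\ast(\alpha+i\beta)\)-type coordinates splits \(L_\ast\) into radial ODE operators for each angular mode \(n\). Modes \(|n|\ge2\) are strictly positive by a Hardy-type inequality. The mode \(n=0\) is positive once the phase is pinned. The modes \(n=\pm1\) contain the translation kernel, and for them I would use a compactness/contradiction argument. Suppose \(w_k\) is normalized in the core norm, orthogonal to \(Z_i\), and has \(\langle L_\ast w_k,w_k\rangle\to0\). Then \(w_k\) converges locally to a bounded kernel element that is orthogonal to \(Z_i\), hence zero. Concentration at infinity is excluded because there \(L_\ast\) is bounded below by \(|\nabla w|^2+2(q_\ast\cdot w)^2\), which already dominates the core norm for \(|y|\) large.

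\textbf{Conclusion.} Combining the three terms gives \(E_{\rm GL}(q_\ast+w)-E_{\rm GL}(q_\ast)\ge c_\ast\|w\|_{\rm core}^2-C\|w\|_{\rm core}^3\). In the open basin we have \(\|w\|_{\rm core}\le c_\ast/(2C)\), so the cubic term is absorbed and we obtain \(\ge \tfrac{c_\ast}{2}\|w\|_{\rm core}^2\) for the orthogonal slice remainder.
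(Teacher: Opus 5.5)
Your outline is the paper's: Taylor expansion at \(q_\ast\), vanishing first variation, coercivity of \(L_\ast\) modulo \(Z_1,Z_2\), and a cubic remainder. You also correctly notice the rotation mode \(iq_\ast\), which the paper's proof leaves out of the kernel. But the step you call the main obstacle does not go through. With your core norm and the stated orthogonality, the uniform bound \(\langle L_\ast w,w\rangle\ge 2c_\ast\|w\|_{\rm core}^2\) is false.

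\emph{Rotation sector.} Write \(q_\ast=f(r)e^{i\theta}\) and \(R=r_0/\eps\). Let \(w=i\beta q_\ast\) with \(\beta\) radial and \(\beta(R)=0\), so the collar phase is pinned. Since \(\chi\) is radial, \(w\) is automatically orthogonal to \(Z_1,Z_2\). Also \(q_\ast\cdot w=0\), and the profile equation for \(f\) gives \(\langle L_\ast w,w\rangle=\int f^2|\nabla\beta|^2\). By contrast, \(|\nabla w|^2=|(\beta f)'|^2+\beta^2f^2r^{-2}\). Take \(\beta\equiv1\) on \(r\le\sqrt R\) and \(\beta=\log(R/r)/\log\sqrt R\) on \([\sqrt R,R]\). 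Then \(\langle L_\ast w,w\rangle\le 4\pi/\log R\) but \(\|w\|_{\rm core}^2\gtrsim\log R\), a ratio of order \(|\log\eps|^{-2}\). Pinning the phase gives positivity, but only with the logarithmic Poincar\'e constant. Your norm contains the angular term \(\beta^2|\nabla\theta|^2\), which the form never sees. So ``the gradient part controls the phase'' runs in the wrong direction; this is the failure of Hardy's inequality in two dimensions.

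\emph{Translation sector.} The orthogonality is imposed with weight \(\chi(\eps y)\) on all of \(|y|\le R\). There \(\int|Z_1|^2\chi(\eps y)\,\dd y\approx\pi\log R\) is carried by the far field, so the condition does not survive a local limit. Let \(w=g(|y|)Z_1\), where:
\(g\equiv1\) on \(|y|\le\rho\);
\(g\) passes to \(-1\) on \([\rho,2\rho]\);
\(g\equiv-1\) on \([2\rho,\rho_2]\), with \(\rho_2\approx C\rho^2\ll R\) chosen so that \(\int gZ_1\cdot Z_1\chi(\eps y)\,\dd y=0\);
\(g\) is cut off to zero on \([\rho_2,2\rho_2]\).
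Orthogonality to \(Z_2\) holds by symmetry. Since \(L_\ast Z_1=0\) and \(w\) vanishes on \(|y|=R\), we get \(\langle L_\ast w,w\rangle=\int|\nabla g|^2|Z_1|^2=O(\rho^{-2})\). Meanwhile \(\|w\|_{\rm core}\ge\|\nabla Z_1\|_{L^2(B_1)}>0\). After normalization these \(w\) converge locally to a nonzero multiple of \(Z_1\). So the step ``the limit is orthogonal to \(Z_i\), hence zero'' is exactly what breaks. Your claim that \(L_\ast\) dominates \(|\nabla w|^2+2(q_\ast\cdot w)^2\) at large \(|y|\) also fails: it drops \(-(1-|q_\ast|^2)|w|^2\approx-|w|^2/|y|^2\), which is of the same order as the angular part of \(|\nabla w|^2\).

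Apply the lemma to \(tw\), divide by \(t^2\), and let \(t\downarrow0\); it reduces to \(\tfrac12\langle L_\ast w,w\rangle\ge c_\ast\|w\|_{\rm core}^2\). So no treatment of the boundary flux or the cubic remainder can repair this. The paper's proof offers no way around it either: it simply invokes ``the spectral theorem'' on the truncated orthogonal complement and lists only \(Z_1,Z_2\) as kernel. An \(\eps\)-uniform version requires changing the set-up. At minimum you would need orthogonality against \(Z_i\) with an \(\eps\)-independent, compactly supported weight. You would also need a norm that measures the phase part of \(w=q_\ast(\alpha+i\beta)\) through \(\int|q_\ast|^2|\nabla\beta|^2\) rather than \(\int|\nabla w|^2\).
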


\begin{proof}
Taylor expand the planar GL energy at \(q_\ast\).  The first variation
vanishes because \(q_\ast\) is a critical point.  The second variation is
\(\langle L_\ast w,w\rangle\).  The translation orthogonality removes the
kernel.  The spectral theorem gives
\[
        \langle L_\ast w,w\rangle
        \ge
        c_\ast\|w\|_{\rm core}^2
\]
on the finite-scale orthogonal complement.  The remainder is cubic in the core
norm by standard Sobolev estimates on bounded discs, while the cutoff collar and
annular no-crossing hypotheses control the far-field tail.  The basin smallness
makes the cubic term at most half the quadratic term.  No \(s\)-derivative of
the translation center is estimated here.
\end{proof}

\begin{lemma}[Nonlinear slice alternative]
\label{lem:app-slice-alternative}
On each normal slice of a good tube, one of the following holds:
\begin{enumerate}[label=(\roman*)]
\item the nonlinear coercivity estimate of Lemma
\ref{lem:app-planar-coercivity} holds after choosing the translation center;
\item a nonzero-degree annular crossing occurs and is charged to
\({\rm TopExitCost}_\eps\);
\item exterior core mass of order \(\estar\) is present and is charged to the
exterior-mass entry of the basin.
\end{enumerate}
\end{lemma}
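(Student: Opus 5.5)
The plan is a trichotomy on each normal slice, driven by degree and energy counting on annuli. Fix a slice $\nu_s$ of the good tube and work in the rescaled variable $y=(r-a)/\eps$. On the annular transition region $\{\rho_1\le|r|\le\rho_0\}$ between the core scale and the fixed collar, I will consider the set where $|d_\eps|\ge c_{\rm out}$. If this set contains a full circle $\partial B_\rho$ with $\rho$ in a range of positive logarithmic measure, then $\deg(d_\eps/|d_\eps|,\partial B_\rho)$ is defined there. The fixed collar phase forces this degree to equal one at the outer radius. If the degree is not one on some admissible circle, a nonzero-degree current has crossed the annulus. That is exactly the annular crossing event of Definition~\ref{def:short-priced-exit}, and it is charged to ${\rm TopExitCost}_\eps$, giving alternative (ii).

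If no such good circle exists, the sublevel set $\{|d_\eps|<c_{\rm out}\}$ meets every circle in the annulus. I will then apply the Jerrard/Sandier ball-construction lower bound to this slice. Vortex balls covering the zero set away from the center carry energy of order $|\log\eps|$ times their total degree. If that total degree is zero, the ball construction still gives $\gtrsim c_{\rm out}$-type potential energy on each circle, which integrated over $\rho$ yields a logarithmic lower bound. Integrating over $s$ on the set of bad slices produces exterior core mass of order $\estar$, which is alternative (iii). I will also need to handle pure zero-degree scalar depressions of small energy. For these I will use the scalar-defect clearing estimate, as in Lemma~\ref{lem:short-residual-absorption}: they are absorbed into the basin as $o_\eps(1)$ and do not defeat (i).

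In the remaining case, every slice has a good circle carrying degree one, and the energy outside the core disc is at most $\pi|\log\eps|+o(\estar)$. The slice upper bound comes from the basin. I will first show that $d_\eps$ on the disc of radius $r_0/\eps$ is $H^1_{\rm loc}$-close to some translate $q_\ast(\cdot-a)$, using a compactness/rigidity argument for degree-one GL minimizers at near-optimal energy. I will then choose $a$ by the implicit function theorem applied to the orthogonality conditions; the Gram matrix $M$ is uniformly invertible, as in Proposition~\ref{prop:open-initial-basin}. This places $w=d_\eps-\bar d_{\eps,a}$ within $\rho_\ast$, and Lemma~\ref{lem:app-planar-coercivity} gives (i).

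The main obstacle is the rigidity step: near-minimal energy with degree one on a slice must force closeness to a translated $q_\ast$ in the core norm, with quantitative control uniform in $\eps$ and in $s$. Logarithmic energy excess of size $o(\estar)$ is weak, so I will first try to use the $O(1)$-level energy expansion $\pi|\log\eps|+\gamma+o(1)$ of Bethuel–Brezis–Hélein, combined with uniqueness of the degree-one minimizer up to symmetries. I expect slices with excess that is $o(\estar)$ but larger than $O(1)$ to have to be put into (iii) by a measure argument in $s$.
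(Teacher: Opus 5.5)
Your case split matches the paper's: degree one with no annular crossing gives (i), a degree change across the annulus is priced by the logarithmic annular bound, and everything else is booked as exterior mass. The gap is in how you reach (i); that step does not close.

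Lemma~\ref{lem:app-planar-coercivity} requires the orthogonal remainder to satisfy $\|w\|_{H^1_{\rm loc}}\le\rho_\ast$, and slice energy cannot deliver this, for three reasons.
First, the basin bounds only the $s$-integrated excess, by $\eta_\eps\estar$, so you have no per-slice upper bound to start from.
Second, even per slice, an $O(1)$ excess (which is $o(\estar)$) permits a vortex--antivortex pair at $y$-distance $O(1)$ from the core. That slice has degree one in the disc, no annular crossing, negligible flat error and no mass outside the tube, yet $w$ is not small.
Third, arbitrarily small excess does not pin the core phase. The symmetry orbit of $q_\ast$ contains $e^{i\alpha}q_\ast$, and the two translation orthogonality conditions do not remove it. A core rotated by $\alpha$, with the twist undone harmonically across the annulus, costs only about $\pi\alpha^2/\log(r_0/\eps)$. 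Meanwhile $\|w\|_{H^1_{\rm loc}}\sim|e^{i\alpha}-1|$.

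So ``BBH expansion plus uniqueness up to symmetries, then the implicit function theorem in $a$'' gives no bound uniform in $\eps$ without a separate phase normalization. That is presumably what the ${\rm GaugeErr}_\eps$ entry of the basin is meant to supply.

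Your fallback, sending the leftover slices to (iii) by a measure argument in $s$, also fails. Alternative (iii) asserts exterior mass of order $\estar$, and these slices carry neither that amount nor any mass outside the tube. Chebyshev only bounds the $s$-measure of slices with excess $\ge\delta$ by $\eta_\eps\estar/\delta$. That need not be small, since $\eta_\eps\estar$ need not tend to zero. In any case it controls a set of slices rather than giving a per-slice alternative. As written, slices with degree one, no crossing, no exterior mass and non-small excess or rotated phase fall into none of (i)--(iii).

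The paper does not use energy rigidity. It decides (i) from the slice Jacobian: degree one in the inner disc with no degree crossing the annulus gives flat-norm proximity to a single center. The current-graph selection fixes that center, and orthogonality plus Lemma~\ref{lem:app-planar-coercivity} is then invoked. Any other non-crossing slice ``not close to a single degree-one core'' is simply declared exterior mass.

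That assignment is asserted there, not derived. Both configurations above have the correct Jacobian up to negligible flat error, so flat-norm proximity does not give $\rho_\ast$-smallness either. The paper's route therefore does not fill the hole you found. A rigorous per-slice trichotomy needs extra input, for example:
\begin{itemize}
\item a per-slice smallness threshold on the excess, together with a phase/gauge condition built into ``good tube''; or
\item an $s$-averaged version of the lemma with an explicit bad-slice term.
\end{itemize}

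A smaller point: the degree on the outer circle of a normal slice is not forced by the ordered collar, which sits near $\partial\Omega$. The paper takes degree one in the inner disc as a case hypothesis. On most slices it should come from the flat-current entry of the basin.
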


\begin{proof}
If the Jacobian current on the slice has degree one in the inner disc and no
degree crosses the annulus, the current is close in flat norm to a single
center.  The current-graph selection fixes this center and the orthogonality
conditions then select the translation modulation.  Coercivity follows from
Lemma~\ref{lem:app-planar-coercivity}.  If the degree changes across the
annulus, the annular lower bound gives a logarithmic cost and hence a priced
crossing.  If neither of these occurs but the slice energy is not close to a
single degree-one core, the excess must lie outside the controlled tube and is
recorded as exterior core mass.
\end{proof}

\subsection{Verification estimates for Model 1}
\label{subsec:app-model-one-verification}

The next lemmas expand the proof of Theorem~\ref{thm:model-one-verification}.
They are included to make clear that the interface is checked from the concrete
LDG/Oldroyd-FENE closure rather than imposed after the fact.

\begin{lemma}[Morse-Bott chart and GL leading energy]
\label{lem:app-mb-chart-verification}
Let \(W_{\rm LDG}\) have the ordered Morse-Bott well \(\mathcal N\) used in
Model 1.  Then, after reducing the tubular radius, every chart-valid field has
\[
        C=\mathcal C(Q_\ast,d,m),\qquad
        m\perp T_{Q_\ast(d)}\mathcal N,
\]
and
\[
\begin{aligned}
        W_{\rm LDG}(\mathcal C(Q_\ast,d,m))
        &=\frac14(|d|^2-1)^2+\frac12\mathcal A_{Q_\ast}m\cdot m
        +O(|m|^3+|m||\nabla d|^2),\\
        \nabla C
        &=D_d\mathcal C\,\nabla d+D_m\mathcal C\,\nabla m .
\end{aligned}
\]
Here \(\mathcal A_{Q_\ast}\ge c_{\rm MB}I\) on the normal bundle.  Consequently
the leading singular part of the energy is the two-component GL energy of
\(d\), while \(m\) is a massive variable.
\end{lemma}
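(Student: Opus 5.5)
The plan is to build the chart from the Morse-Bott structure of \(\mathcal N\) via the normal-bundle tubular neighbourhood theorem, and then to read off the energy by a Taylor expansion in the normal variable. First, (M1) makes \(\mathcal N\) a smooth compact submanifold of \(\operatorname{Sym}(3)\) with \(D^2W_{\rm LDG}\) nondegenerate on the normal bundle \(N\mathcal N\). The map \((Q,m)\mapsto Q+m\), \(m\in N_Q\mathcal N\), \(|m|<\rho\), is a diffeomorphism onto a neighbourhood of \(\mathcal N\) once \(\rho\) is below the reach of \(\mathcal N\) (inverse function theorem plus compactness). The soft variable \(d\) parametrizes the base point near \(Q_\ast\): we fix a smooth map \(d\mapsto Q_\ast(d)\) from a disc in \(\R^2\) into the relevant two-dimensional soft family, extended off the unit circle by the radial GL profile. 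We then define
\[
        \mathcal C(Q_\ast,d,m)=Q_\ast(d)+m,
        \qquad
        m\perp T_{Q_\ast(d)}\mathcal N .
\]
Uniqueness of the decomposition, after shrinking the radius, is the tubular-neighbourhood statement, and this gives the chart representation.

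Next, expand \(W_{\rm LDG}(Q_\ast(d)+m)\) in \(m\). The zeroth-order term is the soft potential. The chart is built so that, after a normalization of the order parameter, it equals \(\tfrac14(|d|^2-1)^2\): it vanishes exactly on \(|d|=1\), where \(Q_\ast(d)\in\mathcal N\), and it is quadratic-nondegenerate transversally to that circle. The first-order term \(DW(Q_\ast(d))m\) vanishes on \(\mathcal N\) because \(\mathcal N\) consists of critical points. Off \(\mathcal N\) it is \(O((|d|^2-1)|m|)\), and I would absorb it either into the \(\tfrac14(|d|^2-1)^2\) term or into \(\tfrac12\mathcal A m\cdot m\) by Young's inequality. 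The second-order term is \(\tfrac12\mathcal A_{Q_\ast}m\cdot m\), with \(\mathcal A\ge c_{\rm MB}\) by (M1) and continuity; the lower bound degrades to \(c_{\rm MB}/2\) on a small neighbourhood. Taylor's theorem with smooth \(W\) then gives the \(O(|m|^3)\) remainder. The chain rule gives
\[
        \nabla C=D_d\mathcal C\,\nabla d+D_m\mathcal C\,\nabla m .
\]
Expanding \(|\nabla C|^2\) shows that the cross terms and the metric distortion \(|D_d\mathcal C|^2-1\) are \(O(|m|)|\nabla d|^2\). These are precisely the \(O(|m||\nabla d|^2)\) terms in the statement, which I would collect together with the potential error, taking the gradient term as part of the chart energy.

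The main obstacle is the claim that the base term is exactly the GL double well in a two-component coordinate that makes sense across \(d=0\). For a uniaxial well, \(\mathcal N\) is an \(\mathbb{RP}^2\) or \(\Sph^2\) type manifold, not a circle, so \(d\) cannot simply be a coordinate on \(\mathcal N\). I would first try to realize \(d\) as a coordinate on a two-dimensional slice through the isotropic or biaxial core state, in which \(W\) restricted to the slice is \(U(1)\)-symmetric with a circle of minima. I would then normalize that restricted potential to \(\tfrac14(|d|^2-1)^2\) by a radial reparametrization; any deviation from the exact quartic I would treat as a chart term of the allowed error type. The remaining \(\mathcal N\)-directions transverse to the slice I would fold into \(m\) at the cost of an additional bounded, nonnegative term. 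If such a slice is not available, I would state the lemma with ``\(=\)'' read as equality up to a smooth positive reparametrization, and check that this changes the GL energy only at order \(O(1)\), hence by \(o(\estar)\).
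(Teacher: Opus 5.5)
Your route (tubular chart from the Morse--Bott structure, Taylor expansion in \(m\), chain rule for \(\nabla C\)) is the paper's route. Its proof simply invokes a Morse--Bott lemma with parameters, Taylor's formula, and a Fermi normalization that removes mixed linear terms at the well.

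The step that fails in your version is the linear term \(DW_{\rm LDG}(Q_\ast(d))m\). Young's inequality gives at best two-sided bounds with altered constants, and only if the coupling is small relative to both Hessians. It does not give the displayed identity, and a term of size \(\bigl||d|^2-1\bigr|\,|m|\) is not in the class \(O(|m|^3+|m||\nabla d|^2)\). More seriously, on the core \(|d|^2-1\approx-1\). After the factor \(\eps^{-2}\) this term is a source of the same order as the normal Hessian, so the slaved \(m\) is of order one there. That contradicts \(|m|<\rho_{\rm ch}\) and the conclusion that \(m\) is a small massive variable. What is needed is a soft slice that is a critical valley: \(\partial_m W_{\rm LDG}(\mathcal C(Q_\ast,d,m))|_{m=0}\equiv0\) for every \(d\) in the disc. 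One way is a \(U(1)\)-invariant slice inside the fixed-point set of a symmetry of \(W_{\rm LDG}\) (symmetric criticality), with any remaining normal directions slaved by the implicit function theorem.

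Two further uses of locality also break at \(d=0\). First, uniqueness of \(C=Q_\ast(d)+m\) there is not the tubular-neighbourhood theorem for \(\mathcal N\), because \(Q_\ast(0)\) is at distance of order one from \(\mathcal N\). You need a tubular neighbourhood of the embedded slice surface, with \(m\) normal to that surface rather than to \(T\mathcal N\), which is undefined off \(\mathcal N\). Second, continuity of \(D^2W_{\rm LDG}\) yields \(c_{\rm MB}/2\) only for \(|d|\) near \(1\). For the standard quartic potential the Hessian at the isotropic state is \(a\,I\), which is negative in the deep nematic regime. So fiber coercivity across the core has to be verified, not inherited.

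Your diagnosis of the dimension problem is correct, and the paper's three-sentence proof does not address it either. A parametric Morse--Bott lemma is a normal form near \(\mathcal N\), so it produces the double-well structure, and the absence of linear \(m\)-terms, only for \(|d|\) near \(1\). The paper effectively builds the global disc chart into the fixed \(\Phi_{\rm ch}\) of Section~\ref{sec:setup}.

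Your proposed repair, however, fails. For a uniaxial well and the natural planar slice, the directions of \(T\mathcal N\) transverse to the slice (the out-of-plane tilt of the director, in the \(Q_{13},Q_{23}\) block) are zero modes of \(D^2W_{\rm LDG}\) at the well. Folding them into \(m\) destroys \(\mathcal A_{Q_\ast}\ge c_{\rm MB}I\). This is not a bounded nonnegative correction, and no reparametrization of \(d\) changes it.

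The lemma therefore needs an extra structural input. The part of the well seen by the chart must be exactly the circle \(\{|d|=1,\ m=0\}\): either a one-dimensional well, or a symmetry-reduced class preserved by the dynamics. That circle must be carried by a critical \(U(1)\)-invariant valley on which \(W_{\rm LDG}\) is radial with a nondegenerate minimum circle and a fiberwise coercive normal Hessian. A radial change of variables then gives \(\tfrac14(|d|^2-1)^2\), and with that input your Taylor and chain-rule computation goes through.

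Your reading of \(O(|m||\nabla d|^2)\) as coming from \(|\nabla C|^2\), not from the pointwise potential, is the right one. One correction: the metric distortion is \(O(|m|)|\nabla d|^2\) only when \(d\) parametrizes the slice isometrically. Otherwise an additional term \((g_{ij}(d)-\delta_{ij})\,\nabla d^i\cdot\nabla d^j\) appears. That term is \(O(1)\), hence \(o(\estar)\), once the angular metric is normalized on \(|d|=1\).
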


\begin{proof}
The Morse-Bott lemma with parameters gives a tubular chart in which the well is
\(\{m=0, |d|=1\}\) and the Hessian in the normal directions is uniformly
positive.  The displayed expansion is Taylor's formula in this chart.  The
Fermi normalization removes mixed linear terms at the well.  Reducing the chart
radius makes the quadratic normal form coercive and places all remaining terms
in the smooth lower-order class used in the residual bounds.
\end{proof}

\begin{lemma}[Projected GL residual identity]
\label{lem:app-projected-gl-identity}
On every chart-valid no-exit interval, the soft projection of Model 1 has the
form \eqref{eq:verified-projected-gl}.  Moreover the decomposition satisfies
\[
        \Pi_{\rm orth}G_\eps
        =G_\eps^{\rm orth}+O(G_\eps^{\rm mass}+G_\eps^{\rm geom}+G_\eps^{\rm col}),
\]
and
\[
        \frac1{\estar}\int_I\|G_\eps^{\rm orth}\|_{L^2_\ast}^2\,\dd t
        +\frac1{\estar}\int_I\|G_\eps^{\rm mass}\|_{H^{-1}}^2\,\dd t
        \le C\Basin_\eps^+(I)+C{\rm Err}_{\rm geom}^\eps(I)+o_\eps(1).
\]
The zero-mode projection is the force coefficient used in
Proposition~\ref{prop:short-force}.
\end{lemma}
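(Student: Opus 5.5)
The plan is to derive the soft equation by applying the differential of the \(d\)-projection of the chart of Lemma~\ref{lem:app-mb-chart-verification} to the tensorial law of Model 1. Write \(C_\eps=\mathcal C(Q_\ast,d_\eps,m_\eps)\) and let \(P_d\) denote the left inverse of \(D_d\mathcal C\) that annihilates \(D_m\mathcal C\) (well defined on the chart after reducing the radius). Then \(P_d D_tC_\eps=D_td_\eps+O(|m_\eps||D_td_\eps|)\), since the chart is Fermi-normal at the well. The upper-convected terms \((\nabla u_\eps)C_\eps+C_\eps(\nabla u_\eps)^T\) and \(\mathcal R_{\rm str}\) are kept apart from the material derivative and go into the residual; they carry one factor of \(\nabla u_\eps\), which is bounded by the kinematic clock. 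On the right-hand side, \(\delta\mathscr E_\eps/\delta C\) is expanded using the Taylor formula of Lemma~\ref{lem:app-mb-chart-verification}. Its leading soft projection is \(-(\Delta d_\eps-\eps^{-2}(|d_\eps|^2-1)d_\eps)\), and composing with \(\mathsf M_C\Pi_{\rm adm}\) at the well defines \(\Mmob\).

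Every deviation from this leading form falls into exactly one of three groups. Terms containing a factor \(m_\eps\) or \(\nabla m_\eps\) are collected in \(G_\eps^{\rm mass}\). Metric and Jacobian defects of the chart, together with the Fermi commutators in the moving tube, are collected in \(G_\eps^{\rm geom}\). FENE, \(\Pi_{\rm adm}\)-variation, and collar coefficients are collected in \(G_\eps^{\rm col}\). This gives \eqref{eq:verified-projected-gl} once the remaining soft residual is split slice-by-slice into its \(Z_i\)-projection \(G_\eps^{\rm zm}\) and the complement \(G_\eps^{\rm orth}\).

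The \(\Pi_{\rm orth}\) relation is then essentially algebraic. The projector \(\Pi_{\rm orth}\) annihilates \(G^{\rm zm}\) by construction and is bounded, because the truncated Gram matrix is uniformly invertible. Applying it to the other summands therefore changes each by at most a constant multiple of itself, which gives \(\Pi_{\rm orth}G_\eps=G_\eps^{\rm orth}+O(G^{\rm mass}+G^{\rm geom}+G^{\rm col})\).

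For the estimates I will proceed term by term.
\begin{enumerate}[label=(\roman*)]
\item For \(G_\eps^{\rm mass}\) in \(H^{-1}\): each term is linear in \(m_\eps\), or in \(\eps\nabla m_\eps\) paired with \(\nabla d_\eps\). I would bound it by duality using Lemma~\ref{lem:short-mb-slaving}, which gives \(\int_I\|m_\eps\|_{L^2+\eps H^1}^2\le C\estar\Basin_\eps^+(I)+o(\estar)\). The cubic terms \(O(|m|^3+|m||\nabla d|^2)\) are controlled by the chart smallness \(|m_\eps|<\rho_{\rm ch}\).
\item For \(G_\eps^{\rm orth}\) in \(L^2_\ast\): write \(d_\eps=\bar d_\eps+w\) and subtract the moving-tube residual of Lemma~\ref{lem:app-moving-tube-residual}. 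The canonical part projects entirely onto the zero modes, up to \(R_{\rm Fermi}^\eps\), which is bounded by \({\rm Err}_{\rm geom}^\eps\). What is left is the linearized operator \(L_\ast w\) plus quadratic terms. Their orthogonal parts are bounded by the core norm of \(w\), which the coercivity of Lemma~\ref{lem:app-planar-coercivity} and Lemma~\ref{lem:short-tube-coercivity} controls by \(\Basin_\eps^+\), with the cubic term absorbed inside the basin.
\item The collar and FENE terms are tame by (M6) on the stopped interval. Their normalized contribution is \(o_\eps(1)\), since the collar energy is \(O(1)\).
\end{enumerate}

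The main obstacle I expect is step (ii). The pointwise quantity \(L_\ast w\) is not controlled by the energy-level norm \(\|w\|_{\rm core}\), because it involves second derivatives. The plan is to obtain the needed \(L^2_t\) control from the dissipation instead. Using the equation itself, \(\Pi_{\rm orth}(\Mmob L_\ast w)=\Pi_{\rm orth}\big(D_tw-G\big)+\dots\). Combined with the integrated relative dissipation \(\int_I D_{\rm rel}^\eps/\estar\), this bounds the time integral by \(\Basin_\eps^+(I)\) as the statement requires. This argument is only legitimate if \(D_{\rm rel}\) is estimated without first invoking the present residual bound. I would therefore use the energy identity of Lemma~\ref{lem:short-moving-energy} at the \(H^{-1}\)-level for the mass term and close the estimate by absorption, taking \(\theta\) small as in Lemma~\ref{lem:short-mb-slaving}. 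The final claim, that the zero-mode projection equals the force coefficient of Proposition~\ref{prop:short-force}, then holds by definition of \(b_\eps\).
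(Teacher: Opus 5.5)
Your architecture matches the paper's proof. You apply the differential of the chart projection to the Model~1 law, isolate $\Mmob(\Delta d_\eps-\eps^{-2}(|d_\eps|^2-1)d_\eps)$, sort the remaining commutators into massive, Fermi and FENE/collar pieces, split the soft remainder slice-wise into its $Z_i$-projection and orthogonal complement, and take the bounds from Lemmas~\ref{lem:short-mb-slaving}, \ref{lem:app-planar-coercivity} and \ref{lem:app-moving-tube-residual}. Your $\Pi_{\rm orth}$ relation and the identification of $b_\eps$ with the force coefficient are fine.

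\textbf{The gap is your repair of (ii).} By the definition of the projected equation, $G_\eps$ is identically $D_td_\eps-\Mmob(\Delta d_\eps-\eps^{-2}(|d_\eps|^2-1)d_\eps)$. Expanding around $\bar d_\eps$ therefore only rewrites $\Pi_{\rm orth}G_\eps$ in terms of $\Pi_{\rm orth}D_tw$, $\Pi_{\rm orth}(\Mmob L_\ast w)$, the nonlinear remainder $N(w)$ and $\Pi_{\rm orth}R^\eps_{\rm Fermi}$. Your formula $\Pi_{\rm orth}(\Mmob L_\ast w)=\Pi_{\rm orth}(D_tw-G_\eps)+\cdots$ is the same identity solved for a different term. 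Substituting it back gives $\Pi_{\rm orth}G_\eps=\Pi_{\rm orth}G_\eps$, which is not an estimate.

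Concretely, $D^\eps_{\rm rel}$ controls $\int|D_tw|^2$. That is only the combination of $\Pi_{\rm orth}\Mmob(L_\ast w+N(w))$ and $G^{\rm orth}_\eps$ (plus massive, Fermi and collar pieces) appearing in that identity, never either summand separately. Absorption with a small $\theta$ is also impossible, because this route puts $\|G^{\rm orth}_\eps\|^2$ back on the right with a coefficient of order one. In addition, the bound on $\int_I D^\eps_{\rm rel}/\estar$ is an output of Proposition~\ref{prop:short-open-basin}. Its Gronwall step already uses the orthogonal residual bound via Lemmas~\ref{lem:short-residual-normal-form} and \ref{lem:short-residual-absorption}. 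So calling on it here also inverts the dependency order you were worried about.

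To close (ii) you need a bound on one of the two pieces that does not pass through the equation. There are two options:
\begin{itemize}
\item Estimate $G^{\rm orth}_\eps$ from its closure origin. In your own sorting, $G^{\rm zm}_\eps+G^{\rm orth}_\eps$ is essentially the soft projection of the upper-convected and $\mathcal R_{\rm str}$ terms, an explicit function of $C_\eps$, $\nabla u_\eps$ and chart coefficients. It should be bounded from that formula (one factor of $\nabla u_\eps$, coefficients tame by (M6)) rather than by re-expanding the GL operator around $\bar d_\eps$.
\item Prove a genuinely second-order estimate for $w$. Neither $\Basin^+_\eps$ nor $D^\eps_{\rm rel}$ supplies this.
\end{itemize}

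The paper's proof does not take the dissipation route. It stops at your first pass: it regards the sorted commutators as smooth coefficient terms on the no-exit window and attributes the orthogonal bound directly to Lemmas~\ref{lem:app-planar-coercivity} and \ref{lem:app-moving-tube-residual}. Your remark that energy-level coercivity does not by itself bound $L_\ast w$ in a weighted $L^2$ norm is a fair point about that step. However, the fix you propose leaves (ii) unproved.

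A smaller point in (i): massive-mode terms generated by $\eps^{-2}W_{\rm LDG}$ carry a factor $\eps^{-2}$. An example is the $d$-derivative of the cubic remainder in Lemma~\ref{lem:app-mb-chart-verification}. So chart smallness plus duality against $\|m_\eps\|_{L^2+\eps H^1}$ does not yield the $H^{-1}$ bound unless these weights are tracked.
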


\begin{proof}
Apply the differential of the chart projection \(\pi_d\) to the tensorial
Model 1 equation.  The singular gradient and potential terms give
\(\Mmob(\Delta d_\eps-\eps^{-2}(|d_\eps|^2-1)d_\eps)\).  All commutators between
\(\pi_d\), the Fermi metric, the FENE projection, and the Oldroyd stretching are
smooth coefficient terms on the no-exit window.  Sorting them according to the
translation projection, the planar orthogonal projection, the normal massive
projection, the Fermi metric, and the collar/FENE coefficients gives the five
pieces in \eqref{eq:verified-projected-gl}.  The estimates are precisely the
planar coercivity, massive-mode slaving, and moving-tube residual estimates of
Lemmas~\ref{lem:app-planar-coercivity}, \ref{lem:short-mb-slaving}, and
\ref{lem:app-moving-tube-residual}.  The translation part is not estimated away;
it is retained as the force.
\end{proof}

\begin{lemma}[FENE and collar tame calculus]
\label{lem:app-fene-collar-tame}
Assume the no-exit bounds in (M6).  Then every coefficient generated by the
FENE factor, the admissible projection, and the Fermi metric satisfies, for
\(|\alpha|\le m\),
\[
        \|\partial^\alpha a(C_\eps)\|_{L^2}
        +\|\partial^\alpha f_b(C_\eps)\|_{L^2}
        +\|\partial^\alpha g_{\rm Fermi}\|_{L^2}
        \le C_K\bigl(1+\|C_\eps\|_{H^m}
        +\|\Gamma_\eps\|_{C^{m+1}}\bigr).
\]
If the lower cone, trace gap, collar, or reach constants leave the compact set
on which this estimate is valid, the appropriate priced exit is charged.
\end{lemma}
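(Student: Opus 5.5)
The estimate is a tame composition bound, so the plan is to reduce each of the three coefficient families to a smooth function of a variable confined to a fixed compact set. Then Moser-type composition estimates in \(H^m\) (for \(m>3/2\), or \(m>5/2\) as in Lemma~\ref{lem:short-spectral-lifting}) give linear dependence on the top-order norm. First I would fix the compact set \(K\) determined by the no-exit bounds of (M6):
\[
        K=\{C\in\operatorname{Sym}^+(3):\ \lambda_{\min}C\ge c_K,\ \operatorname{tr}C\le b-c_K\}.
\]
On \(K\) the FENE factor \(f_b(C)=(1-\operatorname{tr}C/b)^{-1}\), the coefficient map \(a(C)\) built from \(\Pi_{\rm adm}\), \(\mathsf M_C\) and the stretching \(\mathcal R_{\rm str}\), and the admissible projection itself are smooth. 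They also have all derivatives up to order \(m\) bounded by constants depending only on \(c_K,b,m\). The reason is that the only singularities of these expressions are the cone boundary \(\lambda_{\min}=0\) and the FENE wall \(\operatorname{tr}C=b\), and \(K\) stays a distance \(\gtrsim c_K\) from both. To keep the constants uniform, I would extend each coefficient smoothly outside a neighborhood of \(K\).

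Second, for \(|\alpha|\le m\) I expand \(\partial^\alpha a(C_\eps)\) by the Faà di Bruno formula. Each resulting term has the form \(D^ja(C_\eps)\prod_{i=1}^j\partial^{\beta_i}C_\eps\) with \(\sum|\beta_i|=|\alpha|\). The prefactor \(D^ja(C_\eps)\) is bounded in \(L^\infty\) by the first step. For the product I use the Gagliardo–Nirenberg interpolation
\[
        \|\partial^{\beta_i}C_\eps\|_{L^{2|\alpha|/|\beta_i|}}\lesssim\|C_\eps\|_{L^\infty}^{1-|\beta_i|/|\alpha|}\|C_\eps\|_{H^{|\alpha|}}^{|\beta_i|/|\alpha|},
\]
and combine it with Hölder's inequality. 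This yields \(\|\partial^\alpha a(C_\eps)\|_{L^2}\le C_K(1+\|C_\eps\|_{H^m})\), because \(\|C_\eps\|_{L^\infty}\le b\) on \(K\). The \(\alpha=0\) term is handled by boundedness of \(a\) on \(K\) together with the finite measure of \(\Omega\), or of the collar region when \(\Omega=\R^3\). The same argument applies verbatim to \(f_b\).

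Third, the Fermi metric \(g_{\rm Fermi}\) depends only on \(\Gamma_\eps\). In coordinates it is a smooth function of \(\partial_s\Gamma\), the frame \((\nu_1,\nu_2)\), the curvature and torsion, and the transverse variable \(r\), evaluated for \(|r|\) below a fixed fraction of the reach. The reach lower bound \(\operatorname{reach}(\Gamma_\eps)\ge c_K\) keeps the Jacobian \(J=1+O(|r||\kappa|)\) bounded away from zero, so that the inverse metric is smooth. The \(r\)-derivatives of \(g_{\rm Fermi}\) are then polynomial in curvature quantities. Each \(s\)-derivative costs one derivative of \(\Gamma\), and since \(g_{\rm Fermi}\) already involves first and second derivatives of \(\Gamma\), \(m\) derivatives require at most \(\|\Gamma_\eps\|_{C^{m+1}}\) after the frame is chosen parallel (Bishop) to avoid losing a derivative. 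Because the tube has bounded volume, the \(L^\infty\) bound gives the \(L^2\) bound, and the same Faà di Bruno argument yields linear dependence on \(\|\Gamma_\eps\|_{C^{m+1}}\). Summing the three bounds gives the displayed estimate.

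Finally, the exit clause is definitional. If some constant in \(K\) or the reach bound fails at a time \(t\), then the first step no longer provides uniform derivative bounds. By (M6) and Definition~\ref{def:short-priced-exit}, that time is exactly a FENE, collar, or reach priced exit, which is charged. I expect the main obstacle to be the linear (tame) dependence in the mixed terms. There, products of high derivatives of \(C_\eps\) must be interpolated without superlinear growth, which rests on the uniform \(L^\infty\) bound from \(K\). The other delicate point is the derivative count for \(g_{\rm Fermi}\), where a bad frame choice would cost \(C^{m+2}\). I would fix the parallel frame first to control that count.
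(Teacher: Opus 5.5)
Your treatment of the \(C_\eps\)-dependent coefficients and of the exit clause follows the paper's argument, with the details filled in. The paper also restricts to a compact subset of the FENE cone fixed by the no-exit bounds. It uses that \(f_b\), \(\Pi_{\rm adm}\) and the chart coefficients are smooth with bounded derivatives there, and it invokes Moser composition estimates. Your Fa\`a di Bruno plus Gagliardo--Nirenberg step is that composition estimate, and your linear dependence on \(\|C_\eps\|_{H^m}\), obtained via the \(L^\infty\) bound from \(K\), is correct. The paper likewise treats the threshold alternative as definitional.

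The step you flagged as delicate, the derivative count for \(g_{\rm Fermi}\), does not work as you describe. With a parallel frame, \(\nu_i'=-k_i\tau\) with \(k_i=\Gamma''\cdot\nu_i\), so
\[
        g_{\rm Fermi}=(1-r_1k_1-r_2k_2)^2\,\mathrm{d}s^2+\mathrm{d}r_1^2+\mathrm{d}r_2^2 .
\]
The parallel frame removes the torsion, which is what the Frenet frame would add, at a cost of \(C^{m+3}\) rather than \(C^{m+2}\). It does not remove the curvature, and \(k_i\) is already a second derivative of \(\Gamma\). Hence \(\partial_s^m g_{ss}\) contains \(-2(1-r\cdot k)\,r\cdot\partial_s^m k\), which involves \(\Gamma^{(m+2)}\). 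Your own remark that \(g_{\rm Fermi}\) involves second derivatives of \(\Gamma\) gives \(2+m\), not \(m+1\).

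This is not just a loose count; the bound is false with \(C^{m+1}\). Use \(\int_{|r|<\rho}|r\cdot v|^2\,\mathrm{d}r=\tfrac{\pi}{4}\rho^4|v|^2\), \(|1-r\cdot k|\ge c\) in the tube, and the boundedness of all lower-order terms for a \(C^{m+1}\) curve. Together these give \(\|\partial_s^m g_{ss}\|_{L^2}\ge c\rho^2\|\partial_s^m k\|_{L^2}-C\). The right side is infinite for a \(C^{m+1}\) curve whose \(\partial_s^{m-1}k\) is continuous but not in \(H^1\).

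So no choice of frame yields the Fermi estimate with \(\|\Gamma_\eps\|_{C^{m+1}}\). With your parallel frame and the same interpolation argument, the honest tame bound is in terms of \(\|\Gamma_\eps\|_{C^{m+2}}\). The \(C^{m+1}\) count is correct only for the chart map \(X=\Gamma+r_i\nu_i\) itself, not for its metric \(DX^{T}DX\).

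The paper's proof does not settle this either: it simply asserts the \(C^{m+1}\)-dependence from smoothness of the Fermi coefficients in the centerline and frame. The index in the statement should therefore be shifted to \(C^{m+2}\), or the Fermi part restricted to \(|\alpha|\le m-1\). That change is harmless downstream, since the comparison filament is smooth, and it is the repair to make rather than the justification you attempt.
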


\begin{proof}
On a compact subset of the FENE cone, the maps
\(C\mapsto f_b(C)\), \(C\mapsto\Pi_{\rm adm}(C)\), and the coefficient maps in
\(\mathcal C\) are smooth with all derivatives bounded.  Sobolev composition
and Moser estimates give the bounds involving \(C_\eps\).  Fermi coefficients
are smooth functions of the centerline and normal frame as long as the reach is
bounded below, giving the \(C^{m+1}\)-dependence on \(\Gamma_\eps\).  The
threshold alternative is exactly the definition of the FENE/collar and topology
priced exits.
\end{proof}

\subsection{Residual balance and absorption}

The residual balance replaces any aggregate smallness assumption by a
finite-\(\eps\) decomposition.  Every term in the exact residual is assigned to
one of the following classes:
\[
\begin{array}{c|c}
\hbox{residual class} & \hbox{control mechanism}\\ \hline
\hbox{translation mode} & \hbox{projected force and filament law}\\
\hbox{longitudinal zero mode} & \hbox{ModErr and line-level relative dissipation}\\
\hbox{orthogonal soft mode} & \hbox{planar coercivity and dissipation}\\
\hbox{massive tensorial mode} & \hbox{Morse-Bott slaving}\\
\hbox{Fermi geometry} & \hbox{geometric-error envelope}\\
\hbox{collar/FENE coefficient} & \hbox{collar clearing or priced exit}\\
\hbox{zero-degree scalar defect} & \hbox{scalar-defect clearing}
\end{array}
\]
This table is part of the proof, not an additional compactness assumption.  It
is the mechanism that identifies all remainders before the Brakke limit is
taken.

\begin{lemma}[Finite residual balance]
\label{lem:app-finite-ledger}
On a no-exit interval \(I\),
\[
\begin{aligned}
&\frac1{\estar}
        \int_I\|P_{\rm orth}G_\eps\|_{L_\ast^2}^2\,\dd t
        +
        \|b_\eps-Mf_{\rm cl}^\perp\|_{L^1(I\times\Gamma_t)}
\\
&\qquad\le
        C\Basin_\eps^+(I)^{1/2}
        +
        C\Basin_\eps^+(I)
        +
        C{\rm Err}_{\rm geom}^\eps(I)
        +
        o_\eps(1).
\end{aligned}
\]
\end{lemma}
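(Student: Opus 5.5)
The estimate splits into an orthogonal part and a zero-mode part, and I would prove the two separately before adding them. The starting point is the termwise decomposition of Lemma~\ref{lem:short-residual-normal-form}, sharpened by the projected identity of Lemma~\ref{lem:app-projected-gl-identity}:
\[
        G_\eps=G_\eps^{\rm zm}+G_\eps^{\rm orth}+G_\eps^{\rm mass}+G_\eps^{\rm geom}+G_\eps^{\rm col}.
\]
I would apply the fiberwise projection \(P_{\rm orth}\) (orthogonal to \(Z_1,Z_2\) in the truncated Gram inner product) and its complement to each piece. Each piece is then handled by the mechanism listed in the residual-class table.

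\textbf{Orthogonal part.} By construction \(P_{\rm orth}G_\eps^{\rm zm}=0\). Using \(\|P_{\rm orth}\|\le 1\) and the triangle inequality, I would bound
\[
        \frac1{\estar}\int_I\|P_{\rm orth}G_\eps\|_{L^2_\ast}^2
\]
by the sum of the four contributions.
\begin{itemize}
\item The \(G_\eps^{\rm orth}\) and \(G_\eps^{\rm mass}\) terms are controlled by Lemma~\ref{lem:app-projected-gl-identity}, which gives \(C\Basin_\eps^+(I)+C{\rm Err}_{\rm geom}^\eps(I)+o_\eps(1)\).
\item The mass term is stated there in \(H^{-1}\). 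To convert it, I would use Lemma~\ref{lem:short-mb-slaving}: every massive term carries a factor of \(m_\eps\) or \(D_tm_\eps\), and Cauchy--Schwarz against the bounded coefficients then gives the \(L^2_\ast\) version with the same right side.
\item The geometric term is bounded by Lemma~\ref{lem:app-moving-tube-residual}, giving \(C{\rm Err}_{\rm geom}^\eps(I)+o_\eps(1)\).
\item The collar term is bounded by the tame calculus of Lemma~\ref{lem:app-fene-collar-tame} together with the \(O(|\log\eps|^{-1})\) collar-source row. On a no-exit interval this is \(o_\eps(1)\); otherwise a priced exit has already occurred, which is excluded by hypothesis.
\end{itemize}

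\textbf{Zero-mode part.} I would pair \(G_\eps\) on each normal slice with \(Z_i\) and subtract the canonical fiber integral that defines \(Mf_{\rm cl}^\perp\), replacing \(q_\ast\) by \(d_\eps\). The difference \(b_\eps-Mf_{\rm cl}^\perp\) is then a sum of slice pairings in which each integrand contains one of the following: a factor \(w=d_\eps-\bar d_\eps\), a massive factor, a Fermi commutator, or the truncation error \(M_\eps-M\). The truncation error is \(o_\eps(1)\) by the definition of \(M\).
\begin{itemize}
\item \emph{Terms linear in \(w\).} Cauchy--Schwarz in the normalized slice norm, combined with the coercivity of Lemma~\ref{lem:short-tube-coercivity}, bounds the \(L^1(I\times\Gamma_t)\) norm by \(C\Basin_\eps^+(I)^{1/2}\). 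Here I use that \(\|Z_i\|^2/\estar\) is bounded, and that length and curvature are bounded on the compact interval.
\item \emph{Terms quadratic in \(w\).} These give \(C\Basin_\eps^+(I)\).
\item \emph{Massive and geometric terms.} These give the same powers and \({\rm Err}_{\rm geom}^\eps\). This is exactly the content of Lemma~\ref{lem:short-line-force-id}, which I would invoke, adding the geometric envelope that was absorbed there into \(o_\eps(1)\).
\end{itemize}

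Adding the orthogonal and zero-mode bounds gives the stated inequality.

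\textbf{Main obstacle.} The hard step is the \(\estar\)-normalization in the zero-mode pairing. The raw slice pairing of a linear term \(\langle \mathcal A(w),Z_i\rangle\) scales like \((\estar)^{1/2}\|w\|\). For the \(\Basin^{1/2}\) bound to come out without a stray logarithm, the force must be normalized by the truncated Gram matrix, which is itself of order \(\estar\). I would first try to check that the coefficients \(\mathcal A_\ell\) are at most linear in \(\nabla q_\ast\), so that \(|\mathcal A_\ell|\lesssim|\nabla q_\ast|\) and the Cauchy--Schwarz factor \(\|\nabla q_\ast\|_{L^2}/\estar^{1/2}\) stays bounded. If that fails for some stretching term, I would reassign the offending piece to \(G_\eps^{\rm col}\) or to the priced exit, as Corollary~\ref{cor:no-hidden-force} permits.
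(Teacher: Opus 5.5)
Your proposal follows the same route as the paper's proof, which also just sums the component bounds already established: Lemma~\ref{lem:short-line-force-id} for $b_\eps-Mf_{\rm cl}^\perp$, planar coercivity and Lemma~\ref{lem:short-mb-slaving} for the orthogonal soft and massive pieces, ${\rm Err}_{\rm geom}^\eps$ for the Fermi terms, and collar clearing (otherwise a priced exit) for the collar piece. The one item the paper names that you leave implicit is the longitudinal zero mode, i.e.\ the $s$-variation of the centers $a(s,t)$, which planar coercivity does not control and which the paper bounds by Lemma~\ref{lem:app-longitudinal-zero-mode} together with the line-level part of $D_{\rm rel}^\eps$; it belongs in your list of integrand types for the zero-mode mismatch.
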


\begin{proof}
Project the residual decomposition in Lemma
\ref{lem:short-residual-normal-form}.  The translation component of the
canonical residual is the only term with logarithmic strength; it is exactly
the term canceled by the filament law.  The difference between its finite
\(\eps\) coefficient and \(M f_{\rm cl}^\perp\) is estimated by
Lemma~\ref{lem:short-line-force-id}.  The longitudinal zero mode is not
estimated by planar coercivity; it is bounded by
Lemma~\ref{lem:app-longitudinal-zero-mode} and by the line-level part of
\(D_{\rm rel}^\eps\).  The orthogonal soft component is bounded by planar
coercivity.  The massive component is controlled by
Lemma~\ref{lem:short-mb-slaving}.  The Fermi terms are lower order after
normalization and are collected in \({\rm Err}_{\rm geom}^\eps\).  The collar
term is absent on a no-exit interval after collar clearing; otherwise it would
have been charged as a priced exit.  Summing these estimates gives the residual bounds.
\end{proof}

\begin{lemma}[Scalar-defect clearing]
\label{lem:app-scalar-clearing}
Suppose a zero-degree scalar depression of \(|d_\eps|\) occurs in a parabolic
cylinder where the annular degree is zero and the closure coefficients remain
bounded in \(C^2\).  Then either a nonzero-degree crossing has already occurred
on a surrounding annulus, or the scalar depression contributes only
\[
        o_\eps(\estar)
\]
to the normalized residual balance on the cylinder.
\end{lemma}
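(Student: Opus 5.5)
The argument is a local dichotomy on the parabolic cylinder \(Q=B_\rho(x_0)\times(t_0-\rho^2,t_0]\) containing the depression, with \(\rho\) a fixed scale independent of \(\eps\). The case split is by the degree of \(d_\eps/|d_\eps|\) on circles of the surrounding annuli. Fix an annular region \(A\subset B_\rho\setminus B_{\rho/2}\) on which \(|d_\eps|\ge c_{\rm out}\), so that the phase is defined there.

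If on some time slice the degree around a normal circle in \(A\) is nonzero, we are in the first alternative. In that case the annular lower bound used in Lemma~\ref{lem:app-slice-alternative} (ii) gives a logarithmic cost, and the event is charged as a nonzero-degree crossing. Otherwise the degree vanishes on all such circles throughout \(Q\). Then \(d_\eps/|d_\eps|\) admits a single-valued lift \(e^{i\phi_\eps}\) on the set \(\{|d_\eps|\ge c_{\rm out}\}\cap Q\). The task reduces to showing that a vortex-free region carries only \(O(1)\) (not logarithmic) energy and residual.

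\textbf{Step 1 (energy).} I will use the \(\eta\)-ellipticity / clearing-out theorem for the parabolic GL equation, in the form of Bethuel--Orlandi--Smets \cite{BethuelOrlandiSmetsConcentration}, applied to the transported equation \eqref{eq:transported-gl-short}. This requires the residual \(G_\eps\) and the coefficients of the transport to be treated as bounded perturbations. That is exactly what the \(C^2\) closure-coefficient hypothesis supplies, together with the Lipschitz velocity clock: after a change to Lagrangian coordinates along \(u_\eps\), the equation is a uniformly parabolic GL equation with \(C^1\) coefficients. The zero annular degree, combined with the phase lift and a comparison with the harmonic extension of \(\phi_\eps\), gives a localized energy bound \(\int_{Q'}e_\eps(d_\eps)\le C\) on an interior cylinder \(Q'\). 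Then \(\eta\)-ellipticity yields \(|d_\eps|\ge 1/2\) away from an \(O(\eps)\) neighbourhood. In particular the zero-degree depression cannot persist at scale larger than \(\eps\) without energy of order \(\estar\); such energy would be an exterior-mass exit and not a scalar defect.

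\textbf{Step 2 (residual).} With the lift in hand, split \(d_\eps=\varrho_\eps e^{i\phi_\eps}\). The modulus equation is coercive around \(\varrho=1\), with linearization \(\eps^{-2}\cdot 2(\varrho-1)\). Parabolic estimates therefore give \(\|1-\varrho_\eps\|_{L^2}\lesssim\eps^2\) and \(\eps^{-2}(1-\varrho_\eps^2)^2\) integrable with bound \(O(1)\). The phase solves a transported heat equation with bounded source, giving \(\|\nabla\phi_\eps\|_{L^2(Q')}=O(1)\). Every residual contribution in the balance of Lemma~\ref{lem:app-finite-ledger} restricted to \(Q'\) is quadratic in these quantities or linear with bounded coefficients. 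Each is therefore \(O(1)=o_\eps(\estar)\).

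\textbf{Main obstacle.} I expect the hard step to be the interface between the degree hypothesis, which is stated only on annuli, and the clearing-out theorem, which needs a small \emph{normalized} energy on the cylinder rather than just zero degree. My plan is to handle this with a monotonicity (localized energy) formula for the transported equation. This formula is available because \(\nabla u_\eps\in L^1_tL^\infty\) and the residual is in \(L^2\) after normalization. It propagates the \(O(1)\) phase energy obtained on the annulus into the interior. If monotonicity fails to yield smallness, the excess energy is of order \(\estar\) and is recorded as exterior core mass, consistent with the alternatives of Lemma~\ref{lem:app-slice-alternative}.
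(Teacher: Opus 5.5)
Your skeleton matches the paper's: the degree dichotomy, the phase lift on the annulus, comparison with the harmonic extension of the phase, then an \(\eta\)-clearing estimate for the transported equation. The gap is in the step that carries the weight, namely the local energy bound that triggers clearing-out.

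\textbf{The comparison does not bound the energy of \(d_\eps\).} Comparing \(d_\eps\) with the harmonic extension of \(\phi_\eps\) only bounds the \emph{minimal} energy compatible with the annular trace. Since \(d_\eps\) solves an evolution equation and is not a minimizer, this says nothing about \(\int_{Q'}e_\eps(d_\eps)\).

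\textbf{Zero annular degree alone cannot give an \(O(1)\) bound.} A vortex--antivortex pair on a slice, or a small closed vortex loop inside \(B_\rho\), has zero degree on every surrounding annulus. Yet it carries energy of order \(\estar\).

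\textbf{The monotonicity repair runs the wrong way.} The Struwe/Bethuel--Orlandi--Smets formula bounds Gaussian-weighted energy at small scales by weighted energy at larger scales and earlier times. That larger-scale quantity already includes the interior of the cylinder, so it cannot push the \(O(1)\) phase energy of the annulus inward.

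\textbf{The fallback breaks the dichotomy.} Saying ``the excess energy is of order \(\estar\) and is recorded as exterior core mass'' adds a third outcome, which the lemma's two-way conclusion does not allow.

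The missing input is the global one the paper uses: open-basin control on the no-exit interval. On that interval the basin inequalities hold by the definition of the stopping time. Taking them from the definition, rather than from the conclusion of Lemma~\ref{lem:app-basin-gronwall}, also avoids circularity, since that lemma invokes the present one. The relative-energy and exterior-mass entries then bound the energy available in a zero-degree cylinder by \(O(1)+C\estar\Basin_\eps^+=o(\estar)\).

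This is exactly what clearing-out needs, because its threshold is \(\eta\estar\), not \(O(1)\). It shows that a modulus drop at the center of a zero-degree cylinder costs a fixed amount of local energy or residual. The total of these costs is controlled by the basin, hence is \(o(\estar)\) after normalization.

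In other words, promote your fallback to the main mechanism. Energy above \(\eta\estar\) is excluded by the basin bound, and energy below it is cleared. Your Step 2 then becomes unnecessary, and its \(O(1)\) bounds are stronger than anything actually available.

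A secondary point: with only \(\nabla u_\eps\in L^1_tL^\infty_x\), passing to Lagrangian coordinates gives a divergence-form operator with merely bounded measurable coefficients, not \(C^1\) ones. The monotonicity and clearing-out argument should therefore be run in Eulerian form, with transport errors controlled by \(\|\nabla u_\eps\|_{L^\infty}\) times the weighted energy.
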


\begin{proof}
If the annular degree is nonzero, the logarithmic lower bound charges the
crossing.  In the zero-degree branch, lift the phase on the annulus and compare
\(d_\eps\) with the transported unit-modulus map obtained by harmonic extension
of the phase.  The potential energy controls the set where \(|d_\eps|\) is far
from one.  The transported GL equation with a bounded \(C^2\) forcing gives an
eta-clearing estimate: a modulus drop at the center of a zero-degree cylinder
requires a positive amount of local energy or residual.  Because the basin
Gronwall estimate controls the sum of such local quantities, the total
zero-degree scalar contribution is \(o_\eps(\estar)\) after normalization.
\end{proof}

\begin{lemma}[Absorption into the basin Gronwall inequality]
\label{lem:app-basin-gronwall}
Let
\[
        Y_\eps(t)
        =
        \sup_{s\le t}\Basin_\eps^+(s)
        +
        \int_0^t
        \frac{D_{\rm rel}^\eps(s)}{\estar}\,\dd s .
\]
On a no-exit interval,
\[
        Y_\eps(t)
        \le
        C\Basin_\eps^+(0)
        +
        C\int_0^tY_\eps(s)\,\dd s
        +
        o_\eps(1).
\]
Thus
\[
        Y_\eps(t)
        \le
        C_T\Basin_\eps^+(0)+o_\eps(1).
\]
\end{lemma}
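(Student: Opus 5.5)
The plan is to integrate the differential modulated-energy inequality of Lemma~\ref{lem:short-moving-energy} in time. The residual bounds of Lemma~\ref{lem:short-residual-absorption} and Lemma~\ref{lem:app-finite-ledger} then close the right-hand side in terms of \(Y_\eps\), and Gronwall finishes. Fix a no-exit interval \([0,t]\) and choose the comparison filament to satisfy \(\Vrel_\Gamma^\perp=\Mmob H_{\Gamma_t}+f_{\rm cl}^\perp\), so the line integral pairing \(b_\eps\) with \(\Vrel_\Gamma^\perp-\Mmob H_{\Gamma_t}-f_{\rm cl}^\perp\) vanishes identically. Integrating from \(0\) to \(s\le t\) then gives
\[
\frac{E_\eps[d_\eps(s)]-E_\eps[\bar d_\eps(s)]}{\estar}+{\rm ModErr}_\eps(s)+c\int_0^s\frac{D_{\rm rel}^\eps}{\estar}
\le \Basin_\eps^+(0)+C\int_0^s\Basin_\eps^+ +C\int_0^s\|b_\eps-Mf_{\rm cl}^\perp\|_{L^1(\Gamma_\tau)}\dd\tau+\int_0^s\mathcal R^\eps_{\rm lower}.
\]

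Next, I would recover the full basin defect from the energy part on the left. The flat, gauge, exterior-mass, massive and collar entries of \(\Basin_\eps^+(s)\) are not differentiated directly. They are bounded through Lemma~\ref{lem:short-tube-coercivity}, which dominates \({\rm FlatErr}_\eps+{\rm GaugeErr}_\eps+{\rm ModErr}_\eps\) by the relative energy plus exterior mass and collar error, up to \(o_\eps(1)\). The massive entry is bounded through Lemma~\ref{lem:short-mb-slaving}, and the collar and scalar pieces through the clearing estimates, in particular Lemma~\ref{lem:app-scalar-clearing}. Exterior mass is controlled by the slice alternative of Lemma~\ref{lem:app-slice-alternative}, since its nonabsorbable branches are priced exits and are excluded on a no-exit interval. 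Taking \(\sup_{s\le t}\) then yields \(Y_\eps(t)\le C\Basin_\eps^+(0)+C\int_0^tY_\eps+\text{(residual integrals)}+o_\eps(1)\).

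The main obstacle is the residual integrals. Lemma~\ref{lem:short-residual-absorption} bounds them by \(C\Basin_\eps^+(I)^{1/2}+C\Basin_\eps^+(I)+o_\eps(1)\), and the square-root term is not linear in \(Y_\eps\). I would not apply that bound globally. Instead I would go back to its source in Lemma~\ref{lem:short-line-force-id}, where the mismatch \(b_\eps-Mf_{\rm cl}^\perp\) arises by Cauchy's inequality as a product of the normalized \(\|d_\eps-\bar d_\eps\|\) and massive norms with bounded quantities. The fix is to pair the mismatch with the velocity error rather than with a fixed field. Each such product has one factor controlled by \(\sqrt{D_{\rm rel}^\eps/\estar}\) and the other by \(\sqrt{\Basin_\eps^+}\) plus an \(o_\eps(1)\) square. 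Young's inequality then gives \(\theta D_{\rm rel}^\eps/\estar+C_\theta\Basin_\eps^+ +o_\eps(1)\), and the \(\theta\)-term is absorbed into the left dissipation by taking \(\theta<c/2\). The Fermi term \({\rm Err}_{\rm geom}^\eps\) is \(o_\eps(1)\) on a smooth tube by Lemma~\ref{lem:app-moving-tube-residual}. The transport terms involving \(\nabla u_\eps\), taken from Proposition~\ref{prop:transport-compatibility}, enter as \(\|\nabla u_\eps\|_{L^\infty}\Basin_\eps^+\). Their constant is therefore integrable rather than bounded, so I would run Gronwall with weight \(C(1+\|\nabla u_\eps(\tau)\|_{L^\infty})\), which is uniformly integrable by the kinematic hypothesis.

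Finally, the integral form of Gronwall gives \(Y_\eps(t)\le C_T\Basin_\eps^+(0)+o_\eps(1)\) with \(C_T=C\exp\bigl(C\int_0^T(1+\|\nabla u_\eps\|_\infty)\bigr)\), which is independent of \(\eps\). The \(\sup\) in \(Y_\eps\) and the upper right-envelope representative are compatible with Gronwall because \(Y_\eps\) is nondecreasing, which is the property used again in Lemma~\ref{lem:short-stopping}.
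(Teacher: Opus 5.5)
Your outline is the paper's: integrate Lemma~\ref{lem:short-moving-energy} up to every terminal time, remove the leading translation pairing with the filament law, absorb the residuals through Lemma~\ref{lem:app-finite-ledger} and Lemma~\ref{lem:app-scalar-clearing}, rebuild the remaining entries of \(\Basin_\eps^+\) by tube coercivity, take the supremum, and apply Gronwall. Running Gronwall with the weight \(1+\|\nabla u_\eps\|_{L^\infty}\) is a sound refinement. Proposition~\ref{prop:transport-compatibility} only gives errors of the form \(\|\nabla u_\eps\|_{L^\infty}\Basin_\eps^+\), and only \(\int_0^T\|\nabla u_\eps\|_{L^\infty}\,\dd t\) is controlled. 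You are also right that the term \(C\Basin_\eps^+(I)^{1/2}\) in Lemmas~\ref{lem:short-line-force-id}, \ref{lem:short-residual-absorption} and~\ref{lem:app-finite-ledger} is not linear in \(Y_\eps\). The paper's proof does not discuss this and simply cites Lemma~\ref{lem:app-finite-ledger}.

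Your remedy, however, does not work as stated, because you cannot choose what the mismatch is paired with. The residual enters the energy identity as \(\Mmob^{-1}\int G_\eps\cdot D_td_\eps\), and \(D_td_\eps=D_t\bar d_\eps+D_t(d_\eps-\bar d_\eps)\). Young's inequality against \(D_{\rm rel}^\eps\) handles only the second piece. In the first piece, \(D_t\bar d_\eps\approx-\eps^{-1}\Vrel_\Gamma^\perp\cdot\nabla_yq_\ast\) in the core. After normalization the mismatch is therefore tested against the bounded reference velocity, giving a term like \(\int_{\Gamma_t}(b_\eps-Mf_{\rm cl}^\perp)\cdot\Vrel_\Gamma^\perp\,\dd\Hh^1\) up to Gram factors. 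This is the kind of term recorded as \(C\|b_\eps-Mf_{\rm cl}^\perp\|_{L^1(\Gamma_t)}\) in Lemma~\ref{lem:short-moving-energy}. It is linear in \(d_\eps-\bar d_\eps\) and \(m_\eps\), so Cauchy's inequality returns \(\Basin_\eps^+(I)^{1/2}\), and no choice of \(\theta\) removes it. Even for the piece you do pair with \(D_t(d_\eps-\bar d_\eps)\), you would need the quadratic bound \(\int_I\int_{\Gamma_t}|b_\eps-Mf_{\rm cl}^\perp|^2\,\dd\Hh^1\dd t\le C\Basin_\eps^+(I)+o_\eps(1)\), which is stronger than the \(L^1\) statement of Lemma~\ref{lem:short-line-force-id}.

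What actually makes the lemma as stated go through is simpler, and it is the reading under which the paper's citation is legitimate. On a no-exit interval in the sense of Proposition~\ref{prop:short-open-basin}, the basin inequality \(\Basin_\eps^+\le A\eta_\eps\) is still in force; the paper uses this implicitly, e.g.\ in the last sentence of Lemma~\ref{lem:short-mb-slaving}. Hence \(C\Basin_\eps^+(I)^{1/2}\le C(A\eta_\eps)^{1/2}\) goes into the \(o_\eps(1)\). The cost is that this \(o_\eps(1)\) has size \(\eta_\eps^{1/2}\gg\eta_\eps\), so it cannot by itself give the strict margin \(\theta A\eta_\eps\) required in Lemma~\ref{lem:short-stopping}. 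That is where your concern is genuine. Closing it would need an extra ingredient, such as a cancellation of the linear part of \(b_\eps-Mf_{\rm cl}^\perp\) against \(\Vrel_\Gamma^\perp\cdot\nabla_yq_\ast\), or control of that pairing through the linear entry \({\rm FlatErr}_\eps\). Neither your argument nor the paper supplies this.
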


\begin{proof}
Apply the moving-tube energy identity at every terminal time \(r\le t\), then
take the supremum over \(r\).  The dissipation term is retained at the endpoint
\(t\).  Lemma~\ref{lem:app-finite-ledger} absorbs the zero-mode mismatch, the
longitudinal zero-mode terms, and the orthogonal residual.
Lemma~\ref{lem:app-scalar-clearing} absorbs the
zero-degree scalar branch.  Tube coercivity reconstructs the graph, gauge, and
exterior-mass entries of \(\Basin_\eps^+\) from the modulation controls.  The
remaining geometric-error representative is monotone and \(o_\eps(1)\) on
compact no-exit intervals.  Gronwall gives the second inequality.
\end{proof}

\subsection{Collar, boundary, and FENE alternatives}

The collar is needed for two reasons.  First, it fixes the relative Hopf class
and eliminates boundary Chern-Simons flux on no-exit intervals.  Second, it
prevents core mass from leaving the domain without being seen by the Brakke
compactness argument.

\begin{lemma}[FENE coefficient control or priced exit]
\label{lem:app-fene-control}
Assume the conformation tensor begins in a compact subset of the FENE
admissible cone on the collar and the collar \(C^1\) norm is bounded.  On a
compact no-exit interval, the conformation remains in a compact subset of the
same cone and the coefficients in the residual normal form remain uniformly
bounded in \(C^1\).  If this fails, a FENE coefficient exit is charged.
\end{lemma}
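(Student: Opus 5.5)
The plan is a continuity (stopping-time) argument for the collar conformation in the FENE cone, followed by smooth composition estimates for the residual coefficients. Fix the compact set \(K_0\subset\{C>0,\ \operatorname{tr}C<b\}\) containing the initial collar data. Let \(K\) be a slightly larger compact subset, for instance
\[
\{\lambda_{\min}C\ge c_K/2,\ \operatorname{tr}C\le b-c_K/2\}.
\]
Define \(\tau^{\rm F}_\eps\) as the first time at which \(C_\eps\) on the collar leaves \(K\), or its collar \(C^1\) norm exceeds a fixed multiple of the initial bound. By continuity of \(t\mapsto C_\eps(t)\) in \(H^m\subset C^1\) (\(m>5/2\)), we have \(\tau^{\rm F}_\eps>0\). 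The task is to show that on a compact no-exit interval the threshold is never reached. If it is reached, the definition of the FENE priced exit in Definition~\ref{def:short-priced-exit} and (M6) charges it, which is the stated alternative.

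\textbf{Step 1: maximum principle along characteristics.} On the collar the ordered state is fixed, so the soft coordinate is frozen with \(|d|=1\). The tensorial law of Model 1 then reduces to a transported ODE/parabolic system for \(C_\eps\):
\begin{itemize}
\item the stretching terms \((\nabla u_\eps)C+C(\nabla u_\eps)^T\), bounded by \(\|\nabla u_\eps\|_{L^\infty}|C|\);
\item the relaxation term \(-\Mmob_C\Pi_{\rm adm}\,\delta\mathscr E_\eps/\delta C\), containing the FENE factor \(f_b(C)\), which blows up as \(\operatorname{tr}C\uparrow b\);
\item the LDG term, vanishing at the well to leading order in the collar.
\end{itemize}
For \(\lambda_{\min}\) I would use the standard Oldroyd argument. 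At a minimizing unit vector \(\xi\), the quantity \(\xi^TC\xi\) satisfies
\[
D_t(\xi^TC\xi)\ge -2\|\nabla u_\eps\|_\infty\,\xi^TC\xi+(\hbox{relaxation})\ge -2\|\nabla u_\eps\|_\infty\lambda_{\min},
\]
because the relaxation pushes toward the positive equilibrium. Gronwall with the integrable clock \(\int\|\nabla u_\eps\|_\infty\) from Theorem~\ref{thm:short-main} gives a positive lower bound. For the trace, compute \(D_t\operatorname{tr}C\). The stretching contributes at most \(2\|\nabla u_\eps\|_\infty\operatorname{tr}C\), while the FENE relaxation gives \(-f_b(C)\operatorname{tr}C+\dots\). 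Since \(f_b\to\infty\) near \(\operatorname{tr}C=b\), a barrier at \(b-c_K/2\) is not crossed if \(c_K\) is chosen against the clock. The diffusive term \(L\Delta C\) respects both barriers by the maximum principle for the scalar quantities, and the collar boundary values lie in \(K_0\).

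\textbf{Step 2: the \(C^1\) bound.} Differentiate the collar equation once. The result is linear in \(\nabla C\), with coefficients that are smooth functions of \(C\in K\) (Lemma~\ref{lem:app-fene-collar-tame}) plus \(\nabla u_\eps\), \(\nabla^2u_\eps\) terms. An energy or maximum-principle estimate plus Gronwall then bounds \(\|C_\eps\|_{C^1(\text{collar})}\) by the initial data and the transport clock. Once \(C_\eps\in K\) with bounded \(C^1\) norm, smooth composition on the compact set \(K\) (Lemma~\ref{lem:app-fene-collar-tame}) gives the uniform \(C^1\) bounds for \(a(C_\eps)\), \(f_b(C_\eps)\), \(\Pi_{\rm adm}(C_\eps)\), and the chart coefficients. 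These are exactly the coefficients entering \(G^{\rm col}_\eps\) and the FENE pieces of the residual normal form (Lemma~\ref{lem:short-residual-normal-form}).

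\textbf{Step 3: closing the continuity argument.} Steps 1–2 give strict interior bounds (in \(K_0'\Subset K\)) up to \(\tau^{\rm F}_\eps\). Continuity then excludes reaching \(\partial K\) at \(\tau^{\rm F}_\eps\) unless the interval is not no-exit.

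\textbf{Main obstacle.} The hardest part is the \(C^1\) estimate in Step 2. Only \(\nabla u_\eps\in L^1_tL^\infty\) is assumed, and controlling \(\nabla C\) through stretching formally requires \(\nabla^2u_\eps\). I would first try to avoid this by using the parabolic smoothing from \(L\Delta C\) with the \(\eps^{-2}\)-independent collar coefficients, treating \(\nabla u_\eps\,C\) as a forcing in \(L^1_tL^\infty\). If that is insufficient, I would declare loss of the collar \(C^1\) bound a FENE/collar priced exit, exactly as the statement permits.
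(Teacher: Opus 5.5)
Your proposal is correct and follows essentially the paper's argument. It uses a maximum-principle barrier for the distance of the collar conformation to the boundary of the FENE cone, driven by the blow-up of the FENE factor; then the threshold alternative that charges a FENE exit; then smooth composition on the resulting compact subset of the cone to get the \(C^1\) coefficient bounds.

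The one real difference is your Step 2. The paper never propagates \(\nabla C_\eps\); it feeds the assumed collar \(C^1\) bound directly into the composition estimate, so it never has to overcome the obstacle you flag. That obstacle is genuine: differentiating produces \(\nabla^2u_\eps\) and \(\eps^{-2}\) LDG coefficients. Similarly, an \(L^1_t\) clock alone does not make your Step 1 trace barrier uniform. In both places the proof is carried by the lemma's built-in alternative, which you already invoke in your fallback and in Step 3.
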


\begin{proof}
The FENE potential diverges at the boundary of the admissible cone.  In the
collar, the energy and the transported coefficient equation give a local
maximum-principle estimate for the distance to this boundary, up to the
controlled source terms from the ordered-core chart.  If the distance reaches
the prescribed threshold, the corresponding control quantity has positive cost and is
one of the priced exits.  Otherwise the coefficients remain in a compact subset
of the cone, and standard composition estimates give the uniform \(C^1\) bounds
used in the residual expansion.
\end{proof}

\begin{lemma}[No boundary loss or boundary price]
\label{lem:app-no-boundary-loss}
On a no-exit interval, normalized core mass and topological current do not
escape through \(\partial\Omega\).  More precisely, for every collar
neighborhood \(U_\rho(\partial\Omega)\),
\[
        \lim_{\rho\downarrow0}
        \limsup_{\eps\to0}
        \sup_{t\in I}
        \mu_\eps^t(U_\rho(\partial\Omega))
        =
        0.
\]
If this fails, either a nonzero-degree collar crossing or a boundary-flux exit
is charged.
\end{lemma}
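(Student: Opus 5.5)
The plan is to decompose the collar neighborhood into a radial family of annular layers and to compare the energy flux through each layer with the three controlled sources in the no-exit window: the fixed collar phase, the exterior-mass entry of \(\Basin_\eps^+\), and the priced-exit quantities. First, the smooth comparison link \(\Gamma_t\) stays at positive distance from \(\partial\Omega\) uniformly on the compact interval \(I\), by the reach and collar bounds in (M6). So for \(\rho\) smaller than this distance, \(U_\rho(\partial\Omega)\) lies outside the controlled tube. Any \(\mu_\eps^t\)-mass there is therefore either exterior core mass or collar energy. I will use Lemma~\ref{lem:short-tube-coercivity} and the propagated estimate of Proposition~\ref{prop:short-open-basin}. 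Together they bound \({\rm ExtMass}_\eps(t)/\estar\) and \({\rm CollarErr}_\eps(t)/\estar\) by \(A\eta_\eps+o_\eps(1)\), uniformly in \(t\in I\). This already gives \(\limsup_\eps\sup_t\mu_\eps^t(U_\rho)\to0\) for each fixed small \(\rho\), and hence the displayed double limit.

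Second, for the topological current I will apply the slice argument of Lemma~\ref{lem:app-slice-alternative} to normal slices of \(\partial\Omega\) inside the collar. The fixed collar phase \(d_{\rm out}\) has degree zero along every small loop near the boundary. A nonzero Jacobian flux through \(U_\rho\) would require a nonzero-degree crossing of some annulus in the collar. The annular logarithmic lower bound prices such a crossing at order \(\estar\), so it is charged as the collar-crossing exit. In the zero-degree branch, Lemma~\ref{lem:app-scalar-clearing} shows that scalar depressions contribute \(o_\eps(\estar)\).

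Third, I will handle the Chern-Simons and mass flux through \(\partial\Omega\) itself. Multiply the localized energy identity by a cutoff \(\varphi_\rho\) equal to \(1\) on \(U_\rho\). The boundary term is the normal energy flux, and it vanishes because \(u_\eps\) is tangent to the collar and the trace is fixed. The transport contribution is bounded by \(\int_I\|\nabla u_\eps\|_\infty\Basin_\eps^+\), as in Proposition~\ref{prop:transport-compatibility}. If the FENE coefficients leave their compact window, Lemma~\ref{lem:app-fene-control} charges the corresponding exit. Any remaining order-one normalized flux is by definition the boundary-flux exit.

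The main obstacle is the uniformity in \(t\) of the exterior-mass control near the boundary. The basin entry is a supremum, but it must be localized to \(U_\rho\) without losing the \(\sup_t\). To handle this, I would first show that \(t\mapsto\mu_\eps^t(\varphi_\rho)\) is equicontinuous up to the energy dissipation. I would then take the supremum through the upper right-envelope representative.
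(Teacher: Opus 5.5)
Your proposal is correct in substance, but the main mass bound comes from a different source than in the paper.

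The paper argues locally in the collar. The collar has positive modulus and fixed phase, so normalized energy can pile up at $\partial\Omega$ in only two ways: a nonzero-degree annular crossing, priced by the logarithmic lower bound, or zero-degree accumulation, which the collar clearing estimate converts into a Chern--Simons boundary-flux term. On a no-exit interval neither happens, so the collar energy is $O(1)$ and vanishes after division by $\estar$.

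You instead read the bound off Proposition~\ref{prop:short-open-basin}. For $\rho$ below the tube-to-boundary distance, $\mu_\eps^t(U_\rho(\partial\Omega))$ is dominated by the exterior-mass and collar entries of $\Basin_\eps^+$. This gives the stronger statement $\limsup_\eps\sup_{t\in I}\mu_\eps^t(U_\rho(\partial\Omega))=0$ for each such $\rho$.

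Two corrections to how you justify this step:
\begin{itemize}
\item The bound holds because those entries are summands of $\Basin_\eps^+$. It does not come from Lemma~\ref{lem:short-tube-coercivity}, where they sit on the larger side of the inequality.
\item Your ``main obstacle'' is not one. The propagated estimate is already a supremum in time, so no equicontinuity or envelope argument is needed.
\end{itemize}

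\textbf{What each route buys.} Yours is quantitative and short, but it has two costs.
\begin{itemize}
\item It makes the lemma a corollary of the basin Gronwall argument. That is non-circular only if the collar and boundary terms in Proposition~\ref{prop:short-open-basin} are closed by collar clearing rather than by this lemma.
\item If your first step fails, you only learn that the basin failed, possibly as exterior core leakage. So typing the exit as a collar crossing or a boundary flux rests entirely on your second and third steps. There, Lemma~\ref{lem:app-scalar-clearing} covers only modulus depressions. Zero-degree phase-gradient accumulation with $|d_\eps|\approx 1$ is handled only by your clause that any remaining flux is by definition a boundary-flux exit. That is exactly the step the paper makes through collar clearing in the Chern--Simons balance.
\end{itemize}
The paper's local argument gives the exit typing directly and does not depend on the propagation module.
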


\begin{proof}
The fixed ordered collar has positive modulus and fixed phase.  If degree
crosses a collar annulus, the logarithmic lower bound gives a nonzero-degree
collar price.  If no degree crosses but normalized energy accumulates at the
boundary, the collar clearing estimate reduces it to a boundary flux term in
the Chern-Simons balance.  That term is one of the priced exits.  On a no-exit
interval neither event occurs, so the collar energy is \(O(1)\), which is
zero after normalization by \(\estar\).  This proves the no-loss statement.
\end{proof}

\subsection{Testing cores and time representatives}

The Brakke inequality requires more than convergence on a selected finite list
of tests.  The finite-epsilon estimates is first proved for finite lists because
the constants depend on the tests.  The following compactness device is the
bridge to the full testing class.

\begin{lemma}[Positive countable testing core]
\label{lem:app-positive-core}
Let \(K\Subset I\times\Omega\).  There is a countable
\(\mathbb Q\)-vector space \(\mathcal D\subset C_c^1(K)\) and a countable
positive cone \(\mathcal D_+\subset C_c^1(K;[0,\infty))\) such that
\(\mathcal D\) is \(C^1\)-dense and \(\mathcal D_+\) is uniformly dense in
the nonnegative continuous weights on \(K\).  Moreover, positive linear
functionals bounded on cutoff functions extend uniquely to Radon measures.
\end{lemma}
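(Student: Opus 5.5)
The plan is to build both families from standard separability facts on the compact set \(K\Subset I\times\Omega\), and then obtain the extension claim from the Riesz representation theorem.

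For \(\mathcal D\), I would start from the Banach space \(C^1_0(K)\) of \(C^1\) functions supported in \(K\), with the norm \(\|f\|_\infty+\|\nabla f\|_\infty\). It is separable. One way to see this: fix a cutoff \(\psi\in C_c^\infty\) equal to one near \(K\). A polynomial with rational coefficients, multiplied by \(\psi\), approximates any \(C^1\) function in \(C^1\) on a neighbourhood of \(K\), by Stone--Weierstrass applied to the function and its first derivatives on a compact box (or by Nachbin's theorem). Since the approximants must themselves lie in \(C_c^1(K)\), I would instead exhaust \(K^\circ\) by compacts \(K_j\) with cutoffs \(\psi_j\) supported in \(K\), and take
\[
\mathcal D=\operatorname{span}_{\mathbb Q}\{\psi_j p:\ j\in\mathbb N,\ p\ \hbox{a rational polynomial}\}.
\]
This is a countable \(\mathbb Q\)-vector space. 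Density of \(\mathcal D\) in the closure of \(C_c^1(K^\circ)\) follows by first cutting off a given \(f\) with \(\psi_j\), then approximating \(f\) in \(C^1\) near \(\operatorname{supp}\psi_j\) by polynomials. The Leibniz rule then controls \(\|\psi_j(f-p)\|_{C^1}\) by \(C_j\|f-p\|_{C^1}\).

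For \(\mathcal D_+\), I would use a countable partition of unity \(\{\theta_k\}\subset C_c^\infty(K;[0,1])\) subordinate to dyadic covers of \(K^\circ\) of mesh \(2^{-n}\), taken over all \(n\). Then I set
\[
\mathcal D_+=\Bigl\{\sum_k \beta_k\theta_k:\ \beta_k\in\mathbb Q_{\ge0},\ \hbox{finitely many nonzero}\Bigr\}.
\]
Given a nonnegative continuous weight \(\Psi\) vanishing near \(\partial K\), uniform continuity gives
\[
\Bigl|\Psi-\sum_k\Psi(x_k)\theta_k\Bigr|\le\omega_\Psi(2^{-n}),
\]
where \(x_k\in\operatorname{supp}\theta_k\). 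Rounding each value \(\Psi(x_k)\) down to a nonnegative rational changes the sum by at most a further small error. The resulting functions are nonnegative, smooth and compactly supported, so the family is countable, positive, and uniformly dense.

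For the extension statement, let \(\Lambda\) be a positive \(\mathbb Q\)-linear functional on \(\mathcal D_+\) (or on \(\mathcal D\)) that satisfies \(\Lambda(f)\le C_{K'}\|f\|_\infty\) for supports in \(K'\); this bound is what "bounded on cutoff functions" gives via positivity and \(|f|\le\|f\|_\infty\chi_{K'}\). Because \(\mathcal D_+-\mathcal D_+\) is uniformly dense in \(C_c(K^\circ)\), \(\Lambda\) first extends uniquely to a bounded positive linear functional on \(C_c(K^\circ)\). The Riesz representation theorem then produces a unique Radon measure. Uniqueness is automatic, since two measures that agree on a uniformly dense class of compactly supported functions coincide.

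The main obstacle I expect is bookkeeping at \(\partial K\): the functions must stay supported in \(K\) while still being dense. This is why I approximate only functions supported in the interior and rely on the exhaustion \(\{K_j\}\). I would state the densities relative to \(C_c^1(K^\circ)\) and \(C_c(K^\circ;[0,\infty))\), which is all that Lemma~\ref{lem:short-full-testing} uses.
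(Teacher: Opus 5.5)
Your proposal is correct and follows essentially the paper's route. You build a countable rational family from cutoffs: rational polynomials times exhaustion cutoffs for \(\mathcal D\), and rational combinations of a multiscale partition of unity for \(\mathcal D_+\), where the paper uses rational bumps plus mollification. Then, as in the paper, you get the bound \(|\Lambda(\phi)|\le r\Lambda(\chi)\) from \(r\chi\pm\phi\ge0\), extend by uniform continuity, and apply Riesz representation.

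Restricting the density claims to functions supported in \(K^\circ\) is the correct reading, since every element of \(C_c^1(K)\) vanishes on \(\partial K\), and it is all that Lemma~\ref{lem:short-full-testing} uses. For the \(C^1\)-density of polynomials, cite Nachbin's theorem (or convolution with polynomial kernels). Stone--Weierstrass applied to the derivatives separately does not produce a single polynomial in dimension \(\ge2\).
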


\begin{proof}
Take finite rational linear combinations of smooth bumps with rational centers,
rational radii, and rational heights.  Include cutoff bumps dominating one on
each compact set in a rational exhaustion.  Standard mollification and
partition of unity give density.  If \(L\) is positive and \(L(\chi_K)<\infty\),
then for \(\operatorname{spt}\phi\subset K\) and \(r>\|\phi\|_\infty\),
\[
        r\chi_K\pm\phi\ge0 .
\]
Hence
\[
        |L(\phi)|\le rL(\chi_K).
\]
This gives uniform continuity on the dense core and therefore an extension to
\(C_c^0(K)\).  Riesz representation gives the Radon measure.
\end{proof}

\begin{lemma}[Endpoint-weight representative]
\label{lem:app-endpoint-rep}
Let \(\mu^t\) be the common upper representative constructed from the positive
testing core.  Then, for every nonnegative
\(\varphi\in C_c^1(\Omega)\) and every interval \([a,b]\Subset I\),
\[
\begin{aligned}
\mu^b(\varphi)-\mu^a(\varphi)
&\le
\int_a^b
\bigg[
 \int
 \bigl(\nabla\varphi\cdot u
 +\varphi\,{\rm div}_{T_xV_t}u\bigr)\,\dd\mu^t
\\
&\qquad
 -\Mmob\int\varphi |H|^2\,\dd\mu^t
 +\int\nabla\varphi\cdot(\Mmob H+f_{\rm cl}^\perp)\,\dd\mu^t
\\
&\qquad
 -\int\varphi H\cdot f_{\rm cl}^\perp\,\dd\mu^t
\bigg]\,\dd t .
\end{aligned}
\]
At a.e. time this is equivalent to the differential Brakke inequality.
\end{lemma}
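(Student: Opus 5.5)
The plan is to integrate the localized finite-\(\eps\) energy identity over \([a,b]\) for each test in the positive core, pass to the limit using the input estimates, and then transfer the resulting interval inequality from the core to all nonnegative \(\varphi\in C_c^1(\Omega)\) by density. The upper representative lets us read the endpoint terms at every time, not only at a.e.\ time.

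\textbf{Step 1: the finite-\(\eps\) inequality.} Fix a nonnegative \(\varphi\) in the countable positive core \(\mathcal D_+\) of Lemma~\ref{lem:app-positive-core}. Multiply the transported GL equation \eqref{eq:transported-gl-short} by \(-\varphi\,\Mmob^{-1}\)-weighted variations of the energy and integrate over \([a,b]\times\Omega\). After grouping \(\partial_t d_\eps+u_\eps\cdot\nabla d_\eps=D_td_\eps\), this gives \(\mu_\eps^b(\varphi)-\mu_\eps^a(\varphi)\) as the sum of four pieces:
\begin{itemize}
\item a transport term \(\int\nabla\varphi\cdot u_\eps\,\dd\mu_\eps^t+\estar^{-1}\int\varphi\,T_\eps:\nabla u_\eps\);
\item the dissipation \(-\estar^{-1}\int\varphi|D_td_\eps|^2/\Mmob\);
\item a cross term \(\estar^{-1}\int(\nabla\varphi\cdot\nabla d_\eps)\cdot D_td_\eps\);
\item the force-power terms in \(G_\eps\).
\end{itemize}
This is an exact identity at fixed \(\eps\).

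\textbf{Step 2: passage to the limit.} For each finite subfamily of the core, Lemma~\ref{lem:short-input-ledger} together with the propagated basin (Proposition~\ref{prop:short-open-basin}) makes every defect \(o_\eps(1)\), on one subsequence independent of the family (Lemma~\ref{lem:short-full-testing}). The individual terms are handled as follows.
\begin{itemize}
\item The transport term converges to \(\int(\nabla\varphi\cdot u+\varphi\,{\rm div}_{T_xV_t}u)\,\dd\mu^t\) by Proposition~\ref{prop:transport-compatibility}.
\item The cross and force-power terms converge to \(\int\nabla\varphi\cdot(\Mmob H+f_{\rm cl}^\perp)\,\dd\mu^t\) and \(-\int\varphi\,H\cdot f_{\rm cl}^\perp\,\dd\mu^t\), through the relative-velocity and force vector measures and the filament law \(\Vrel_\Gamma^\perp=\Mmob H+f_{\rm cl}^\perp\).
\item For the dissipation, defect (vi) gives only a \(\liminf\) lower bound. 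We write \(|D_td_\eps|^2\) through \(\Vrel^\perp\) and complete the square as in Proposition~\ref{prop:short-brakke}, which yields \(\le-\Mmob\int\varphi|H|^2\) plus the correct sign for the combined \(H\cdot f\) terms.
\end{itemize}
This is where the inequality, rather than equality, enters. The left-hand side converges at times \(a,b\) in the good set of Lemma~\ref{lem:short-measure-function}.

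\textbf{Step 3: arbitrary endpoints.} For general \(a,b\), we approximate by good times \(a_k\downarrow a\) and \(b_k\uparrow b\), or from the appropriate side. The choice of side matches the upper (left-continuous) envelope of Lemma~\ref{lem:short-common-representative}. The resulting BV bound on \(t\mapsto\mu^t(\varphi)\) ensures the one-sided limits exist and respect the inequality. The integrand on the right side is in \(L^1(\dd t)\) by the \(L^2\) mean-curvature bound and the uniform Lipschitz bound on \(u\), so the right side is continuous in \(a,b\).

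\textbf{Step 4: density and the differential form.} We extend the inequality to all \(\varphi\ge0\) in \(C_c^1\) by \(C^1\)-approximation from \(\mathcal D_+\) inside a fixed compact support, using the local mass bound. Finally, Lebesgue differentiation in \(b\) shows that at a.e.\ time the interval form is equivalent to \eqref{eq:short-brakke}.

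The main obstacle is Step 3: the endpoint values must be taken with the common upper representative, so that the jump sets of the functions \(\mu^t(\varphi)\) over the countable core do not spoil the inequality. The sign of any jump must go the right way, namely downward in the sense permitted by the envelope.
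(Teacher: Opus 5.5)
Your proposal follows the paper's proof essentially step for step: pass to the limit in the time-integrated localized energy inequality for tests in the countable positive core, use the local mass, $L^2(\mu^t\dd t)$ curvature and force-power bounds to make the right-hand side continuous under approximation of $\varphi$, read arbitrary endpoints through the left-continuous upper representative (whose jumps are downward), and finish with Lebesgue differentiation. One bookkeeping slip: the force-power term alone converges to $\Mmob^{-1}\int\varphi\, f_{\rm cl}^\perp\cdot\Vrel_\Gamma^\perp\,\dd\mu^t$, not to $-\int\varphi H\cdot f_{\rm cl}^\perp\,\dd\mu^t$; you get $-\Mmob\int\varphi|H|^2\,\dd\mu^t-\int\varphi H\cdot f_{\rm cl}^\perp\,\dd\mu^t$ only after adding the dissipation bound $-\Mmob^{-1}\int\varphi|\Vrel_\Gamma^\perp|^2\,\dd\mu^t$ and inserting $\Vrel_\Gamma^\perp=\Mmob H+f_{\rm cl}^\perp$, where the $|f_{\rm cl}^\perp|^2$ contributions cancel, and this sum is what your ``completed square'' should mean.
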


\begin{proof}
The interval inequality is first obtained on the countable core by passing to
the limit in the time-integrated localized energy inequality.  The integrand is
continuous under approximation of \(\varphi\) because the local mass,
\(L^2(\mu^t\dd t)\) curvature bound, and force-power bound are already
available from the Brakke-input estimates.  The upper representative is chosen so
that endpoint masses are compatible with monotone loss.  The Lebesgue
differentiation theorem then gives the a.e. differential form.
\end{proof}

\subsection{Relative Hopf details}

The Hopf accounting in the main text uses a capped positive-gap region.  This
subsection records the finite-epsilon interfaces that make the accounting
usable in the open-basin theorem: the cap atlas on regular slabs, the
Chern-Simons balance, and the annular degree price.

\begin{lemma}[Topological regularity and cap atlas]
\label{lem:app-cap-atlas}
On a topologically regular Hopf slab, the inner boundaries of the removed core
tubes form a smooth isotopy of embedded tori.  Consequently the complement has
a fixed diffeomorphism type, the meridian and longitude classes are transported
from the initial tube, and the standard solid-torus caps used in
Lemma~\ref{lem:short-capped-hopf} are defined at every time of the slab.  If
the isotopy, reach bound, or cap atlas fails before the slab closes, the
tube-topology control quantity reaches its threshold and the failure is charged to
\({\rm TopExitCost}_\eps\).
\end{lemma}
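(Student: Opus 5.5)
The plan is to read the statement as essentially a consequence of Definition~\ref{def:topological-regular-slab} combined with standard isotopy-extension and solid-torus topology, plus the stopping rule of Definition~\ref{def:short-priced-exit}. First I will fix the slab \(I=[t_-,t_+]\) and use topological regularity. For every small \(\eps\) the selected core is a smooth embedded tube graph over a fixed link type, with reach bounded below by some \(r_\ast>0\) uniformly on \(I\). I will choose the inner tube radius \(\rho<r_\ast/2\). Then the normal exponential map of \(\Gamma_t\) restricted to the closed \(\rho\)-disc bundle is an embedding for every \(t\), and it depends smoothly on \(t\) because the tube graph (centerline and normal frame) is smooth in \(t\). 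Its boundary \(\partial T_\rho\Gamma_t\) is a disjoint union of embedded tori, and \(t\mapsto\partial T_\rho\Gamma_t\) is a smooth isotopy of embeddings \(\bigsqcup\mathbb T^2\to\Omega\). The Fermi estimates of the appendix (Jacobian \(J=1+O(|r||\kappa|)\)) guarantee nondegeneracy for radius below the reach fraction.

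Second, I will apply the isotopy extension theorem to obtain an ambient diffeotopy \(\Psi_t\) of \(\Omega\), equal to the identity on the fixed ordered collar since the tubes stay a positive distance from \(\partial\Omega\). Its time derivative is a compactly supported vector field extending the normal velocity of the tori. By construction \(\Psi_t\) carries \(\Omega\setminus T_\rho\Gamma_{t_-}\) diffeomorphically onto \(\Omega\setminus T_\rho\Gamma_t\), which gives the fixed diffeomorphism type. I will then define the meridian of each component as the boundary of a normal disc fiber, and the longitude as the push-off \(\Gamma_t+\rho\,\nu_1\) along the transported frame, corrected at \(t_-\) to have zero linking with its own component. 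Both are transported by \(\Psi_t\), so their homology classes in \(H_1(\partial T_\rho\Gamma_t)\) are constant in the identified basis; linking numbers are isotopy invariant. The standard solid-torus cap is the filling in which the meridian bounds, with the zero-longitudinal-winding gauge of Lemma~\ref{lem:short-capped-hopf}. It is therefore defined at every \(t\in I\) by composing with \(\Psi_t\).

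Third, I will handle the failure branch. If the isotopy, the reach bound, or the atlas fails at a first time \(t_1\in I\), then by Definition~\ref{def:topological-regular-slab} the slab is not regular past \(t_1\). The failure might be reach dropping below threshold, self-contact of the graph, or the transported frame ceasing to define an embedded normal disc bundle. This is one of the graph or tube-topology events listed in Definition~\ref{def:short-priced-exit}, so its cost is recorded in \({\rm TopExitCost}_\eps\). Lemma~\ref{lem:short-exit-measure} then gives the positive lower price on any slab containing the limiting failure time.

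I expect the main obstacle to be uniformity in \(\eps\): the isotopy and atlas must be controlled by constants independent of \(\eps\), so that the cap atlas is a fixed object at the chosen tube scale. My first attempt is to take \(\rho\) fixed, depending only on the reach lower bound, and to rely on the \(C^{2}\) bounds of the tube graph that are implicit in bounded reach and the smooth-graph hypothesis. If these bounds are only available at the finite-\(\eps\) level, I will instead charge any degeneration of them as a reach or graph exit. This keeps the dichotomy between a regular atlas and a priced exit intact.
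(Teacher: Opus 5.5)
Your proposal is correct and follows essentially the same route as the paper's proof. Graph condition plus reach bound give embedded tubes, time continuity of the selected graph gives an isotopy of tori that transports the meridian, longitude and zero-winding cap choice, and any failure of graph, reach or atlas is by definition a tube-topology priced exit. The one difference is that the paper gets the time continuity of the graph from the implicit-function construction of the modulation centers and the endpoint-continuation Lemma~\ref{lem:short-stopping}, whereas you read it off the definition of a regular slab and make the isotopy-extension step explicit; for that step continuity in \(C^1\) is enough, since \(C^1\)-close embeddings are isotopic.
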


\begin{proof}
The open-basin graph condition gives a single degree-one center on each normal
slice and a graph over the reference embedded link.  The reach condition prevents
different components, or distant portions of the same component, from meeting
at the tube scale.  The implicit-function construction of the centers is
continuous in time on a no-exit interval, and the endpoint-continuation lemma
keeps the same chart as long as the control quantity thresholds remain strict.  Hence the
tube boundaries form an ambient isotopy of embedded tori.  Isotopy transports
the meridian, the collar-fixed longitude, and the zero-longitude cap choice.
If any of these objects cannot be continued, then either the graph condition, the
reach condition, or the cap-atlas condition has reached its threshold.  By
Definition~\ref{def:short-priced-exit} this is a priced exit.
\end{proof}

\begin{lemma}[Chern-Simons balance on a moving positive-gap region]
\label{lem:app-cs-balance}
Let \(U(t)\) be a smooth positive-gap region with moving inner tube boundary and
fixed ordered outer collar on a topologically regular Hopf slab.  Let
\(n(t):U(t)\to\Sph^2\) be the principal-axis map.  After choosing the collar
gauge and the cap atlas of Lemma~\ref{lem:app-cap-atlas},
\[
        \frac{\dd}{\dd t}
        \int_{U(t)}A(t)\wedge \dd A(t)
        =
        \int_{\partial U(t)}
        \mathcal J[n,\partial_tn,A,V_{\partial U}],
\]
where \(\dd A=n^\ast\omega_{\Sph^2}\).  The outer boundary term vanishes under
fixed collar data.  The inner boundary term is the degree flux through the
core tube.
\end{lemma}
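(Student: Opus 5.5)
The plan is to prove the balance law as a transgression formula for the Chern--Simons integrand on a moving domain. We combine Reynolds' transport theorem with the Cartan formula for the time derivative of the primitive \(A(t)\). On the topologically regular slab, Lemma~\ref{lem:app-cap-atlas} supplies a smooth isotopy \(\Psi_t\) of the region \(U(t)\) onto a fixed reference region \(U_0\), with inner tori moving by a normal velocity \(V_{\partial U}\). The positive gap on \(U(t)\), together with the estimate \(|\nabla n_C|\lesssim|\nabla C|/\gap(C)\), makes \(n(t)\) smooth. Hence \(F=n^\ast\omega_{\Sph^2}\) is a smooth closed two-form on \(U(t)\).

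The first step is to fix the gauge so that \(A\) is well defined and smooth in \(t\). \(U(t)\) is not simply connected, so \(F\) need not be exact on it. We therefore glue the standard zero-longitude solid-torus caps of Lemma~\ref{lem:short-capped-hopf}, which gives a closed map on \(\Omega^\ast\simeq\Sph^3\). There we choose a primitive \(A\) in Coulomb gauge, normalized to vanish on the fixed collar (the collar gauge); \(H^1(\Sph^3)=0\) makes this unique. Smooth dependence of \(n\) on \(t\) then gives smooth dependence of \(A\).

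Next, pulling back by \(\Psi_t\), we differentiate
\[
        \frac{\dd}{\dd t}\int_{U(t)}A\wedge\dd A
        =\int_{U(t)}\bigl(\partial_tA\wedge\dd A+A\wedge\dd\partial_tA\bigr)
        +\int_{\partial U(t)}\iota_{V}(A\wedge\dd A).
\]
Writing \(A\wedge\dd\partial_tA=\dd\partial_tA\wedge A\) and using \(\dd(\partial_tA\wedge A)=\dd\partial_tA\wedge A-\partial_tA\wedge\dd A\), Stokes' theorem gives a bulk term \(2\int_{U}\partial_tA\wedge F\) and a boundary term \(\int_{\partial U}\partial_tA\wedge A\). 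Since \(\dd\partial_tA=\partial_tF=\dd(n^\ast\iota_{\partial_tn}\omega_{\Sph^2})\), we may set \(\partial_tA=n^\ast(\iota_{\partial_tn}\omega)+\dd\phi\). The wedge \(n^\ast(\iota_{\partial_tn}\omega)\wedge n^\ast\omega\) vanishes pointwise, because it is a pullback of a three-form on the two-dimensional target \(\Sph^2\). The remaining bulk piece \(\int\dd\phi\wedge F\) equals \(\int_{\partial U}\phi F\) by Stokes. Collecting everything, the boundary integrand is
\[
        \mathcal J=\partial_tA\wedge A+2\phi F+\iota_V(A\wedge F),
\]
which depends only on \(n\), \(\partial_tn\), \(A\) and \(V_{\partial U}\).

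On the outer collar we have \(n\) fixed, \(\partial_tn=0\), \(V=0\), and \(A\) fixed by the collar gauge, so \(\phi\) can be taken constant and zero there. The outer contribution therefore vanishes. On each inner torus, the remaining integrand reduces to the flux of \(F\) weighted by the meridional gauge function. By the cap normalization this is the degree carried through the tube, read off in the transported meridian--longitude basis.

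The step I expect to be the main obstacle is the gauge bookkeeping. Two things must be justified: that \(\phi\) can be chosen smooth in \(t\) and constant on each boundary component, and that the interior cap terms do not reappear. I would first try working entirely on the capped closed manifold, where the full integral is a homotopy invariant. That reduces the formula to the cap contributions, which Lemma~\ref{lem:short-capped-hopf} already identifies as meridional degree.
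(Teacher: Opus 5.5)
Your route is the paper's: Reynolds' formula for the moving region, the identity $\partial_t(A\wedge\dd A)=2\,\partial_tA\wedge\dd A-\dd(A\wedge\partial_tA)$, exactness of the variation of $n^\ast\omega_{\Sph^2}$, and the fixed collar killing the outer term; your decomposition $\partial_tA=n^\ast(\iota_{\partial_tn}\omega_{\Sph^2})+\dd\phi$, together with the pointwise vanishing of $n^\ast(\iota_{\partial_tn}\omega_{\Sph^2})\wedge F$, spells out what the paper compresses into ``the variation of $n^\ast\omega_{\Sph^2}$ is exact on the target sphere,'' and gives an explicit $\mathcal J$ where the paper leaves it unnamed. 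The one step that fails as written is the gauge choice: the Coulomb primitive on $\Sph^3$ is unique and depends nonlocally on $F$, so it is in general neither zero nor time-independent on the collar; you should instead apply a gauge change $A\mapsto A-\dd\chi(t)$ that makes $A$ time-independent there (this is the paper's collar gauge, and it is what your outer-boundary argument actually uses).
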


\begin{proof}
Differentiate the integral over the moving domain using Reynolds' formula.
The interior derivative is exact because
\[
        \partial_t(A\wedge\dd A)
        =
        \partial_tA\wedge\dd A
        +
        A\wedge\dd\partial_tA
        =
        2\partial_tA\wedge\dd A
        -
        \dd(A\wedge\partial_tA),
\]
and the variation of \(n^\ast\omega_{\Sph^2}\) is exact on the target sphere.
The domain motion contributes the contraction of \(A\wedge\dd A\) with the
boundary velocity.  These boundary terms combine into the displayed
\(\mathcal J\).  The fixed outer collar has \(\partial_tn=0\) in the chosen
gauge, so it contributes zero.  The inner term records precisely the degree
flux through the moving tube.
\end{proof}

\begin{lemma}[Annular degree price]
\label{lem:app-annular-price}
If a normal annulus around a core tube carries nonzero degree \(q\), then
\[
        E_\eps[d_\eps;A_{\rm ann}]
        \ge
        c|q|\log(R/\eps)-C
\]
on that annulus.  After normalization by \(\estar\), this is the
\({\rm CoreMass}_\eps\) contribution used in the Hopf cost.
\end{lemma}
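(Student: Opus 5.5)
The estimate is a lower bound of Jerrard--Sandier type on an annulus, so I would prove it by slicing into concentric circles and using the degree on each circle. Write the normal annulus as \(A_{\rm ann}=\{\rho_0\le|y|\le R\}\) in the normal plane, with \(\rho_0\) a fixed multiple of \(\eps\) or larger. I take ``carries degree \(q\)'' in the standard sense: on every circle \(\partial B_\rho\), \(\rho\in(\rho_0,R)\), where \(|d_\eps|\ge 1/2\), the winding number of \(d_\eps/|d_\eps|\) equals \(q\). Energy on a normal disc is the slice energy. The tube energy is then recovered by integrating along \(s\) with the Jacobian \(J=1+O(|r||\kappa|)\) of the Fermi geometry. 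The constant in the final statement absorbs both \(\int\dd\Hh^1\) and this Jacobian factor.

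The argument runs in three steps.

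\begin{enumerate}[label=(\arabic*)]
\item \emph{Circles where the modulus stays near one.} If \(|d_\eps|\ge 1/2\) on \(\partial B_\rho\), I write \(d_\eps=|d_\eps|e^{i\theta}\). Cauchy--Schwarz and the degree condition give
\[
\int_{\partial B_\rho}\tfrac12|\nabla d_\eps|^2\ge\tfrac18\cdot\frac{(2\pi|q|)^2}{2\pi\rho}=\frac{\pi q^2}{4\rho}.
\]
Integrating this in \(\rho\) would give the logarithm directly if it held for every radius.

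\item \emph{Bad radii.} These are the radii where \(\min_{\partial B_\rho}|d_\eps|<1/2\). Here I use the standard one-dimensional estimate. On such a circle, either the modulus oscillates between \(1/2\) and \(3/4\), or the modulus stays small on a whole arc. In both cases the slice energy satisfies \(\int_{\partial B_\rho}e_\eps\ge c/\eps\), by the Modica--Mortola trick \(\int|\partial_\tau|d||\,(1-|d|^2)/\eps\le 2\int e_\eps\). Alternatively, if the modulus is small on the whole circle, the potential term alone gives \(\int_{\partial B_\rho}e_\eps\ge c\rho/\eps^2\ge c/\eps\) once \(\rho\ge\eps\).

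\item \emph{Choice of scales.} I use dyadic annuli \(\{2^{j}\rho_0\le|y|\le 2^{j+1}\rho_0\}\). On each one, either all radii are good, which gives energy \(\ge c q^2\log 2\), or the bad radii carry energy. If a set of bad radii of measure \(\ge 2^{j}\rho_0/2\) is present, that dyadic annulus has energy \(\ge c\,2^j\rho_0/\eps\ge c\). So every dyadic annulus carries energy at least \(c\min(1,q^2)\ge c\) when \(q\neq0\). Summing over the roughly \(\log_2(R/\rho_0)\) annuli gives \(c\log(R/\eps)-C\), with the \(-C\) coming from the annuli below scale \(\eps\).
\end{enumerate}

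This only produces \(c\log(R/\eps)\); it does not yet give the linear dependence \(c|q|\log(R/\eps)\). For \(|q|\ge 1\) I get the sharper bound as follows. On a good dyadic annulus the energy is \(\ge cq^2\ge c|q|\). On a bad one I run the Jerrard ball-construction / Sandier ball-growth argument. Cover the small-modulus set by disjoint balls of total radius \(\le C\eps E\). Then grow and merge them, using that each ball of radius \(r\) with degree \(d_B\) carries \(\ge\pi|d_B|\log(r/\eps)-C\), and that the degrees of the balls add up to \(q\) on the annulus. The resulting bound is \(\pi|q|\log(R/(\eps E))-C\). If \(E\ge \log(R/\eps)\), the claim already holds, since \(|q|\) is bounded on the relevant finite-energy branch by the basin's energy upper bound; otherwise \(\log E\) is lower order and can be absorbed into \(C\).

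Finally I normalize. Dividing by \(\estar=\pi|\log\eps|\) and integrating over the slices where the degree is nonzero identifies the left side with the \({\rm CoreMass}_\eps\) entry, giving the contribution \(c|q|\log(R/\eps)/\estar\to c|q|\).

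The main obstacle is step (3): getting the constant per unit degree, rather than a constant merely per nonzero degree. This is where the ball construction is needed. I would cite Jerrard/Sandier directly for it rather than reprove it, and check only that the annulus version (boundary degree prescribed on the outer circle) follows from it by reflection or by taking the inner radius to be \(\eps\).
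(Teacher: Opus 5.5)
Your argument is correct in substance, and its core is the paper's proof. On circles where \(|d_\eps|\) is bounded below, Cauchy--Schwarz against the winding \(2\pi q\) gives \(\int_{\partial B_r}|\nabla d_\eps|^2\ge cq^2/r\). Integrating from \(\eps\) to \(R\) then gives \(cq^2\log(R/\eps)\ge c|q|\log(R/\eps)\). The two routes differ on the modulus-drop case. The paper does not estimate it. It refers to a dichotomy: either ``a nonzero-degree crossing already priced'', or a zero-degree defect handled by Lemma~\ref{lem:app-scalar-clearing}. But the pricing of that crossing is this same logarithmic bound, so the paper's proof really covers only annuli on which \(|d_\eps|\) stays bounded below. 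Your steps (2)--(3) close that case directly: a \(c/\eps\) lower bound on every bad circle of radius \(\ge\eps\), from Modica--Mortola or the potential, followed by dyadic pigeonholing. That is what your longer route buys.

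You do, however, misplace the difficulty: the ball construction is unnecessary. A good-dominant dyadic annulus already gives \(cq^2\ge c|q|\). A bad-dominant one at scale \(2^j\rho_0\) costs \(c2^j\rho_0/\eps\), which exceeds \(c|q|\) once \(2^j\rho_0\ge\eps|q|\). Summing \(\min(cq^2,c2^j\rho_0/\eps)\) over \(j\) gives \(c|q|\bigl(\log_2(R/\rho_0)-\log_2|q|-C\bigr)\). With the bound \(|q|\le Q\) that you invoke anyway, this is the lemma.

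That bound itself follows from the same count. Under a budget \(E_\eps[d_\eps;A_{\rm ann}]\le K\estar\), bad-dominant annuli can only sit below scale \(C\eps|\log\eps|\), so almost every annulus contributes \(cq^2\). Some such bound is genuinely needed once the modulus may drop. For example, take \(d=\varphi(|y|)e^{iq\theta}\) with \(\varphi\) vanishing except in a layer of width \(R/|q|\) at the outer circle. It has winding \(q\) on every good circle, but its energy is \(O(|q|+R^2/\eps^2)\), which is below \(c|q|\log(R/\eps)-C\) once \(|q|\gg(R/\eps)^2\).

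The ball-construction detour also has slips as written:
\begin{enumerate}[label=(\alph*)]
\item Under your hypothesis, the zeros between two good circles carry total degree zero, so all of \(q\) sits inside the innermost good circle. The degrees add up to \(q\) only if the inner hole is itself taken as an initial ball.
\item Your proposed fixes do not achieve this. Reflecting across the inner circle just produces another annulus. Taking the inner radius to be \(\eps\) leaves the uncontrolled energy of \(B_\eps\).
\item The loss \(\pi|q|\log\log(R/\eps)\) is unbounded as \(\eps\to0\), so it must be absorbed by lowering \(c\), not into \(C\).
\end{enumerate}
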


\begin{proof}
On an annulus where \(|d_\eps|\) is bounded below, write
\[
        d_\eps=|d_\eps|e^{i\theta}.
\]
The degree condition gives
\[
        \int_{\partial B_r}\partial_\tau\theta\,\dd\tau=2\pi q
\]
for a.e. \(r\).  Cauchy's inequality on each circle gives
\[
        \int_{\partial B_r}|\nabla\theta|^2
        \ge
        \frac{c q^2}{r}.
\]
Integrating in \(r\) from \(\eps\) to \(R\) gives the logarithmic lower bound.
If \(|d_\eps|\) drops below the positive threshold, the modulus-drop
dichotomy either gives a nonzero-degree crossing already priced or a
zero-degree scalar defect controlled by Lemma~\ref{lem:app-scalar-clearing}.
\end{proof}

\begin{lemma}[Slab compatibility of Hopf and exit costs]
\label{lem:app-slab-compatibility}
Let \(I=[t_-,t_+]\) be an admissible Hopf slab.  If a limiting first exit time
\(\tau_\ast\) lies in \(I^\circ\), then the same slab satisfies
\[
        \liminf_{\eps\to0}
        {\rm TopExitCost}_\eps(I)\ge c_0 .
\]
If no limiting exit lies in \(I\) and \(I\) is topologically regular, the capped
relative Hopf functional is computed entirely on the no-exit positive-gap
region with the fixed cap atlas, and any change is paid by
\({\rm GapCost}_\eps(I)+{\rm CoreMass}_\eps(I)\).  If \(I\) is not
topologically regular, the cap functional is not transported across the
nonregular time; the topology exit supplies the first alternative above.
\end{lemma}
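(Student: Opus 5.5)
The plan is to split according to the three cases in the statement and reduce each to earlier results, chiefly Lemma~\ref{lem:short-exit-measure} for the exit case and Lemmas~\ref{lem:app-cap-atlas}, \ref{lem:app-cs-balance}, \ref{lem:short-capped-hopf}, \ref{lem:app-annular-price} for the regular case.

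\emph{Interior limiting exit.} If a limiting first exit time \(\tau_\ast\) lies in \(I^\circ\), I invoke Lemma~\ref{lem:short-exit-measure} directly. The priced mechanisms define nonnegative set functions converging, along a subsequence, to the Radon measure \(\nu_{\rm exit}\). At \(\tau_\ast\) some control quantity reaches its threshold with positive margin. Inserting a smooth time cutoff supported in \(I^\circ\) and equal to one near \(\tau_\ast\) then gives \(\nu_{\rm exit}(I^\circ)\ge c_0\). The one point needing care is that the liminf of \({\rm TopExitCost}_\eps(I)\) dominates this quantity. This holds because the set function is monotone in the slab, and an open set containing \(\tau_\ast\) is charged by lower semicontinuity of weak convergence on open sets.

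\emph{No exit and topologically regular.} I use Lemma~\ref{lem:app-cap-atlas} to obtain a smooth isotopy of the inner tori, a fixed complement type, and standard zero-longitude caps at every time of \(I\). On the no-exit set the chart-gap condition (M5) keeps the principal-axis map smooth on the exterior positive-gap region. I then integrate the Chern--Simons balance of Lemma~\ref{lem:app-cs-balance} over \(I\):
\begin{itemize}
\item the outer collar term vanishes by fixed collar data, and Lemma~\ref{lem:app-no-boundary-loss} excludes boundary flux;
\item the inner term is the degree flux through the tubes, bounded by the annular price of Lemma~\ref{lem:app-annular-price} and hence by \(C\,{\rm CoreMass}_\eps(I)\);
\item regions where the exterior gap degenerates (but does not reach the threshold) enter only through the \(|\nabla C|/\gap(C)\) bound, and the curvature-gap estimate of Lemma~\ref{lem:short-curvature-gap} charges them to \({\rm GapCost}_\eps(I)\).
\end{itemize}
Endpoint cap ambiguity is absorbed by Lemma~\ref{lem:short-capped-hopf}, and Lemma~\ref{lem:short-endpoint-hopf} then gives that any change is paid by \({\rm GapCost}_\eps+{\rm CoreMass}_\eps\).

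\emph{Not topologically regular.} By Definition~\ref{def:topological-regular-slab}, the first failure time lies in \(I\). Lemma~\ref{lem:app-cap-atlas} says the tube-topology control reaches threshold there, which is a priced exit under Definition~\ref{def:short-priced-exit}. The first case then applies. The cap functional is not transported across that time, and the statement does not require it to be.

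\emph{Main obstacle.} I expect the hardest part to be the first case, namely transferring a threshold event at a single limiting time to a uniform-in-\(\eps\) lower bound on every slab containing it. The finite-\(\eps\) exit times \(\tau_\eps\) must converge into \(I^\circ\), so I would first pass to a subsequence with \(\tau_\eps\to\tau_\ast\). Then for small \(\eps\), \(\tau_\eps\in I^\circ\), and each event contributes at least its fixed threshold \(c_0\) to \({\rm ExitCost}_\eps(I)/\estar\) by the normalization in Definition~\ref{def:short-priced-exit}.
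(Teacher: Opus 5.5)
Your proposal is correct and takes the same route as the paper. The exit case is Lemma~\ref{lem:short-exit-measure} localized to the slab, with tube-topology and cap-atlas failures among the control quantities; this is also how the nonregular case reduces to it. The regular no-exit case is the Chern--Simons balance of Lemma~\ref{lem:app-cs-balance} with no outer term, where inner-tube flux is charged to \({\rm CoreMass}_\eps\) via Lemma~\ref{lem:app-annular-price} and loss of the principal-axis map is charged to \({\rm GapCost}_\eps\).

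Your extra details (the portmanteau step, and cap ambiguity via Lemma~\ref{lem:short-capped-hopf}) are consistent with this, with two small adjustments. The localized gap charge is better taken from Lemma~\ref{lem:app-gap-localization} than from the global inequality of Lemma~\ref{lem:short-curvature-gap}. Also, the \({\rm TopExitCost}_\eps\) term in Lemma~\ref{lem:short-endpoint-hopf} should be noted to vanish on a no-exit slab.
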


\begin{proof}
The first statement is Lemma~\ref{lem:short-exit-measure} localized to the
slab; the list of control quantities includes the tube-topology and cap-atlas conditions. For
the second, no open-basin control quantity reaches its threshold on \(I\), so the tube
chart, collar gauge, cap atlas, and positive-gap region are valid except on the
gap-loss and core-tube sets.  The Chern-Simons balance of Lemma
\ref{lem:app-cs-balance} has no outer boundary term.  Inner tube flux is charged
by Lemma~\ref{lem:app-annular-price}; loss of the principal-axis map is charged
by \({\rm GapCost}_\eps(I)\).  This proves the compatibility.
\end{proof}

\subsection{Low-regularity Hopf representatives and boundary flux}

The main theorem uses smooth positive-gap representatives on finite-epsilon
slabs.  The limiting Brakke object is weaker, so the proof must avoid applying
the Hopf invariant directly to the limiting varifold.  The invariant is used
only at finite \(\eps\), before the limit, where the ordered region is smooth
after removing tubes and gap-loss sets.  The following observations record the
minimal low-regularity facts needed to justify this order of operations.

\begin{lemma}[Sobolev Hopf continuity at the finite-epsilon level]
\label{lem:app-sobolev-hopf}
Let \(n_j,n\in W^{1,3}(\Sph^3;\Sph^2)\) and suppose
\[
        n_j\to n
        \quad\hbox{strongly in }W^{1,3}.
\]
Then
\[
        \Hopf[n_j]\to\Hopf[n].
\]
If the maps are smooth and lie in one \(W^{1,3}\)-connected component, the
value is the classical integer Hopf invariant.
\end{lemma}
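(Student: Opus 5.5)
The plan is to use the Chern--Simons integral formula with a primitive chosen continuously in \(n\), and to prove that \(n\mapsto\Hopf[n]\) is continuous for the strong \(W^{1,3}\) topology. Write \(F_n=n^\ast\omega_{\Sph^2}\). Pointwise \(|F_n|\le C|\nabla n|^2\), so \(F_n\in L^{3/2}(\Sph^3;\Lambda^2)\). The form \(F_n\) is closed in the distributional sense; this holds for smooth maps, and I extend it by density of smooth maps in \(W^{1,3}(\Sph^3;\Sph^2)\), which is available in the critical dimension (Schoen--Uhlenbeck/Bethuel). Since \(H^2(\Sph^3)=0\), I take the Hodge--Coulomb primitive \(A_n=\dd^\ast\Delta^{-1}F_n\). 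Elliptic regularity gives \(A_n\in W^{1,3/2}\), and Sobolev embedding in dimension three gives \(W^{1,3/2}\hookrightarrow L^3\), with
\[
        \|A_n\|_{L^3}\le C\|F_n\|_{L^{3/2}}\le C\|\nabla n\|_{L^3}^2 .
\]
By Hölder, \(A_n\wedge F_n\in L^1\), so \(\Hopf[n]=\int A_n\wedge F_n\) is well defined on \(W^{1,3}\). The same identity \(\int (A-A')\wedge F=\int \dd\beta\wedge F=0\) shows it is independent of the primitive; this follows by approximation, since \(\dd F=0\) weakly.

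\textbf{Continuity.} Next I show \(F_{n_j}\to F_n\) in \(L^{3/2}\). Write \(F_{n}=\omega(n)(\nabla n,\nabla n)\), with \(\omega\) smooth and bounded. The difference \(F_{n_j}-F_n\) splits into
\[
        \omega(n_j)\bigl[(\nabla n_j,\nabla n_j)-(\nabla n,\nabla n)\bigr]
        \quad\hbox{and}\quad
        \bigl(\omega(n_j)-\omega(n)\bigr)(\nabla n,\nabla n).
\]
The first piece goes to zero in \(L^{3/2}\) by Hölder and strong \(L^3\) convergence of the gradients. The second goes to zero by dominated convergence along subsequences with \(n_j\to n\) a.e.; uniqueness of the limit then gives convergence of the full sequence. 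Linearity and boundedness of \(\dd^\ast\Delta^{-1}:L^{3/2}\to L^3\) give \(A_{n_j}\to A_n\) in \(L^3\). The bilinear pairing \(L^3\times L^{3/2}\to\R\) is continuous, so \(\Hopf[n_j]\to\Hopf[n]\).

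\textbf{Integrality.} For smooth maps, the Chern--Simons value equals the classical integer invariant by the normalization fixed in the setup. The functional is continuous and integer-valued on smooth maps, so it is constant along any continuous path of smooth maps. If two smooth maps lie in the same \(W^{1,3}\)-path component, I connect them by a \(W^{1,3}\) path, approximate it by smooth maps, and use continuity to conclude that they have the same integer, namely the classical Hopf invariant. This is consistent with White's result that \(W^{1,3}\) homotopy classes agree with the classical ones in this critical setting.

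The main obstacle is the closedness of \(F_n\) for a general \(W^{1,3}\) map, together with the justification of \(\Delta^{-1}\) acting on an \(L^{3/2}\) closed two-form with no harmonic part. I will deal with this first through strong density of \(C^\infty(\Sph^3;\Sph^2)\) in \(W^{1,3}\), using the continuity estimate for \(F\) just established to pass closedness to the limit. The remaining steps are the routine Hölder and Calderón--Zygmund estimates above.
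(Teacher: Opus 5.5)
Your proposal is correct and follows the paper's own argument. Both use the Coulomb primitive with \(A_n\in W^{1,3/2}\hookrightarrow L^3\), H\"older's inequality to make \(\int A_n\wedge F_n\) well defined, strong \(L^{3/2}\) convergence of \(F_{n_j}\) passed to \(L^3\) convergence of the primitives, and integrality from density of smooth maps in the critical space \(W^{1,3}\). You also supply details the paper leaves implicit: weak closedness of \(F_n\) via Schoen--Uhlenbeck density, and the two-piece proof that \(F_{n_j}\to F_n\) in \(L^{3/2}\). Your path-approximation step can be shortened: continuity plus density already make \(\Hopf\) integer-valued, hence locally constant, on all of \(W^{1,3}(\Sph^3;\Sph^2)\).
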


\begin{proof}
Write
\[
        F_n=n^\ast\omega_{\Sph^2}.
\]
Since \(n\in W^{1,3}\), the two-form \(F_n\) lies in \(L^{3/2}\).  Let \(A_n\)
be the Coulomb primitive:
\[
        \dd A_n=F_n,\qquad \dd^\ast A_n=0 .
\]
Elliptic estimates give \(A_n\in W^{1,3/2}\hookrightarrow L^3\).  Therefore
\[
        \int_{\Sph^3}A_n\wedge F_n
\]
is well-defined by H\"older's inequality.  Strong \(W^{1,3}\) convergence gives
strong \(L^{3/2}\) convergence of \(F_{n_j}\) to \(F_n\), hence strong \(L^3\)
convergence of the Coulomb primitives.  The product converges in \(L^1\).  The
integer statement follows by density of smooth maps in fixed
\(W^{1,3}\)-homotopy classes \cite{WhiteSobolevHomotopy}.
\end{proof}

\begin{remark}[Why no \(W^{1,2}\) Hopf invariant is used]
The natural energy space for line defects is closer to \(W^{1,2}\), but in
three dimensions \(n^\ast\omega_{\Sph^2}\in L^1\) only, and the
Chern-Simons product is not controlled by Sobolev duality.  The present proof
therefore does not define a Hopf invariant of the limiting Brakke object.  It
uses finite-epsilon capped maps with positive gap, proves a slab lower bound,
and then passes only the resulting nonnegative costs to the limit.
\end{remark}

\begin{lemma}[Boundary-flux dichotomy]
\label{lem:app-boundary-flux}
Let \(n_\eps\) be the finite-epsilon principal-axis map on a collar
neighborhood of \(\partial\Omega\).  On a time slab \(I\), either the boundary
trace is homotopic to the fixed collar trace and the boundary Chern-Simons flux
is \(o_\eps(1)\), or
\[
        \liminf_{\eps\to0}
        {\rm TopExitCost}_\eps(I)>0 .
\]
\end{lemma}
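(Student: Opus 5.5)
The plan is to split according to whether the boundary trace of \(n_\eps\) stays, throughout \(I\), in the homotopy class of the fixed collar trace, using the collar quantities already priced in Definition~\ref{def:short-priced-exit}.

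\emph{Case 1: the fixed collar data fail on \(I\).} First, if the collar is not kept fixed, either a nonzero degree crosses a collar annulus, or the FENE/collar coefficient bounds of (M6) reach their threshold. Both events are listed exits. A nonzero-degree collar crossing carries the logarithmic lower bound of Lemma~\ref{lem:app-annular-price}, so it has normalized cost bounded below by a positive constant. A coefficient or chart failure is likewise a listed exit, and Lemma~\ref{lem:short-exit-measure} gives \(\liminf_\eps{\rm TopExitCost}_\eps(I)\ge c_0\). If the principal spectral gap closes in the collar, that is the exterior chart/gap exit of Definition~\ref{def:chart-gap-stopping}, again priced. So in every subcase where the trace is not homotopic to the fixed trace, or cannot be defined as a continuous \(\Sph^2\)-map, the second alternative holds.

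\emph{Case 2: no such exit occurs.} On the collar the gap is then at least \(\gamma_{\rm out}\), the chart is valid, and the coefficients are tame. By the spectral lifting estimate, \(|\nabla n_\eps|\lesssim|\nabla C_\eps|/\gamma_{\rm out}\) is bounded there, and Lemma~\ref{lem:short-spectral-lifting} applied on the collar shows that the trace moves by a homotopy inside the fixed class. Now write the boundary term of Lemma~\ref{lem:app-cs-balance} on the outer boundary:
\[
\int_{\partial\Omega}\mathcal J[n,\partial_t n,A,0],
\]
which is bilinear in \(A\) and \(\partial_t n\) (times \(\nabla n\)). I would choose the gauge fixed by the collar data, so that \(A\) is controlled in \(L^3\) of the collar through the Coulomb estimate of Lemma~\ref{lem:app-sobolev-hopf}. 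The flux is then bounded by \(C\int_I\|\partial_t n_\eps\|_{L^2(\text{collar})}\,\dd t\). Lemma~\ref{lem:app-no-boundary-loss} shows that the collar energy is \(O(1)\), hence \(o(\estar)\). Combined with the material derivative bound \(D_{\rm rel}^\eps/\estar\), the Lipschitz transport clock (which makes the \(u_\eps\cdot\nabla n\) part small, since the velocity is tangent to the fixed collar), and the tame bounds, this gives a collar time derivative of size \(o(1)\) after normalization. So the flux is \(o_\eps(1)\).

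The main obstacle is this last quantitative step. Identifying the boundary flux with a quantity that is genuinely small requires control of \(\partial_t n_\eps\) on the collar, not only of energy. I would first try to use the fixed-collar gauge, in which \(\partial_t n=0\) exactly up to the deviation of \(C_\eps\) from the collar data. I would then bound that deviation by the collar-clearing estimate inside the proof of Lemma~\ref{lem:app-no-boundary-loss}. Any failure of this bound is, by construction, a collar exit, which lands in the second alternative.
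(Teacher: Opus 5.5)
Your proposal is correct and follows essentially the paper's argument. When the trace stays in the fixed collar class, the paper passes to the time-independent collar gauge, so the outer Chern--Simons term of Lemma~\ref{lem:app-cs-balance} vanishes up to the collar-clearing error; otherwise it charges either a nonzero-degree collar-annulus crossing or a modulus/FENE failure in the collar to \({\rm TopExitCost}_\eps\). That is exactly your Case~1 together with the fixed-gauge fallback in your last paragraph. The intermediate estimate through \(D_{\rm rel}^\eps\), the \(O(1)\) collar energy and the transport clock is not needed, and on its own it would give smallness only relative to \(\estar\), while \(u_\eps\cdot\nabla n_\eps\) need not be small, so you can drop it in favor of the fixed-gauge argument.
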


\begin{proof}
If the trace remains in the fixed positive-gap collar class, choose the collar
gauge in which the trace is time independent.  The boundary term in the
Chern-Simons balance of Lemma~\ref{lem:app-cs-balance} then vanishes up to the
collar clearing error, which is \(o_\eps(1)\) after normalization.  If the
trace leaves this class, one of two events occurs.  Either a nonzero degree
crosses a collar annulus, giving the logarithmic collar-core price; or the
phase remains zero degree but the modulus or FENE coefficient bound fails in
the collar, giving a collar/FENE priced exit.  These are entries of
\({\rm TopExitCost}_\eps\).
\end{proof}

\begin{lemma}[Gap-cost localization]
\label{lem:app-gap-localization}
Let
\[
        Z_{\eps,\gamma}
        =
        \{(x,t):\gap(C_\eps(x,t))\le\gamma\}
\]
inside a topologically regular admissible slab.  If the slab is not
topologically regular, the failure is already charged to
\({\rm TopExitCost}_\eps\).  If the principal-axis map cannot be extended across
the regular slab with fixed relative Hopf class after removing the core tubes,
then there is a choice of \(\gamma=\gamma_\eps\downarrow0\) such that
\[
        \liminf_{\eps\to0}
        {\rm GapCost}_\eps(I)>0
\]
unless the failure is already accounted for by core mass or a priced exit.
\end{lemma}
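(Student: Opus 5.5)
The plan is to argue by contraposition on the regular slab \(I\), with threshold \(\gamma\) and the cost \({\rm GapCost}_\eps(I)\) normalized through the scale-invariant density \(|\nabla C_\eps|^3/\gap(C_\eps)^3\) on the exterior set. Suppose that for every sequence \(\gamma_\eps\downarrow0\) one has \(\liminf_{\eps\to0}{\rm GapCost}_\eps(I)=0\), and that no core mass or priced exit accounts for the failure. Passing to a subsequence, we may assume \({\rm GapCost}_\eps(I)\to0\), \({\rm CoreMass}_\eps(I)\to0\) and \({\rm TopExitCost}_\eps(I)\to0\). The aim is then to show that the principal-axis map extends across the slab with fixed relative Hopf class, which contradicts the hypothesis.

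\textbf{Step 1 (splitting the exterior region).} On the exterior region \(U_\eps(t)\setminus(\text{core tubes})\), write \(Z_{\eps,\gamma}\) for the near-degenerate part and its complement for the region where \(\gap\ge\gamma\). On the complement, the spectral lifting estimate \(|\nabla n_{C_\eps}|\le C|\nabla C_\eps|/\gap\) and Lemma~\ref{lem:short-spectral-lifting} make the principal-axis map continuous. The exterior positive-gap condition of Definition~\ref{def:chart-gap-stopping} guarantees \(\gap\ge\gamma_{\rm out}\) wherever \(|d_\eps|\ge c_{\rm out}\) outside the tubes. Hence any point of \(Z_{\eps,\gamma}\) with \(\gamma<\gamma_{\rm out}\) lies in \(\{|d_\eps|<c_{\rm out}\}\), where it is counted by the core mass or by the scalar-defect clearing of Lemma~\ref{lem:app-scalar-clearing}, or else it signals a chart/gap exit. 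I will choose \(\gamma_\eps\downarrow0\) by a diagonal argument, so that the vanishing of the gap cost forces the \(W^{1,3}\)-mass of \(n_{C_\eps}\) concentrated on \(Z_{\eps,\gamma_\eps}\) to tend to zero.

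\textbf{Step 2 (extension across the degenerate set).} On each time slice, use the small \(W^{1,3}\)-energy to redefine \(n_{C_\eps}\) on a neighbourhood of \(Z_{\eps,\gamma_\eps}\). I will try a local projection or averaging onto \(\Sph^2\) on small balls: a Vakulenko--Kapitanskii/Courant-type argument shows that a map with small \(W^{1,3}\) oscillation on a ball has essentially constant values there. This yields a smooth map that agrees with \(n_{C_\eps}\) off a small set and lies close to it in \(W^{1,3}\). Capping with the atlas of Lemma~\ref{lem:short-capped-hopf} then gives maps \(\widehat n_\eps(t)\) on \(\Omega^\ast\).

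\textbf{Step 3 (constancy of the Hopf class).} The Hopf invariant of \(\widehat n_\eps(t)\) is computed by Lemma~\ref{lem:app-sobolev-hopf}, and its continuity in \(W^{1,3}\) makes it constant in \(t\) up to an \(o(1)\) error, hence exactly constant since it is integer-valued. The Chern--Simons balance of Lemma~\ref{lem:app-cs-balance}, together with Lemma~\ref{lem:app-annular-price} and the vanishing core mass, removes the inner-flux term. The fixed collar and Lemma~\ref{lem:app-boundary-flux} remove the outer term. The map therefore extends with fixed relative Hopf class, which is the desired contradiction.

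I expect the main obstacle to be Step 2: smoothing a map with small \(L^3\)-gradient mass into a genuine continuous \(\Sph^2\)-valued extension uniformly in time. This requires a uniform small-energy/\(\eta\)-regularity estimate in \(W^{1,3}\) (critical in three dimensions), together with control on the parabolic time continuity. If the averaging argument fails, I would fall back on the Lemma~\ref{lem:short-curvature-gap} quantitative bound \(c|k|^{3/4}\) applied to the capped maps directly: a change of class \(k\neq0\) forces \(\int|\nabla C|^3/\gap^3\) to be of order one on the modified set, which is exactly the positive lower bound on \({\rm GapCost}_\eps(I)\).
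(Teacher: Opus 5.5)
Your Step~1 is sound, and it is slightly sharper than the paper's text. On a no-exit slab, the exterior gap condition of Definition~\ref{def:chart-gap-stopping} places the part of \(Z_{\eps,\gamma}\) outside the tubes inside \(\{|d_\eps|<c_{\rm out}\}\) as soon as \(\gamma<\gamma_{\rm out}\).

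The genuine gap is in Steps~2--3.
\begin{itemize}
\item Small \(\int_B|\nabla n|^3\) on a ball does not make \(n\) nearly constant there, because \(W^{1,3}\) is critical in three dimensions. Fubini and Morrey give only a good sphere \(\partial B_\rho\) on which the oscillation is small.
\item An extension of such boundary data into \(B_\rho\) is determined only up to the action of \(\pi_3(\Sph^2)\cong\mathbb Z\). That is, it is ambiguous by exactly the Hopf charge you are trying to track.
\item Lemma~\ref{lem:app-sobolev-hopf} is a qualitative continuity statement along a \(W^{1,3}\)-convergent sequence. It gives constancy in \(t\) only if \(t\mapsto\widehat n_\eps(t)\) is \(W^{1,3}\)-continuous.
\item A ball cover of \(Z_{\eps,\gamma_\eps}(t)\), together with its fillings, changes discontinuously whenever components of the degenerate set appear, merge or disappear. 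At each such time your construction lets the Hopf number jump by a nonzero integer.
\end{itemize}
Nothing in Steps~2--3 rules this out. A Hopf charge collapsing into a gap-closing point is exactly the scenario the lemma is about.

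The missing ingredient is quantitative. If two fillings both carry small \(\int|\nabla n|^3\), gluing them gives a map \(\Sph^3\to\Sph^2\) of small energy. The Vakulenko--Kapitanskii bound \(c|\Hopf|^{3/4}\le\int|\nabla n|^3\) then forces its Hopf number to vanish. A canonical averaging-and-projection at one fixed small scale, continuous in \(t\), would also avoid the jumps. But that route needs small energy on every ball of that scale, uniformly in \(t\), including across \(\{\gap=0\}\), where \(n_{C_\eps}\) is undefined.

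The paper uses the VK bound directly and never extends the map:
\begin{itemize}
\item Off \(Z_{\eps,\gamma}\) it uses \(|\nabla n_{C_\eps}|\le C\gamma^{-1}|\nabla C_\eps|\).
\item With core crossings, collar flux and atlas failures excluded or charged, the Chern--Simons balance must lose compactness through \(Z_{\eps,\gamma}\).
\item It covers \(Z_{\eps,\gamma}\) by finitely many balls and applies Lemma~\ref{lem:short-curvature-gap} to the capped map restricted to the complement.
\item It lets \(\gamma\downarrow0\) along a diagonal sequence. Balls meeting a nonzero-degree tube are charged to core mass, and chart failures to the exit measure.
\end{itemize}

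So your fallback is the paper's route and should become the main argument. As you state it, though, it omits the localization. Lemma~\ref{lem:short-curvature-gap} only bounds the full integral of a single map on \(\Sph^3\) by \(|\Hopf|^{3/4}\). What turns the endpoint change \(k\) into cost carried by \(Z_{\eps,\gamma}\) is the Chern--Simons flux argument: Lemma~\ref{lem:app-cs-balance}, with inner flux priced by Lemma~\ref{lem:app-annular-price} and outer flux by Lemma~\ref{lem:app-boundary-flux}.
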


\begin{proof}
Outside \(Z_{\eps,\gamma}\), the spectral lifting estimate gives
\[
        |\nabla n_{C_\eps}|
        \le
        C\gamma^{-1}|\nabla C_\eps|.
\]
If the relative Hopf class changes without core crossing, cap-atlas failure, or
exit, the Chern-Simons balance must lose compactness through the set where this
bound degenerates.  Cover \(Z_{\eps,\gamma}\) by a finite family of balls and use the
curvature-gap lower bound on the capped map restricted to the complement.
Letting \(\gamma\downarrow0\) along a diagonal sequence yields a positive lower
bound for the gap-cost term.  If the cover intersects a core tube with nonzero
degree, the event is charged to core mass instead; if the open-basin chart
fails during the localization, it is charged to the exit-cost measure.
\end{proof}

\begin{lemma}[Compatibility with the Brakke limit]
\label{lem:app-hopf-brakke-compatibility}
The Hopf lower bound and the Brakke passage use the same subsequence on
topologically regular no-exit subslabs, while nonregular times are charged to
the exit measure.  More precisely, after the open-basin diagonal subsequence is
chosen, every admissible Hopf slab has the lower bound
\[
        c|k|
        \le
        \liminf_{\eps\to0}
        \left(
        {\rm GapCost}_\eps(I)
        +
        {\rm CoreMass}_\eps(I)
        +
        {\rm TopExitCost}_\eps(I)
        \right),
\]
and the core-mass term on the right is carried by the same limiting varifold
that appears in the Brakke inequality on no-exit subslabs.
\end{lemma}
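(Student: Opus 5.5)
The plan is to fix the subsequence once and then prove the two claims, same subsequence and common carrier of the core mass, separately. First, I would take the diagonal subsequence produced by the open-basin propagation (Proposition~\ref{prop:short-open-basin}) and the core-compactness step (Lemma~\ref{lem:short-measure-function} and Proposition~\ref{prop:short-brakke}). Along it, $\mu_\eps^t\rightharpoonup\mu^t=\|V_t\|$ for a.e.\ $t$ on every compact no-exit interval. Next I would pass to a further subsequence on which the exit set functions converge to the Radon measure $\nu_{\rm exit}$ of Lemma~\ref{lem:short-exit-measure}. The key point is that a further subsequence preserves all limits already identified. Also, the Hopf lower bound of Proposition~\ref{prop:short-hopf} is an inequality for each fixed $\eps$ up to an $o_\eps(1)$ error (Lemma~\ref{lem:short-endpoint-hopf}). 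So its $\liminf$ version survives passage to any subsequence, because a $\liminf$ along a subsequence is at least the full $\liminf$. Hence one subsequence serves both statements, and no new extraction is made for the Hopf part.

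Second, I would decompose an admissible slab $I$. Let $\tau_\ast^{(1)}<\dots$ be the finitely many limiting first-failure times of the tube-topology and exit controls inside $I$. Finiteness should follow because each such time carries mass at least $c_0$ of the finite measure $\nu_{\rm exit}$, by Lemma~\ref{lem:app-slab-compatibility}. Between consecutive failures, I would choose topologically regular closed subslabs $I_j$ and apply Proposition~\ref{prop:short-hopf} on each. Small slabs around the failure times would be charged to ${\rm TopExitCost}_\eps$ with the lower bound $c_0$. The Hopf charges are additive over the pieces. The costs are superadditive set functions: ${\rm GapCost}$ and ${\rm CoreMass}$ are time-integrated nonnegative quantities, and ${\rm TopExitCost}$ is nonnegative. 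Summing the slab inequalities then gives $c|k|\le\liminf(\dots)$ on $I$. Here I use $\sum\liminf\le\liminf\sum$ and $c|k_j|$ summing to at least $c|k|$ by the triangle inequality.

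Third, I would identify the core-mass term with the Brakke varifold. On a regular no-exit subslab, ${\rm CoreMass}_\eps(I_j)$ is the normalized GL energy in the core tubes, i.e.\ $\int_{I_j}\mu_\eps^t(T_\rho\Gamma_t)\,\dd t$ up to the $o_\eps(1)$ annular corrections of Lemma~\ref{lem:app-annular-price}. Lemma~\ref{lem:short-measure-function} gives tightness and no diffuse background: exterior mass is a basin entry, and there is no boundary loss by Lemma~\ref{lem:app-no-boundary-loss}. Therefore the limit equals $\int_{I_j}\|V_t\|(\Omega)\,\dd t$, the weight of the same varifold that appears in \eqref{eq:short-brakke}. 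This uses the common representative of Lemma~\ref{lem:short-common-representative} at a.e.\ time.

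The main obstacle is the localization near exit times. Core mass in the small slabs around $\tau_\ast^{(i)}$ is not carried by any Brakke varifold, so I must make sure it is not double-counted or lost. My first attempt would be to let the widths of these slabs tend to zero after $\eps\to0$. Their core mass would then vanish by uniform local mass bounds, while their ${\rm TopExitCost}$ stays at least $c_0$ by Lemma~\ref{lem:short-exit-measure}. Consequently every nonregular contribution is absorbed into the exit term.
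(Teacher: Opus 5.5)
Your first and third steps are the paper's proof. These are: one open-basin diagonal subsequence; stability of the finite-\(\eps\) Hopf inequality under passage to a subsequence, since a \(\liminf\) can only increase; and, on topologically regular no-exit subslabs, \({\rm TopExitCost}_\eps\to0\) with the core energy measures converging to \(\|V_t\|\). Slabs containing a limiting exit are handled by Lemma~\ref{lem:short-exit-measure}, and no Brakke motion is asserted across that exit.

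The decomposition in your second step and the width-shrinking in your last paragraph are unnecessary. Proposition~\ref{prop:short-hopf} already gives \(c|k|\le\liminf_{\eps\to0}(\cdots)\) on every admissible slab, regular or not, and your first step transfers this to the chosen subsequence.

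As written, these extra steps are also not sound:
\begin{enumerate}
\item Charging an exit slab \(J_i\) around \(\tau_\ast^{(i)}\) only through \(\liminf_{\eps\to0}{\rm TopExitCost}_\eps(J_i)\ge c_0\) does not pay for the Hopf jump \(c|k_i|\) across \(J_i\) once \(|k_i|>c_0/c\). You would have to invoke the nonregular case of Proposition~\ref{prop:short-hopf} on \(J_i\) instead.
\item Nonnegativity of \({\rm TopExitCost}_\eps\) does not by itself give superadditivity.
\item Reading \({\rm CoreMass}_\eps\) as a time integral conflicts with the single-time quantity \({\rm CoreMass}_\eps(\{t\})\) used in Lemma~\ref{lem:short-capped-hopf}.
\item The uniform mass bound you invoke across \(\tau_\ast^{(i)}\) is not supplied by the basin estimate, which holds only before the exit.
\end{enumerate}

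None of this is needed. The statement only asserts that the core mass is carried by \(V_t\) on no-exit subslabs. Near an exit you can simply let the exit cost carry the bound, rather than forcing the core mass there to vanish.
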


\begin{proof}
The open-basin compactness argument chooses one subsequence on an exhaustion of
compact no-exit intervals.  On those intervals the tube-topology condition is
below threshold, so the cap atlas is fixed by Lemma~\ref{lem:app-cap-atlas}.
The finite-epsilon Hopf estimate is a liminf inequality, so it is stable under
passing to this subsequence.  On a topologically regular no-exit subslab,
\({\rm TopExitCost}_\eps\to0\) and the core energy measures converge to the
Brakke varifold measure.  Therefore every nonzero core-mass contribution visible
in the Hopf inequality is represented by mass of the same limiting
one-varifold.  If the slab contains a limiting exit time, including a limiting
reconnection or loss of the cap atlas, Lemma~\ref{lem:short-exit-measure}
supplies the positive exit cost and no Brakke motion is asserted across that
exit.
\end{proof}

\section*{Acknowledgements}

The author acknowledges financial support from NSFC Grant 12501602, Hunan
Provincial Education Department Grant 24C0055, Hunan Provincial Science and
Technology Department Grant 2025JJ60052, and Xiangtan University Start-up Fund
Grant KZ0810769.


\begin{thebibliography}{99}

\bibitem{Allard}
W. K. Allard,
On the first variation of a varifold,
\emph{Ann. of Math.} 95 (1972), 417--491.

\bibitem{AmbrosioSoner}
L. Ambrosio and H. M. Soner,
Level set approach to mean curvature flow in arbitrary codimension,
\emph{J. Differential Geom.} 43 (1996), 693--737.

\bibitem{ArnoldKhesin}
V. I. Arnold and B. A. Khesin,
\emph{Topological Methods in Hydrodynamics},
Springer, 1998.

\bibitem{BallMajumdar}
J. M. Ball and A. Majumdar,
Nematic liquid crystals: from Maier-Saupe to a continuum theory,
\emph{Mol. Cryst. Liq. Cryst.} 525 (2010), 1--11.

\bibitem{BarrettSuli}
J. W. Barrett and E. Suli,
Finite element approximation of finitely extensible nonlinear elastic dumbbell
models for dilute polymers,
\emph{ESAIM Math. Model. Numer. Anal.} 46 (2012), 949--978.

\bibitem{BerisEdwards}
A. N. Beris and B. J. Edwards,
\emph{Thermodynamics of Flowing Systems with Internal Microstructure},
Oxford University Press, 1994.

\bibitem{BethuelBrezisHelein}
F. Bethuel, H. Brezis, and F. Helein,
\emph{Ginzburg-Landau Vortices},
Birkhauser, 1994.

\bibitem{BethuelOrlandiSmetsConcentration}
F. Bethuel, G. Orlandi, and D. Smets,
Motion of concentration sets in Ginzburg-Landau equations,
\emph{Ann. Fac. Sci. Toulouse Math.} 13 (2004), 3--43.

\bibitem{BethuelOrlandiSmetsMeanCurvature}
F. Bethuel, G. Orlandi, and D. Smets,
Convergence of the parabolic Ginzburg-Landau equation to motion by mean
curvature,
\emph{Ann. of Math.} 163 (2006), 37--163.

\bibitem{Brakke}
K. A. Brakke,
\emph{The Motion of a Surface by Its Mean Curvature},
Princeton University Press, 1978.

\bibitem{BrezisCoronLieb}
H. Brezis, J.-M. Coron, and E. H. Lieb,
Harmonic maps with defects,
\emph{Comm. Math. Phys.} 107 (1986), 649--705.

\bibitem{ConstantinKliegl}
P. Constantin and M. Kliegl,
Note on global regularity for two-dimensional Oldroyd-B fluids with diffusive
stress,
\emph{Arch. Rational Mech. Anal.} 206 (2012), 725--740.

\bibitem{DeGennesProst}
P. G. de Gennes and J. Prost,
\emph{The Physics of Liquid Crystals},
2nd ed., Oxford University Press, 1993.

\bibitem{DoiEdwards}
M. Doi and S. F. Edwards,
\emph{The Theory of Polymer Dynamics},
Oxford University Press, 1986.

\bibitem{HardtLin}
R. Hardt and F.-H. Lin,
Mappings minimizing the \(L^p\) norm of the gradient,
\emph{Comm. Pure Appl. Math.} 40 (1987), 555--588.

\bibitem{Ilmanen}
T. Ilmanen,
Elliptic regularization and partial regularity for motion by mean curvature,
\emph{Mem. Amer. Math. Soc.} 108 (1994), no. 520.

\bibitem{JerrardSoner}
R. L. Jerrard and H. M. Soner,
Dynamics of Ginzburg-Landau vortices,
\emph{Arch. Rational Mech. Anal.} 142 (1998), 99--125.

\bibitem{JerrardSmets}
R. L. Jerrard and D. Smets,
On the motion of a curve by its binormal curvature,
\emph{J. Eur. Math. Soc.} 17 (2015), 1487--1515.

\bibitem{LionsMasmoudi}
P.-L. Lions and N. Masmoudi,
Global solutions for some Oldroyd models of non-Newtonian flows,
\emph{Chinese Ann. Math. Ser. B} 21 (2000), 131--146.

\bibitem{MajumdarZarnescu}
A. Majumdar and A. Zarnescu,
Landau-de Gennes theory of nematic liquid crystals: the Oseen-Frank limit and
beyond,
\emph{Arch. Rational Mech. Anal.} 196 (2010), 227--280.

\bibitem{Moffatt}
H. K. Moffatt,
The degree of knottedness of tangled vortex lines,
\emph{J. Fluid Mech.} 35 (1969), 117--129.

\bibitem{Oldroyd}
J. G. Oldroyd,
On the formulation of rheological equations of state,
\emph{Proc. R. Soc. Lond. A} 200 (1950), 523--541.


\bibitem{PengBlowUp2026}
S. Peng,
Pressure quotients and endpoint velocity-clock criteria for non-diffusive
viscoelastic flows,
arXiv:2606.25258, 2026.

\bibitem{PengPositiveCone3D2026}
S. Peng,
Three-dimensional positive-cone Oldroyd-B flows: geometric continuation and
residual-work criteria,
arXiv:2606.25438, 2026.

\bibitem{SandierSerfaty}
E. Sandier and S. Serfaty,
\emph{Vortices in the Magnetic Ginzburg-Landau Model},
Birkhauser, 2007.

\bibitem{SimonGMT}
L. Simon,
\emph{Lectures on Geometric Measure Theory},
Proceedings of the Centre for Mathematical Analysis, Australian National
University, 1983.

\bibitem{VakulenkoKapitanskii}
A. F. Vakulenko and L. V. Kapitanskii,
Stability of solitons in \(S^2\) in the nonlinear sigma model,
\emph{Soviet Phys. Dokl.} 24 (1979), 433--436.

\bibitem{Virga}
E. G. Virga,
\emph{Variational Theories for Liquid Crystals},
Chapman and Hall, 1994.

\bibitem{WhiteSobolevHomotopy}
B. White,
Homotopy classes in Sobolev spaces and the existence of energy minimizing maps,
\emph{Acta Math.} 160 (1988), 1--17.

\end{thebibliography}
\end{document}